\documentclass[a4j,11pt]{amsart}
\usepackage{amsmath,amssymb,amscd,color,mathrsfs}
\usepackage[all,arc,curve,color,frame,poly]{xy}
\usepackage{enumerate}
\usepackage{graphicx}

\voffset=0mm
\headheight=0mm
\topmargin=0mm
\oddsidemargin=0mm
\evensidemargin=0mm
\textheight=240mm
\textwidth=162mm
\abovedisplayskip=0pt
\abovedisplayshortskip=0pt
\belowdisplayskip=0pt
\belowdisplayshortskip=0pt
\theoremstyle{definition}
\numberwithin{equation}{section}
\newtheorem{thm}{Theorem}[section]
\newtheorem{df}[thm]{Definition}
\newtheorem{ex}[thm]{Example}
\newtheorem{prop}[thm]{Proposition}
\newtheorem{cor}[thm]{Corollary}
\newtheorem{lem}[thm]{Lemma}

\newtheorem{rmk}[thm]{Remark}

\newtheorem*{ack}{Acknowledgements}
\newcommand{\udim}{\underline{\dim}}
\newcommand{\IM}{\mathrm{Im}}
\newcommand{\Ker}{\mathrm{Ker}}
\newcommand{\Coker}{\mathrm{Coker}}
\newcommand{\Hom}{\mathrm{Hom}}
\newcommand{\End}{\mathrm{End}}
\newcommand{\Ext}{\mathrm{Ext}}
\newcommand{\Tor}{\mathrm{Tor}}
\newcommand{\Spec}{\mathrm{Spec}}
\newcommand{\Mod}{\mathrm{Mod}}
\newcommand{\RHom}{\mathbb R\mathrm{Hom}}
\newcommand{\Soc}{\mathrm{Soc}}
\newcommand{\add}{\mathrm{add}}
\newcommand{\nilp}{\mathrm{nilp}}
\newcommand{\diag}{\mathrm{diag}}
\newcommand{\op}{\mathrm{op}}
\newcommand{\fd}{\mathrm{fd}}

\newcommand{\e}{{\mathbf{e}}}
\newcommand{\s}[3]{\mathcal S_{#1 #3}(#2)}
\newcommand{\Ltensor}{\stackrel{\mathbb L}{\otimes}}

\newcommand{\dcat}[2]{\mathcal{D}_{#1}(#2)}
\newcommand{\uM}[3]{h_{\mathcal M_{#1,#3}(#2)}}
\newcommand{\M}[3]{\mathcal M_{#1,#3}(#2)}
\newcommand{\Ph}[3]{\Phi_{#1,#3,#2}}
\newcommand{\F}[3]{\mathcal F_{#1,#3,#2}}
\newcommand{\Max}{\mathrm{Max}}
\newcommand{\m}{{\mathfrak{m}}}
\newcommand{\R}{{\mathfrak{R}}}
\newcommand{\Set}{{\mathfrak{Set}}}
\newcommand{\Var}{{\mathfrak{Var}}}
\newcommand{\bs}{\mathbf s}
\newcommand{\bw}{\mathbf w}
\newcommand{\ove}{\overline{\mathbf{e}}}

\title[Tilting theoretical approach to moduli spaces over preprojective algebras]{Tilting theoretical approach to moduli spaces over preprojective algebras}
\author{Yuhi Sekiya}
\address{Graduate School of Mathematics\\ 
 Nagoya University, Chikusa-ku Nagoya 464-8602 Japan}
\email{yuhi-sekiya@math.nagoya-u.ac.jp}
\author{Kota Yamaura}
\address{Graduate School of Mathematics\\ 
 Nagoya University, Chikusa-ku Nagoya 464-8602 Japan}
\email{m07052d@math.nagoya-u.ac.jp}
\keywords{Moduli spaces, Preprojective algebras, Tilting theory, McKay correspondence, Kleinian singularities}
\subjclass[2000]{
Primary 
14D22, 
14E16, 
16E30, 
16G20; 
Secondary  
14J10, 
17B20. 
}
\thanks{The first author is supported by JSPS Fellowships for Young Scientists No.21-6922.
}
\thanks{The second author is supported by JSPS Fellowships for Young Scientists No.22-5801.
}

\begin{document}

\maketitle

\begin{abstract}
We apply tilting theory over preprojective algebras $\Lambda$ to a study of moduli space of $\Lambda$-modules.
We define the categories of semistable modules and give an equivalence, so-called reflection functors, between them by using tilting modules over $\Lambda$. Moreover we prove that the equivalence induces an isomorphism of algebraic varieties between moduli spaces.
In particular, we study in the case when the moduli spaces related to the Kleinian singularity. We generalize a result of Crawley-Boevey which is known another proof of the McKay correspondence of Ito-Nakamura type.
\end{abstract}

\tableofcontents

\section{Introduction}

In this paper we attempt to apply tilting theory of algebras to a study of moduli spaces of modules over algebras.
Throughput this paper $K$ denotes an arbitrary algebraically closed field. For any $K$-algebra $\Lambda$, $\Mod\Lambda$ denotes the category of right $\Lambda$-modules.

The motivation is a study of moduli spaces of modules over algebras related to singularities on algebraic varieties.
We especially focus on the quotient singularity. When we discuss about the quotient singularity, we always assume that the characteristic of $K$ is zero.
In this paper, we consider the 2-dimensional case.
Let $G$ be a finite subgroup of $SL(2,K)$.
Then $G$ acts naturally on an affine space $\mathbb A^2 = \mathbb A^2_K$ and its coordinate ring $S=K[\mathbb A^2]$. 
The quotient singularity $\mathbb A^2/G$ has a lot of names like Kleinian singularity, Gorenstein quotient singularity, rational double point and so on. Since the observation of McKay \cite{M}, a field of singularities related to the McKay correspondence has been developed. 
In particular, after Ito-Nakamura \cite{IN} introduced the $G$-Hilbert scheme, moduli spaces of $G$-clusters, $G$-sheaves, $G$-constellations or McKay quiver representations have been investigated in connection with resolutions of singularities by many researchers (e.g.~\cite{BKR,CI} etc.). 
On the other hand, in representation theory of the invariant ring $S^G$, the skew group ring $S*G$ has been studied (cf.~\cite{A,AR}). 
The non-commutative ring $S*G$ is also called a non-commutative crepant resolution in \cite{VdB,IW} because it is considered as a non-commutative analogue of the crepant resolution. 
In fact, all $G$-clusters, $G$-sheaves etc. are regarded as modules over the skew group ring $S*G$, so the resolution of the singularity is recovered as a moduli space of $S*G$-modules.

By the way, the skew group ring $S*G$ is Morita equivalent to the preprojective algebra $\Lambda = K\overline{Q}/\langle R \rangle$ of an extended Dynkin quiver $Q$ (cf.~\cite{R-VdB,BSW}). Here $\overline{Q}$ is the double of $Q$ and $R$ is the preprojective relation.
In this paper, therefore, we deal with modules over any preprojective algebra $\Lambda$ which is not necessarily Morita equivalent to $S*G$.
Since the category of $\Lambda$-modules is equivalent to the category of representations of $(\overline{Q},R)$ (cf.~\cite[Chapter 3]{ASS}), we often confuse $\Lambda$-modules and representations of $(Q,R)$.
By virtue of King \cite{Ki}, moduli spaces of $\Lambda$-modules are constructed by using geometric invariant theory. 
We identify dimension vectors with elements in $\mathbb Z^{Q_0}$ and denote by $(\mathbb Z^{Q_0})^*$ the dual lattice. We call $\Theta = (\mathbb Z^{Q_0})^{*}\otimes_{\mathbb Z}\mathbb Q$ the parameter space.  
For any $\theta \in \Theta$, we denote by $\M{\theta}{\Lambda}{\alpha}$ the moduli space of $\theta$-semistable $\Lambda$-modules of dimension vector $\alpha$, which is actually a coarse moduli space parametrizing S-equivalence classes of $\theta$-semistable modules of dimension vector $\alpha$.

The purpose of this paper is to study a relation between variation of parameter $\theta$ and variation of moduli space $\mathcal M_{\theta}(\Lambda,\alpha)$ by using tilting theory over the preprojective algebra $\Lambda$. 
We want to deal with $\theta$-semistable $\Lambda$-modules categorically, hence we give the following definition. 

\begin{df}
For any parameter $\theta \in \Theta$, we define the full subcategory $\s{\theta}{\Lambda}{}$ of $\Mod \Lambda$ consisting of $\theta$-semistable $\Lambda$-modules. Moreover we denote by $\s{\theta}{\Lambda}{,\alpha}$ the full subcategory of $\s{\theta}{\Lambda}{}$ consisting of $\theta$-semistable $\Lambda$-modules of dimension vector $\alpha$ if $\s{\theta}{\Lambda}{,\alpha}$ is not empty. 
\end{df}

The tilting theory is a theory to deal with equivalences of derived categories of modules over algebras, which is applied to fields around the representation theory of algebras (cf.~\cite{AHK,Hap}).
Tilting modules, or more generally tilting complexes, play an important roll in the tilting theory.
If a tilting module $T$ is given, then the derived category $\mathcal D(\Mod \Lambda)$ becomes equivalent to the derived category $\mathcal D(\Mod \End_{\Lambda}(T))$ (cf.~\cite{Ri}).
On the other hand, preprojective algebras were introduced by Gelfand and Ponomarev \cite{GP} and have been studied by many researchers (for example \cite{Boc,BGL,BIRS}).
Buan, Iyama, Reiten and Scott \cite{BIRS} studied tilting theory on preprojective algebras.
They constructed a set of tilting modules over preprojective algebras  of projective dimension at most one as follows. 
For each vertex $i \in Q_0$ with no loops, they defined the two sided ideal $I_i$ and proved that any products of these ideals are tilting modules. We denote by $\mathcal I(\Lambda)$ the set of such tilting modules.
Moreover they showed that, for any $T \in \mathcal I(\Lambda)$ the endomorphism ring $\End_{\Lambda}(T)$ is isomorphic to the original ring $\Lambda$, and $\mathcal{I}(\Lambda)$ is related to the Coxeter group $W_Q$ associated to $Q$, that is, there exists a bijection $W_Q \longrightarrow \mathcal{I}(\Lambda) \ ;\ w \longmapsto I_{w}$ where $I_w$ is well-defined as the product $I_{i_1} \cdots I_{i_{\ell}}$ for any reduced expression $w=s_{i_1}\cdots s_{i_{\ell}}$.

For a tilting module $I_w$, we have the derived auto-equivalence 
\[\xymatrix{
\mathcal D(\Mod \Lambda) \ar@<0.5ex>[rrr]^{\RHom_{\Lambda}(I_w,-)} 
\ar@<-0.5ex>@{<-}[rrr]_{-\Ltensor_{\Lambda}I_w} &&& \mathcal D(\Mod \Lambda)
}\]
which induce the main result in this paper. The coxeter group $W_Q$ acts on both of the set of dimension vectors and the parameter space $\Theta$.

\begin{thm}[Theorem \ref{simple-reflection} and \ref{reflection-functor}] \label{1}
For any preprojective algebra $\Lambda$ and any $\theta \in \Theta$ with $\theta_i>0$, there is a categorical equivalence
\[
\xymatrix{
\s{\theta}{\Lambda}{} \ar@<0.5ex>[rr]^{\Hom_{\Lambda}(I_i,-)} & & \s{s_i \theta}{\Lambda}{} \ar@<0.5ex>[ll]^{- \otimes_{\Lambda} I_i}
}.
\]
For any element $w \in W_Q$ and any sufficiently general parameter $\theta \in \Theta$, an equivalence between $\s{\theta}{\Lambda}{}$ and $\s{w\theta}{\Lambda}{}$ is given by a composition of them.
Moreover it preserves stable objects and $S$-equivalence classes. Moreover it induces the equivalence between $\s{\theta}{\Lambda}{,\alpha}$ and $\s{w\theta}{\Lambda}{,w\alpha}$.
\end{thm}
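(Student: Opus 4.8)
The plan is to reduce the statement to a single simple reflection $s_i$ with $\theta_i>0$ and then build up arbitrary $w$ by composition. Fix a vertex $i$ with $\theta_i>0$; then $i$ has no loop and $I_i$, $s_i$ are defined. By \cite{BIRS}, $I_i$ is a tilting $\Lambda$-module of projective dimension at most one with $\End_\Lambda(I_i)\cong\Lambda$, so $\RHom_\Lambda(I_i,-)$ and $-\Ltensor_\Lambda I_i$ are mutually quasi-inverse auto-equivalences of $\mathcal D(\Mod\Lambda)$, and by the classical tilting theorem $\Hom_\Lambda(I_i,-)$ restricts to an equivalence from the torsion class $\mathcal T_i=\{M:\Ext^1_\Lambda(I_i,M)=0\}$ onto the torsion-free class $\mathcal X_i=\{N:\Tor^\Lambda_1(N,I_i)=0\}$, with quasi-inverse $-\otimes_\Lambda I_i$; in particular $\mathcal X_i$ is closed under submodules. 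The first thing I would check is that the exact sequence $0\to I_i\to\Lambda\to S_i\to 0$ together with the projective presentation of $I_i$ gives natural isomorphisms $\Ext^1_\Lambda(I_i,M)\cong D\Hom_\Lambda(M,S_i)$ and $\Tor^\Lambda_1(N,I_i)\cong D\Hom_\Lambda(S_i,N)$, where $D=\Hom_K(-,K)$.

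Granting this, the sign of $\theta_i$ controls the relevant $\Ext$ and $\Tor$ on semistable modules: every $M\in\s{\theta}{\Lambda}{}$ has no quotient $\cong S_i$ (such a quotient would have a kernel of $\theta$-weight $-\theta_i<0$, contradicting semistability), so $\Ext^1_\Lambda(I_i,M)=0$, i.e. $M\in\mathcal T_i$ and $\RHom_\Lambda(I_i,M)=\Hom_\Lambda(I_i,M)$ is concentrated in degree $0$; dually $(s_i\theta)_i=-\theta_i<0$ forces every $N\in\s{s_i\theta}{\Lambda}{}$ to have no submodule $\cong S_i$, so $\Tor^\Lambda_1(N,I_i)=0$ and $N\Ltensor_\Lambda I_i=N\otimes_\Lambda I_i$. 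I would also record that $\RHom_\Lambda(I_i,-)$ restricts to $\mathcal D^{\mathrm{b}}(\fd\,\Lambda)$ and acts on $K_0(\fd\,\Lambda)=\mathbb Z^{Q_0}$ by the reflection $s_i$, via the computation $[\RHom_\Lambda(I_i,S_j)]=[S_j]-\langle S_i,S_j\rangle[S_i]$ with $\langle S_i,S_j\rangle$ the $(i,j)$-entry of the symmetric Cartan matrix of $Q$; hence $\udim\Hom_\Lambda(I_i,M)=s_i(\udim M)$ whenever $\Ext^1_\Lambda(I_i,M)=0$, and $\theta(M)=0$ becomes $(s_i\theta)(\Hom_\Lambda(I_i,M))=0$.

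It then remains to check that $X:=\Hom_\Lambda(I_i,M)$ is $s_i\theta$-semistable for every $M\in\s{\theta}{\Lambda}{}$. Let $N\subseteq X$ be a submodule; since $\mathcal X_i$ is closed under submodules, $N=\Hom_\Lambda(I_i,M')$ for a (unique) $M'\in\mathcal T_i$, and the inclusion $N\hookrightarrow X$ equals $\Hom_\Lambda(I_i,g)$ for some $g\colon M'\to M$. Then $\Hom_\Lambda(I_i,\ker g)=0$, which (using the explicit description of $I_i$) forces $\ker g$ to be a direct sum of copies of $S_i$, while $\IM g\subseteq M$; hence $\theta(\ker g)\ge 0$ (because $\theta_i>0$) and $\theta(\IM g)\ge 0$ (because $M$ is semistable), and since $\udim N=s_i(\udim M')$ we get $(s_i\theta)(N)=\theta(M')=\theta(\ker g)+\theta(\IM g)\ge 0$. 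Thus $X\in\s{s_i\theta}{\Lambda}{}$. The reverse containment, that $-\otimes_\Lambda I_i$ carries $\s{s_i\theta}{\Lambda}{}$ into $\s{\theta}{\Lambda}{}$, is entirely analogous, with the roles of submodules and quotients (and of $\mathcal T_i$ and $\mathcal F_i$) interchanged; equivalently one conjugates the first statement by the duality $D$ together with the isomorphism $\Lambda\cong\Lambda^{\op}$. Since on $\s{\theta}{\Lambda}{}$ and $\s{s_i\theta}{\Lambda}{}$ all higher $\Ext$ and $\Tor$ vanish, the unit and counit of the adjunction $(-\otimes_\Lambda I_i,\Hom_\Lambda(I_i,-))$ are isomorphisms there, so the two functors are mutually quasi-inverse equivalences $\s{\theta}{\Lambda}{}\simeq\s{s_i\theta}{\Lambda}{}$.

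For general $w\in W_Q$ I would pick a reduced expression $w=s_{i_1}\cdots s_{i_\ell}$ and take $\theta$ sufficiently general (for instance in the interior of the dominant chamber): then each $s_{i_1}\cdots s_{i_{k-1}}(\alpha_{i_k})$ is a positive root, so the intermediate parameter $s_{i_{k-1}}\cdots s_{i_1}\theta$ has positive $i_k$-coordinate and the simple case applies at every stage, giving an equivalence $\s{\theta}{\Lambda}{}\simeq\s{w\theta}{\Lambda}{}$. It is independent of the reduced expression because it is the restriction of $\RHom_\Lambda(I_w,-)$, where $I_w=I_{i_1}\cdots I_{i_\ell}$ is well-defined and the higher $\Tor$ among the tilting modules $I_{i_j}$ vanish, so $\RHom_\Lambda(I_{i_1},-)\circ\cdots\circ\RHom_\Lambda(I_{i_\ell},-)\cong\RHom_\Lambda(I_w,-)$. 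Finally, $\s{\theta}{\Lambda}{}$ is an abelian length category whose simple objects are exactly the $\theta$-stable modules and in which $S$-equivalence means having the same composition factors; an equivalence of abelian categories is exact, hence preserves simple objects and composition series, i.e. stable objects and $S$-equivalence classes, and since it multiplies dimension vectors by $w$ it restricts to $\s{\theta}{\Lambda}{,\alpha}\simeq\s{w\theta}{\Lambda}{,w\alpha}$ ($w\alpha$ being automatically a genuine dimension vector, as the dimension vector of an actual module). I expect the main obstacle to be the semistability check in the third paragraph: a submodule of $\Hom_\Lambda(I_i,M)$ need not be the image of a submodule of $M$, so the defining inequality has to be extracted through the tilting equivalence by a dimension-vector count that keeps track of the simple $S_i$. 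The remaining ingredients are either quoted from the tilting theory of $\Lambda$ or are routine (if sign-sensitive) homological computations.
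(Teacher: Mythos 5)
Your treatment of the simple reflection $s_i$ is correct and is essentially the paper's own proof: you establish $\s{\theta}{\Lambda}{}\subset\mathcal T(I_i)$ and $\s{s_i\theta}{\Lambda}{}\subset\mathcal Y(I_i)$ from the sign of $\theta_i$ exactly as in Lemma \ref{TY}, invoke the torsion-pair equivalence of Lemma \ref{Brenner-Butler}, use the dimension-vector identity $\udim\Hom_{\Lambda}(I_i,M)=s_i(\udim M)$ of Corollary \ref{change-dim}, and in the key semistability check you transport a submodule $N\subseteq\Hom_{\Lambda}(I_i,M)$ back to a map $g\colon N\otimes_{\Lambda}I_i\to M$ whose kernel lies in $\mathcal F(I_i)=\add S_i$ and whose image is a submodule of $M$ --- this is literally the paper's computation, with the only (harmless) difference that you identify the kernel via $\mathcal F(I_i)=\add S_i$ where the paper uses the $2$-Calabi--Yau duality, and that you deduce preservation of stables and $S$-equivalence from exactness of an equivalence of abelian categories rather than by rechecking strict inequalities.

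The gap is in the passage to general $w$. You take $\theta$ in the interior of the dominant chamber, so that every intermediate parameter $s_{i_{k-1}}\cdots s_{i_1}\theta$ has positive $i_k$-coordinate and the composite is $\RHom_{\Lambda}(I_w,-)$; this reproduces only the paper's Proposition \ref{composition}. But the theorem asserts the equivalence for \emph{any} sufficiently general $\theta$, and this is what the paper actually needs later (e.g.\ to pass between $\mathcal S_{w_1}$ and $\mathcal S_{w_2}$ for arbitrary chambers in Section \ref{Klein}). For non-dominant generic $\theta$ the intermediate coordinates change sign along the word, so the composite must alternate between $\bs_i^+=\Hom_{\Lambda}(I_i,-)$ and $\bs_i^-=-\otimes_{\Lambda}I_i$ according to the sign of $(w_{j-1}\theta)_{i_j}$; your identification of the composite with $\RHom_{\Lambda}(I_w,-)$ then fails (at a stage with negative sign the functor used is the \emph{inverse} of $\RHom_{\Lambda}(I_{i_j},-)$ on modules, not $\RHom_{\Lambda}(I_{i_j},-)$ itself), and with it your argument for independence of the chosen expression. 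The paper handles this by defining the sign-dependent functors $\bs_i$ and verifying the Coxeter relations for the mixed compositions case by case (Proposition \ref{Coxeter-relation}, using Propositions \ref{rel-tilt} and \ref{red-tilt}). You do have both $\bs_i^{\pm}$ available from your simple-reflection step, so the existence of \emph{some} composite equivalence $\s{\theta}{\Lambda}{}\simeq\s{w\theta}{\Lambda}{}$ for arbitrary sufficiently general $\theta$ is within reach, but as written your argument neither constructs it outside the dominant chamber nor justifies its well-definedness there.
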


We call these functors the reflection functors.
The above equivalence induces a bijection on sets of closed points of $\M{\theta}{\Lambda}{\alpha}$ and $\M{w\theta}{\Lambda}{w\alpha}$.
It is natural to hope that this bijection is extended to an isomorphism of algebraic varieties.
In this paper, we prove it in a more general setting in view of applications: for example, $\Lambda$ is a $d$-Calabi-Yau algebra and $L$ is a partial tilting $\Lambda$-module and so on.

\begin{thm}[Theorem \ref{equiv-iso}]\label{i}
For a finite quiver $(Q,R)$ with a relation, let $\Lambda=KQ/\langle R \rangle$. 
For a $\Lambda$-module $L$ which satisfies an appropriate condition (see Proposition \ref{hom}), we assume that $\Gamma = \End_{\Lambda}(L)$ is of the form $\Gamma=KQ'/\langle R' \rangle$ for a finite quiver $(Q',R')$ with a relation.
If the functors $\Hom_{\Lambda}(L,-)$ and $-\otimes_{\Gamma}L$ give a categorical equivalence 
\[\xymatrix{
\s{\theta}{\Lambda}{,\alpha} \ar@<0.5ex>[rr]^{\Hom_{\Lambda}(L,-)} && \ar@<0.5ex>[ll]^{-\otimes_{\Gamma}L} \s{\eta}{\Gamma}{,\beta}
}\]
which preserve $S$-equivalence classes, then they are extended to morphisms of algebraic varieties and give an isomorphism of algebraic varieties:
\[
\M{\theta}{\Lambda}{\alpha} \stackrel{\sim}{\longrightarrow} \M{\eta}{\Gamma}{\beta}.
\]
\end{thm}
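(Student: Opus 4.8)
The plan is to realize both functors geometrically on King's GIT construction of the two moduli spaces and then to check that the resulting morphisms are mutually inverse. Recall that $\M{\theta}{\Lambda}{\alpha}$ is the good quotient $\mathrm{Rep}^{ss}_{\theta}(\Lambda,\alpha)/\!\!/\, GL_{\alpha}$ of the open $\theta$-semistable locus inside the affine representation scheme $\mathrm{Rep}(\Lambda,\alpha)$ by the base-change group $GL_{\alpha}=\prod_{i\in Q_0}GL_{\alpha_i}$; its closed points are the $GL_{\alpha}$-orbits of $\theta$-polystable modules, that is, the $S$-equivalence classes in $\s{\theta}{\Lambda}{,\alpha}$. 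On $\mathrm{Rep}^{ss}_{\theta}(\Lambda,\alpha)$ there is the tautological flat family $\mathcal M$ of $\theta$-semistable $\Lambda$-modules of dimension vector $\alpha$: the trivial bundle $\bigoplus_{i\in Q_0}\mathcal O^{\alpha_i}$ with each arrow of $Q$ acting through the tautological matrix of coordinate functions. The same description applies to $\M{\eta}{\Gamma}{\beta}$ since $\Gamma=KQ'/\langle R'\rangle$.

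First I would apply $\Hom_{\Lambda}(L,-)$ to $\mathcal M$ in families. Fixing a finite projective presentation $P_1\to P_0\to L\to 0$ of $L$ over $\Lambda$ and applying $\Hom_{\Lambda}(-,\mathcal M)$, one obtains $\Hom_{\Lambda}(L,\mathcal M)$ as the kernel of an $\mathcal O$-linear map $\Hom_{\Lambda}(P_0,\mathcal M)\to\Hom_{\Lambda}(P_1,\mathcal M)$ of trivial bundles whose entries are polynomial in the tautological coordinates (using $\Hom_{\Lambda}(e_j\Lambda,\mathcal M)\cong\mathcal Me_j$). By the hypothesis that $\Hom_{\Lambda}(L,-)$ restricts to an equivalence $\s{\theta}{\Lambda}{,\alpha}\xrightarrow{\ \sim\ }\s{\eta}{\Gamma}{,\beta}$, every fibre of this kernel is a $\Gamma$-module of the fixed dimension vector $\beta$; hence the map of bundles has constant rank, so $\mathcal N:=\Hom_{\Lambda}(L,\mathcal M)$ is a subbundle, carries the tautological $\Gamma=\End_{\Lambda}(L)$-action, and—being a constant-rank kernel—commutes with arbitrary base change. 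Thus $\mathcal N$ is a flat family of $\eta$-semistable $\Gamma$-modules of dimension vector $\beta$ over $\mathrm{Rep}^{ss}_{\theta}(\Lambda,\alpha)$, so the coarse moduli property of $\M{\eta}{\Gamma}{\beta}$ produces a classifying morphism $\mathrm{Rep}^{ss}_{\theta}(\Lambda,\alpha)\to\M{\eta}{\Gamma}{\beta}$. This morphism depends only on the isomorphism classes of the members of $\mathcal M$, hence is constant on $GL_{\alpha}$-orbits, and since $\M{\theta}{\Lambda}{\alpha}$ is a categorical quotient it descends to a morphism $\Phi\colon\M{\theta}{\Lambda}{\alpha}\to\M{\eta}{\Gamma}{\beta}$. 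Symmetrically, applying $-\otimes_{\Gamma}L$ to the tautological family over $\mathrm{Rep}^{ss}_{\eta}(\Gamma,\beta)$, via a projective presentation of $L$ as a left $\Gamma$-module and the condition of Proposition \ref{hom}, yields a morphism $\Psi\colon\M{\eta}{\Gamma}{\beta}\to\M{\theta}{\Lambda}{\alpha}$.

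It remains to check $\Psi\circ\Phi=\mathrm{id}$ and $\Phi\circ\Psi=\mathrm{id}$. By construction $\Phi$ sends the $S$-equivalence class of a $\theta$-polystable module $M$ to that of $\Hom_{\Lambda}(L,M)$, and $\Psi$ sends the latter to that of $\Hom_{\Lambda}(L,M)\otimes_{\Gamma}L$; the natural isomorphism $\Hom_{\Lambda}(L,-)\otimes_{\Gamma}L\cong\mathrm{id}$ supplied by the categorical equivalence, together with the hypothesis that the equivalence preserves $S$-equivalence classes, shows $\Psi\circ\Phi$ is the identity on closed points, and likewise $\Phi\circ\Psi$. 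Since the moduli spaces are varieties over the algebraically closed field $K$—reduced and separated, with closed points dense—an endomorphism agreeing with the identity on every closed point is the identity. Hence $\Phi$ and $\Psi$ are mutually inverse isomorphisms of algebraic varieties, induced by the given functors.

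I expect the second paragraph to be the main obstacle: showing that $\Hom_{\Lambda}(L,-)$, which need not be exact, nonetheless sheafifies to a flat family of $\Gamma$-modules compatibly with base change. This is exactly where the hypotheses do real work—the constancy of the dimension vector $\beta$ on the semistable locus is what upgrades $\Hom_{\Lambda}(L,\mathcal M)$ from a mere coherent sheaf to a subbundle behaving well under pullback; without it one would obtain a morphism only on a locally closed stratum. A secondary point, routine but worth spelling out in King's framework, is to pin down the precise families functor that $\M{\eta}{\Gamma}{\beta}$ corepresents, so that $\mathcal N$ legitimately produces the classifying morphism, and to justify the descent through the GIT quotient.
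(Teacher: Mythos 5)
Your argument is correct, but it reaches the morphism by a genuinely different route from the paper's. The paper never touches the GIT presentation: it defines the moduli functor $\F{\theta}{\Lambda}{\alpha}$ on the category $\R$ of finitely generated reduced commutative $K$-algebras, shows (Proposition \ref{morph}) that any functor commuting with all base changes $-\otimes_RS$ induces a natural transformation of moduli functors and hence, by the coarse moduli property, a morphism of varieties, and then verifies the base-change hypothesis separately for $-\otimes_{\Lambda}L$ (Proposition \ref{tensor}, immediate from right-exactness) and for $\Hom_{\Lambda}(L,-)$ (Proposition \ref{hom}); the latter is the paper's heavy lifting, an induction up the finite projective resolution of $L$ using $\Ext^i_{\Lambda}(L,M)=0$ for $i\geq 1$ and $M\in\s{\theta}{\Lambda}{,\alpha}$ to prove $\Ext^i_{\Lambda^R}(L^R,M)=0$ and the $R$-flatness of $\Hom_{\Lambda^R}(L^R,M)$. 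You instead work over the semistable locus of the representation scheme with its tautological family and replace that homological induction by a soft geometric one: the fibrewise kernel of $\Hom_{\Lambda}(P_0,\mathcal M)\to\Hom_{\Lambda}(P_1,\mathcal M)$ is $\Hom_{\Lambda}(L,\mathcal M_x)$, whose dimension vector is forced to equal $\beta$ by the equivalence hypothesis, so over a reduced base the map has constant rank and its kernel is a locally split subbundle commuting with base change. This buys a shorter argument that, for this step, needs only a two-term finite presentation of $L$ and reducedness rather than the full $\Ext$-vanishing; what it costs is the GIT bookkeeping, and three points you should make explicit: (i) endow $\mathrm{Rep}^{ss}_{\theta}(\Lambda,\alpha)$ with its reduced structure, since the constant-rank argument fails over non-reduced bases (this is consistent with the paper, whose varieties and test algebras are all reduced); (ii) descent through the good quotient requires $GL_{\alpha}$-invariance at the level of morphisms, not merely of closed points, which follows from the equivariance of the tautological family together with reducedness of $GL_{\alpha}\times\mathrm{Rep}^{ss}_{\theta}(\Lambda,\alpha)$; (iii) the tensor direction needs no condition from Proposition \ref{hom} at all --- right-exactness gives base change for free, and local freeness of the resulting family follows from constancy of fibre dimensions as in Lemma \ref{fiber}. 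Your concluding step --- two morphisms between reduced separated $K$-varieties agreeing on all closed points coincide --- is exactly the paper's.
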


We remark that Nakajima \cite{N} has already considered about similar isomorphisms between quiver varieties, which is a space of representations over a deformed preprojective algebra. But it is defined as a hyper-K\"ahler quotient and his method is differential geometric.
Maffei \cite{Maf} also study them by using technics in geometric invariant theory. 
So our result is more or less known. However we define the reflection functor by using unconventional tactics and the fact that it is induced by the equivalence between derived categories has never known. Furthermore since our reflection functor is realized as a equivalence of categories, it gives the correspondence between not only the objects but also morphisms between them.
\\

Now we return to the Kleinian singularity case, that is, the case when $\Lambda$ is Morita equivalent to the skew group ring $S*G$.
Let $Q$ be the extended Dynkin quiver whose type is the same of that of $G$. Then the double $\overline{Q}$ of $Q$ coincides with the Mckay quiver of $G$. 
Denote the vertexes of $Q$ by $0,1,\ldots,n$ where $0$ corresponds to the trivial representation of $G$. 
We fix the dimension vector $\mathbf d$ whose entries are the dimension of irreducible representations of $G$.
Let $\Theta_{\mathbf d}$ be a subset of $\Theta$ consisting of parameters with $\theta(\mathbf d)=0$.
Then it is known by  \cite{Kr,CS,BKR} that, if $\theta \in \Theta_{\mathbf d}$ is generic, then $\M{\theta}{\Lambda}{\mathbf d}$ gives a minimal resolution of $\mathbb A^2/G$. 
Since we especially interested in the minimal resolution, we only consider generic parameters $\theta$ in $\Theta_{\mathbf d}$.  
Then categories $\s{\theta}{\Lambda}{}$ are classified by the chamber structure of $\Theta_{\mathbf d}$.
If we denote the generic locus of $\Theta_{\mathbf d}$ by $\Theta_{\mathbf d}^{\mathrm{gen}}$, then it decomposes into connected components by using the elements of the Weyl group $W$ whose type is the same as that of $G$: $\Theta_{\mathbf d}^{\mathrm{gen}} = \prod_{w \in W} C(w)$.  
Thus for a $\theta \in C(w)$, we just denote $\s{\theta}{\Lambda}{,\mathbf d}$ and $\M{\theta}{\Lambda}{\mathbf d}$ by $\mathcal S_w$ and $\mathcal M_w$ respectively.
For the identity element $1 \in W$, Crawley-Boevey \cite{CB} observed that the $G$-Hilbert scheme is naturally identified with $\mathcal M_1$ via the Morita equivalence between $S*G$ and $\Lambda$.

The purpose in this case is to study properties of stable modules.
As an application of Theorem \ref{1} and \ref{i}, we give a module theoretical description of stability condition and exceptional curves.
The complexes $S_i^w = \RHom_{\Lambda}(I_w,S_i)$ defined in Section \ref{S_i^w} play an important role in the arguments. It is actually a complexes concentrated in degree 0 or 1 and $[S_1^w],\ldots,[S_n^w]$ form the simple root system $w\Delta$ for any $w \in W$, where $\Delta := \{\ove_1,\ldots, \ove_n \}$ is a simple root system and $\e_i$ denotes the dimension vector of $S_i$ for any $i \in Q_0$ (see Section \ref{Klein} for details). We denote by $\Pi$ the positive root system associated to $\Delta$. Then we have the following result.

\begin{thm}[Proposition \ref{E_i^w} and Theorem \ref{thm:ch-s}]\label{2}
Let $M$ be a $\Lambda$-module of dimension vector $\mathbf d$. Then $M \in \mathcal S_w$ if and only if, $M$ satisfies
\[
\begin{cases}
\Hom_{\Lambda}(M,S_i^w)=0 & \text{ if $w\e_i \in \Pi$},\\
\Hom_{\Lambda}(S_i^w[1],M)=0 & \text{ if $w\e_i \in -\Pi$}
\end{cases}
\]
for all $i=1,\ldots,n$.
\end{thm}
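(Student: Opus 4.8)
The plan is to reduce to the case $w=1$ by means of the reflection functor. Let $\mathcal{D}$ denote the bounded derived category of finite‑dimensional $\Lambda$‑modules, and put $G_w:=\RHom_\Lambda(I_w,-)$, the derived auto‑equivalence of $\mathcal{D}$; by definition $G_w(S_i)=S_i^w$. The functor $G_w$ acts on the Grothendieck group as $w$, so that $[S_i^w]=w\e_i$, and --- by iterating the elementary reflections $\Hom_\Lambda(I_i,-)$ of Theorem~\ref{1} and using that $\mathbf d$ is $W$‑invariant --- $G_w$ restricts to an equivalence $\mathcal{S}_1\xrightarrow{\ \sim\ }\mathcal{S}_w$ (both seen inside $\mathcal{D}$). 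Therefore a $\Lambda$‑module $M$ of dimension vector $\mathbf d$ lies in $\mathcal{S}_w$ precisely when (i) $M$ lies in the heart $\mathcal{A}_w:=G_w(\Mod\Lambda)$, i.e.\ $G_w^{-1}(M)$ is concentrated in degree $0$, and (ii) the resulting module $G_w^{-1}(M)$ lies in $\mathcal{S}_1$. So the proof splits into the base case $w=1$ and the unwindings of (i) and (ii).

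\emph{Base case.} For $w=1$ we have $S_i^1=S_i$ and every $\e_i\in\Pi$, so the assertion reads $M\in\mathcal{S}_1\iff\Hom_\Lambda(M,S_i)=0$ for $i=1,\dots,n$ (the $w\e_i\in-\Pi$ conditions being empty, and $\Hom_\Lambda(S_i[1],M)=\Ext^{-1}_\Lambda(S_i,M)=0$ automatically). This I would prove directly from King's criterion: for $\theta\in C(1)$ one has $\theta_i>0$ for $i\geq1$, and since $\theta(\mathbf d)=0$ with $\mathbf d_0=1$ also $\theta_0<0$; a module of dimension vector $\mathbf d$ is then $\theta$‑semistable exactly when it has no nonzero quotient supported on the vertices $1,\dots,n$, i.e.\ when it is generated by its component at the vertex $0$, which is precisely the vanishing of $\Hom_\Lambda(M,S_i)$ for all $i\geq1$.

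\emph{Unwinding (i) and (ii).} Granting the base case and writing $M'=G_w^{-1}(M)$: once $M'$ is a module, $\Hom_\Lambda(M',S_i)=\Hom_{\mathcal{D}}(M,S_i^w)$, and by Proposition~\ref{E_i^w}, which identifies $S_i^w$ with the module $E_i^w$ when $w\e_i\in\Pi$ and with $E_i^w[-1]$ when $w\e_i\in-\Pi$, this equals $\Hom_\Lambda(M,S_i^w)$ for $w\e_i\in\Pi$ and equals $\Ext^{-1}_\Lambda(M,E_i^w)=0$ for $w\e_i\in-\Pi$; hence (ii) amounts to the vanishing of $\Hom_\Lambda(M,S_i^w)$ for all $i$ with $w\e_i\in\Pi$. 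For (i) I would use that $\{S_0^w,S_1^w,\dots,S_n^w\}$ is a simple‑minded collection --- it consists of the simple objects of $\mathcal{A}_w$, since $G_w\colon\Mod\Lambda\to\mathcal{A}_w$ is an equivalence of abelian categories --- so that $M\in\mathcal{A}_w$ is equivalent to having $\Hom_{\mathcal{D}}(M,S_j^w[k])=0$ for all $k<0$ and $\Hom_{\mathcal{D}}(S_j^w[k],M)=0$ for all $k>0$, over all $j\in Q_0$. Feeding in the description of the $S_j^w$ from Proposition~\ref{E_i^w} and tracking cohomological degrees, every one of these conditions is automatic (a negative $\Ext$‑group) except $\Hom_\Lambda(E_j^w,M)=0$ for the $j\in\{1,\dots,n\}$ with $w\e_j\in-\Pi$; here one uses that $w\e_0$ is always a positive root, so $S_0^w$ is an honest module and contributes nothing. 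Thus (i) amounts to the vanishing of $\Hom_\Lambda(E_j^w,M)=\Hom_\Lambda(S_j^w[1],M)$ for all $j$ with $w\e_j\in-\Pi$, and combining (i) with (ii) yields exactly the two displayed families of conditions.

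\emph{Main obstacle.} The one genuinely delicate point is the analysis of (i): one must check that heart‑membership, once expressed through the simple‑minded collection $\{S_j^w\}$, collapses to precisely the stated $\Hom$‑vanishings and nothing more --- this is exactly where the explicit shape of the complexes $S_i^w$ recorded in Proposition~\ref{E_i^w} is needed, the rest being bookkeeping with shifts (in fact only the vanishing of negative $\Ext$‑groups enters, not the $2$‑Calabi--Yau property of $\Lambda$). Alternatively, (i) can be settled geometrically by invoking Theorem~\ref{i} together with the identification, also in Proposition~\ref{E_i^w}, of the $S_i^w$ with the structure sheaves of the exceptional curves on the minimal resolution $\mathcal{M}_w$.
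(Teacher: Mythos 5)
Your route is genuinely different from the paper's. The paper proves Theorem \ref{thm:ch-s} by induction on $\ell(w)$, peeling off one simple reflection $s_{i_{\ell}}$ at a time: the adjunction $\Hom_{\mathcal D}(M\Ltensor_{\Lambda}I_{i_{\ell}},-)\simeq\Hom_{\mathcal D}(M,\RHom_{\Lambda}(I_{i_{\ell}},-))$ transfers the vanishing conditions between $w$ and $s_{i_{\ell}}w$, and the technical Lemma \ref{lem:ch-s} is what converts the stated vanishings into the statement $M\in\mathcal Y(I_{i_{\ell}})$ needed to apply the simple reflection functor. You instead reduce to $w=1$ in one step through $I_w$ and test membership in the tilted heart against $\{S_j^w\}_{j\in Q_0}$. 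Your base case agrees with Lemma \ref{0-gen}, and your unwinding of condition (ii) is correct. Two smaller slips: the module/shifted-module description of $S_i^w$ is Lemma \ref{root-dim}, not Proposition \ref{E_i^w} (which concerns exceptional curves in $\mathcal M_w$, not the structure of the complexes); and ``$w\e_0$ is always a positive root'' is only true for the affine root $\e_0$ (in the finite quotient $\ove_0=-\tilde{\alpha}$ is negative) --- the fact you actually need, namely $S_0\in\mathcal T(I_w)$ so that $S_0^w$ is a module, does hold.

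The genuine gap is in your step (i). The claim that $\{S_0^w,\dots,S_n^w\}$ are ``the simple objects of $\mathcal A_w$'' is false: $\Mod\Lambda$ is not a length category and has simple objects other than $S_0,\dots,S_n$ (the non-nilpotent simples of Proposition \ref{cat_fd}), so the standard simple-minded-collection criterion for heart membership does not apply as stated, and this is precisely the step you dismiss as ``bookkeeping with shifts.'' What must actually be proved is: if $\Hom_{\Lambda}(S_j^w[1],M)=0$ for all $j$ with $w\e_j\in-\Pi$, then $\Tor^{\Lambda}_1(M,I_w)=0$. This is reparable along the following lines, none of which appear in your write-up: one has $\Hom_{\mathcal D}(S_j^w[1],M)\simeq\Hom_{\Lambda}(S_j,\Tor^{\Lambda}_1(M,I_w))$; the module $\Tor^{\Lambda}_1(M,I_w)$ lies in $\mathcal F(I_w)$, which is closed under submodules; it is nilpotent when $M$ is (the non-nilpotent $M$ of dimension vector $\mathbf d$ are simple by Corollary \ref{vec->simp} and the theorem is trivial for them); hence if it were nonzero its socle would contain some $S_j$ with $S_j\in\mathcal F(I_w)$, i.e.\ $w\e_j\in-\Pi$ by Lemma \ref{root-dim} (and $j\neq 0$ since $S_0\in\mathcal T(I_w)$), contradicting the hypothesis. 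This is exactly the content the paper packages into Lemma \ref{lem:ch-s} in its inductive formulation, so your approach does not avoid that lemma so much as relocate it. The closing suggestion to settle (i) ``geometrically'' via structure sheaves of exceptional curves is not substantiated by anything in the paper and should be dropped.
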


Next we give a characterization of exceptional curves on $\mathcal M_w$. This is a generalization of a result of Crawley-Boevey \cite{CB} known as another proof of the McKay correspondence of Ito-Nakamura type \cite{IN}.
For any $i \in \{ 1,\ldots,n \}$, let $E_i := \{ M \in \mathcal M_{1} \mid S_i \subset M \}$ be a closed set of $\mathcal M_{1}$ and $E_i^w$ is the image of $E_i$ under the isomorphism $\mathcal M_1 \to \mathcal M_w$ obtained by Theorem \ref{1} and \ref{i}.

\begin{thm}[Theorem \ref{hom-des-main}]\label{3}
If $i \in \{ 1,\ldots, n\}$, then the set $E_i^w$ is a closed subset of $\mathcal M_w$ isomorphic to $\mathbb P^1$. Moreover $E_i^w$ meets $E_j^w$ if and only if $i$ and $j$ are adjacent in $Q$, and in this case they meet at only one point. Moreover $M \in E_i^w$ if and only if 
\[
\begin{cases}
S_i^w \text{ is a submodule of } $M$ & \text{if $w\e_i \in \Pi$ \quad or} \\
S_i^w[1] \text{ is a factor module of }  M & \text{if $w\e_i \in -\Pi$}.
\end{cases}
\]
\end{thm}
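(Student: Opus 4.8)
The plan is to obtain everything by transporting the already-known case $w=1$ along the isomorphism of algebraic varieties $\phi_w\colon\mathcal M_1\xrightarrow{\sim}\mathcal M_w$ provided by Theorems \ref{1} and \ref{i}, where $\phi_w$ is induced by the reflection functor, i.e.\ by (the restriction of) the derived auto-equivalence $\RHom_\Lambda(I_w,-)$, under which $S_i\mapsto S_i^w$. For the geometric assertions, note that by definition $E_i^w=\phi_w(E_i)$; since $\phi_w$ is an isomorphism of varieties, $E_i^w$ is closed in $\mathcal M_w$, $\phi_w$ restricts to an isomorphism $E_i\xrightarrow{\sim}E_i^w$, and $E_i^w\cap E_j^w=\phi_w(E_i\cap E_j)$. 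Hence "$E_i^w\cong\mathbb P^1$" and "$E_i^w$ meets $E_j^w$, in exactly one point, iff $i,j$ are adjacent in $Q$" reduce to the corresponding statements about the $E_i\subset\mathcal M_1$. Those are precisely Crawley-Boevey's theorem \cite{CB}: under the identification of $\mathcal M_1$ with the minimal resolution of $\mathbb A^2/G$, the $E_i=\{M\in\mathcal M_1\mid S_i\subset M\}$, $i=1,\dots,n$, are the exceptional $\mathbb P^1$'s, with intersection graph the diagram obtained from $Q$ by deleting the vertex $0$ (the McKay correspondence of Ito--Nakamura type \cite{IN}). This is also the case $w=1$ of the module description, since then $w\e_i=\e_i\in\Pi$ and $S_i^1=S_i$.

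For the module description, I would first reduce it to a $\Hom$-condition. Fix a closed point $M$ of $\mathcal M_w$; since $\theta$ is generic and $\mathbf d$ is indivisible, $M$ is the class of a single $\theta$-stable $\Lambda$-module of dimension vector $\mathbf d$, so $M\in\mathcal S_w$ and $\End_\Lambda M=K$. Put $N:=\phi_w^{-1}(M)\in\mathcal S_1$. As $\phi_w$ is bijective on closed points and $E_i^w=\phi_w(E_i)$,
\[
M\in E_i^w\iff N\in E_i\iff S_i\subset N\iff\Hom_\Lambda(S_i,N)\ne0,
\]
the last step because $S_i$ is simple. Since $\Hom_\Lambda(S_i,N)=\Hom_{\bdcat{\fd}{\Lambda}}(S_i,N)$ and the auto-equivalence $\RHom_\Lambda(I_w,-)$ carries $S_i$ to $S_i^w$ and $N$ to $M$,
\[
M\in E_i^w\iff\Hom_{\bdcat{\fd}{\Lambda}}(S_i^w,M)\ne0 .
\]

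Next I would interpret this last condition using that $S_i^w$ is concentrated in degree $0$ when $w\e_i\in\Pi$ and in degree $1$ (so $S_i^w[1]$ is a module, of dimension vector $-w\e_i\in\Pi$) when $w\e_i\in-\Pi$; that $\Lambda$ is $2$-Calabi--Yau on finite-dimensional modules; and that the ($2$-Calabi--Yau, hence symmetric) Euler form $\langle-,-\rangle_\Lambda$ satisfies $\langle\mathbf d,\beta\rangle_\Lambda=0$ for every root $\beta$ of the finite root system, because $\mathbf d=\delta$ lies in the radical of the corresponding form. If $w\e_i\in\Pi$ the condition is $\Hom_\Lambda(S_i^w,M)\ne0$; $S_i^w$ is a brick ($\End_\Lambda S_i^w\cong\End_\Lambda S_i=K$) of dimension vector a positive real root, and — using the $\theta$-stability of $M$ — every nonzero map $S_i^w\to M$ is injective, giving "$S_i^w$ is a submodule of $M$". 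If $w\e_i\in-\Pi$, set $Y:=S_i^w[1]$; then $\Hom_{\bdcat{\fd}{\Lambda}}(S_i^w,M)=\Ext^1_\Lambda(Y,M)$, which by $2$-Calabi--Yau duality is nonzero iff $\Ext^1_\Lambda(M,Y)\ne0$; since $\Hom_\Lambda(Y,M)=0$ for $M\in\mathcal S_w$ by Theorem \ref{2}, we get $\Ext^2_\Lambda(M,Y)\cong D\Hom_\Lambda(Y,M)=0$ (with $D$ the $K$-dual), so $\langle\mathbf d,-w\e_i\rangle_\Lambda=0$ forces $\dim\Hom_\Lambda(M,Y)=\dim\Ext^1_\Lambda(M,Y)$; hence the condition becomes $\Hom_\Lambda(M,Y)\ne0$, which — again using $\theta$-stability of $M$, so that every nonzero $M\to Y$ is surjective — reads "$S_i^w[1]$ is a factor module of $M$". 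This yields the asserted characterization.

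The main obstacle is the claim just used, that every nonzero map $S_i^w\to M$ (resp.\ $M\to S_i^w[1]$) is a monomorphism (resp.\ epimorphism). Over a general algebra this can fail; here it should follow from the $\theta$-stability of $M$ together with the fact that $S_i^w$ (resp.\ $S_i^w[1]$) is a \emph{stable} $\Lambda$-module for a parameter $\eta$ lying on the wall of the chamber $C(w)$ orthogonal to $w\e_i$ — equivalently, a simple object of the abelian category of $\eta$-semistable modules of slope $0$, which also contains $M$, so that a nonzero map with simple source (resp.\ target) has image equal to its target (resp.\ source). Establishing this stability is where the actual work lies; I would deduce it by transporting, along $\RHom_\Lambda(I_w,-)$, the trivial statement that $S_i$ is simple (hence $\eta'$-stable for $\eta'=w^{-1}\eta$) together with the compatibility of the reflection functors with the stability conditions coming from the proof of Theorem \ref{1}, with extra care because $\eta$ sits on a wall rather than in an open chamber. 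Granting it, the theorem is Crawley-Boevey's $w=1$ picture carried through $\phi_w$ and refined by the module-theoretic dictionary above.
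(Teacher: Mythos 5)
Your reduction of the geometric assertions to Crawley-Boevey's theorem via the isomorphism $\mathcal M_1\simeq\mathcal M_w$ is exactly what the paper does (Proposition \ref{E_i^w}), and your reformulation ``$M\in E_i^w$ iff $\Hom_{\mathcal D}(S_i^w,M)\neq 0$'', together with the $2$-Calabi--Yau and Euler-form bookkeeping in the case $w\e_i\in-\Pi$, is correct. The problem is the step you yourself flag as ``where the actual work lies'': upgrading $\Hom_{\Lambda}(S_i^w,M)\neq0$ to ``$S_i^w$ is a submodule of $M$'' (resp.\ $\Hom_{\Lambda}(M,S_i^w[1])\neq0$ to ``$S_i^w[1]$ is a factor module of $M$''), which is genuinely needed for one direction of the stated equivalence. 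You propose to obtain it from $\eta$-stability of $S_i^w$ for a parameter $\eta$ on the wall of $C(w)$ orthogonal to $w\e_i$, to be established by transporting the simplicity of $S_i$ through the reflection functors; but this is left unproven, and the proposed transport lands exactly in the degenerate situation the paper excludes: Theorem \ref{simple-reflection} requires $\theta_i\neq0$ at every step, and the example following it shows the equivalence can fail on a wall. So as written there is a gap.

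The paper closes this gap by a more elementary route that avoids wall-stability entirely. For $N\in E_i$ it produces non-split exact sequences $0\to S_i\to L_i^+\to N\to 0$ and $0\to S_i\to N\to L_i^-\to 0$ with $L_i^{\pm}$ $0$-generated of dimension vectors $\mathbf d\pm\e_i$ (Lemma \ref{L}), shows every $0$-generated module lies in $\mathcal T(I_w)$ (Lemma \ref{SL}), and applies $\Hom_{\Lambda}(I_w,-)$: the vanishing of the relevant $\Hom$ and $\Ext^1$ terms turns these into $0\to S_i^w\to M\to(L_i^-)^w\to0$ when $w\e_i\in\Pi$ and $0\to(L_i^+)^w\to M\to S_i^w[1]\to0$ when $w\e_i\in-\Pi$ (Lemma \ref{L^w}), which is precisely the sub-/factor-module assertion. (In the case $w\e_i\in\Pi$ one can even shortcut this: applying the left-exact functor $\Hom_{\Lambda}(I_w,-)$ to the inclusion $S_i\hookrightarrow N$ already yields an injection $S_i^w\hookrightarrow M$.) The converse directions go as in your reduction, except that for $w\e_i\in-\Pi$ the paper tensors $0\to X\to M\to S_i^w[1]\to0$ back with $I_w$ to produce a non-split extension $0\to S_i\to X\otimes_{\Lambda}I_w\to N\to 0$ and invokes Lemma \ref{CB-form}. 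You should either adopt this exact-sequence argument or actually prove the wall-stability of $S_i^w$; until one of these is done, the characterization is not established.
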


We explain the contents of each sections.
In Section \ref{tilting}, we recall preprojective algebras and their tilting theory  shown in \cite{BIRS}. Since we are dealing with non-completed algebras, we give a proof for readers. Moreover we study the change of dimension vectors and the structure of finite dimensional modules over the preprojective algebra associated to an extended Dynkin quiver.
In Section \ref{moduli}, we prepare necessary notations about moduli spaces of modules.
In Section \ref{reflection}, we prove Theorem \ref{1}. The arguments are separated into two parts. First we consider the simple reflection case. Second we prove the simple reflection functor satisfies the Coxeter relation in the general case. 
In Section \ref{functor}, we prove Theorem \ref{i}.
In Section \ref{Klein}, we study the reflection functor in this case and prove Theorem \ref{2} and \ref{3}.
Finally in Section \ref{example}, we give an example in the Kleinian singularity case with $G$ is the abelian group of order three.

\begin{ack}
The authors would like to thank Professor O.~Iyama for a lot of kindly and useful advices and giving us lectures on preprojective algebras. The first author is grateful to Professor Y.~Ito for giving him a chance to study McKay correspondence and related topics and her warm encouragement.
They also thank K.~Nagao for useful advices about the proof of Theorem \ref{equiv-iso}. They also thank A.~Craw, Y.~Kimura and M.~Wemyss for useful comments. They thank T.~Hayashi for pointing out the lack of an assumption in Lemma \ref{fiber}. They also thank A.~Nolla de Celis for his kind advice in correcting our English.
\end{ack}

\section{Tilting theory on preprojective algebras}
\label{tilting}

In this section, we study representation theory of preprojective algebras which plays a key role in this paper. 
In Section \ref{pp} we recall the definition of preprojective algebras and its basic facts. 
In Section \ref{pptilting},  we construct a set of tilting modules over preprojective algebras whose endomorphism ring is isomorphic to the original algebra,
and in \ref{Coxeter} show that it is described by the Coxeter group.
Almost results shown in those sections were  proved in \cite{BIRS}. 
In Section \ref{dimension-tilting} we study the change of dimension vectors of finite dimensional modules under the derived equivalence which are induced by tilting modules constructed in Section \ref{pptilting}.
In Section \ref{torsion-tilting} we observe torsion pairs over finite dimensional modules.
In Section \ref{fdDynkin} we study the structure of the category of finite dimensional modules over preprojective algebras of extended Dykin quivers.

For an algebra $\Lambda$, we always deal with right $\Lambda$-modules.
We denote by $\mathrm{Mod}\Lambda$ the category of all right $\Lambda$-modules and
$\dcat{}{\mathrm{Mod}\Lambda}$ the derived category of $\mathrm{Mod}\Lambda$. 
For a $\Lambda$-module $M$, we denote by $\mathrm{add}M$ the full subcategory of $\mathrm{Mod}\Lambda$ whose objects consist of 
direct summands of direct sums of finite copies of $M$.
We put $D:=\Hom_K(-,K)$ the $K$-dual.

\subsection{Preprojective algebras}
\label{pp}

In this subsection, we recall the definition of preprojective algebras and properties of preprojective algebras of non-Dynkin quivers.

A \emph{quiver} is a quadruple $Q = (Q_0,Q_1,s,t)$ which consists of a vertex set $Q_0$ and an arrow set $Q_1$, and maps $s,t : Q_1 \to Q_0$ 
which associate to each arrow $a \in Q_1$ its source $sa := s(a) \in Q_0$ and its target $ta := t(a) \in Q_0$, respectively. 
We call $a \in Q_1$ a \emph{loop} if $sa=ta$.
A quiver is called non-Dynkin if its underlying graph is not a Dykin graph.

\begin{df}\label{def-pp}
Let $Q$ be a finite connected quiver. We define the double quiver $\overline{Q}$ of $Q$ by
\[
\overline{Q}_0:=Q_0
\]
and
\[
\overline{Q}_1:=Q_1 \bigsqcup \left\{  j \xrightarrow{\alpha^*} i \ | \ i \xrightarrow{\alpha} j \in Q_1 \right\}.
\]
Then we have a bijection $^*:  \overline{Q}_1\longrightarrow \overline{Q}_1$ which is defined by
\[
\alpha^* := \begin{cases}
\alpha^* & (\alpha \in Q_1), \\
\beta & (\alpha=\beta^*  \mbox{ for some } \beta \in Q_1).
\end{cases}
\]

We define a relation $\rho_i$ for any $i \in \overline{Q}_0 $ by
\[
\rho_i := \sum_{i \xrightarrow{\alpha} j \in \overline{Q}_1} \epsilon_{\alpha}  \alpha \alpha^*
\]
where 
\[
\epsilon_{\alpha} := \begin{cases}
1 & (\alpha \in Q_1), \\
-1 & (\alpha^* \in Q_1).
\end{cases}
\]
A relation $\sum_{i \in \overline{Q}_0} \rho_i$ is called a \emph{preprojective relation}.
We call an algebra
\[
K\overline{Q}/\langle \rho_i \ | \ i \in \overline{Q}_0 \rangle
\]
\emph{the preprojective algebra of $Q$}.
\end{df}

\begin{rmk} 
We give two remarks.
\begin{enumerate}
\def\labelenumi{(\theenumi)} 
\item Let $Q$ and $Q'$ be quivers which have the same underlying graph. 
Then the preprojective algebra of $Q$ and that of $Q'$ are isomorphic to each other as $K$-algebras.
\item The preprojective algebra of $Q$ is not finite dimensional if and only if $Q$ is a non-Dynkin quiver.
\end{enumerate}
\end{rmk}

Throughout this section, let $\Lambda$ be the preprojective algebra of a finite connected non-Dynkin quiver $Q$ which has no loops with the vertex set $Q_0 = \{ 0,1,\ldots,n \}$.
We denote by $I$ the two-sided ideal of $\Lambda$ which is generated by all arrows in $\overline{Q}$, 
$e_i$ the primitive idempotent of $\Lambda$ which corresponds to a vertex $i \in \overline{Q}_0$, 
and $S_i$ the simple $\Lambda$-module which corresponds to a vertex $i \in \overline{Q}_0$. 

In this setting, simple $\Lambda$-modules $S_0,S_1,\ldots, S_n$ have projective resolutions, which plays a crucial role in the representation theory of $\Lambda$.

\begin{prop}[{\cite[Section 4.1]{BBK}}]\label{pp-Koszul}
For any $i \in \overline{Q}_0$, the following hold.
\begin{enumerate}
\def\labelenumi{(\theenumi)} 
\item A complex
\begin{eqnarray}
0 \longrightarrow e_i\Lambda \xrightarrow{(\epsilon_{\alpha}\alpha^*)_{i \xrightarrow{\alpha} j \in \overline{Q}_1}} \bigoplus_{i \xrightarrow{\alpha} j \in \overline{Q}_1}  e_j\Lambda \xrightarrow{(\alpha)_{i \xrightarrow{\alpha} j \in \overline{Q}_1}} e_i\Lambda \longrightarrow S_i \longrightarrow 0 \label{exact-ap1}
\end{eqnarray}
is a projective resolution of the right $\Lambda$-module $S_i$.
\item A complex
\begin{eqnarray}
0 \longrightarrow \Lambda e_i \xrightarrow{(\epsilon_{\alpha^*}\alpha^*)_{j \xrightarrow{\alpha} i \in \overline{Q}_1}} \bigoplus_{j \xrightarrow{\alpha} i \in \overline{Q}_1}  \Lambda e_j  \xrightarrow{(\alpha)_{j \xrightarrow{\alpha} i \in \overline{Q}_1}} \Lambda e_i \longrightarrow S_i \longrightarrow 0 \label{exact-ap2}
\end{eqnarray}
is a projective resolution of the left $\Lambda$-module $S_i$.
\end{enumerate}
\end{prop}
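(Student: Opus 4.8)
The plan is to show that \eqref{exact-ap1} is a projective resolution of $S_i$ by checking exactness at each term; part~(2) will then follow from part~(1) via the canonical $K$-algebra anti-isomorphism $\Lambda\to\Lambda^{\op}$ that fixes every $e_i$ and sends each arrow to a scalar multiple of its star (this interchanges right and left modules and carries \eqref{exact-ap1} to \eqref{exact-ap2} up to the sign conventions; alternatively one repeats the argument symmetrically), so I only discuss~(1). Write $\mathcal R=\langle\rho_k\mid k\in\overline Q_0\rangle\subseteq K\overline Q$, so that $\Lambda=K\overline Q/\mathcal R$, and recall two elementary facts: $\rho_k=e_k\rho_k e_k$, and every path of $\overline Q$ starting at $i$ of positive length factors uniquely as $\alpha p$ with $\alpha$ an arrow out of $i$ and $p$ a path. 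That \eqref{exact-ap1} is a complex is immediate, as the composite of its two middle maps is left multiplication by $\sum_\alpha\epsilon_\alpha\alpha\alpha^*=\rho_i=0$. Exactness at $S_i$ is clear, and exactness at the right-hand copy of $e_i\Lambda$ holds because $\Ker(e_i\Lambda\to S_i)=e_iI$ equals $\sum_{\alpha:i\to j}\alpha\,(e_j\Lambda)$ by the unique factorization above, which is precisely the image of $(\alpha)_\alpha$.

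The first genuinely nontrivial step is exactness at the middle term $\bigoplus_\alpha e_j\Lambda$. Given $(x_\alpha)_\alpha$ with $\sum_\alpha\alpha x_\alpha=0$ in $\Lambda$, I would lift the $x_\alpha$ to $\tilde x_\alpha\in e_j K\overline Q$; then $\sum_\alpha\alpha\tilde x_\alpha$ lies in $e_i\mathcal R$. Using $\rho_k=e_k\rho_k e_k$ and unique first-arrow factorization one checks that $e_i\mathcal R=\rho_i\,(K\overline Q)+\sum_{\alpha:i\to l}\alpha\,(e_l\mathcal R)$, so $\sum_\alpha\alpha\tilde x_\alpha=\rho_i g+\sum_\alpha\alpha h_\alpha$ with $g=e_i g$ and $h_\alpha\in e_l\mathcal R$. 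Rewriting $\rho_i g=\sum_\alpha\alpha(\epsilon_\alpha\alpha^* g)$ and comparing the (unique) decompositions of $\sum_\alpha\alpha\tilde x_\alpha=\sum_\alpha\alpha(\epsilon_\alpha\alpha^* g+h_\alpha)$ by leading arrow out of $i$ gives $\tilde x_\alpha=\epsilon_\alpha\alpha^* g+h_\alpha$ in $K\overline Q$; reducing modulo $\mathcal R$ and using $h_\alpha\in\mathcal R$ yields $x_\alpha=\epsilon_\alpha\alpha^*\bar g$ with $\bar g\in e_i\Lambda$, so $(x_\alpha)_\alpha$ lies in the image of $(\epsilon_\alpha\alpha^*)_\alpha$, as needed.

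The main obstacle is injectivity of the left-hand map $e_i\Lambda\to\bigoplus_\alpha e_j\Lambda$. By the previous steps, \eqref{exact-ap1} is exact except possibly at the leftmost $e_i\Lambda$, where the homology equals the kernel $K_i$ of that map; since then $\Omega^2_\Lambda S_i\cong e_i\Lambda/K_i$ and a proper quotient of the indecomposable projective $e_i\Lambda$ is never projective, $K_i=0$ is equivalent to $\mathrm{pd}_\Lambda S_i\le 2$, i.e.\ (over all $i$) to $\mathrm{gl.dim}\,\Lambda\le 2$. This is precisely where the hypothesis that $Q$ is non-Dynkin is used; I would invoke it, e.g.\ from the Hilbert-series identity $(1+t^2)H=\mathbf 1+tCH$, where $C$ is the adjacency matrix of the underlying graph of $Q$ and $H=\bigl(\sum_{m\ge 0}\dim_K(e_i\Lambda)_m\,t^m\bigr)_i$, or from the $2$-Calabi--Yau property of $\Lambda$ (cf.\ \cite{BBK}). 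Granting $\mathrm{gl.dim}\,\Lambda\le 2$, we get $K_i=0$ and \eqref{exact-ap1} is a projective resolution.

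Finally, an alternative route that handles \eqref{exact-ap1} and \eqref{exact-ap2} simultaneously --- and disposes of the injectivity without a separate argument --- is to start from the projective $\Lambda$-bimodule resolution $0\to\bigoplus_k\Lambda e_k\otimes_K e_k\Lambda\to\bigoplus_{\alpha\in\overline Q_1}\Lambda e_{t\alpha}\otimes_K e_{s\alpha}\Lambda\to\bigoplus_k\Lambda e_k\otimes_K e_k\Lambda\to\Lambda\to 0$; since its terms are free on either side, applying $S_i\otimes_\Lambda-$ (respectively $-\otimes_\Lambda S_i$) keeps it exact and produces a projective resolution of the right (respectively left) module $S_i$, which one identifies with \eqref{exact-ap1} (respectively \eqref{exact-ap2}) by unwinding the differentials. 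In this approach the entire difficulty is concentrated in verifying exactness of the bimodule complex, which is again equivalent to the global-dimension statement above.
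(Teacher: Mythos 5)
The paper itself offers no proof of this proposition: it is quoted verbatim from \cite[Section 4.1]{BBK} and used as a black box, so there is no internal argument to compare yours against. Measured on its own terms, the bulk of your proposal is correct and is genuinely more than the paper provides. The verification that \eqref{exact-ap1} is a complex (the composite is left multiplication by $\rho_i$), the identification of $\Ker(e_i\Lambda \to S_i)$ with $\sum_{\alpha}\alpha\,e_{t\alpha}\Lambda$ via unique first-arrow factorization, and the exactness at the middle term by lifting to $K\overline{Q}$ and using the decomposition $e_i\mathcal{R}=\rho_i K\overline{Q}+\sum_{\alpha}\alpha(e_{t\alpha}\mathcal{R})$ together with uniqueness of the leading arrow are all sound; the reduction of (2) to (1) via the anti-automorphism $\alpha\mapsto\epsilon_\alpha\alpha^*$ (which sends $\rho_i$ to $-\rho_i$) also works.

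The gap is exactly where you locate it, but your two proposed ways of closing it do not actually close it. The Hilbert-series identity $(1+t^2)H=\mathbf{1}+tCH$ is \emph{equivalent} to the exactness of \eqref{exact-ap1} in every degree (given the exactness you have already established at the other three spots, it is obtained by taking graded Euler characteristics), so quoting it as an input is circular unless you independently compute $H=(1-Ct+t^2)^{-1}$, e.g.\ from the description of $\Lambda$ as $\bigoplus_{m\ge 0}\mathrm{Hom}_{KQ}(KQ,\tau^{-m}KQ)$ for non-Dynkin $Q$ --- and that computation is the real content. Likewise, the $2$-Calabi--Yau property (Lemma \ref{lem:1-1} in this paper, itself only cited from \cite{BK,Ke}) is standardly \emph{deduced} from the bimodule resolution whose exactness is the very point at issue, so invoking it here inverts the logical order. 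Your final paragraph concedes as much for the bimodule route. As written, then, the proposal proves three quarters of the statement from scratch and reduces the remaining quarter --- injectivity of $e_i\Lambda\to\bigoplus_{\alpha}e_{t\alpha}\Lambda$, the only place the non-Dynkin hypothesis enters --- to the same external reference the paper cites; to make it self-contained you would need to supply the Hilbert-series computation (or an equivalent dimension count) as an independent lemma. A small additional caveat: the claim that a proper quotient of $e_i\Lambda$ is never projective needs a word of justification in this infinite-dimensional setting (a graded argument using that $(e_i\Lambda)_0$ is one-dimensional suffices), though this equivalence is only used to frame the difficulty, not to resolve it.
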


The following property is called $2$-Calabi-Yau property. 

\begin{lem}\label{lem:1-1}
There exists a functorial isomorphism
\[
\Hom_{\dcat{}{\mathrm{Mod}\Lambda}}(M,N) \simeq D \Hom_{\dcat{}{\mathrm{Mod}\Lambda}}(N,M[2]).
\]
for any $M \in \dcat{}{\mathrm{Mod}\Lambda}$ whose total homology is finite dimensional and any $N \in \dcat{}{\mathrm{Mod}\Lambda}$.
\end{lem}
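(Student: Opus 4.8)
The plan is to reduce the statement to the case where $M$ is a finite-dimensional module (in degree $0$) and $N$ is a projective module, where both sides can be computed explicitly from the projective resolutions in Proposition \ref{pp-Koszul}. First I would observe that since $\Lambda$ has global dimension $2$ (by Proposition \ref{pp-Koszul}, every simple has a length-$2$ projective resolution, hence so does every finite-dimensional module by a standard dévissage), any $M\in\dcat{}{\mathrm{Mod}\Lambda}$ with finite-dimensional total homology is isomorphic in the derived category to a bounded complex of finitely generated projectives, and by a further truncation/induction argument on the number of nonzero homologies one reduces to $M$ a single finite-dimensional module concentrated in degree $0$. Both functors $\Hom_{\dcat{}{}}(-,N)$ and $D\Hom_{\dcat{}{}}(N,-[2])$ are cohomological in $M$, so it suffices to check the isomorphism on a set of generators of the relevant subcategory, e.g.\ on the simple modules $S_i$, and to check that a natural transformation between the two is an isomorphism there.

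Next I would construct the natural transformation. The pairing comes from the Yoneda/composition product $\Ext^k_\Lambda(M,N)\times\Ext^{2-k}_\Lambda(N,M)\to\Ext^2_\Lambda(M,M)$ followed by a "trace" map $\Ext^2_\Lambda(M,M)\to K$; concretely, for $\Lambda$ the preprojective algebra one has a canonical element of $\Ext^2_\Lambda(S_i,S_i)$ dual to the relation $\rho_i$, and the two Koszul-type resolutions \eqref{exact-ap1} and \eqref{exact-ap2} exhibit the symmetry $a\leftrightarrow a^*$ that makes this pairing perfect. Equivalently, and perhaps cleaner: using the projective resolution \eqref{exact-ap1} of $S_i$ as a right module and applying $\Hom_\Lambda(-,\Lambda)$, one gets a complex computing $\Ext^\bullet_\Lambda(S_i,\Lambda)$; comparing with \eqref{exact-ap2} one identifies $\R\Hom_\Lambda(S_i,\Lambda)\simeq S_i[-2]$ as a left module (this is the bimodule $2$-Calabi-Yau statement $\R\Hom_\Lambda(\Lambda,\Lambda)\simeq\Lambda[-2]$ read off on simples, which in turn follows because the minimal bimodule resolution of $\Lambda$ is self-dual up to a shift by $2$, again because the double quiver relations are symmetric under $*$). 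From $\R\Hom_\Lambda(M,\Lambda)\simeq (\text{its dual})[-2]$ for $M$ finite-dimensional, the desired functorial isomorphism follows by writing $N$ as a complex of projectives, using $\Hom_{\dcat{}{}}(M,N)\simeq\Hom_{\dcat{}{}}(M,\Lambda)\otimes_\Lambda N$-type manipulations, and invoking finite-dimensionality of the homology of $M$ to commute $D$ past the relevant limits.

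The main obstacle I anticipate is the careful bookkeeping needed to make the isomorphism \emph{functorial} in both variables simultaneously and to handle the passage from modules to unbounded-in-principle complexes $N\in\dcat{}{\mathrm{Mod}\Lambda}$: one must ensure the relevant $\Hom$ and $\otimes$ commute with the (possibly infinite) direct sums and products appearing when $N$ is an arbitrary module, and this is exactly where the hypothesis that $M$ has finite-dimensional total homology is used — it lets one replace $M$ by a perfect complex of \emph{finitely generated} projectives, so that $\Hom_\Lambda(M,-)$ commutes with coproducts and $D$ turns the resulting colimits into the limits on the other side. A secondary technical point is verifying nondegeneracy of the trace pairing on $\Ext^2_\Lambda(S_i,S_i)$, i.e.\ that the preprojective relation really does furnish a one-dimensional $\Ext^2_\Lambda(S_i,S_i)$ paired perfectly with $\Hom_\Lambda(S_i,S_i)=K$; this is a direct computation with the resolution \eqref{exact-ap1} and \eqref{exact-ap2} and the sign conventions $\epsilon_\alpha$, but it is the crux that everything else rests on.
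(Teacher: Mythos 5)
You should first know that the paper gives no argument for this lemma at all: its proof is the single line ``See \cite[Theorem 9.2]{BK} and \cite[Lemma 4.1]{Ke}'', so your proposal is being compared against a citation rather than a written proof. Your overall strategy --- establish $\RHom_{\Lambda}(M,\Lambda)\simeq DM[-2]$ from the self-duality of the Koszul-type resolutions, then deduce the bifunctorial statement by writing $\RHom_{\Lambda}(M,N)\simeq\RHom_{\Lambda}(M,\Lambda)\Ltensor_{\Lambda}N$ and dualizing, using perfectness of $M$ to commute the relevant limits --- is the standard proof of the $2$-Calabi-Yau property and is the right one.

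There is, however, one step that fails as stated: the reduction to the vertex simples $S_i$. The subcategory of complexes with finite-dimensional total homology is \emph{not} generated by $S_0,\dots,S_n$. Because $Q$ is non-Dynkin, $\Lambda$ is infinite-dimensional and possesses finite-dimensional simple modules not isomorphic to any $S_i$ (the non-nilpotent simples; the paper makes this explicit in Section \ref{fdDynkin}, where $\mathcal S$ denotes exactly this set). The thick subcategory generated by the $S_i$ consists only of complexes with nilpotent homology, so checking the isomorphism on the $S_i$ proves the lemma only for that smaller class; the same issue undercuts your d\'evissage claim that every finite-dimensional module has projective dimension at most $2$, since Proposition \ref{pp-Koszul} resolves only the vertex simples. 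The fix is the route you mention only in passing: use the length-two projective \emph{bimodule} resolution of $\Lambda$, which is self-dual up to a shift by $2$ via $a\leftrightarrow a^*$ and the signs $\epsilon_a$. Tensoring it over $\Lambda$ with an arbitrary finite-dimensional $M$ produces a length-two resolution of $M$ by finitely generated projectives --- which also settles finite generation of the syzygies in the non-Noetherian wild case, where this is not automatic --- and its self-duality yields $\RHom_{\Lambda}(M,\Lambda)\simeq DM[-2]$ for all finite-dimensional $M$ at once, not just the nilpotent ones. With that substitution made central rather than parenthetical, the remainder of your outline (compatibility of $D$ with $\Ltensor_{\Lambda}$ and $\RHom_{\Lambda}$ as in Lemma \ref{isom1}, and the nondegeneracy of the trace pairing on $\Ext^2_{\Lambda}(S_i,S_i)$) goes through.
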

\begin{proof}
See \cite[Theorem 9.2]{BK} and \cite[Lemma 4.1]{Ke}.
\end{proof}

Now we show a useful lemma.
The dimension vector $\udim M$ of a finite dimensional $\Lambda$-module $M$ is defined by
\[
\udim M := ^t(\dim (Me_0),\dim (Me_1),\ldots,\dim (Me_n)) \in \mathbb Z^{\overline{Q}_0}.
\]
Let $(-,-)$ be a symmetric bilinear form on $\mathbb Z^{\overline{Q}_0}$ defined by
\[
(\alpha,\beta) = \sum_{i \in \overline{Q}_0}2\alpha_i \beta_i - \sum_{a\in \overline{Q}_1} \alpha_{sa} \beta_{ta}.
\]
We define $(M,N):=(\udim M,\udim N)$ for any finite dimensional $\Lambda$-modules $M,N$.

\begin{lem}[{\cite[Lemma 1]{CB}}]\label{CB-form}
Let $M$ and $N$ be finite dimensional $\Lambda$-modules. Then the following holds.
\begin{eqnarray*}
(M,N) &=& \dim \Hom_{\Lambda}(M,N) - \dim \Ext^1_{\Lambda}(M,N) + \dim \Ext^2_{\Lambda}(M,N) \\
&=& \dim \Hom_{\Lambda}(M,N) - \dim \Ext^1_{\Lambda}(M,N) + \dim \Hom_{\Lambda}(N,M).
\end{eqnarray*}
\end{lem}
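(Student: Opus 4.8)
The statement to prove is Lemma \ref{CB-form}: for finite dimensional $\Lambda$-modules $M, N$,
\[
(M,N) = \dim\Hom_\Lambda(M,N) - \dim\Ext^1_\Lambda(M,N) + \dim\Ext^2_\Lambda(M,N) = \dim\Hom_\Lambda(M,N) - \dim\Ext^1_\Lambda(M,N) + \dim\Hom_\Lambda(N,M).
\]

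The plan is as follows. First I would establish the second equality as an immediate consequence of the $2$-Calabi-Yau property (Lemma \ref{lem:1-1}): since $M$ is finite dimensional, we have $\Ext^2_\Lambda(M,N) = \Hom_{\dcat{}{\mathrm{Mod}\Lambda}}(M,N[2]) \simeq D\Hom_{\dcat{}{\mathrm{Mod}\Lambda}}(N,M)= D\Hom_\Lambda(N,M)$, so the two expressions agree dimension-wise. This reduces everything to proving the first equality, i.e.\ that the Euler form $\sum_{k\ge 0}(-1)^k\dim\Ext^k_\Lambda(M,N)$ equals $(M,N)$ and that higher $\Ext$ groups vanish beyond degree $2$.

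Next I would use the projective resolution of simple modules from Proposition \ref{pp-Koszul}(1): every simple $S_i$ has a length-two projective resolution $0 \to e_i\Lambda \to \bigoplus_{i\to j} e_j\Lambda \to e_i\Lambda \to S_i \to 0$. Since any finite dimensional $\Lambda$-module $M$ has a finite filtration with simple subquotients, it follows that $M$ has projective dimension at most $2$, hence $\Ext^k_\Lambda(M,N) = 0$ for $k \ge 3$, and the Euler characteristic $\langle M, N\rangle := \sum_{k=0}^{2}(-1)^k\dim\Ext^k_\Lambda(M,N)$ is well-defined and additive on short exact sequences in both variables. The bilinear form $(-,-)$ is visibly additive in each variable on dimension vectors as well, so by dévissage it suffices to check the identity $(S_i, S_j) = \langle S_i, S_j\rangle$ for all pairs of simples $i,j \in \overline{Q}_0$.

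Finally I would verify the base case $(S_i,S_j)=\langle S_i,S_j\rangle$ by direct computation: apply $\Hom_\Lambda(-,S_j)$ to the projective resolution \eqref{exact-ap1} of $S_i$ and compute homology. One gets $\dim\Hom_\Lambda(S_i,S_j) = \delta_{ij}$, $\dim\Ext^1_\Lambda(S_i,S_j) = \#\{a \in \overline{Q}_1 : s(a)=i, t(a)=j\}$ (the number of arrows from $i$ to $j$ in the double quiver), and $\dim\Ext^2_\Lambda(S_i,S_j)=\delta_{ij}$. Thus $\langle S_i,S_j\rangle = 2\delta_{ij} - \#\{\text{arrows } i\to j \text{ in }\overline{Q}\}$, which is exactly $(S_i,S_j)$ by the definition of the symmetric bilinear form (note $\overline{Q}$ has equally many arrows $i\to j$ as $j\to i$, so the form is indeed symmetric and matches). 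I do not expect a serious obstacle here; the only point requiring mild care is confirming that $\Hom$ and $\Ext$ between finite dimensional modules over the infinite dimensional algebra $\Lambda$ are finite dimensional, which again follows from the finite projective resolution of simples together with $\dim e_i\Lambda e_j < \infty$ being false in general --- so instead one argues via the filtration directly, reducing all $\Hom$/$\Ext$ computations to the case of simples where finiteness is manifest from \eqref{exact-ap1}.
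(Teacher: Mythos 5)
Your second equality (via the $2$-Calabi--Yau property) and your base-case computation for the vertex simples are both fine, but the d\'evissage step has a genuine gap. You reduce to the simples $S_i$, $i \in \overline{Q}_0$, by saying that any finite dimensional $\Lambda$-module has a finite filtration with simple subquotients. It does, but since $\Lambda$ is the preprojective algebra of a \emph{non-Dynkin} quiver it is infinite dimensional, and it has finite dimensional simple modules that are \emph{not} isomorphic to any $S_i$ (these are exactly the non-nilpotent simples; the paper introduces the set $\mathcal{S}$ of them in Section \ref{fdDynkin}). Only nilpotent modules are filtered by the vertex simples, so your argument proves the formula only on $\nilp\Lambda$. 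This is not a harmless restriction: the paper applies Lemma \ref{CB-form} precisely to a finite dimensional simple module that is \emph{not} nilpotent in the proof of Lemma \ref{lem2_cat_fd}. The same defect infects your claim that $\mathrm{pd}\,M \le 2$ for all finite dimensional $M$ (that part could be repaired by quoting Lemma \ref{lem:1-1}, which gives $\Ext^k_{\Lambda}(M,N) \simeq D\Ext^{2-k}_{\Lambda}(N,M) = 0$ for $k>2$), but the computation of the Euler characteristic for non-nilpotent composition factors is still missing.

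The paper itself gives no proof --- it cites \cite[Lemma 1]{CB} --- and the cited argument avoids this issue entirely: one uses the projective bimodule resolution
\[
0 \longrightarrow \bigoplus_{i} \Lambda e_i \otimes e_i\Lambda \longrightarrow \bigoplus_{a \in \overline{Q}_1} \Lambda e_{ta} \otimes e_{sa}\Lambda \longrightarrow \bigoplus_{i} \Lambda e_i \otimes e_i\Lambda \longrightarrow \Lambda \longrightarrow 0
\]
(exact on the left because $Q$ is non-Dynkin), whose tensor product with any finite dimensional $M$ yields a length-two projective resolution of $M$ with terms $\bigoplus_i Me_i \otimes e_i\Lambda$ and $\bigoplus_a Me_{ta}\otimes e_{sa}\Lambda$. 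Applying $\Hom_{\Lambda}(-,N)$ and taking the alternating sum of dimensions gives $(M,N)$ directly for \emph{every} finite dimensional $M$, with no reduction to simples. If you want to keep a d\'evissage-style argument you must either establish this bimodule resolution or otherwise handle the non-nilpotent simple modules; as written, the proof does not cover the full statement.
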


The following holds for general $K$-algebra $\Lambda$.

\begin{lem}[{\cite[Chapter VI Proposition 5.1]{CE}}]\label{isom1}
We have a functorial isomorphism
\[
\Ext^i_{\Lambda}(M,DN) \simeq D \Tor^{\Lambda}_i(M,N)
\]
for any $M \in \mathrm{Mod}\Lambda$, $N \in \mathrm{Mod}\Lambda^{\mathrm{op}}$ and $i \in \mathbb{N} \cup \{0\}$.
\end{lem}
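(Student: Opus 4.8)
The statement to prove is Lemma~\ref{isom1}: a functorial isomorphism $\Ext^i_{\Lambda}(M,DN)\simeq D\Tor^{\Lambda}_i(M,N)$ for $M\in\Mod\Lambda$, $N\in\Mod\Lambda^{\op}$. This is a classical adjunction-type statement (it is exactly \cite[Ch.~VI, Prop.~5.1]{CE}), so the plan is simply to recall the standard homological-algebra argument.

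\medskip

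The plan is to build both sides from a single projective resolution of $M$. First I would take a projective resolution $P_\bullet\to M$ in $\Mod\Lambda$. Then by definition $\Tor^\Lambda_i(M,N)=H_i(P_\bullet\otimes_\Lambda N)$, and since $D=\Hom_K(-,K)$ is an exact contravariant functor on $K$-vector spaces, it commutes with homology up to the usual sign/indexing, giving $D\Tor^\Lambda_i(M,N)=H^i\bigl(D(P_\bullet\otimes_\Lambda N)\bigr)$. On the other side, $\Ext^i_\Lambda(M,DN)=H^i\bigl(\Hom_\Lambda(P_\bullet,DN)\bigr)$. So it suffices to produce an isomorphism of complexes $\Hom_\Lambda(P_\bullet,DN)\cong D(P_\bullet\otimes_\Lambda N)$, functorial in everything, and then pass to cohomology.

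\medskip

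The key step is therefore the natural isomorphism, for a projective (indeed any) right $\Lambda$-module $P$,
\[
\Hom_\Lambda(P,\Hom_K(N,K))\;\cong\;\Hom_K(P\otimes_\Lambda N,K),
\]
which is just the standard hom-tensor adjunction (tensor-hom duality): a map $P\to\Hom_K(N,K)$ corresponds to a $K$-bilinear, $\Lambda$-balanced map $P\times N\to K$, i.e.\ a $K$-linear map $P\otimes_\Lambda N\to K$. One checks this isomorphism is natural in $P$, so it assembles into an isomorphism of cochain complexes $\Hom_\Lambda(P_\bullet,DN)\cong D(P_\bullet\otimes_\Lambda N)$ compatible with the differentials. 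Taking $H^i$ of both sides yields the claimed isomorphism. Naturality in $M$ follows from the comparison theorem for projective resolutions (independence of the resolution up to homotopy), and naturality in $N$ is visible from the adjunction formula directly.

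\medskip

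There is no real obstacle here: this is a routine, well-known fact and the reference \cite{CE} is given precisely so the reader need not reproduce it. The only mild point of care is bookkeeping — matching the homological grading of $\Tor$ (a chain complex, lower indices) with the cohomological grading of $\Ext$ (a cochain complex) after applying the exact duality $D$, which reverses arrows and turns $H_i$ into $H^i$ without shifting the index. Since $D$ is exact over a field, no $\lim^1$ or spectral-sequence subtleties intervene, and finite-dimensionality hypotheses are not even needed for this particular lemma (they matter elsewhere in the paper, e.g.\ in Lemma~\ref{lem:1-1}).
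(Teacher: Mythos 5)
Your argument is correct and is precisely the standard proof of the cited result: the paper itself gives no proof, only the reference to Cartan--Eilenberg, and your combination of the hom-tensor adjunction $\Hom_\Lambda(P,\Hom_K(N,K))\cong\Hom_K(P\otimes_\Lambda N,K)$ applied to a projective resolution of $M$ with the exactness of $D$ over the field $K$ is exactly that argument. Nothing further is needed.
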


\subsection{Tilting theory on preprojective algebras}
\label{pptilting}

In this subsection, we recall the construction of tilting modules over preprojective algebras of non-Dynkin quivers which was shown in \cite{IR,BIRS}, and properties of those tilting modules. 
We keep the notations in the previous subsection.

We start with recalling the definition of tilting modules.

\begin{df}\label{df_tilt}
A $\Lambda$-module $T$ is called a \emph{tilting module} if it satisfies the following conditions.
\begin{enumerate}
\def\labelenumi{(\theenumi)}
\item There exists an exact sequence
\begin{eqnarray}
0 \longrightarrow P_1 \longrightarrow P_0 \longrightarrow T \longrightarrow 0 \label{exact-fgp}
\end{eqnarray}
where $P_0,P_1 \in \add \Lambda$.
\item $\Ext^{1}_{\Lambda}(T,T)=0$.
\item There exists an exact sequence
\begin{eqnarray}
0 \longrightarrow \Lambda_{\Lambda} \longrightarrow T_0 \longrightarrow T_1 \longrightarrow 0 \label{exact-tilt}
\end{eqnarray}
where $T_0,T_1 \in \add T$.
\end{enumerate}
\end{df}

Let $T$ be a tilting $\Lambda$-module. We put $\Gamma:=\End_{\Lambda}(T)$. 
\cite{Ri} showed that $T$ induces a triangle equivalence
\[
\xymatrix{
\dcat{}{\mathrm{Mod}\Lambda} \ar@<0.5ex>[rr]^{\RHom_{\Lambda}(T,-)} & & \dcat{}{\mathrm{Mod} \Gamma} \ar@<0.5ex>[ll]^{-\Ltensor_{\Gamma} T}
}.
\]

Thus it is important for representation theory which has been developed by using derived categories to study constructions or classifications of tilting modules. 
The construction of tilting modules over preprojective algebras of non-Dynkin quivers was given by \cite{BIRS} as follows.

We define a two-sided ideal $I_i$ of $\Lambda$ by 
\[
I_i := \Lambda (1-e_i) \Lambda
\]
for any $i \in \overline{Q}_0$.
Then we have an exact sequence 
\begin{eqnarray}
0 \longrightarrow I_i \longrightarrow \Lambda \longrightarrow S_i \longrightarrow 0 \label{exact1}
\end{eqnarray}
of $(\Lambda,\Lambda)$-bimodules for any $i \in \overline{Q}_0$ since $\overline{Q}$ has no loops.
We consider a set
\[
\mathcal{I}(\Lambda):= \{ I_{i_1}I_{i_2} \cdots I_{i_{\ell}} \ | \ l \in \mathbb{N}\cup\{0\}, \ i_1,i_2,\ldots,i_{\ell} \in Q_0 \}
\]
where $I_{i_1}I_{i_2} \cdots I_{i_{\ell}}$ is an ideal which is obtained by product of ideals $I_{i_1},I_{i_2}, \ldots ,I_{i_{\ell}}$.
Then the following result holds.

\begin{thm}[{\cite[Proposition III.1.4. Theorem III.1.6.]{BIRS}}]\label{classi-tilt}
Any $T \in \mathcal{I}(\Lambda)$ is a tilting $\Lambda$-module with $\End_{\Lambda}(T) \simeq \Lambda$.
\end{thm}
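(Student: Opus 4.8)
The plan is to prove that each ideal $T=I_{i_1}\cdots I_{i_\ell}\in\mathcal I(\Lambda)$ is a tilting module of projective dimension at most one, and simultaneously that $\End_\Lambda(T)\simeq\Lambda$, by induction on the length $\ell$. The base case $\ell=0$ is trivial since $T=\Lambda$. For the inductive step it suffices to treat the two basic operations: given a tilting module $T$ with $\End_\Lambda(T)\simeq\Lambda$, show that $TI_i$ is again a tilting module with $\End_\Lambda(TI_i)\simeq\Lambda$. I would first reduce to the case $T=\Lambda$, i.e. I would first establish that each $I_i$ itself is a tilting $\Lambda$-module with $\End_\Lambda(I_i)\simeq\Lambda$, and then bootstrap to arbitrary products using the derived equivalence attached to $I_i$.

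The heart of the matter is the single ideal $I_i$. From the bimodule exact sequence $0\to I_i\to\Lambda\to S_i\to 0$ together with the projective resolution of $S_i$ in Proposition \ref{pp-Koszul}(1), I would read off a projective presentation $0\to e_i\Lambda\xrightarrow{(\epsilon_\alpha\alpha^*)} \bigoplus_{i\xrightarrow{\alpha}j}e_j\Lambda\to I_i\to 0$, which gives condition (1) of Definition \ref{df_tilt} and shows $\mathrm{pd}_\Lambda I_i\le 1$; here one uses crucially that $\overline Q$ has no loop at $i$, since $S_i$ is not a summand of $e_i\Lambda/e_i I e_i$. For condition (2), $\Ext^1_\Lambda(I_i,I_i)=0$: apply $\Hom_\Lambda(-,I_i)$ to the presentation above, or better, apply $\Hom_\Lambda(I_i,-)$ to $0\to I_i\to\Lambda\to S_i\to 0$ and use $\Ext^1_\Lambda(I_i,\Lambda)$ and $\Hom/\Ext$ with $S_i$; the $2$-Calabi-Yau property (Lemma \ref{lem:1-1}) converts $\Ext^2$-terms involving the finite-dimensional module $S_i$ into $\Hom$-terms and lets the relevant vanishing be checked directly from the resolution of $S_i$. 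For condition (3), the same short exact sequence $0\to\Lambda\to ?\to ?\to 0$ is built by splicing: since $\Lambda I_i\subseteq I_i$ and $\Lambda/I_i\simeq \mathrm{End}(S_i)$-stuff is killed appropriately, one produces $0\to\Lambda\to T_0\to T_1\to 0$ with $T_0,T_1\in\mathrm{add}\,I_i$ from the projective resolution of $S_i$ as a left module (Proposition \ref{pp-Koszul}(2)). Finally $\End_\Lambda(I_i)\simeq\Lambda$ follows by applying $\Hom_\Lambda(-,\Lambda)$ or $\Hom_\Lambda(\Lambda,-)=\mathrm{id}$ to $0\to I_i\to\Lambda\to S_i\to0$ and observing $\Hom_\Lambda(I_i,\Lambda)=\Lambda$, $\Hom_\Lambda(S_i,\Lambda)=0=\Ext^1_\Lambda(S_i,\Lambda)$, where the last equalities again come from the explicit projective resolution and the $2$-CY duality.

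For the general product, I would argue as follows. Suppose $T\in\mathcal I(\Lambda)$ is tilting with $\End_\Lambda(T)\simeq\Lambda$, so that $F=\RHom_\Lambda(T,-)$ is a derived auto-equivalence of $\mathcal D(\mathrm{Mod}\,\Lambda)$. The point is that multiplication $TI_i$ corresponds, under this equivalence, to the ideal $I_i$ of $\mathrm{End}_\Lambda(T)\simeq\Lambda$, i.e. $TI_i\simeq I_i\otimes_\Lambda T$ and $\End_\Lambda(TI_i)\simeq\End_{\End_\Lambda(T)}(I_i)\simeq\End_\Lambda(I_i)\simeq\Lambda$; and a derived equivalence sends tilting modules to tilting modules, so $TI_i$ is tilting since $I_i$ is. The bookkeeping point that needs care is that the isomorphism $\End_\Lambda(T)\simeq\Lambda$ is not canonical, so one must check it is compatible with the chosen generators $e_j$ (i.e. carries the idempotent $e_i$ of $\End_\Lambda(T)$ to $e_i\in\Lambda$, after relabeling) so that the ideal $I_i$ on the endomorphism side really is identified with $I_i\subseteq\Lambda$; this is where I expect the main obstacle to lie, and it is resolved by tracking how the functor $\Hom_\Lambda(I_j,-)$ permutes the simples, which pins down the identification of idempotents up to the symmetry of the quiver. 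Once this is in place, induction on $\ell$ closes the argument.
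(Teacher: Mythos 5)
Your treatment of the single ideal $I_i$ is sound and is essentially the paper's own argument: the presentation $0\to e_i\Lambda\to\bigoplus_{j\to i}e_j\Lambda\to e_iI_i\to 0$ from Proposition \ref{pp-Koszul} together with $I_i=(\bigoplus_{j\neq i}e_j\Lambda)\oplus e_iI_i$ gives conditions (1) and (3) of Definition \ref{df_tilt}, the $2$-Calabi--Yau property gives $\Ext^1_{\Lambda}(I_i,I_i)\simeq D\Hom_{\Lambda}(I_i,S_i)=0$, and $\End_{\Lambda}(I_i)\simeq\Lambda$ follows by applying $\Hom_{\Lambda}(-,\Lambda)$ and $\Hom_{\Lambda}(I_i,-)$ to $0\to I_i\to\Lambda\to S_i\to 0$.

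The gap is in the inductive step. Your central claim, that the ideal product $TI_i$ ``corresponds under the derived equivalence to $I_i$,'' i.e.\ that the tensor product over $\Lambda$ of $T$ with $I_i$ is isomorphic to the ideal $TI_i$, is precisely the nontrivial point, and as a blanket statement it is \emph{false}: for $T=I_i$ one has $I_iI_i=I_i$, while the multiplication map $I_i\otimes_{\Lambda}I_i\to I_i$ has kernel $\Tor_1^{\Lambda}(I_i,S_i)\simeq\Tor_2^{\Lambda}(S_i,S_i)\simeq D\Ext^2_{\Lambda}(S_i,S_i)\neq 0$ by the $2$-CY property. So the induction must split into cases. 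If $TI_i=T$, the inductive hypothesis applies directly and no tensor identity is needed. If $TI_i\subsetneq T$, two facts are required that you neither state nor prove: first, that $T\Ltensor_{\Lambda}I_i$ is concentrated in degree $0$ (this comes from $\mathrm{pd}\,S_i=2$ via $\Tor_j^{\Lambda}(T,I_i)\simeq\Tor_{j+2}^{\Lambda}(\Lambda/T,S_i)=0$ for $j\geq 1$); second, that the multiplication map $T\otimes_{\Lambda}I_i\to TI_i$ is injective. The latter is the technical heart of the paper's proof and rests on a dichotomy (Lemma \ref{IRprivate}): for a tilting module $T$ and a simple left module $S$, exactly one of $T\otimes_{\Lambda}S=0$ and $\Tor_1^{\Lambda}(T,S)=0$ holds. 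Since $TI_i\subsetneq T$ forces $T\otimes_{\Lambda}S_i\simeq T/TI_i\neq 0$, the dichotomy gives $\Tor_1^{\Lambda}(T,S_i)=0$, which is exactly what kills the kernel of the multiplication map; the proof of the dichotomy is itself a nontrivial null-homotopy argument. In addition, ``a derived equivalence sends tilting modules to tilting modules'' only produces a tilting \emph{complex}; to land in Definition \ref{df_tilt} one must still check $\mathrm{pd}(TI_i)\leq 1$, which the paper deduces from $\mathrm{pd}\,T\leq 1$ and $\mathrm{pd}(T/TI_i)\leq 2$. By contrast, the idempotent-matching issue you single out as the main obstacle is not one: the paper avoids it entirely by working with the two-sided complex $T\Ltensor_{\Lambda}I_i$ and quoting Yekutieli's result that a product of two-sided tilting complexes is again a tilting complex, with endomorphism ring $\End_{\Lambda}(T)\simeq\Lambda$ by induction.
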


In the following we give a proof of the above result since the setting in this paper is different from that of \cite{BIRS}.
Namely they dealt with completed preprojective algebras of non-Dynkin quivers, but we deal with non-completed one.

To prove Theorem \ref{classi-tilt}, we need the following lemma which was pointed out to us by Osamu Iyama and Idun Reiten.

\begin{lem}\label{IRprivate}
Let $T$ be a tilting $\Lambda$-module, S a simple $\Lambda^{\op}$-module.
Then exactly one of the statements $T\otimes_{\Lambda}S=0$ and $\Tor^{\Lambda}_{1}(T,S)=0$ holds.
\end{lem}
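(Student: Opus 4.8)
The plan is to use the $2$-Calabi--Yau property (Lemma \ref{lem:1-1}) together with the long exact sequence obtained by applying $T\otimes_\Lambda-$ to a projective presentation of the simple module $S$. First I would recall that since $\Lambda$ has global dimension $2$ on finite-dimensional modules and $T$ has projective dimension at most one, the functor $T\otimes_\Lambda-$ has only two nonvanishing derived functors: $T\otimes_\Lambda-$ and $\Tor^\Lambda_1(T,-)$. So the statement is really that \emph{at least one} and \emph{at most one} of these vanishes on a simple $S$.

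For the ``at most one'' direction, I would argue by contradiction: suppose both $T\otimes_\Lambda S=0$ and $\Tor^\Lambda_1(T,S)=0$. Take the projective resolution \eqref{exact-ap2} of the left module $S$ (say $S=S_i$), namely $0\to\Lambda e_i\to\bigoplus\Lambda e_j\to\Lambda e_i\to S\to 0$, and apply $T\otimes_\Lambda-$. Vanishing of both $\Tor$'s forces the complex $T e_i\to\bigoplus Te_j\to Te_i$ to be exact, hence (taking $K$-dimensions) $\sum_j \dim Te_j \cdot (\text{multiplicity}) = 2\dim Te_i$, an Euler-characteristic identity that, combined with the corresponding identity coming from $\Lambda\otimes_\Lambda-$ (which recovers the dimension vector pairing against the simple root $\e_i$), yields $(\udim T,\e_i)=0$ for the form $(-,-)$. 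But $T$ is a tilting module, so $\add T$ generates the same subgroup of $K_0$ as $\add\Lambda$; writing $\e_i$ in terms of classes of summands of $T$ and using that the form is nondegenerate-enough on the relevant sublattice gives a contradiction. Alternatively, and more cleanly, I would use Lemma \ref{isom1}: $\Tor^\Lambda_1(T,S)=0$ together with $T\otimes_\Lambda S=0$ means $T\Ltensor_\Lambda S=0$ in the derived category; then for the right module $S_i$ one gets $\RHom_\Lambda(DS_i, DT)=0$, and via the $2$-Calabi--Yau duality (Lemma \ref{lem:1-1}) this forces $\RHom_\Lambda(P,S_i)=0$ for every indecomposable projective $P\in\add(\text{something})$, which is absurd since $\Hom_\Lambda(e_i\Lambda,S_i)\neq 0$.

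For the ``at least one'' direction, suppose $T\otimes_\Lambda S\neq 0$; I want to deduce $\Tor^\Lambda_1(T,S)=0$. Here I would again invoke duality: by Lemma \ref{isom1}, $\Tor^\Lambda_1(T,S)\cong D\Ext^1_\Lambda(T,DS)$, and $\Ext^1_\Lambda(T,DS)$ can be controlled because $DS$ is a simple $\Lambda$-module and $T$ is tilting --- in fact $\Ext^1_\Lambda(T,-)$ vanishes on any module with no summand of a certain ``$\Ext$-blocking'' simple, and the case $\Tor^\Lambda_1(T,S)\neq 0$ is exactly the complementary one. The cleanest route is to note that applying $T\otimes_\Lambda-$ to $0\to I_i\to\Lambda\to S_i\to 0$-type sequences (or a minimal projective presentation of $S$) gives $0\to\Tor_1^\Lambda(T,S)\to (\text{proj})\to(\text{proj})\to T\otimes_\Lambda S\to 0$, so the two $\Tor$-terms have dimension vectors differing by a fixed element; since both are modules over $\End_\Lambda(T)\simeq\Lambda$ and the reflection $s_i$ acts by $\pm$ a simple root, one of them must be zero.

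The main obstacle I anticipate is making the dichotomy genuinely \emph{exclusive and exhaustive} rather than just ``at most one'': the exclusivity (``at most one'') is the more delicate half, because it is where the tilting hypothesis and the $2$-Calabi--Yau property must interact --- for a general module of projective dimension $\le 1$ it is perfectly possible for both derived tensor functors to vanish on a simple (e.g.\ if the simple is not in the support), so one genuinely needs that $T$ is a \emph{tilting} module (hence a generator of the derived category) to rule this out. Packaging this via the derived-category statement ``$T\Ltensor_\Lambda S=0$ is impossible for $S\neq 0$ when $T$ is a tilting complex'' and then deriving the $\Tor$-version as a corollary is, I expect, the slickest way to keep the argument short.
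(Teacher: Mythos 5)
Your treatment of the easy half --- that $T\otimes_{\Lambda}S$ and $\Tor^{\Lambda}_{1}(T,S)$ cannot both vanish --- is fine and coincides with the paper: since $\mathrm{pd}\,T\le 1$, both vanishing means $T\Ltensor_{\Lambda}S=0$, which is impossible because $T\Ltensor_{\Lambda}-$ is a triangle equivalence and $S\neq 0$. (Your first, Euler-characteristic version of this step does not make sense as written, since $T$ and the $Te_j$ are infinite dimensional, but your ``cleaner'' derived-category version is exactly the paper's argument.)

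The genuine gap is in the other half: showing that at least one of the two spaces \emph{does} vanish. Neither of your two sketches is a proof. The dimension-vector argument is a non sequitur: knowing that $\udim(T\otimes_{\Lambda}S)-\udim\Tor^{\Lambda}_1(T,S)$ equals some fixed class in no way forces one of the two modules to be zero (two nonzero modules can have dimension vectors with any prescribed difference), and it also silently specializes the general tilting module $T$ of the lemma to $T=I_w$ with $\End_{\Lambda}(T)\simeq\Lambda$ --- a fact that in the paper is \emph{proved using} this lemma. The torsion-pair sketch (``$\Ext^1_{\Lambda}(T,-)$ vanishes on any module with no summand of a certain Ext-blocking simple'') names no actual statement and proves nothing; moreover, routing through $D$ and Lemma \ref{isom1} requires $DS$ to be a simple right module, i.e.\ $S$ finite dimensional, which is not part of the hypothesis. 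The paper's actual argument is quite different and is the real content of the lemma: from $\Ext^1_{\Lambda}(T,T)=0$ and the presentation $0\to P_1\xrightarrow{a}P_0\to T\to 0$ one first shows $\Hom_{\Lambda}(P_1,P_0)=a\End_{\Lambda}(P_1)+\End_{\Lambda}(P_0)a$; after applying $-\otimes_{\Lambda}S$ this says every endomorphism-of-complex between the two shifts of $P_1\otimes_{\Lambda}S\xrightarrow{a\otimes S}P_0\otimes_{\Lambda}S$ is null-homotopic. Since $\End_{\Lambda}(S)$ is a division algebra, any $\Gamma$-linear map $\Ker(a\otimes S)\to\Coker(a\otimes S)$ extends to such a chain map and is therefore zero, so $\Hom_{\Gamma}(\Tor^{\Lambda}_1(T,S),T\otimes_{\Lambda}S)=0$; over a division algebra this forces one of the two to vanish. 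A correct alternative along the lines you gesture at does exist (a simple module lies in exactly one of $\mathrm{Gen}(T)$ and $\{M\mid\Hom_{\Lambda}(T,M)=0\}$, using $0\to\Lambda\to T_0\to T_1\to 0$ and $\mathrm{pd}\,T\le 1$), but you would need to actually carry it out and to handle the passage between $\Tor$ on left modules and $\Ext$ on right modules; as it stands your proposal does not close this half of the dichotomy.
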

\begin{proof}
First we assume that $T \otimes_{\Lambda}S=0=\Tor^{\Lambda}_1(T,S)$. 
Then we have $T \Ltensor S=0$. But this is a contradiction since 
\[
T \Ltensor - : \dcat{}{\mathrm{Mod}\Lambda^{\op}} \longrightarrow \dcat{}{\mathrm{Mod}\End_{\Lambda}(T)^{\op}}.
\]
is a triangle equivalence.

Since $T$ is a tilting $\Lambda$-module, there exists an exact sequence
\[
0 \longrightarrow P_1 \xrightarrow{\ a \ } P_0 \xrightarrow{\ b \ } T \longrightarrow 0
\]
where $P_0,P_1 \in \mathrm{add}\Lambda$.
Now we claim that $\Hom_{\Lambda}(P_1,P_0)=a \End_{\Lambda}(P_1)+\End_{\Lambda}(P_0)a$.
We take $f \in \Hom_{\Lambda}(P_1,P_0)$.
By applying $\Hom_{\Lambda}(-,T)$ to the above exact sequence, we have an exact sequence
\[
\Hom_{\Lambda}(P_0,T) \longrightarrow \Hom_{\Lambda}(P_1,T) \longrightarrow \Ext^1_{\Lambda}(T,T)=0.
\]
Therefore there exists  $c \in \Hom_{\Lambda}(P_0,T)$ such that $ca=bf$. 
Since $P_0$ is projective, there exists $d \in \Hom_{\Lambda}(P_0,P_0)$ such that $c=bd$.
\[
\xymatrix{
& 0 \ar[r] & P_1 \ar[r]^a \ar[d]_f & P_0 \ar[r]^b \ar[d]^c \ar[ld]^d & T \ar[r] & 0 \\
0 \ar[r] & P_1 \ar[r]_a & P_0 \ar[r]_b & T \ar[r] & 0 &
}
\]
Since $b(f-da)=bf-bda=bf-ca=0$, there exists $e \in \Hom_{\Lambda}(P_1,P_1)$ such that $f-da=ae$. 
Thus we have $f \in a \End_{\Lambda}(P_1)+\End_{\Lambda}(P_0)a$.

Next we put $\Gamma:= \End_{\Lambda}(S)^{\op}$.
We consider the following commutative diagram.
\[
\xymatrix{
a \End_{\Lambda}(P_1)+\End_{\Lambda}(P_0) a \ar@{=}[r] \ar[d]_{- \otimes_{\Lambda}S} & \Hom_{\Lambda}(P_1,P_0) \ar[d]^{- \otimes_{\Lambda}S}\\
(a \otimes_{\Lambda}S) \End_{\Gamma}(P_1 \otimes_{\Lambda}S)+\End_{\Gamma}(P_0 \otimes_{\Lambda}S) (a \otimes_{\Lambda}S) \ar@{^{(}->}[r] & 
\Hom_{\Gamma}(P_1 \otimes_{\Lambda}S,P_0 \otimes_{\Lambda}S)
}
\]
Since the right vertical map is surjective, we have 
\[
(a \otimes_{\Lambda}S) \End_{\Gamma}(P_1 \otimes_{\Lambda}S)+\End_{\Gamma}(P_0 \otimes_{\Lambda}S) (a\otimes_{\Lambda}S) = 
\Hom_{\Gamma}(P_1 \otimes_{\Lambda}S,P_0 \otimes_{\Lambda}S).
\]
This implies that any morphism 
\[
\xymatrix{
\cdots \ar[r] & 0 \ar[r] \ar[d] & 0 \ar[r] \ar[d] & P_1 \otimes_{\Lambda}S \ar[r]^{a\otimes_{\Lambda}S} \ar[d] & P_0 \otimes_{\Lambda}S \ar[r] \ar[d] & 0  \ar[r] \ar[d]& \cdots  \\
\cdots \ar[r] & 0 \ar[r] &P_1 \otimes_{\Lambda}S \ar[r]_{a\otimes_{\Lambda}S} & P_0 \otimes_{\Lambda}S \ar[r] & 0 \ar[r] & 0\ar[r] & \cdots 
}
\]
of complexes of $\Gamma$-modules is null-homotopic. 
Since $\Gamma$ is a division algebra,
any $f \in \Hom_{\Gamma}(\Ker (a \otimes_{\Lambda}S), \Coker (a \otimes_{\Lambda}S))$ can be extended to $\tilde{f} \in \Hom_{\Gamma}(P_1 \otimes_{\Lambda}S,P_0 \otimes_{\Lambda}S)$.
But 
\[
\xymatrix{
\cdots \ar[r] & 0 \ar[r] \ar[d] & 0 \ar[r] \ar[d] & P_1 \otimes_{\Lambda}S \ar[r]^{a\otimes_{\Lambda}S} \ar[d]^{\tilde{f}} & P_0 \otimes_{\Lambda}S \ar[r] \ar[d] & 0  \ar[r] \ar[d]& \cdots  \\
\cdots \ar[r] & 0 \ar[r] & P_1 \otimes_{\Lambda}S \ar[r]_{a\otimes_{\Lambda}S} & P_0 \otimes_{\Lambda}S \ar[r] & 0 \ar[r] & 0\ar[r] & \cdots 
}
\]
is null-homotopic, we have $f=0$. Thus we have $\Hom_{\Gamma}(\Ker (a \otimes_{\Lambda}S), \Coker (a \otimes_{\Lambda}S))=0$. 
Since $\Gamma$ is a division algebra, we have $\Tor^{\Lambda}_1(T,S)=\Ker (a \otimes_{\Lambda}S)=0$ or $T \otimes_{\Lambda} S = \Coker (a \otimes_{\Lambda}S)=0$.
\end{proof}

\begin{proof}[Proof of Theorem \ref{classi-tilt}]
First we show that $I_i$ is a tilting $\Lambda$-module for any $i$. 
We remark that $I_i=(\bigoplus_{j \neq i}e_j\Lambda) \oplus e_iI_i$. 
By Proposition \ref{pp-Koszul}, there exists an exact sequence
\[
0 \longrightarrow e_i\Lambda \longrightarrow \bigoplus_{j \rightarrow i}e_j\Lambda \longrightarrow e_iI_i \longrightarrow 0.
\]
Thus we have exact sequences of the forms \eqref{exact-fgp} and \eqref{exact-tilt} in Definition \ref{df_tilt}.
We show $\Ext^1_{\Lambda}(I_i,I_i)=0$. 
By applying $\Hom_{\Lambda}(-,I_i)$ to the exact sequence \eqref{exact1}, we have an exact sequece
\[
0= \Ext^1_{\Lambda}(\Lambda,I_i) \longrightarrow \Ext^1_{\Lambda}(I_i,I_i) \longrightarrow \Ext^2_{\Lambda}(S_i,I_i) \longrightarrow 
\Ext^2_{\Lambda}(\Lambda,I_i) =0.
\]
By this and Lemma \ref{lem:1-1}, we have
\[
\Ext^1_{\Lambda}(I_i,I_i) \simeq \Ext^2_{\Lambda}(S_i,I_i) \simeq D \Hom_{\Lambda}(I_i,S_i)=0.
\]
Thus $I_i$ is a tilting $\Lambda$-module.

Next we show $\End_{\Lambda}(I_i) \simeq \Lambda$.
By applying $\Hom_{\Lambda}(-,\Lambda)$ to the exact sequence \eqref{exact1}, we have an exact sequece
\[
0 \longrightarrow \Hom_{\Lambda}(S_i,\Lambda) \longrightarrow \Hom_{\Lambda}(\Lambda,\Lambda) 
\longrightarrow \Hom_{\Lambda}(I_i,\Lambda) \longrightarrow \Ext^1_{\Lambda}(S_i,\Lambda).
\]
Since $\Ext^j_{\Lambda}(S_i,\Lambda) \simeq D \Ext^{2-j}_{\Lambda}(\Lambda,S_i)=0$ for $j=0,1$ by Lemma \ref{lem:1-1}, 
we have $\Hom_{\Lambda}(\Lambda,\Lambda) \simeq \Hom_{\Lambda}(I_i,\Lambda)$.
By applying $\Hom_{\Lambda}(I_i,-)$ to the exact sequence \eqref{exact1}, we have an exact sequence
\[
0 \longrightarrow \Hom_{\Lambda}(I_i,I_i) \longrightarrow \Hom_{\Lambda}(I_i,\Lambda) \longrightarrow  \Hom_{\Lambda}(I_i,S_i)=0.
\]
Thus we have
\[
\Hom_{\Lambda}(\Lambda,\Lambda) \simeq \Hom_{\Lambda}(I_i,\Lambda) \simeq \Hom_{\Lambda}(I_i,I_i).
\]
Since the above isomorphism is given by 
\[
\Hom_{\Lambda}(\Lambda,\Lambda) \ni a \cdot \longmapsto a \cdot \in \Hom_{\Lambda}(I_i,I_i),
\]
we have a $K$-algebra isomorphism $\Lambda \simeq \End_{\Lambda}(I_i)$.

Finally we show that $I_{i_{\ell}} \cdots I_{i_2}I_{i_1}$ is a tilting 
$\Lambda$-module with $\End_{\Lambda}(I_{i_{\ell}} \cdots I_{i_2}I_{i_1})=\Lambda$
for any $\ell \in \mathbb{N}\cup\{0\}$ and $i_1,i_2,\ldots,i_{\ell} \in Q_0$ by induction on $\ell$.
If $I_{i_{\ell}} \cdots I_{i_2}I_{i_1}=I_{i_{\ell}} \cdots I_{i_2}$, it is a tilting $\Lambda$-module with 
$\End_{\Lambda}(I_{i_{\ell}} \cdots I_{i_2}I_{i_1})=\Lambda$ by inductive hypothesis. 

We assume $I_{i_{\ell}} \cdots I_{i_2}I_{i_1} \neq I_{i_{\ell}} \cdots I_{i_2}$.
By \cite[Corollary 1.7.(3)]{Yak}, $I_{i_{\ell}} \cdots I_{i_2} \Ltensor_{\Lambda} I_{i_1}$ is a tilting complex in $\dcat{}{\mathrm{Mod} \Lambda}$,  
so we have 
\[
\Hom_{\dcat{}{\mathrm{Mod}\Lambda}}(I_{i_{\ell}} \cdots I_{i_2} \Ltensor_{\Lambda} I_{i_1},I_{i_{\ell}} \cdots I_{i_2} \Ltensor_{\Lambda} I_{i_1}) \simeq 
\Hom_{\Lambda}(I_{i_{\ell}} \cdots I_{i_2},I_{i_{\ell}} \cdots I_{i_2}) \simeq \Lambda.
\]
Hence it is enough to show that
\[
(I_{i_{\ell}} \cdots I_{i_2}) \Ltensor_{\Lambda} I_{i_1}= (I_{i_{\ell}} \cdots I_{i_2}) \otimes_{\Lambda} I_{i_1}=
I_{i_{\ell}} \cdots I_{i_2} I_{i_1}
\]
and $\mathrm{pd} (I_{i_{\ell}} \cdots I_{i_2} I_{i_1}) \leq 1$.

Since $\mathrm{pd}S_{i_1}=2$, we have $\Tor^{\Lambda}_{j}(I_{i_{\ell}} \cdots I_{i_2},I_{i_1}) \simeq 
\Tor^{\Lambda}_{j+2}(\Lambda/(I_{i_{\ell}} \cdots I_{i_2}),S_{i_1})=0$ for any $j \neq 0$. 
Thus we have $(I_{i_{\ell}} \cdots I_{i_2}) \Ltensor_{\Lambda} I_{i_1}= (I_{i_{\ell}} \cdots I_{i_2}) \otimes_{\Lambda} I_{i_1}$.

Now we show $\Tor^{\Lambda}_1(I_{i_{\ell}} \cdots I_{i_2},S_{i_1})=0$. 
By applying $I_{i_{\ell}} \cdots I_{i_2} \otimes_{\Lambda}-$ to an exact sequence
\[
0 \longrightarrow I_{i_1} \longrightarrow \Lambda \longrightarrow S_{i_1} \longrightarrow 0,
\]
we have a commutative diagram
\[
\xymatrix{
\Tor^{\Lambda}_1(I_{i_{\ell}} \cdots I_{i_2},S_{i_1}) \ar[r] & (I_{i_{\ell}} \cdots I_{i_2}) \otimes_{\Lambda} I_{i_1} \ar[r] \ar[d]^f & (I_{i_{\ell}} \cdots I_{i_2}) \otimes_{\Lambda} \Lambda \ar[d]^g \ar[r] &  
 (I_{i_{\ell}} \cdots I_{i_2}) \otimes_{\Lambda} S_{i_1}\\
& I_{i_{\ell}} \cdots I_{i_2} I_{i_1} \ar[r] &  I_{i_{\ell}} \cdots I_{i_2} &
}
\]
such that the first row is exact, $f$ is an epimorphism and $g$ is an isomorphism. 
If $I_{i_{\ell}} \cdots I_{i_2} \otimes_{\Lambda} S_{i_1}=0$, we have $I_{i_{\ell}} \cdots I_{i_2} I_{i_1} =  I_{i_{\ell}} \cdots I_{i_2}$. This is a contradiction, hence we have $I_{i_{\ell}} \cdots I_{i_2} \otimes_{\Lambda} S_{i_1} \neq 0$. 
By Lemma \ref{IRprivate} and the induction hypothesis, $\Tor^{\Lambda}_1(I_{i_{\ell}} \cdots I_{i_2},S_{i_1})=0$ holds.
Thus $f$ is a monomorphism, hence we have $(I_{i_{\ell}} \cdots I_{i_2}) \otimes_{\Lambda} I_{i_1}=I_{i_{\ell}} \cdots I_{i_2} I_{i_1}$.
Since $\mathrm{pd}(I_{i_{\ell}} \cdots I_{i_2}) \leq 1$ and $\mathrm{pd}((I_{i_{\ell}} \cdots I_{i_2})/(I_{i_{\ell}} \cdots I_{i_2}I_{i_1})) \leq 2$, 
we have $\mathrm{pd}(I_{i_{\ell}} \cdots I_{i_2}I_{i_1}) \leq 1$.
The assertion follows.
\end{proof}

\subsection{Description of $\mathcal{I}(\Lambda)$ via the Coxeter group}
\label{Coxeter}

In the previous subsection we constructed the set $\mathcal{I}(\Lambda)$ of tilting $\Lambda$-modules. 
However elements in $\mathcal{I}(\Lambda)$ has many expressions (e.g.~$I_i=I_i^2$).
Thus, in this subsection, we describe elements in $\mathcal{I}(\Lambda)$ by using the Coxeter group associated to $Q$, which was investigated in \cite{BIRS}. 
We keep the notations in the previous subsection.

First we recall the definition of the Coxeter group associated to a finite quiver.

\begin{df}\label{def-Weyl}
For any finite connected quiver $Q$ with no loops (not necessarily non-Dynkin), 
the \emph{Coxeter group} $W_Q$ associated to $Q$ is defined as a group whose generators are $s_0,\ldots,s_n$ with the relations
\[
\begin{cases}
s^2_i=1, \\
\mbox{$s_is_j=s_js_i$ if there is no arrows between $i$ and $j$ in $Q$,} \\
\mbox{$s_is_js_i=s_js_is_j$ if there is precisely one arrow between $i$ and $j$ in $Q$.}
\end{cases}
\]
In particular, if $Q$ is a Dynkin quiver, we call $W_Q$ the \emph{(finite) Weyl group}, 
and if $Q$ is an extended Dynkin quiver, we call $W_Q$ the \emph{affine Weyl group}.

Define the \emph{length} $\ell(w)$ of $w$ to be the smallest $r$ for which such an expression exists and call the expression \emph{reduced}.
By convention $\ell(1)=0$.
Clearly $\ell(w)=1$ if and only if $w = s_i$ for some $i \in Q_0$.
\end{df}

Now we define a correspondence between $W_Q$ and $\mathcal{I}(\Lambda)$. 
For $w \in W_Q$, we take a reduced expression $w=s_{i_{\ell}} \cdots s_{i_2}s_{i_1}$. Then we put
\[
I_w := I_{i_{\ell}} \cdots I_{i_2}I_{i_1} \in \mathcal{I}(\Lambda). 
\]
This gives a correspondence
\[
W_Q \ni w \longmapsto I_w \in \mathcal{I}(\Lambda).
\]
The following results imply that the correspondence is actually well-defined and bijective. We omit proofs because they are shown by the quite same arguments as in \cite{BIRS}.

\begin{prop}[\cite{BIRS} Proposition III.1.8.]\label{rel-tilt}
The following hold.
\begin{enumerate}
\def\labelenumi{(\theenumi)}
\item $I^2_i=I_i$.
\item $I_iI_j=I_jI_i$ if there is no arrows between $i$ and $j$ in $Q$.
\item $I_iI_jI_i=I_jI_iI_j$ if there is precisely one arrow between $i$ and $j$ in $Q$.
\end{enumerate}
\end{prop}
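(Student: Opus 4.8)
The plan is to verify each of the three identities by exploiting the ideal structure of the $I_i$ together with the projective resolutions of simple modules in Proposition \ref{pp-Koszul}, following the strategy of \cite{BIRS} but adapted to the non-completed setting. For part (1), I would argue directly: since $I_i=\Lambda(1-e_i)\Lambda$, one has $I_i^2=\Lambda(1-e_i)\Lambda(1-e_i)\Lambda$, and the claim $I_i^2=I_i$ amounts to showing that $(1-e_i)\Lambda(1-e_i)$ generates the same two-sided ideal as $(1-e_i)$; equivalently, that $e_i\Lambda e_i\cdot\Lambda\cdot(1-e_i)\Lambda \subseteq I_i^2$, which follows because $\overline Q$ has no loops, so $e_i\Lambda e_i=Ke_i$ and every path through $i$ factors through some $e_j$ with $j\neq i$. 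Alternatively, and perhaps more cleanly, one can note $I_i$ is a tilting module (Theorem \ref{classi-tilt}) and use that tensoring $0\to I_i\to\Lambda\to S_i\to 0$ with $I_i$ and using $I_i\otimes_\Lambda S_i=0$ (which holds since $e_iI_i\neq I_i$, i.e.\ $S_i$ is not in the subcategory) gives $I_i\otimes_\Lambda I_i\cong I_i$, and this tensor product realizes $I_i^2$ once one checks $\mathrm{Tor}_1$ vanishes via Lemma \ref{IRprivate}.

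For part (2), when there is no arrow between $i$ and $j$, the key observation is that in the relation $\rho_k$ no term involves both an arrow at $i$ and an arrow at $j$, so the subalgebras "generated near $i$" and "near $j$" commute in the appropriate sense; concretely $e_i\Lambda e_j = 0 = e_j\Lambda e_i$ in low degree is false in general, but what one needs is $I_iI_j = \Lambda(1-e_i)\Lambda(1-e_j)\Lambda = \Lambda(1-e_j)\Lambda(1-e_i)\Lambda = I_jI_i$, and this can be checked by showing both equal $\Lambda(1-e_i)(1-e_j)\Lambda$ plus controllable correction terms, again using that there are no arrows between $i$ and $j$. The cleanest route is probably the homological one used in \cite{BIRS}: show that $I_i\otimes_\Lambda I_j$ and $I_j\otimes_\Lambda I_i$ are both isomorphic to $I_iI_j=I_jI_i$ as bimodules by a Tor-vanishing argument, and identify the underlying ideal by restricting to each idempotent component $e_k(-)e_l$.

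For part (3), the braid relation, I expect the real work to lie: the standard approach is to show that $I_iI_jI_i$ and $I_jI_iI_j$ define the same submodule of $\Lambda$ by analyzing the rank-one and rank-two "local" pieces, i.e.\ computing $e_k(I_iI_jI_i)e_l$ for $k,l$ ranging over $\{i,j\}$ and the remaining vertices, and matching these with the corresponding pieces of $I_jI_iI_j$ using the single relation $\rho_i$ or $\rho_j$ that couples the arrow between $i$ and $j$. Equivalently, one verifies that the tilting complexes $I_iI_jI_i\Ltensor_\Lambda-$ and $I_jI_iI_j\Ltensor_\Lambda-$ agree, which reduces to showing the two ideals coincide after the Tor-vanishing reductions of Theorem \ref{classi-tilt}'s proof are in place. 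I expect the braid relation to be the main obstacle, because it is the only identity that genuinely uses the combinatorics of the arrow joining $i$ and $j$ rather than just the absence of loops or of arrows; but since the argument is formally identical to the completed case, I would simply carry out the bimodule-component bookkeeping and cite \cite{BIRS} for the parallel computation, which is why the statement is given without a detailed proof in the excerpt.
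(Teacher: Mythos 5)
The paper offers no proof of this proposition at all: immediately before stating it the authors write that the proofs ``are shown by the quite same arguments as in \cite{BIRS}'' and omit them, so for parts (2) and (3) your plan to do the bimodule-component bookkeeping and cite \cite{BIRS} coincides with what the paper actually does, and there is nothing internal to compare your sketch against. For part (1) your direct argument is correct but over-engineered: since $1-e_i$ is idempotent, $1-e_i=(1-e_i)\cdot 1\cdot(1-e_i)\in(1-e_i)\Lambda(1-e_i)$, hence $I_i=\Lambda(1-e_i)\Lambda\subseteq\Lambda(1-e_i)\Lambda(1-e_i)\Lambda=I_i^2$, and the reverse inclusion is automatic; no property of the quiver is used. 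In particular your appeal to ``$e_i\Lambda e_i=Ke_i$'' is both unnecessary and false for a non-Dynkin quiver, where $e_i\Lambda e_i$ contains all cycles at $i$ of length at least two and is infinite dimensional.

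Your ``cleaner'' homological alternative for (1) contains a genuine error. Applying $I_i\otimes_\Lambda-$ to $0\to I_i\to\Lambda\to S_i\to 0$ gives
\[
0\longrightarrow\Tor^\Lambda_1(I_i,S_i)\longrightarrow I_i\otimes_\Lambda I_i\longrightarrow I_i\longrightarrow I_i\otimes_\Lambda S_i\longrightarrow 0,
\]
and since the image of the middle map is $I_i^2$, the equality $I_i^2=I_i$ already follows from $I_i\otimes_\Lambda S_i=0$; but the stronger claim $I_i\otimes_\Lambda I_i\simeq I_i$ is false, and Lemma \ref{IRprivate} proves the opposite of what you assert: exactly one of $I_i\otimes_\Lambda S_i$ and $\Tor^\Lambda_1(I_i,S_i)$ vanishes, the former is $0$, so the latter is nonzero (indeed $\Tor^\Lambda_1(I_i,S_i)\simeq\Tor^\Lambda_2(S_i,S_i)\simeq D\Ext^2_\Lambda(S_i,S_i)\simeq K$ by Lemmas \ref{isom1} and \ref{lem:1-1}), and the map $I_i\otimes_\Lambda I_i\to I_i^2$ has a one-dimensional kernel. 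This is exactly why the paper's proof of Theorem \ref{classi-tilt} separates the non-reduced case $I_{i_\ell}\cdots I_{i_1}=I_{i_\ell}\cdots I_{i_2}$ from the reduced one. For (2), your ``correction terms'' formulation stops short of a proof; the clean elementary argument is to compare idempotent components: $e_kI_i=e_k\Lambda$ for $k\neq i$ and $e_iI_i=\sum_{a\colon i\to l}a\Lambda$, so if no arrow joins $i$ and $j$ then every arrow out of $i$ lands at some $l\neq j$ with $e_lI_j=e_l\Lambda$, giving $e_kI_iI_j=e_kI_jI_i$ for every $k$. Only part (3) genuinely requires the preprojective relation, and there your proposal is a plan rather than an argument.
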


\begin{thm}[\cite{BIRS} Theorem III.1.9.]\label{Weyl-tilt}
The correspondence $W_Q \ni w \longmapsto I_w \in \mathcal{I}(\Lambda)$ is a bijection.
\end{thm}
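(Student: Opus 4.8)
The plan is to prove three statements: that $I_w$ does not depend on the choice of reduced expression for $w$ (so the correspondence is well defined), that every element of $\mathcal{I}(\Lambda)$ is of the form $I_w$ (surjectivity), and that $w\mapsto I_w$ is injective. The first two are combinatorics inside the Coxeter group combined with Proposition \ref{rel-tilt}; injectivity is where the representation theory of $\Lambda$ has to be used, and it is the step I expect to be the real obstacle.

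For well-definedness I would appeal to the theorem of Matsumoto and Tits: any two reduced expressions of an element $w\in W_Q$ differ by a finite sequence of braid moves, i.e.\ replacements $\cdots s_is_j\cdots \leftrightarrow \cdots s_js_i\cdots$ when $i,j$ are non-adjacent in $Q$ and $\cdots s_is_js_i\cdots \leftrightarrow \cdots s_js_is_j\cdots$ when there is exactly one arrow between $i$ and $j$. Parts (2) and (3) of Proposition \ref{rel-tilt} say exactly that the corresponding products of the ideals $I_k$ are unaffected by these moves, so $I_w:=I_{i_\ell}\cdots I_{i_1}$ is independent of the reduced expression. For surjectivity, an arbitrary element of $\mathcal{I}(\Lambda)$ is a product $I_{j_1}\cdots I_{j_m}$ for some, possibly non-reduced, word $s_{j_1}\cdots s_{j_m}$; let $w$ be the element of $W_Q$ it represents. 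Any word in the generators can be brought to a fixed reduced expression for the element it represents using only the braid moves above together with cancellations $s_is_i\leftrightarrow 1$ of adjacent equal letters; since the braid moves are valid for the $I_k$ by Proposition \ref{rel-tilt}(2),(3) and the cancellation is valid by $I_k^2=I_k$ (Proposition \ref{rel-tilt}(1)), we conclude $I_{j_1}\cdots I_{j_m}=I_w$, so the correspondence is onto.

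For injectivity I would pass to the Grothendieck group. Each $T\in\mathcal{I}(\Lambda)$ has projective dimension at most one, so $\RHom_{\Lambda}(T,-)$ (equivalently, its quasi-inverse $-\Ltensor_{\Lambda}T$ via $\End_{\Lambda}(T)\simeq\Lambda$) restricts to an autoequivalence of the bounded derived category of finite-dimensional $\Lambda$-modules, and therefore induces a $\mathbb Z$-linear automorphism $\Phi(T)$ of $K_0\cong\mathbb Z^{\overline{Q}_0}$, the free abelian group on $[S_0],\dots,[S_n]$. A direct computation using the bimodule sequence \eqref{exact1} and the projective resolutions of Proposition \ref{pp-Koszul} --- this is precisely the content of Section \ref{dimension-tilting} --- identifies $\Phi(I_i)$ with the simple reflection $s_i$ for the symmetric bilinear form $(-,-)$, and shows that $\Phi$ converts the multiplication $I_vI_w$ (with lengths adding) into composition of reflections; hence $\Phi(I_w)$ realizes the action of $w^{\pm1}$ in the geometric representation of $W_Q$. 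Since that representation is faithful, $I_w=I_{w'}$ forces $\Phi(I_w)=\Phi(I_{w'})$ and therefore $w=w'$. Equivalently, one can read off $w$ from the dimension vector $\udim(\Lambda/I_w)$, using the filtration $\Lambda\supseteq I_{i_1}\supseteq I_{i_2}I_{i_1}\supseteq\cdots\supseteq I_w$ whose successive layers have classes the positive roots in the inversion set of $w$. The main obstacle is thus the explicit identification $\Phi(I_i)=s_i$ and checking that it propagates multiplicatively; granting that, faithfulness of the geometric representation finishes injectivity, and together with the previous paragraph the correspondence is a bijection.
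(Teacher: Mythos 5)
The paper does not actually prove this theorem: it defers to \cite{BIRS} (Theorem III.1.9), asserting that the arguments there carry over verbatim to the non-completed setting. Judged on its own terms, your well-definedness step (Matsumoto--Tits together with Proposition \ref{rel-tilt}(2),(3)) is fine, and your injectivity step is also essentially sound and is a nice use of the paper's own machinery: the identification of the action of $I_i$ on dimension vectors with $s_i$ is exactly Theorem \ref{tilt-Gro}, the decomposition along a reduced expression is Proposition \ref{red-tilt}, and faithfulness of the reflection representation of $W_Q$ on $\mathbb Z^{\overline{Q}_0}$ is the standard faithfulness of a Kac--Moody Weyl group on its root lattice. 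One imprecision there: $K_0$ of the bounded derived category of $\fd\Lambda$ is \emph{not} free on $[S_0],\dots,[S_n]$ (for non-Dynkin $Q$ there are finite-dimensional simples other than the nilpotent ones, cf.\ Section \ref{fdDynkin}), so you should phrase the argument in terms of the dimension-vector map $[-]:\mathcal D\to\mathbb Z^{\overline{Q}_0}$ of Section \ref{dimension-tilting} rather than an isomorphism $K_0\cong\mathbb Z^{\overline{Q}_0}$; evaluating on the $S_j$ gives $w\e_j=w'\e_j$ for all $j$ and the argument goes through unchanged.

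The genuine gap is in surjectivity. You claim that an arbitrary word $s_{j_1}\cdots s_{j_m}$ can be brought to a reduced expression of the element $w$ it represents using braid moves and cancellations $s_is_i\leftrightarrow 1$, and that the cancellation ``is valid by $I_k^2=I_k$''. It is not: the relation $I_i^2=I_i$ realizes the rewriting $s_is_i\to s_i$, not $s_is_i\to 1$. Concretely, $I_iI_i=I_i=I_{s_i}$, whereas the word $s_is_i$ represents the identity and $I_1=\Lambda\neq I_i$; so the asserted equality $I_{j_1}\cdots I_{j_m}=I_w$ with $w=s_{j_1}\cdots s_{j_m}$ is false. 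The relations of Proposition \ref{rel-tilt} are those of the $0$-Hecke (Demazure) monoid on $W_Q$, not of the group $W_Q$, and conflating the two is exactly the error. Surjectivity is still true, but it needs a different argument, for instance induction on word length via the exchange condition: if $\ell(s_iu)>\ell(u)$ then prepending $s_i$ to a reduced expression of $u$ gives $I_iI_u=I_{s_iu}$, while if $\ell(s_iu)<\ell(u)$ then $u$ admits a reduced expression beginning with $s_i$, so $I_u=I_iI_{s_iu}$ and hence $I_iI_u=I_i^2I_{s_iu}=I_iI_{s_iu}=I_u$; thus the set $\{I_u\mid u\in W_Q\}$ is stable under left multiplication by every $I_i$ and therefore equals $\mathcal I(\Lambda)$. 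With this repair (and the $K_0$ caveat above) your proof is complete.
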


For any $w \in W_Q$, $\RHom_{\Lambda}(I_w,-)$ and $-\Ltensor_{\Lambda}I_w$ are decomposed by using reduced expression of $w$ as follows.

\begin{prop}\label{red-tilt}
Let $w$ be an element of $W_Q$. We take a reduced expression $w=s_{i_{\ell}} \cdots s_{i_2}s_{i_1}$. 
Then the following hold.
\begin{enumerate}
\def\labelenumi{(\theenumi)} 
\item $\RHom_{\Lambda}(I_w,-)=\RHom_{\Lambda}(I_{i_{\ell}},-) \circ \cdots \circ \RHom_{\Lambda}(I_{i_2},-) \circ \RHom_{\Lambda}(I_{i_1},-)$.
\item $-\Ltensor_{\Lambda}I_w=-\Ltensor_{\Lambda}I_{i_{\ell}}  \Ltensor_{\Lambda} \cdots \Ltensor_{\Lambda}I_{i_2} \Ltensor_{\Lambda}I_{i_1}$.
\end{enumerate}
\end{prop}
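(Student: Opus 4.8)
The plan is to reduce everything to the single fact, already established in the proof of Theorem \ref{classi-tilt}, that for a reduced expression $w = s_{i_\ell}\cdots s_{i_2}s_{i_1}$ we have an equality of ideals $I_w = I_{i_\ell}\cdots I_{i_2}I_{i_1}$ together with the key vanishing of Tor: $\Tor^{\Lambda}_j(I_{i_\ell}\cdots I_{i_2}, I_{i_1}) = 0$ for all $j \neq 0$, so that $(I_{i_\ell}\cdots I_{i_2})\Ltensor_{\Lambda} I_{i_1} = (I_{i_\ell}\cdots I_{i_2})\otimes_{\Lambda} I_{i_1} = I_{i_\ell}\cdots I_{i_2}I_{i_1}$ as complexes concentrated in degree $0$. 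Iterating this, I would prove by induction on $\ell$ that
\[
I_w \;=\; I_{i_\ell}\Ltensor_{\Lambda} I_{i_{\ell-1}} \Ltensor_{\Lambda}\cdots \Ltensor_{\Lambda} I_{i_1}
\]
in $\dcat{}{\Mod(\Lambda\otimes_K\Lambda^{\op})}$, i.e.\ that the derived tensor product of the bimodules $I_{i_j}$ along a reduced word has no higher homology and equals the honest product ideal $I_w$. The base case $\ell \le 1$ is trivial, and the inductive step is exactly the displayed Tor-vanishing applied to $I_{i_\ell}\cdots I_{i_2} = I_{w'}$ (with $w' = s_{i_\ell}\cdots s_{i_2}$ a reduced word of length $\ell-1$) tensored with $I_{i_1}$; here one uses that $I_{w'}$ has projective dimension at most $1$, hence $\Tor^{\Lambda}_{\ge 2}(I_{w'},-)=0$, while $\Tor^{\Lambda}_1(I_{w'},S_{i_1})=0$ was proved via Lemma \ref{IRprivate} and the induction hypothesis, and $\Tor^{\Lambda}_1(I_{w'},\Lambda)=0$ is automatic.

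Granting that, part (2) follows immediately: for any $M \in \dcat{}{\Mod\Lambda}$,
\[
M\Ltensor_{\Lambda} I_w \;=\; M\Ltensor_{\Lambda}\bigl(I_{i_\ell}\Ltensor_{\Lambda}\cdots\Ltensor_{\Lambda} I_{i_1}\bigr)
\;=\; \bigl(\cdots\bigl((M\Ltensor_{\Lambda} I_{i_\ell})\Ltensor_{\Lambda} I_{i_{\ell-1}}\bigr)\cdots\bigr)\Ltensor_{\Lambda} I_{i_1}
\]
by associativity of the derived tensor product. Part (1) then follows by passing to right adjoints: since $-\Ltensor_{\Lambda} I_{i_j}$ has right adjoint $\RHom_{\Lambda}(I_{i_j},-)$, and the right adjoint of a composite is the composite of the right adjoints in the reverse order, the functor $\RHom_{\Lambda}(I_w,-) = \RHom_{\Lambda}(I_{i_\ell}\Ltensor_{\Lambda}\cdots\Ltensor_{\Lambda} I_{i_1},-)$ decomposes as $\RHom_{\Lambda}(I_{i_\ell},-)\circ\cdots\circ\RHom_{\Lambda}(I_{i_1},-)$. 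Alternatively one can cite the standard adjunction $\RHom_{\Lambda}(A\Ltensor_{\Lambda} B, -)\simeq \RHom_{\Lambda}(A,\RHom_{\Lambda}(B,-))$ directly and induct.

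The main obstacle is the inductive step establishing $I_{w'}\Ltensor_{\Lambda} I_{i_1} = I_{w'}I_{i_1}$ as a module (not just that no higher Tor appears) — that is, that the natural epimorphism $I_{w'}\otimes_{\Lambda} I_{i_1}\twoheadrightarrow I_{w'}I_{i_1}$ is injective. But this is precisely what is carried out in the last paragraph of the proof of Theorem \ref{classi-tilt}: the vanishing $\Tor^{\Lambda}_1(I_{w'},S_{i_1})=0$ forces the map $f$ in that commutative diagram to be a monomorphism. So the real content is already available, and this proposition is essentially a bookkeeping consequence; accordingly I would keep the write-up short, noting only that the claims are proved by the same arguments as in \cite{BIRS}, exactly as the paper states.
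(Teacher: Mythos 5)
Your proposal is correct and follows essentially the same route as the paper: part (2) is obtained by iterating the identity $(I_{i_\ell}\cdots I_{i_2})\Ltensor_{\Lambda}I_{i_1}=I_{i_\ell}\cdots I_{i_2}I_{i_1}$ established in the proof of Theorem \ref{classi-tilt} (with reducedness of the word, via the strict descending chain of tilting ideals, guaranteeing the Tor-vanishing applies), and part (1) follows from the adjunction $\RHom_{\Lambda}(L\Ltensor_{\Lambda}M,N)\simeq\RHom_{\Lambda}(L,\RHom_{\Lambda}(M,N))$, exactly as in the paper.
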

\begin{proof}
(2) Since $w=s_{i_{\ell}} \cdots s_{i_2}s_{i_1}$ is a reduced expression and \cite[Proposition III.1.10.]{BIRS}, we have a strict descending chain of tilting ideals
\[
I_{i_{\ell}} \cdots I_{i_2} I_{i_1} \subset I_{i_{\ell}} \cdots I_{i_2} \subset \cdots\cdots \subset I_{i_{\ell}}.
\]
Thus by the proof of Theorem \ref{classi-tilt}, we have
\[
I_{i_{\ell}} \Ltensor_{\Lambda} \cdots \Ltensor_{\Lambda} I_{i_2} \Ltensor_{\Lambda} I_{i_1}=I_{i_{\ell}} \cdots I_{i_2} I_{i_1}=I_w.
\]
The assertion follows. 

(1) There is an adjoint isomorphism
\[
\RHom_{\Lambda}(L\Ltensor_{\Lambda}M,N) \simeq \RHom_{\Lambda}(L,\RHom_{\Lambda}(M,N))
\]
for any $L,N \in \mathcal{D}(\mathrm{Mod}\Lambda)$ and $M \in \mathcal{D}(\mathrm{Mod}(\Lambda^{\op} \otimes_K \Lambda))$.
By (2) and the above isomorphism, we have the assertion.
\end{proof}

\subsection{The change of dimension vectors}
\label{dimension-tilting}

In this subsection, we study equivalences between some full subcategories of $\dcat{}{\mathrm{Mod}\Lambda}$ which are restrictions of auto-equivalences on $\dcat{}{\mathrm{Mod}\Lambda}$ induced by tilting $\Lambda$-modules in $\mathcal{I}(\Lambda)$.
We keep the notations in the previous subsection.

We denote by $\fd \Lambda$ the full category of $\mathrm{Mod}\Lambda$ whose objects consist of finite dimensional $\Lambda$-modules.
The category $\fd \Lambda$ has a duality
\[
D: \fd \Lambda \longrightarrow \fd \Lambda^{\mathrm{op}}
\]
such that $D \circ D$ is isomorphic to the identity functor.

We denote by $\mathcal{D}$ the full subcategory of 
$\mathcal{D}(\mathrm{Mod}\Lambda)$ whose objects consist of complexes whose total homology is in $\fd \Lambda$.

First we observe that tilting modules which lie in $\mathcal{I}(\Lambda)$ induce triangle auto-equivalences on $\mathcal{D}$.

\begin{lem}\label{D^b(nilp)-eq}
Let $T$ be a tilting $\Lambda$-module which lies in $\mathcal{I}(\Lambda)$.
Then the restriction of the triangle equivalence
\[
\xymatrix{
\dcat{}{\mathrm{Mod}\Lambda} \ar@<0.5ex>[rr]^{\RHom_{\Lambda}(T,-)} & & \dcat{}{\mathrm{Mod}\Lambda} \ar@<0.5ex>[ll]^{-\Ltensor_{\Lambda} T}
}
\]
to $\mathcal{D}$ induces a triangle equivalence
\[
\xymatrix{
\mathcal{D} \ar@<0.5ex>[rr]^{\RHom_{\Lambda}(T,-)} & & \mathcal{D} \ar@<0.5ex>[ll]^{-\Ltensor_{\Lambda} T}
}.
\]
\end{lem}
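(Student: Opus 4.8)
The plan is to show that both functors $\RHom_{\Lambda}(T,-)$ and $-\Ltensor_{\Lambda}T$ preserve the subcategory $\mathcal{D}$, i.e.\ that they send a complex with finite-dimensional total homology to another such complex. Since the two functors are already mutually inverse triangle equivalences on all of $\dcat{}{\mathrm{Mod}\Lambda}$ (by \cite{Ri} and Theorem \ref{classi-tilt}), once we know each restricts to an endofunctor of $\mathcal{D}$, the restrictions are automatically mutually inverse triangle equivalences on $\mathcal{D}$, and we are done. So the whole content is the preservation of finite-dimensionality of homology.

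First I would reduce to the case $T = I_i$ for a single vertex $i \in Q_0$. By Theorem \ref{Weyl-tilt} every $T \in \mathcal{I}(\Lambda)$ equals $I_w$ for some $w \in W_Q$, and by Proposition \ref{red-tilt} both $\RHom_{\Lambda}(I_w,-)$ and $-\Ltensor_{\Lambda}I_w$ decompose as compositions of the corresponding functors for the simple factors $I_{i_1},\dots,I_{i_\ell}$. Hence it suffices to treat $T=I_i$. Next, since $\mathcal{D}$ is a triangulated subcategory and every object of $\mathcal{D}$ is built from its (finitely many, finite-dimensional) homology modules by finitely many triangles, and since an exact functor preserves triangles, it is enough to check that $\RHom_{\Lambda}(I_i,S_j)$ and $S_j \Ltensor_{\Lambda} I_i$ have finite-dimensional total homology for every simple $\Lambda$-module $S_j$, $j \in Q_0$.

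For this final point I would use the short exact sequence of bimodules \eqref{exact1}, namely $0 \to I_i \to \Lambda \to S_i \to 0$. Applying $\RHom_{\Lambda}(-,S_j)$ (resp.\ $S_j \Ltensor_{\Lambda} -$) gives a triangle relating $\RHom_{\Lambda}(I_i,S_j)$ with $\RHom_{\Lambda}(\Lambda,S_j)=S_j$ and $\RHom_{\Lambda}(S_i,S_j)$ (resp.\ $S_j\Ltensor_{\Lambda}I_i$ with $S_j$ and $S_j\Ltensor_{\Lambda}S_i$). The outer terms $\RHom_{\Lambda}(S_i,S_j)$ and $S_j\Ltensor_{\Lambda}S_i$ have finite-dimensional total homology because $S_i$ admits the finite projective resolution \eqref{exact-ap1} by finitely generated projectives $e_k\Lambda$, and $\Hom_{\Lambda}(e_k\Lambda,S_j)=S_je_k$ and $S_je_k$ are finite-dimensional; likewise on the left using \eqref{exact-ap2}. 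Closure of $\mathcal{D}$ under cones then forces $\RHom_{\Lambda}(I_i,S_j)$ and $S_j\Ltensor_{\Lambda}I_i$ into $\mathcal{D}$.

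I do not expect any serious obstacle here; the statement is essentially bookkeeping once the reduction to simples is in place. The one point that requires a little care is the reduction step itself: one must observe that $\mathcal{D}$ is exactly the thick (indeed, even the triangulated) subcategory of $\dcat{}{\mathrm{Mod}\Lambda}$ generated by the simple modules $S_0,\dots,S_n$, so that checking the functors on simples suffices — this uses that any object of $\mathcal{D}$ has bounded and finite-dimensional homology and hence a finite filtration by simples realized through finitely many triangles. With that observed, everything else is immediate from Proposition \ref{red-tilt}, Proposition \ref{pp-Koszul}, and the sequence \eqref{exact1}.
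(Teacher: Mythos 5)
Your overall strategy (show both functors preserve $\mathcal{D}$, then the restrictions are automatically mutually inverse triangle equivalences) is sound, but your reduction step contains a genuine error. You claim that $\mathcal{D}$ is the triangulated subcategory of $\dcat{}{\mathrm{Mod}\Lambda}$ generated by the vertex simples $S_0,\dots,S_n$, and accordingly you only verify that $\RHom_{\Lambda}(I_i,S_j)$ and $S_j\Ltensor_{\Lambda}I_i$ lie in $\mathcal{D}$ for $j\in Q_0$. This is false for the preprojective algebra of a non-Dynkin quiver: a finite-dimensional $\Lambda$-module has composition factors among $S_0,\dots,S_n$ if and only if it is \emph{nilpotent}, and there exist finite-dimensional simple $\Lambda$-modules not isomorphic to any $S_i$ — this is exactly the set $\mathcal{S}$ introduced in Section \ref{fdDynkin} of the paper, which is nonempty (e.g.\ the simples of dimension vector $\mathbf{d}$ over an extended Dynkin quiver). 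The subcategory generated by $S_0,\dots,S_n$ is the strictly smaller category of complexes with nilpotent homology, so as written your argument does not cover all of $\mathcal{D}$.

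The repair is easy and is in fact the paper's entire proof: by condition (1) of Definition \ref{df_tilt}, $T$ admits an exact sequence $0\to P_1\to P_0\to T\to 0$ with $P_0,P_1\in\add\Lambda$ finitely generated projective, so for \emph{any} finite-dimensional $\Lambda$-module $M$ the spaces $\Hom_{\Lambda}(T,M)$ and $\Ext^1_{\Lambda}(T,M)$ are finite dimensional and $\Ext^{\ge 2}_{\Lambda}(T,M)=0$; one then passes to complexes by truncation triangles, and argues dually for $-\Ltensor_{\Lambda}T$. Alternatively, you can keep your triangle coming from $0\to I_i\to\Lambda\to S_i\to 0$, but you must apply $\RHom_{\Lambda}(-,M)$ with $M$ an \emph{arbitrary} finite-dimensional module (not just a vertex simple) in the second slot; the outer term $\RHom_{\Lambda}(S_i,M)$ still has finite-dimensional total homology thanks to the finite resolution \eqref{exact-ap1} of $S_i$ by finitely generated projectives, and this closes the gap without any appeal to generation by the $S_j$.
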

\begin{proof}
By the condition (1) in the Definition \ref{df_tilt}, for any finite dimensional $\Lambda$-module $M$, 
$\Hom_{\Lambda}(T,M)$ and $\Ext^1_{\Lambda}(T,M)$ is finite dimensional.
\end{proof}

Next we study the above equivalence in the level of dimension vectors.
We consider a map
\[
[-]:\mathcal{D} \longrightarrow \mathbb{Z}^{\overline{Q}_0}
\]
defined by
\[
[X^{\bullet}] = \sum_{i \in \mathbb Z}(-1)^i \udim \mathrm{H}^i(X^{\bullet}).
\]

We define an action of $W_Q$ on $\mathbb{Z}^{\overline{Q}_0}$ by
\[
s_i(\mathbf{x}) := \mathbf{x} - (\mathbf{x},\mathbf{e}_i) \mathbf{e}_i.
\]
Then the following result holds.

\begin{thm}\label{tilt-Gro}
The following diagrams commute.
\[
\xymatrix{
\mathcal{D} \ar[rr]^{\RHom_{\Lambda}(I_i,-)} \ar[d]_{[-]} & & \mathcal{D} \ar[d]^{[-]}\\
\mathbb{Z}^{\overline{Q}_0} \ar[rr]^{s_i} & & \mathbb{Z}^{\overline{Q}_0} 
}\quad
\xymatrix{
\mathcal{D} \ar[rr]^{-\Ltensor_{\Lambda} I_i} \ar[d]_{[-]} & & \mathcal{D} \ar[d]^{[-]}\\
\mathbb{Z}^{\overline{Q}_0} \ar[rr]^{s_i} & & \mathbb{Z}^{\overline{Q}_0}
}
\]
\end{thm}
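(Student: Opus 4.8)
The plan is to reduce both commutativity statements to the case of the simple modules, where each becomes a short Euler‑characteristic computation fed by the projective resolutions of Proposition~\ref{pp-Koszul}.

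First I would observe that $X\mapsto[\RHom_\Lambda(I_i,X)]$ and $X\mapsto[X\Ltensor_\Lambda I_i]$ are well‑defined additive functions $K_0(\mathcal D)\to\mathbb Z^{\overline Q_0}$: the functors $\RHom_\Lambda(I_i,-)$ and $-\Ltensor_\Lambda I_i$ preserve $\mathcal D$ by Lemma~\ref{D^b(nilp)-eq} and are triangle functors, while $[-]$ sends a distinguished triangle to an alternating sum of dimension vectors via the associated long exact homology sequence, hence factors through $K_0(\mathcal D)$. Likewise $X\mapsto s_i([X])$ is additive since $s_i$ is $\mathbb Z$‑linear. Using the truncation triangles, every object of $\mathcal D$ is, in $K_0(\mathcal D)$, the alternating sum of its finite‑dimensional homology modules, and each such module has a composition series with simple subquotients; hence the classes $[S_j]$, $j\in\overline Q_0$, generate $K_0(\mathcal D)$. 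It therefore suffices to prove
\[
[\RHom_\Lambda(I_i,S_j)]=s_i(\mathbf e_j)=[S_j\Ltensor_\Lambda I_i]\qquad(j\in\overline Q_0).
\]

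For the first equality I would apply the (derived, contravariant) functor $\RHom_\Lambda(-,S_j)$ to the bimodule short exact sequence \eqref{exact1}, obtaining a triangle
\[
\RHom_\Lambda(S_i,S_j)\longrightarrow S_j\longrightarrow\RHom_\Lambda(I_i,S_j)\longrightarrow\RHom_\Lambda(S_i,S_j)[1],
\]
so that $[\RHom_\Lambda(I_i,S_j)]=\mathbf e_j-[\RHom_\Lambda(S_i,S_j)]$. Now the left $\Lambda$‑module structure on the bimodule $S_i=\Lambda/I_i$ factors through $\Lambda\to\Lambda/I_i$, so $\RHom_\Lambda(S_i,S_j)$ is a complex of right $\Lambda$‑modules annihilated by $I_i$; its homology hence lies in $\add S_i$, and $[\RHom_\Lambda(S_i,S_j)]=\chi\cdot\mathbf e_i$ with $\chi=\sum_k(-1)^k\dim_K\Ext^k_\Lambda(S_i,S_j)$. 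Computing $\Ext^\bullet_\Lambda(S_i,S_j)$ from the length‑two projective resolution of the right module $S_i$ in Proposition~\ref{pp-Koszul}(1) gives $\chi=2\delta_{ij}-\#\{a\in\overline Q_1\mid sa=i,\ ta=j\}=(\mathbf e_i,\mathbf e_j)$. Therefore
\[
[\RHom_\Lambda(I_i,S_j)]=\mathbf e_j-(\mathbf e_i,\mathbf e_j)\mathbf e_i=\mathbf e_j-(\mathbf e_j,\mathbf e_i)\mathbf e_i=s_i(\mathbf e_j).
\]
For the second equality I would argue symmetrically, applying $S_j\Ltensor_\Lambda-$ to \eqref{exact1} to get $[S_j\Ltensor_\Lambda I_i]=\mathbf e_j-[S_j\Ltensor_\Lambda S_i]$; here $S_j\Ltensor_\Lambda S_i$ has homology in $\add S_i$ because the right $\Lambda$‑structure of $S_i=\Lambda/I_i$ also factors through $\Lambda\to\Lambda/I_i$, and $\sum_k(-1)^k\dim_K\Tor^\Lambda_k(S_j,S_i)=(\mathbf e_j,\mathbf e_i)$ either by computing with the projective resolution of the left module $S_i$ in Proposition~\ref{pp-Koszul}(2), or via the identification $\Tor^\Lambda_k(S_j,S_i)\cong D\Ext^k_\Lambda(S_j,DS_i)$ of Lemma~\ref{isom1}. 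This yields $[S_j\Ltensor_\Lambda I_i]=s_i(\mathbf e_j)$ as well, and by additivity the two diagrams commute on all of $\mathcal D$.

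I expect the only genuinely delicate point to be the bookkeeping of one‑sided module structures on $\RHom$ and $\Tor$ — concretely, confirming that $\RHom_\Lambda(S_i,S_j)$ and $S_j\Ltensor_\Lambda S_i$ have homology concentrated at the single vertex $i$. This becomes routine once one records that, as a $(\Lambda,\Lambda)$‑bimodule, $S_i$ is $\Lambda/I_i$, annihilated by $I_i$ on both sides; everything else is the Euler‑form identity read off from the projective resolutions in Proposition~\ref{pp-Koszul}.
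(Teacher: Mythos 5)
Your core computation is sound and close in spirit to the paper's, but there is one genuine error in the reduction step: the claim that the classes $[S_j]$, $j\in\overline Q_0$, generate $K_0(\mathcal D)$ is false here. Since $Q$ is non-Dynkin, $\Lambda$ is infinite dimensional and $\fd\Lambda$ contains simple modules that are \emph{not} among the vertex simples $S_0,\ldots,S_n$ (the paper makes this explicit in Section \ref{fdDynkin}, where $\nilp\Lambda$ is a proper subcategory of $\fd\Lambda$ and $\mathcal S$ denotes the set of the remaining finite dimensional simples; in the extended Dynkin case these have dimension vector $m\mathbf d$). A composition series of a finite dimensional module therefore need not have subquotients of the form $S_j$, so your reduction to the vertex simples does not cover all of $\mathcal D$. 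The fix is immediate, and it is essentially what the paper does: run your argument on an arbitrary finite dimensional module $M$ rather than on $S_j$. The triangle obtained from \eqref{exact1} gives $[\RHom_\Lambda(I_i,M)]=[M]-[\RHom_\Lambda(S_i,M)]$; the homology of $\RHom_\Lambda(S_i,M)$ lies in $\add S_i$ for exactly the reason you give (the left $I_i$-annihilation of $S_i=\Lambda/I_i$, plus $e_i\Lambda e_i/e_iI_ie_i\cong K$ as $Q$ has no loops); and the Euler characteristic $\sum_k(-1)^k\dim\Ext^k_\Lambda(S_i,M)=(\mathbf{e}_i,\udim M)$ falls out of the length-two projective resolution of $S_i$ in Proposition \ref{pp-Koszul}(1), since applying $\Hom_\Lambda(-,M)$ to it yields the complex $0\to Me_i\to\bigoplus_{i\to j}Me_j\to Me_i\to 0$. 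This gives $[\RHom_\Lambda(I_i,M)]=\udim M-(\mathbf{e}_i,\udim M)\mathbf{e}_i=s_i(\udim M)$ for every $M\in\fd\Lambda$, and additivity over the homology truncations (which you correctly invoke) extends it to $\mathcal D$. No decomposition into composition factors is needed.

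With that repair your route is a mild repackaging of the paper's: the paper computes $\udim\Hom_\Lambda(I_i,M)-\udim\Ext^1_\Lambda(I_i,M)$ componentwise using the splitting $I_i=(\bigoplus_{j\neq i}e_j\Lambda)\oplus e_iI_i$ and the resolution of $e_iI_i$, whereas you isolate the correction term $[\RHom_\Lambda(S_i,M)]$ via the triangle from \eqref{exact1} -- the same resolution in different clothing. For the tensor diagram the paper avoids a second computation by observing $s_i^2=1$ and $[\RHom_\Lambda(I_i,M\Ltensor_\Lambda I_i)]=[M]$; your symmetric direct computation with Proposition \ref{pp-Koszul}(2) (or Lemma \ref{isom1}) is equally valid and, again, should be phrased for arbitrary $N\in\fd\Lambda^{\op}$-type input rather than only for the vertex simples.
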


\begin{proof}
It is enough to show that for any finite dimensional $\Lambda$-module $M$, $s_i[M]=[\RHom_{\Lambda}(I_i,M)]$ holds.
Let $M$ be a finite dimensional $\Lambda$-module, and put $\alpha=(\alpha_i):=\udim M$. 

First we have
\[
\Hom_{\Lambda}(I_i,M)e_j = \begin{cases}
\Hom_{\Lambda}(e_iI_i,M) & (i=j) \\
\Hom_{\Lambda}(e_j \Lambda,M) \simeq Me_j & (i \neq j)
\end{cases}
\]
and
\[
\Ext^1_{\Lambda}(I_i,M)e_j = \begin{cases}
\Ext^1_{\Lambda}(e_iI_i,M) & (i=j) \\
\Ext^1_{\Lambda}(e_j \Lambda,M) =0 & (i \neq j).
\end{cases}
\]

Next by applying $\Hom_{\Lambda}(-,M)$ to the exact seqneuce
\[
0 \longrightarrow e_i\Lambda \longrightarrow \bigoplus_{j \rightarrow i}e_j\Lambda \longrightarrow e_iI_i \longrightarrow 0
\]
obtained from Proposition \ref{pp-Koszul}, we have an exact sequence
\[
0 \longrightarrow \Hom_{\Lambda}(e_iI_i,M) \longrightarrow \bigoplus_{j \rightarrow i}\Hom_{\Lambda}(e_j\Lambda,M) \longrightarrow \Hom_{\Lambda}(e_i\Lambda,M) \longrightarrow \Ext^1_{\Lambda}(e_iI_i,M) \longrightarrow 0.
\]
Thus we have 
\begin{eqnarray*}
\dim \Hom_{\Lambda}(e_iI_i,M)-\dim \Ext^1_{\Lambda}(e_iI_i,M) &=& \sum_{j \rightarrow i} \dim \Hom_{\Lambda}(e_j\Lambda,M)-\dim \Hom_{\Lambda}(e_i\Lambda,M) \\
&=& \sum_{ta=i}\alpha_{sa}-\alpha_i.
\end{eqnarray*}
Consequently we have
\begin{eqnarray*}
[\RHom_{\Lambda}(I_i,M)] &=& \udim \Hom_{\Lambda}(I_i,M)-\udim \Ext^1_{\Lambda}(I_i,M) \\
&=& \sum_{j \neq i} \alpha_i \mathbf{e}_j +(\dim \Hom_{\Lambda}(e_iI_i,M)-\dim \Ext^1_{\Lambda}(e_iI_i,M))\mathbf{e}_i \\
&=& \sum_{j \neq i} \alpha_j \mathbf{e}_j +\left(\sum_{ta=i}\alpha_{sa}-\alpha_i\right)\mathbf{e}_i \\
&=& \alpha - \left(2 \alpha_i - \sum_{ta=i}\alpha_{sa} \right)\mathbf{e}_i \\
&=& \alpha - (\alpha,\mathbf{e}_i)\mathbf{e}_i = s_i \alpha.
\end{eqnarray*}
On the other hand, by $s_i^2=1$ and $[\RHom_{\Lambda}(I_i,M\Ltensor_{\Lambda}I_i)] = [M]$ for any $M \in \mathcal D$, we have $[M\Ltensor_{\Lambda}I_i)] = s_i[M]$.  
\end{proof}

\subsection{Torsion pairs}
\label{torsion-tilting}

In the latter sections, we study semistable modules. 
To study a behavior of them in more detail, it is better to restrict the derived equivalence induced by a tilting module to the modules categories.
At that time, the notion of the torsion pair becomes important. 
We define full subcategories $\mathcal{T}(T)$ and $\mathcal{F}(T)$ of $\fd \Lambda$ by
\[
\mathcal{T}(T):=\{ M \in \fd \Lambda \ | \ \ \Ext^1_{\Lambda}(T,M)=0 \}
\]
and
\[
\mathcal{F}(T):=\{ M \in \fd \Lambda \ | \ \Hom_{\Lambda}(T,M)=0 \}.
\]
We also define full subcategories $\mathcal{X}(T)$ and $\mathcal{Y}(T)$ of $\fd \Lambda$ by
\[
\mathcal{X}(T):=\{ M \in \fd \Lambda \ | \ M \otimes_{\Lambda} T =0 \}
\]
and
\[
\mathcal{Y}(T):=\{ M \in \fd \Lambda \ | \ \Tor^{\Lambda}_1(M,T)=0 \}.
\]
The pairs $(\mathcal T(T),\mathcal Y(T))$ and $(\mathcal F(T),\mathcal X(T))$ form torsion pairs in $\fd \Lambda$.
By the definition, $\mathcal{T}(T)$ and $\mathcal{X}(T)$ are closed under images, extensions and finite direct sums,
and $\mathcal{F}(T)$ and $\mathcal{Y}(T)$ are closed under submodules, extensions and finite direct sums.

The following result immediately follows from Lemma \ref{D^b(nilp)-eq}.

\begin{lem}\label{Brenner-Butler}
Let $T$ be a tilting $\Lambda$-module which lies in $\mathcal{I}(\Lambda)$.
Then there are categorical equivalences
\[
\xymatrix{
\mathcal{T}(T) \ar@<0.5ex>[rr]^{\Hom_{\Lambda}(T,-)} & & \mathcal{Y}(T) \ar@<0.5ex>[ll]^{- \otimes_{\Lambda} T}
}
\]
and
\[
\xymatrix{
\mathcal{F}(T) \ar@<0.5ex>[rr]^{\Ext^1_{\Lambda}(T,-)} & & \mathcal{X}(T) \ar@<0.5ex>[ll]^{\Tor^{\Lambda}_1(-,T)}
}.
\]
\end{lem}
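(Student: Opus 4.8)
The statement to prove is Lemma \ref{Brenner-Butler}: given a tilting module $T \in \mathcal{I}(\Lambda)$, the functors $\Hom_\Lambda(T,-)$ and $-\otimes_\Lambda T$ restrict to quasi-inverse equivalences between $\mathcal{T}(T)$ and $\mathcal{Y}(T)$, and dually $\Ext^1_\Lambda(T,-)$ and $\Tor^\Lambda_1(-,T)$ give an equivalence between $\mathcal{F}(T)$ and $\mathcal{X}(T)$.

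My plan is to deduce everything from the triangle equivalence $\RHom_\Lambda(T,-)\colon \mathcal{D} \to \mathcal{D}$ of Lemma \ref{D^b(nilp)-eq}, exactly as the classical Brenner–Butler theorem is proved for finite-dimensional algebras, the only extra input being that $\RHom_\Lambda(T,-)$ preserves finite-dimensionality of total homology (which is Lemma \ref{D^b(nilp)-eq}). Since $T$ has projective dimension at most one (condition (1) of Definition \ref{df_tilt}), for any module $M$ the complex $\RHom_\Lambda(T,M)$ has homology concentrated in degrees $0$ and $1$: $H^0 = \Hom_\Lambda(T,M)$ and $H^1 = \Ext^1_\Lambda(T,M)$. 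Dually, $-\Ltensor_\Lambda T$ has homology concentrated in degrees $0$ and $-1$, namely $-\otimes_\Lambda T$ and $\Tor^\Lambda_1(-,T)$. The first step is to observe that $M \in \mathcal{T}(T)$ exactly when $\RHom_\Lambda(T,M)$ is a module (sits in degree $0$), and $M \in \mathcal{F}(T)$ exactly when $\RHom_\Lambda(T,M)$ sits in degree $1$, i.e.\ $\RHom_\Lambda(T,M) \cong \Ext^1_\Lambda(T,M)[-1]$; symmetrically, $N \in \mathcal{Y}(T)$ iff $N\Ltensor_\Lambda T$ sits in degree $0$ and $N \in \mathcal{X}(T)$ iff it sits in degree $-1$.

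Next I would run the image-matching argument. For $M \in \mathcal{T}(T)$, applying the equivalence twice and using that $-\Ltensor_\Lambda T$ is inverse to $\RHom_\Lambda(T,-)$ gives $\Hom_\Lambda(T,M)\Ltensor_\Lambda T \cong M$ in $\mathcal{D}$; since the left side a priori has homology in degrees $0,-1$ and the right side is a module, it is a module, whence $\Tor^\Lambda_1(\Hom_\Lambda(T,M),T)=0$, i.e.\ $\Hom_\Lambda(T,M)\in\mathcal{Y}(T)$, and $\Hom_\Lambda(T,M)\otimes_\Lambda T \cong M$ functorially. Conversely, starting from $N\in\mathcal{Y}(T)$ one gets $\RHom_\Lambda(T, N\otimes_\Lambda T) \cong N$, which forces $\Ext^1_\Lambda(T,N\otimes_\Lambda T)=0$, so $N\otimes_\Lambda T\in\mathcal{T}(T)$ and $\Hom_\Lambda(T,N\otimes_\Lambda T)\cong N$. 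This establishes the first equivalence. For the second, the same bookkeeping applied to $M\in\mathcal{F}(T)$ shows $\RHom_\Lambda(T,M) = \Ext^1_\Lambda(T,M)[-1]$ with $\Ext^1_\Lambda(T,M)\in\mathcal{X}(T)$ (again because the double application $\RHom_\Lambda(T,M)\Ltensor_\Lambda T \cong M$ is a shift of a module), and $\Tor^\Lambda_1(\Ext^1_\Lambda(T,M),T)\cong M$; conversely for $N\in\mathcal{X}(T)$ one gets $N[-1]\cong\RHom_\Lambda(T, \Tor^\Lambda_1(N,T))$ up to the degree shift, so $\Tor^\Lambda_1(N,T)\in\mathcal{F}(T)$ and $\Ext^1_\Lambda(T,\Tor^\Lambda_1(N,T))\cong N$.

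I do not expect a serious obstacle here: the hard work is already packaged into Lemma \ref{D^b(nilp)-eq} and the projective dimension bound, and the rest is the standard truncation argument for tilting modules of projective dimension one. The one point requiring mild care is the degree bookkeeping for $-\Ltensor_\Lambda T$: one must check that $\Tor^\Lambda_i(-,T)=0$ for $i\geq 2$, which follows since an exact sequence $0\to P_1\to P_0\to T\to 0$ with $P_0,P_1$ projective gives $T$ a flat resolution of length one. With that in hand, a complex in the image of either functor whose homology happens to be concentrated in a single degree must be (a shift of) that homology module, and all the isomorphisms above are identifications of adjunction/evaluation morphisms, hence functorial. Thus the lemma follows.
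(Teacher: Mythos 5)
Your proposal is correct and is essentially the argument the paper intends: the paper gives no written proof, simply asserting that the lemma follows from the restricted triangle equivalence of Lemma \ref{D^b(nilp)-eq}, and your truncation/bookkeeping argument is exactly the standard way to extract the two module-level equivalences from that derived equivalence. The only point worth tightening is that the vanishing of $\Tor^{\Lambda}_i(-,T)$ for $i\geq 2$ requires $T$ to have projective (or flat) dimension at most one as a \emph{left} module --- which holds for $T\in\mathcal{I}(\Lambda)$ by the symmetric left-module version of the resolution in Proposition \ref{pp-Koszul} --- rather than following directly from the right-module resolution in condition (1) of Definition \ref{df_tilt}.
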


In the case $T=I_i$, we have explicit descriptions of the above full subcategories.

\begin{lem}\label{tor-I_i}
The following hold.
\begin{enumerate}
\def\labelenumi{(\theenumi)}
\item $\mathcal T(I_i) = \{ M \in \fd \Lambda \mid S_i \text{\ is not a direct summand of\ } M/(MI) \}$.
\item $\mathcal F(I_i) = \add S_i$.
\item $\mathcal X(I_i) = \add S_i$
\item $\mathcal Y(I_i) = \{ M \in \fd \Lambda \mid S_i \text{\ is not a direct summand of\ } \Soc(M) \}$.
\end{enumerate}
\end{lem}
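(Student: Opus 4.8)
The plan is to compute each of the four subcategories directly from the definitions, using the bimodule exact sequence \eqref{exact1} together with the Koszul resolutions in Proposition \ref{pp-Koszul}. Throughout, the key structural fact is the decomposition $I_i = \left(\bigoplus_{j\neq i} e_j\Lambda\right) \oplus e_iI_i$, which isolates the vertex $i$: for $j \neq i$ the functors $\Hom_\Lambda(I_i,-)$, $-\otimes_\Lambda I_i$ etc.\ act as the identity on the $j$-component, so all the content is concentrated at vertex $i$, where we must understand $e_iI_i$ and the short exact sequence $0 \to e_i\Lambda \to \bigoplus_{j\to i} e_j\Lambda \to e_iI_i \to 0$.

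First I would do (2) and (3), which are the easiest. For (2): $M \in \mathcal F(I_i)$ means $\Hom_\Lambda(I_i,M)=0$; applying $\Hom_\Lambda(-,M)$ to \eqref{exact1} gives $0 \to \Hom_\Lambda(S_i,M) \to \Hom_\Lambda(\Lambda,M) \to \Hom_\Lambda(I_i,M)$, so $\Hom_\Lambda(I_i,M) = 0$ forces $M \simeq \Hom_\Lambda(S_i,M)$, i.e.\ $M$ is semisimple $i$-isotypic, hence $M \in \add S_i$; conversely $\Hom_\Lambda(I_i,S_i) = 0$ since $I_i$ is generated by $e_j$ with $j\neq i$ together with $e_iI_i \subseteq I \cdot e_i\Lambda$ acting as zero on $S_i$ — more cleanly, $\Hom_\Lambda(I_i,S_i)=0$ was already established in the proof of Theorem \ref{classi-tilt}. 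For (3): $M\otimes_\Lambda I_i = 0$; tensoring \eqref{exact1} with $M$ gives the exact sequence $M\otimes_\Lambda I_i \to M \to M\otimes_\Lambda S_i \to 0$, and $M\otimes_\Lambda I_i$ is the image of $M\otimes_\Lambda I_i \to M$, so $M\otimes_\Lambda I_i = 0$ iff $M \simeq M\otimes_\Lambda S_i = M/MI_i$ is $i$-isotypic semisimple, i.e.\ $M \in \add S_i$ (using $S_j \otimes_\Lambda I_i = S_j$ for $j\neq i$ and $S_i\otimes_\Lambda I_i = 0$).

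For (4): $M \in \mathcal Y(I_i)$ iff $\Tor_1^\Lambda(M,I_i) = 0$. Tensoring \eqref{exact1} with $M$ gives $0 \to \Tor_1^\Lambda(M,S_i) \to M\otimes_\Lambda I_i \to M \to M\otimes_\Lambda S_i \to 0$ (using $\Tor_1^\Lambda(M,\Lambda)=0$), together with $\Tor_2^\Lambda(M,S_i) \simeq \Tor_1^\Lambda(M,I_i)$. Since $\mathrm{pd}\, S_i = 2$ with the explicit resolution \eqref{exact-ap2}, one computes $\Tor_2^\Lambda(M,S_i) = \Ker(Me_i \to \bigoplus_{j\to i} Me_j)$ where the map is dual to the socle-type map; this kernel is nonzero precisely when $M$ contains a copy of $S_i$ on which all arrows into $i$ act as zero, i.e.\ when $S_i$ is a direct summand of $\Soc(M)$. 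So $\Tor_1^\Lambda(M,I_i) = 0$ iff $S_i$ is not a summand of $\Soc(M)$. Statement (1) is then obtained dually, either by the same kind of computation with $\Ext^1_\Lambda(I_i,M) \simeq \Ext^2_\Lambda(S_i,M)$ and the resolution \eqref{exact-ap1}, or most efficiently by applying the duality $D$: $M \in \mathcal T(I_i)$ iff $\Ext^1_\Lambda(I_i,M)=0$ iff (by Lemma \ref{isom1}, noting $I_i$ is also the corresponding left-module ideal) $\Tor_1^\Lambda(DM, I_i) = 0$ iff $DM \in \mathcal Y(I_i)$ iff $S_i$ is not a summand of $\Soc(DM) = D(\mathrm{top}\, M) = D(M/MI)$, which says $S_i$ is not a summand of $M/MI$.

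The main obstacle is the bookkeeping at vertex $i$ in parts (1) and (4): identifying $\Tor_2^\Lambda(M,S_i)$ (resp.\ $\Ext^2_\Lambda(S_i,M)$) concretely as a socle (resp.\ top) multiplicity space via the degree-two Koszul resolution, and making sure the left/right module distinction in applying Lemma \ref{isom1} is handled correctly — in particular that $I_i = \Lambda(1-e_i)\Lambda$ as a two-sided ideal restricts on both sides to the analogous one-sided ideal, so that the duality argument for (1) genuinely reduces to (4). Everything else is a direct unwinding of the torsion-pair definitions against the two short exact sequences \eqref{exact1} and the Koszul sequence $0 \to e_i\Lambda \to \bigoplus_{j\to i}e_j\Lambda \to e_iI_i \to 0$.
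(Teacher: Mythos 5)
Your proposal is correct, and for parts (2) and (3) it is essentially the paper's argument: apply $\Hom_\Lambda(-,M)$ (resp.\ $-\otimes_\Lambda M$) to the sequence $0\to I_i\to\Lambda\to S_i\to 0$ and read off that $M$ must coincide with its $S_i$-socle (resp.\ its $S_i$-top). For parts (1) and (4), however, you take a genuinely different route. The paper's proof of (1) is a two-line dimension shift followed by the $2$-Calabi--Yau property of Lemma \ref{lem:1-1}: $\Ext^1_\Lambda(I_i,M)\simeq\Ext^2_\Lambda(S_i,M)\simeq D\Hom_\Lambda(M,S_i)$, which immediately identifies $\mathcal T(I_i)$ with the modules whose top contains no $S_i$; the paper then declares (4) to follow ``from similar arguments'' (i.e.\ the dual chain $\Tor_1^\Lambda(M,I_i)\simeq\Tor_2^\Lambda(M,S_i)\simeq D\Ext^2_\Lambda(M,S_i)\simeq\Hom_\Lambda(S_i,M)$). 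You instead compute $\Tor_2^\Lambda(M,S_i)$ and $\Ext^2_\Lambda(S_i,M)$ directly from the degree-two terms of the Koszul resolutions in Proposition \ref{pp-Koszul}, identifying them by hand as the socle and top multiplicity spaces, and optionally route (1) through (4) via the duality of Lemma \ref{isom1}. Your version is more elementary in that it never invokes the Calabi--Yau property, at the cost of the left/right bookkeeping you correctly flag (and of a small imprecision: $\Soc(DM)$ is $D(M/\mathrm{rad}\,M)$ rather than $D(M/MI)$ on the nose, though the $S_i$-multiplicities agree since every map $M\to S_i$ factors through $M/MI$); the paper's version is shorter but leans on Lemma \ref{lem:1-1}. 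One further nitpick: in (3) the phrase ``$M\otimes_\Lambda I_i$ is the image of $M\otimes_\Lambda I_i\to M$'' is not literally true (the kernel is a $\Tor_1$ term), but your argument only uses that vanishing of $M\otimes_\Lambda I_i$ forces $MI_i=0$, together with the separately verified fact $S_i\otimes_\Lambda I_i=0$, so the conclusion stands.
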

\begin{proof}
(1) Let $M$ be a finite dimensional $\Lambda$-module. 
By applying $\Hom_{\Lambda}(-,M)$ to the exact sequence \eqref{exact1},  we have $\Ext^1_{\Lambda}(I_i,M) \simeq \Ext^2_{\Lambda}(S_i,M)$, 
and by Lemma \ref{lem:1-1}, we have $\Ext^1_{\Lambda}(I_i,M) \simeq D\Hom_{\Lambda}(M,S_i)$. Thus the assertion follows. 

(2) Let $M$ be a finite dimensional $\Lambda$-module. 
If $M$ lies in $\add S_i$, we have $\Hom_{\Lambda}(I_i,M)=0$. 
Conversely we assume $\Hom_{\Lambda}(I_i,M)=0$. 
Then by applying $\Hom_{\Lambda}(-,M)$ to the exact sequence \eqref{exact1},  we have $M \simeq \Hom_{\Lambda}(S_i,M) \in \add S_i$.

(3) and (4) follow from similar arguments.
\end{proof}

The following immediately follows from Theorem \ref{tilt-Gro}.

\begin{cor}\label{change-dim}
For a finite dimensional $\Lambda$-module $M$, the following hold.
\begin{enumerate}
\def\labelenumi{(\theenumi)}
\item If $M \in \mathcal T(I_i)$, then $\udim \Hom_{\Lambda}(I_i,M) = [\RHom_{\Lambda}(I_i,M)] = s_i(\udim M)$.
\item If $M \in \mathcal F(I_i)$, then $\udim \Ext^1_{\Lambda}(I_i,M) = -[\RHom_{\Lambda}(I_i,M)] = -s_i(\udim M)$.
\item If $M \in \mathcal X(I_i)$, then $\udim \Tor^{\Lambda}_1(M,I_i) = -[M \Ltensor_{\Lambda} I_i] = - s_i(\udim M)$.
\item If $M \in \mathcal Y(I_i)$, then $\udim M \otimes_{\Lambda} I_i = [M \Ltensor_{\Lambda} I_i] = s_i(\udim M)$.
\end{enumerate}
\end{cor}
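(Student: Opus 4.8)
The plan is to derive Corollary \ref{change-dim} as a direct consequence of Theorem \ref{tilt-Gro} together with the explicit descriptions of the torsion pairs in Lemma \ref{tor-I_i}. The key observation is that on each of the four subcategories $\mathcal T(I_i)$, $\mathcal F(I_i)$, $\mathcal X(I_i)$, $\mathcal Y(I_i)$, the complex $\RHom_{\Lambda}(I_i,M)$ (respectively $M\Ltensor_{\Lambda}I_i$) is concentrated in a single degree, so its class $[-]$ in $\mathbb Z^{\overline Q_0}$ collapses to $\pm\udim$ of the single nonzero homology module.

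\medskip

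First I would treat (1). If $M\in\mathcal T(I_i)$, then by definition $\Ext^1_{\Lambda}(I_i,M)=0$, so $\RHom_{\Lambda}(I_i,M)$ has homology only in degree $0$, namely $\Hom_{\Lambda}(I_i,M)$. Hence $[\RHom_{\Lambda}(I_i,M)]=\udim\Hom_{\Lambda}(I_i,M)$, and Theorem \ref{tilt-Gro} identifies this with $s_i(\udim M)$. For (4), if $M\in\mathcal Y(I_i)$ then $\Tor^{\Lambda}_1(M,I_i)=0$, so $M\Ltensor_{\Lambda}I_i$ is concentrated in degree $0$ with homology $M\otimes_{\Lambda}I_i$, and again Theorem \ref{tilt-Gro} (the right-hand diagram) gives $\udim(M\otimes_{\Lambda}I_i)=[M\Ltensor_{\Lambda}I_i]=s_i(\udim M)$.

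\medskip

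For (2) and (3) one uses instead that $M$ lies in $\mathcal F(I_i)=\add S_i$ (resp.\ $\mathcal X(I_i)=\add S_i$), so $\Hom_{\Lambda}(I_i,M)=0$ (resp.\ $M\otimes_{\Lambda}I_i=0$) by Lemma \ref{tor-I_i}. Thus $\RHom_{\Lambda}(I_i,M)$ is concentrated in degree $1$, namely $\Ext^1_{\Lambda}(I_i,M)$, whence $[\RHom_{\Lambda}(I_i,M)]=-\udim\Ext^1_{\Lambda}(I_i,M)$; combining with Theorem \ref{tilt-Gro} yields $\udim\Ext^1_{\Lambda}(I_i,M)=-s_i(\udim M)$. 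Symmetrically, $M\Ltensor_{\Lambda}I_i$ is concentrated in degree $-1$ (i.e.\ $\Tor^{\Lambda}_1(M,I_i)$ placed so that $[\,\cdot\,]$ picks up a sign $-1$), giving $\udim\Tor^{\Lambda}_1(M,I_i)=-[M\Ltensor_{\Lambda}I_i]=-s_i(\udim M)$.

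\medskip

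I do not anticipate a genuine obstacle here: the statement is essentially a bookkeeping consequence of two already-established results. The only point requiring a little care is the sign/degree conventions in the definition of $[-]=\sum_i(-1)^i\udim\mathrm H^i$, to make sure that a complex concentrated in cohomological degree $1$ contributes with a minus sign and that $M\Ltensor_{\Lambda}I_i$ for $M\in\mathcal X(I_i)$ is indeed placed in the degree that produces the claimed sign; once the location of the unique nonzero homology is correctly pinned down in each of the four cases, the rest is immediate from Theorem \ref{tilt-Gro}.
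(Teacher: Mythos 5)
Your proposal is correct and matches the paper's intent exactly: the paper states that Corollary \ref{change-dim} ``immediately follows from Theorem \ref{tilt-Gro}'', and your argument simply makes explicit the bookkeeping that in each of the four cases the relevant complex is concentrated in a single degree, so $[-]$ reduces to $\pm\udim$ of the unique nonzero homology. (The appeals to Lemma \ref{tor-I_i} in cases (2) and (3) are harmless but unnecessary, since the vanishing of $\Hom_{\Lambda}(I_i,M)$ and of $M\otimes_{\Lambda}I_i$ is already the definition of $\mathcal F(I_i)$ and $\mathcal X(I_i)$.)
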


\subsection{The categories of finite dimensional modules over preprojective algebras of extended Dynkin quivers}
\label{fdDynkin}

In this subsection, we determine the structure of the category $\fd \Lambda$ of finite dimensional modules over the preprojective algebra $\Lambda$ of an extended Dynkin quiver $Q$ whose double $\overline{Q}$ is one of quivers appeared in Figure \ref{McKay}.

\begin{figure}[htbp]
\begin{center}
\scalebox{.75}{
$\begin{array}{ccc}
\widetilde{A}_n : &
\scalebox{.8}{$
\def\alphanum{\ifcase\xypolynode\or \bullet\or\bullet\or\bullet\or\bullet\or\bullet\or\bullet\fi}
\xy \xygraph{!{/r5pc/:} [] !P6"A"{~>{} ~*{\alphanum} ~={90}}}
\ar@/_.4pc/ "A1";"A2"
\ar@/_.4pc/ "A2";"A3"
\ar@/_.4pc/ "A3";"A4"
\ar@/_/@{--} "A4";"A5"
\ar@/_.4pc/ "A5";"A6"
\ar@/_.4pc/ "A6";"A1"
\ar@/_.4pc/ "A2";"A1"
\ar@/_.4pc/ "A3";"A2"
\ar@/_.4pc/ "A4";"A3"
\ar@/_.4pc/ "A6";"A5"
\ar@/_.4pc/ "A1";"A6"
\endxy$
}
&
\scalebox{.8}{$
\def\alphanum{\ifcase\xypolynode\or{\huge1}\or{\huge1}\or{\huge1}\or{\huge1}\or{\huge1}\or{\huge1}\fi}
\xy \xygraph{!{/r5pc/:} [] !P6"A"{~>{} ~*{\alphanum} ~={90}}}
\ar@/_/@{-} "A1";"A2"
\ar@/_/@{-} "A2";"A3"
\ar@/_/@{-} "A3";"A4"
\ar@/_/@{--} "A4";"A5"
\ar@/_/@{-} "A5";"A6"
\ar@/_/@{-} "A6";"A1"
\endxy$
}
\\
\xymatrix{ \\  \widetilde{D}_n : } & 
\scalebox{.8}{\xymatrix{
\bullet \ar@/^.4pc/[rd] \ar@{<-}@/_.4pc/[rd] & & & & & & \bullet \ar@/^.4pc/[ld] \ar@{<-}@/_.4pc/[ld]\\
 & \bullet \ar@/^.4pc/[r] \ar@{<-}@/_.4pc/[r] & \bullet \ar@/^.4pc/[r] \ar@{<-}@/_.4pc/[r] & \cdots\cdots \ar@/^.4pc/[r] \ar@{<-}@/_.4pc/[r] & \bullet \ar@/^.4pc/[r] \ar@{<-}@/_.4pc/[r] & \bullet & \\
\bullet \ar@/^.4pc/[ru] \ar@{<-}@/_.4pc/[ru] & & & & & & \bullet \ar@/^.4pc/[lu] \ar@{<-}@/_.4pc/[lu]
}}
&
\scalebox{.8}{\xymatrix{
{\LARGE1} \ar@{-}[rd] &&&&&& {\LARGE1} \ar@{-}[ld]\\
& {\LARGE2} \ar@{-}[r] & {\LARGE2} \ar@{-}[r] & \cdots\cdots \ar@{-}[r] & {\LARGE2} \ar@{-}[r] & {\LARGE2} & \\
{\LARGE1} \ar@{-}[ru] &&&&&& {\LARGE1} \ar@{-}[lu]
}}
\\
\xymatrix{ \\  \widetilde{E}_6 : } &  
\scalebox{.95}{\xymatrix{
& & \bullet \ar@/^.4pc/[d] \ar@{<-}@/_.4pc/[d] & & \\
& & \bullet \ar@/^.4pc/[d] \ar@{<-}@/_.4pc/[d] & & \\
\bullet \ar@/^.4pc/[r] \ar@{<-}@/_.4pc/[r] & \bullet \ar@/^.4pc/[r] \ar@{<-}@/_.4pc/[r] & \bullet \ar@/^.4pc/[r] \ar@{<-}@/_.4pc/[r] & \bullet \ar@/^.4pc/[r] \ar@{<-}@/_.4pc/[r] & \bullet 
}}
&
\scalebox{.95}{\xymatrix{
&& 1 \ar@{-}[d] && \\
&& 2 \ar@{-}[d] && \\
1 \ar@{-}[r] & 2 \ar@{-}[r] & 3 \ar@{-}[r] & 2 \ar@{-}[r] & 1 
}}
\\
\xymatrix{ \\  \widetilde{E}_7 : } &
\scalebox{.95}{\xymatrix{
 & & & \bullet \ar@/^.4pc/[d] \ar@{<-}@/_.4pc/[d] & & & \\
\bullet \ar@/^.4pc/[r] \ar@{<-}@/_.4pc/[r] & \bullet \ar@/^.4pc/[r] \ar@{<-}@/_.4pc/[r]  & \bullet \ar@/^.4pc/[r] \ar@{<-}@/_.4pc/[r]  &\bullet \ar@/^.4pc/[r] \ar@{<-}@/_.4pc/[r] &\bullet \ar@/^.4pc/[r] \ar@{<-}@/_.4pc/[r] & \bullet \ar@/^.4pc/[r] \ar@{<-}@/_.4pc/[r] & \bullet &
}}
&
\scalebox{.95}{\xymatrix{
&&& 2 \ar@{-}[d] &&& \\
1 \ar@{-}[r] & 2 \ar@{-}[r]  & 3 \ar@{-}[r]  & 4 \ar@{-}[r] & 3 \ar@{-}[r] & 2 \ar@{-}[r] & 1 &
}}
\\  
\xymatrix{ \\  \widetilde{E}_8 : } &  
\scalebox{.95}{\xymatrix{
 & &  \bullet \ar@/^.4pc/[d] \ar@{<-}@/_.4pc/[d] & & & & & \\
\bullet \ar@/^.4pc/[r] \ar@{<-}@/_.4pc/[r] &\bullet \ar@/^.4pc/[r] \ar@{<-}@/_.4pc/[r] & \bullet \ar@/^.4pc/[r] \ar@{<-}@/_.4pc/[r] &\bullet \ar@/^.4pc/[r] \ar@{<-}@/_.4pc/[r] &\bullet \ar@/^.4pc/[r] \ar@{<-}@/_.4pc/[r] & \bullet \ar@/^.4pc/[r] \ar@{<-}@/_.4pc/[r] & \bullet \ar@/^.4pc/[r] \ar@{<-}@/_.4pc/[r] &\bullet 
}}
&
\scalebox{.95}{\xymatrix{
&&  3 \ar@{-}[d] &&&&& \\
2 \ar@{-}[r] & 4 \ar@{-}[r] & 6 \ar@{-}[r] & 5 \ar@{-}[r] & 4 \ar@{-}[r] & 3 \ar@{-}[r] & 2 \ar@{-}[r] & 1 
}}
\end{array}$
}
\caption{The double $\overline{Q}$ of extended Dynkin quivers and dimension vectors $\mathbf d$}
\label{McKay}
\end{center}
\end{figure}

These preprojective algebras are important from viewpoint of quotient singularities.
Each quiver $\overline{Q}$ is called the McKay quiver of $G$ which is a finite subgroup of $SL(2,K)$ of type ${A}_n,{D}_n, {E}_6,{E}_7$ or ${E}_8$.
The skew group algebra $S*G$ is Morita equivalent to the preprojective algebra of $Q$ (see \cite{R-VdB,BSW}).
The $\Lambda$-modules of dimension vector $\mathbf d$ are important since they form the moduli spaces which are isomorphic to the minimal resolution of the Kleinian singularity $\mathbb A^2/G$.

The purpose of this subsection is to represent $\fd \Lambda$ as a direct product of its full subcategories.
A $\Lambda$-module $M$ is called \emph{nilpotent} if there exists an $m \in \mathbb{N}$ such that $MI^m=0$.
We remark that a $\Lambda$-module $M$ is finite dimensional nilpotent if and only if $M$ has finite length and its composition factors consist of $S_0,S_1,\ldots,S_n$.
We define the full subcategory $\nilp \Lambda$ of $\fd\Lambda$ which consist of finite dimensional nilpotent $\Lambda$-modules.
We denote by $\mathcal{S}$ the set of isomorphism classes of finite dimensional simple $\Lambda$-modules which are not isomorphic to $S_0,S_1,\ldots,S_n$. 
For any $S \in \mathcal{S}$, we define the full subcategory $\fd_S \Lambda$ of $\fd \Lambda$ which consists of finite dimensional $\Lambda$-modules whose composition factors consist only of $S$.

\begin{prop}\label{cat_fd}
The following holds.
\[
\fd \Lambda= (\prod_{S \in \mathcal{S}}\fd_S \Lambda) \times \nilp \Lambda.
\]
\end{prop}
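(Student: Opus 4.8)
The plan is to show that $\fd\Lambda$ decomposes as a product of subcategories indexed by the "blocks" corresponding to the $W_Q$-orbit of $\{S_0,\dots,S_n\}$ on one hand, and the remaining simple modules on the other. First I would observe that any finite dimensional $\Lambda$-module $M$ has a finite composition series, so it suffices to understand how simple modules can appear together in indecomposable modules, i.e.\ to understand $\Ext^1_\Lambda(S,S')$ for simple $S,S'$. The key structural input is Lemma \ref{lem:1-1} (the $2$-Calabi-Yau property): for finite dimensional simple modules $S,S'$ one has $\Ext^1_\Lambda(S,S')\simeq D\Ext^1_\Lambda(S',S)$, so the "Ext-quiver" on finite dimensional simples is symmetric. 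Hence $\fd\Lambda$ splits as a product over the connected components of this Ext-quiver: if $M$ is indecomposable its composition factors all lie in a single component, and conversely modules supported on disjoint unions of components have no extensions between them and split off as direct summands.

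Next I would identify the connected components. The claim is that $\{S_0,\dots,S_n\}$ forms a single component (giving $\nilp\Lambda$), and every other finite dimensional simple $S\in\mathcal S$ is its own component (giving $\fd_S\Lambda$). For the first part, $\Ext^1_\Lambda(S_i,S_j)\ne 0$ exactly when $i,j$ are adjacent in $\overline Q$, which is connected, so $S_0,\dots,S_n$ all lie in one component; and by the bimodule exact sequence \eqref{exact1} together with Lemma \ref{tor-I_i}, the category $\nilp\Lambda$ of modules with composition factors among the $S_i$ is closed under extensions, giving the $\nilp\Lambda$ factor. For the second part — which I expect to be the main obstacle — I must show that for $S\in\mathcal S$ we have $\Ext^1_\Lambda(S,S_i)=0=\Ext^1_\Lambda(S_i,S)$ for all $i$, and moreover $\Ext^1_\Lambda(S,S')=0$ for non-isomorphic $S,S'\in\mathcal S$. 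Here I would use Lemma \ref{CB-form}: for finite dimensional $M,N$,
\[
\dim\Hom_\Lambda(M,N)-\dim\Ext^1_\Lambda(M,N)+\dim\Hom_\Lambda(N,M)=(\udim M,\udim N),
\]
and crucially $\udim S$ for $S\in\mathcal S$ must be an imaginary root: since $Q$ is extended Dynkin, $(-,-)$ is positive semidefinite with radical spanned by $\mathbf d$, and a finite dimensional simple $\Lambda$-module that is not one of $S_0,\dots,S_n$ must have dimension vector a positive multiple of $\mathbf d$ (this is the classification of simple modules over the preprojective algebra of an extended Dynkin quiver, or can be extracted from Crawley-Boevey's work). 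Then for $S,S'\in\mathcal S$ with $\udim S,\udim S'$ proportional to $\mathbf d$ we get $(\udim S,\udim S')=0$, so $\Hom_\Lambda(S,S')=\Hom_\Lambda(S',S)=0$ and $\Ext^1_\Lambda(S,S')=0$ unless $S\simeq S'$; and $(\udim S,\mathbf e_i)=0$ likewise forces $\Ext^1_\Lambda(S,S_i)=0$.

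Finally, I would assemble the decomposition: given $M\in\fd\Lambda$, its composition factors partition according to the components, $M$ decomposes accordingly as a direct sum $M=M_{\nilp}\oplus\bigoplus_{S}M_S$ with $M_{\nilp}\in\nilp\Lambda$ and $M_S\in\fd_S\Lambda$ (only finitely many nonzero), this decomposition is functorial because there are no nonzero morphisms between objects of different factors (as $\Hom$ vanishing follows from the composition-factor disjointness together with the $\Ext^1$ vanishing, by dévissage), and uniqueness is clear. This yields $\fd\Lambda=(\prod_{S\in\mathcal S}\fd_S\Lambda)\times\nilp\Lambda$ as claimed. The subtle point to be careful about is justifying that $\fd_S\Lambda$ and $\nilp\Lambda$ are genuinely orthogonal subcategories (both $\Hom$ and $\Ext^1$ vanish in both directions), which is exactly where the semidefiniteness of $(-,-)$ and the identification of $\udim S$ with a multiple of the null root $\mathbf d$ do the work.
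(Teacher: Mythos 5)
Your proposal is correct and follows essentially the same route as the paper: the paper's Lemma \ref{lem2_cat_fd} establishes exactly your two key points --- that a non-nilpotent finite dimensional simple module $M$ has $\udim M = m\mathbf{d}$ (via Lemma \ref{CB-form}, since $\dim\Ext^1_{\Lambda}(M,S_i) = -(M,S_i) \geq 0$ for all $i$ together with $(\mathbf{d},\udim M)=0$ forces $\udim M$ into the radical of the form), and hence that $\Ext^1$ vanishes in both directions between any two non-isomorphic simples that are not both nilpotent --- and then deduces the product decomposition by induction along composition series. The only difference is that you defer the first point to the classification of simple modules, where the paper gives the short direct argument just described; your additional remarks on the symmetry of the Ext-quiver and on closure under extensions are harmless but not needed.
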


In the rest of this subsection, we give a proof of Proposition \ref{cat_fd}.
The assertion of Proposition \ref{cat_fd} follows from Lemma \ref{lem2_cat_fd} (2).

We use the symmetric bilinear form $(-,-)$ defined in subsection $2$.

\begin{lem}\label{lem1_cat_fd}
For $\alpha \in \mathbb Z^{\overline{Q}_0}$, a function
\[
(\alpha,-): \mathbb Z^{\overline{Q}_0} \longrightarrow \mathbb{Z}
\]
is zero if and only if $\alpha=m \mathbf{d}$ for some $m \in \mathbb{Z}$ where the $\mathbf{d}$ is the vector appeared in Figure \ref{McKay}.
\end{lem}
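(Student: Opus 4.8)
The plan is to recognize $(\alpha,-)$ as the linear map given by pairing with $\alpha$ through the symmetrized Euler form, whose Gram matrix is (twice) the generalized Cartan matrix $C$ of the extended Dynkin diagram. Thus $(\alpha,-)\equiv 0$ on $\mathbb Z^{\overline Q_0}$ is equivalent to $C\alpha=0$, i.e.\ $\alpha$ lies in the radical of $C$ (equivalently, in the kernel of the map $\mathbb Z^{\overline Q_0}\to\mathbb Z^{\overline Q_0}$). So the statement is precisely the classical fact that for an extended Dynkin diagram the radical of the Cartan matrix is one-dimensional, spanned over $\mathbb Z$ by the vector $\mathbf d$ of marks (the minimal positive imaginary root $\delta$).

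First I would note the ``if'' direction is immediate: $(\mathbf d,-)=0$ is a finite case-by-case check over the diagrams in Figure \ref{McKay} (or follows from $\mathbf d$ being the coordinate vector of $\delta$, which is isotropic and in the radical), and linearity then gives $(m\mathbf d,-)=0$ for all $m\in\mathbb Z$. For the ``only if'' direction, suppose $(\alpha,-)=0$. Since $(-,-)$ is $\mathbb Z$-valued and its restriction to $\mathbb Q^{\overline Q_0}$ is the positive \emph{semi}definite bilinear form attached to the extended Dynkin diagram, the radical $\{\beta\in\mathbb Q^{\overline Q_0}\mid (\beta,-)=0\}$ is one-dimensional; this is the standard structure theorem for affine Cartan matrices (e.g.\ Kac, \emph{Infinite dimensional Lie algebras}, Ch.~4). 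Hence $\alpha=q\mathbf d$ for some $q\in\mathbb Q$. Because $\mathbf d$ has a coordinate equal to $1$ (the vertex $0$ corresponding to the trivial representation, as visible in Figure \ref{McKay}), integrality of $\alpha$ forces $q=\alpha_0\in\mathbb Z$, so $\alpha=m\mathbf d$ with $m=\alpha_0$.

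Alternatively, to keep the paper self-contained one can avoid citing the affine Lie algebra machinery and argue directly: $(\alpha,\mathbf e_i)=0$ for every $i$ says $2\alpha_i=\sum_{a:\,ta=i}\alpha_{sa}$, i.e.\ $\alpha$ is ``harmonic'' with respect to the underlying graph. Fixing the value $\alpha_0$ at the extending vertex, one propagates these local equations along the tree-like shape of each diagram to see that all other $\alpha_i$ are determined, and the unique solution is $\alpha_0\cdot\mathbf d$; the one relation coming from the cycle in type $\widetilde A_n$ is automatically satisfied. Either way the main (and essentially only) obstacle is the rank computation for the radical, which is entirely classical; there is no genuine difficulty beyond bookkeeping over the finitely many diagram types.
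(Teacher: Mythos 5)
Your proof is correct, and it is essentially the argument the paper intends: the paper's own proof consists only of the sentence ``The assertion is verified by easy calculations,'' and what you supply --- that $(\alpha,-)\equiv 0$ means $\alpha$ lies in the radical of the affine Cartan matrix, which is spanned by $\mathbf d$, with integrality of $m$ forced by $d_0=1$ --- is precisely that calculation made explicit. Both your citation of the classical corank-one fact and your direct propagation argument are valid ways to carry it out.
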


\begin{proof}
The assertion is verified by easy calculations.
\end{proof}

\begin{lem}\label{lem2_cat_fd}
Let $M$ be a finite dimensional simple $\Lambda$-module which is not nilpotent. Then the following hold.
\begin{enumerate}
\def\labelenumi{(\theenumi)}
\item The dimension vector of $M$ is $m \mathbf{d}$ for some $m \in \mathbb{N}$.
\item Let $N$ be a simple $\Lambda$-module.
If $M$ is not isomorphic to $N$, then we have $\Ext^1_{\Lambda}(M,N)=\Ext^1_{\Lambda}(N,M)=0$.
\end{enumerate}
\end{lem}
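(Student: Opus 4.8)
The plan is to derive (1) from the Euler-form identity of Lemma~\ref{CB-form} combined with positive semidefiniteness of the form $(-,-)$, and then to obtain (2) by one more, even shorter, application of Lemma~\ref{CB-form}.

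\emph{Proof of (1).} Since $M$ is simple and not nilpotent, it is not isomorphic to any $S_i$ (otherwise $MI=0$ and $M$ would be nilpotent). Hence for every $i\in\overline Q_0$ a nonzero homomorphism $M\to S_i$ or $S_i\to M$ would be an isomorphism of simple modules, so $\Hom_\Lambda(M,S_i)=\Hom_\Lambda(S_i,M)=0$. Applying Lemma~\ref{CB-form} to the finite dimensional modules $M$ and $S_i$ gives
\[
(\udim M,\mathbf e_i)=(M,S_i)=-\dim\Ext^1_\Lambda(M,S_i)\le 0\qquad\text{for all }i\in\overline Q_0 .
\]
Put $\alpha=\udim M\in\mathbb Z_{\ge 0}^{\overline Q_0}$. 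By bilinearity, $(\alpha,\alpha)=\sum_i\alpha_i(\alpha,\mathbf e_i)\le 0$. Because $Q$ is an extended Dynkin quiver the form $(-,-)$ is positive semidefinite (the standard property of the affine Cartan matrix), and by Lemma~\ref{lem1_cat_fd} its radical is $\mathbb R\mathbf d$; so $(\alpha,\alpha)\le 0$ forces $(\alpha,\alpha)=0$ and $\alpha\in\mathbb R\mathbf d$. As $\mathbf d$ has positive coprime entries and $\alpha$ is a nonzero vector of non-negative integers, $\alpha=m\mathbf d$ for some $m\in\mathbb N$.

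\emph{Proof of (2).} By the $2$-Calabi--Yau property of $\Lambda$ (Lemma~\ref{lem:1-1}, applicable since $M$ is finite dimensional) there is an isomorphism $\Ext^1_\Lambda(M,N)\simeq D\Ext^1_\Lambda(N,M)$, so it is enough to show $\Ext^1_\Lambda(M,N)=0$. We may take $N$ finite dimensional: this is the only case needed for Proposition~\ref{cat_fd}, and in fact every simple $\Lambda$-module is finite dimensional because $\Lambda$ is module-finite over its Noetherian centre, so its primitive quotients are finite dimensional algebras. Now $M$ and $N$ are non-isomorphic simple modules, so $\Hom_\Lambda(M,N)=\Hom_\Lambda(N,M)=0$, and Lemma~\ref{CB-form} gives $(M,N)=-\dim\Ext^1_\Lambda(M,N)$. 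On the other hand $\udim M=m\mathbf d$ by (1), and $\mathbf d$ lies in the radical of $(-,-)$ by Lemma~\ref{lem1_cat_fd}, so $(M,N)=(m\mathbf d,\udim N)=0$. Hence $\Ext^1_\Lambda(M,N)=0$, and therefore $\Ext^1_\Lambda(N,M)=0$ as well.

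The computation is short; the two substantive ingredients are external and standard: positive semidefiniteness of $(-,-)$ for an extended Dynkin graph — which, together with the identification of its radical in Lemma~\ref{lem1_cat_fd}, is exactly what turns $(\alpha,\alpha)\le 0$ into $\alpha\in\mathbb N\mathbf d$ — and finite-dimensionality of simple $\Lambda$-modules, needed only so that Lemma~\ref{CB-form} and Lemma~\ref{lem:1-1} apply to $N$. The one conceptual step is the observation that vanishing of $\Hom_\Lambda(M,S_i)$ and $\Hom_\Lambda(S_i,M)$ for all $i$ makes $\udim M$ anti-dominant, which semidefiniteness then forces onto the isotropic line $\mathbb R\mathbf d$. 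A reflection-functor variant (reducing $\alpha$ within its Weyl-group orbit, using $M\in\mathcal T(I_i)$ for all $i$ from Lemma~\ref{tor-I_i}) is possible but relies on the same affine root-system combinatorics, so I would keep the semidefiniteness argument.
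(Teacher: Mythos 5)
Your proof is correct, and part (2) coincides with the paper's. The interesting difference is in part (1): after the common opening step ($\Hom_\Lambda(M,S_i)=\Hom_\Lambda(S_i,M)=0$ by Schur, hence $(M,S_i)=-\dim\Ext^1_\Lambda(M,S_i)\le 0$ for all $i$ by Lemma \ref{CB-form}), you pair $\alpha=\udim M$ against itself, get $(\alpha,\alpha)=\sum_i\alpha_i(\alpha,\e_i)\le 0$, and invoke positive semidefiniteness of the affine form to force $\alpha$ into the radical. The paper instead pairs against $\mathbf d$: since $\mathbf d$ lies in the radical by (one direction of) Lemma \ref{lem1_cat_fd}, $0=(\mathbf d,\alpha)=\sum_i d_i(\e_i,\alpha)$ is a sum of non-positive terms with strictly positive weights $d_i$, so every $(\e_i,\alpha)$ vanishes and $(\alpha,-)\equiv 0$; the other direction of Lemma \ref{lem1_cat_fd} then gives $\alpha=m\mathbf d$. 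The two arguments are equally short, but the paper's is self-contained modulo Lemma \ref{lem1_cat_fd} (it never needs semidefiniteness, only that $\mathbf d$ is isotropic with all entries positive), whereas yours imports the standard semidefiniteness of the extended Dynkin form and the Cauchy--Schwarz-type fact that isotropic vectors of a semidefinite form lie in the radical. Your explicit justification that $N$ may be taken finite dimensional (needed for Lemma \ref{CB-form} and Lemma \ref{lem:1-1} in part (2)) is a point the paper leaves implicit, and is a welcome addition.
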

\begin{proof}
(1) We write $\mathbf{d}=(d_0,d_1,\ldots,d_n)$.
Since $M$ is not isomorphic to $S_i$ for any $i \in \overline{Q}_0$, we have
\[
\dim \Ext^1_{\Lambda}(M,S_i) =  - (M,S_i)
\]
for any $i \in \overline{Q}_0$ by Lemma \ref{CB-form}. Thus we have $(M,S_i) \leq 0$ for any $i \in \overline{Q}_0$. 
On the other hand, we have
\[
0=(\mathbf{d},\udim M)=\sum_{i \in \overline{Q}_0} d_i (S_i,M).
\]
Since $d_i (M,S_i) \leq 0$ for any $i \in \overline{Q}_0$, we have $(S_i,M)=0$. 
Consequently a function
\[
(\udim M,-): \mathbb Z^{\overline{Q}_0} \longrightarrow \mathbb{Z}
\]
is zero. By Lemma \ref{lem1_cat_fd}, the assertion follows.

(2) The assertion follows from Lemma \ref{CB-form} and (1).
\end{proof}

As a consequence of  Lemma \ref{lem2_cat_fd}, we have the following result.

\begin{cor}\label{vec->simp}
Let $M$ be an indecomposable finite dimensional $\Lambda$-module. If $\udim M=\mathbf{d}$, $M$ is either nilpotent or simple.
\end{cor}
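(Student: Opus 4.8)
The plan is to recast the disjunction as an implication: I would fix a finite dimensional $M$ with $\udim M = \mathbf{d}$ that is \emph{not} nilpotent and prove that such an $M$ is forced to be simple. This is logically equivalent to the stated ``nilpotent or simple'' alternative, and it sets things up so that Lemma \ref{lem2_cat_fd} applies directly. Write $\mathbf{d} = (d_0,\dots,d_n)$ for the dimension vector recorded in Figure \ref{McKay}, and recall that all of its entries are strictly positive.

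First I would pass to a composition series of $M$. Because $M$ is finite dimensional but not nilpotent, the characterization of nilpotent modules recalled before Proposition \ref{cat_fd} (namely that $M$ is nilpotent precisely when all its composition factors lie among $S_0,\dots,S_n$) guarantees that at least one composition factor $S$ is a simple $\Lambda$-module \emph{not} isomorphic to any $S_i$, i.e.\ a non-nilpotent finite dimensional simple module. Applying Lemma \ref{lem2_cat_fd}(1) to this $S$ then gives $\udim S = m\mathbf{d}$ for some integer $m \geq 1$.

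The decisive step is a comparison of dimension vectors. The dimension vectors of all composition factors of $M$ are nonnegative and sum to $\udim M = \mathbf{d}$, so in particular $\udim S = m\mathbf{d} \leq \mathbf{d}$ holds entrywise; that is, $m\,d_i \leq d_i$ for every $i \in \overline{Q}_0$. Since each $d_i > 0$, this forces $m = 1$, whence $\udim S = \mathbf{d} = \udim M$. Consequently every remaining composition factor contributes the zero vector and must therefore vanish, so the composition series has length one and $M \cong S$ is simple, as desired.

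The argument is short and relies only on Lemma \ref{lem2_cat_fd}(1) together with the positivity of $\mathbf{d}$; notably it uses neither part (2) of that lemma nor the indecomposability hypothesis, which I would flag as a harmless observation. I do not expect a genuine obstacle here: the one point worth stating carefully is the extraction in paragraph two of a non-nilpotent \emph{simple} composition factor from a non-nilpotent module, which is exactly the content of the composition-factor description of $\nilp\Lambda$.
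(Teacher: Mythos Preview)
Your argument is correct and is precisely the natural way to extract the corollary from Lemma~\ref{lem2_cat_fd}(1): pick a non-nilpotent composition factor, apply the lemma to get dimension vector $m\mathbf{d}$, and compare with $\udim M=\mathbf{d}$. The paper gives no explicit proof beyond ``as a consequence of Lemma~\ref{lem2_cat_fd}'', so your route is exactly what is intended; your observation that neither part~(2) of that lemma nor the indecomposability hypothesis is actually used is valid and worth noting.
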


\section{Moduli space of modules}
\label{moduli}

We recall moduli spaces of modules defined in \cite{Ki}.
In this section, $\Lambda = K\overline{Q}/\langle R \rangle$ denotes the preprojective algebra associated to a finite connected quiver $Q$ with no loops. 
Denote by $Q_0 = \{0,\ldots,n \}$ the vertex set of $Q$. We confuse $\Lambda$-modules and representations of $(Q,R)$ since there is a categorical equivalence between the category of $\Lambda$-modules and the category of representations of $(Q,R)$ (cf.~\cite[Chapter 3]{ASS}).
We regard a dimension vector $(\alpha_i)_{i \in Q_0}$ as a element $\sum_{i \in Q_0} \alpha_i \e_i$ in $\mathbb Z^{Q_0}$ where $\e_0,\ldots,\e_n$ denote the canonical basis of $\mathbb Z^{Q_0}$. 
Also we denote by $(\mathbb Z^{Q_0})^{*}$ the dual lattice of $\mathbb Z^{Q_0}$ with the dual basis $\e_0^*,\ldots,\e_n^*$.
We define the parameter space 
\[
\Theta := (\mathbb Z^{Q_0})^{*}\otimes_{\mathbb Z} \mathbb Q.
\] 
By the canonical pairing, we define $\theta(M) := \langle \theta, \udim M \rangle = \sum_{i \in Q_0}\theta_i \dim_K(Me_i)$ for any $\theta = (\theta_i)_{i \in Q_0} \in \Theta$ and any finite dimensional $KQ$-module $M$.
King provided the notion of stability for modules. 

\begin{df}[\cite{Ki}]
For any $\theta \in \Theta$, a $KQ$-module $M$ is called $\theta$-{\it semistable} (resp.~$\theta$-{\it stable}) if $\theta(M)=0$ and, for any non-zero proper submodule $N$ of $M$, $\theta(N) \geq 0$ (resp.~$>0$). Moreover for a given indivisible vector $\alpha$, $\theta$ is called {\it generic} if all $\theta$-semistable modules of dimension vector $\alpha$ are $\theta$-stable.
\end{df}

For any $\theta \in \Theta$ and any dimension vector $\alpha$, we denote by $\M{\theta}{\Lambda}{\alpha}$ the moduli space of $\theta$-semistable $\Lambda$-modules of dimension vector $\alpha$.
In fact, it is a coarse moduli space parametrizing S-equivalemce classes of $\theta$-semistable $\Lambda$-modules of dimension vector $\alpha$ where two $\theta$-semistable modules are S-equivalent if they have filtrations by $\theta$-stable modules with the same associated graded modules.
For an indivisible vector $\alpha$, if $\theta$ is generic, then $\M{\theta}{\Lambda}{\alpha}$ becomes a fine moduli space. In the case, S-equivalence classes are just isomorphism classes.

Recall that the function $(-,-) : \mathbb Z^{Q_0}\times \mathbb Z^{Q_0} \to \mathbb Z$ denotes the symmetric bilinear form defined in Section \ref{pp}.
We define actions of the Coxeter group $W_Q$ associated to $Q$ on $\mathbb Z^{Q_0}$ and $(\mathbb Z^{Q_0})^{*}$ as follows.
For any simple reflection $s_i$, any $\alpha \in \mathbb Z^{Q_0}$ and any $\theta \in (\mathbb Z^{Q_0})^{*}$, we put
\[
s_i\alpha := \alpha - (\alpha,\e_i)\e_i,
\]
\[
s_i \theta := \theta - \theta_i\sum_{j=1}^n (\e_i,\e_j)\e_j^*.
\]
These determine actions of $W_Q$ on $\mathbb Z^{Q_0}$ and $(\mathbb Z^{Q_0})^*$. Moreover it is extended to $\Theta$ linearly.
Then it is easy to see that $s_i\theta(\alpha)=\theta(s_i\alpha)$ for any $\alpha \in \mathbb Z^{Q_0}$ and $\theta \in \Theta$.

\section{Reflection functor}
\label{reflection}

In this section, $\Lambda$ denotes the preprojective algebra associated to a finite connected quiver $Q$ with no loops. 
Recall that, in Section 1, we defined the full subcategory $\s{\theta}{\Lambda}{}$ of $\Mod \Lambda$ consisting of $\theta$-semistable $\Lambda$-modules and the full subcategory $\s{\theta}{\Lambda}{,\alpha}$ of $\s{\theta}{\Lambda}{}$ consisting of $\theta$-semistable $\Lambda$-modules of dimension vector $\alpha$ if $\s{\theta}{\Lambda}{,\alpha}$ is not empty. The category $\s{\theta}{\Lambda}{}$ is closed under extensions and direct summands.
In this section, we give an equivalence between $\s{\theta}{\Lambda}{}$ and $\s{w\theta}{\Lambda}{}$ for any Coxeter element $w \in W_Q$ by using tilting theory over preprojective algebras studied in Section \ref{tilting}.

\subsection{Simple reflection case $w=s_i$}

First of all we consider the case when $w \in W_Q$ is a simple reflection $s_i$ since the general case is obtained by a composition of the simple case.
The required equivalence is given by the tilting module $I_i$, which gives the derived equivalence 
\[
\RHom_{\Lambda}(I_i,-) : \mathcal D(\Mod \Lambda) \to \mathcal D(\Mod \Lambda).
\]

Recall that $\mathcal T(T)$ and $\mathcal Y(T)$ are full subcategories of $\fd \Lambda$ defined in Section \ref{torsion-tilting}.

\begin{lem}\label{TY}
For any $\theta \in \Theta$ the following hold.
\begin{enumerate}
\item If $\theta_i>0$, then $\s{\theta}{\Lambda}{} \subset \mathcal T(I_i)$.
\item If $\theta_i<0$, then $\s{\theta}{\Lambda}{} \subset \mathcal Y(I_i)$.
\end{enumerate} 
\end{lem}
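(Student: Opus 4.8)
The plan is to recall the description of $\mathcal T(I_i)$ and $\mathcal Y(I_i)$ from Lemma \ref{tor-I_i}: a finite dimensional module $M$ lies in $\mathcal T(I_i)$ precisely when $S_i$ is not a direct summand of $M/(MI)$, equivalently when there is no surjection $M \twoheadrightarrow S_i$; and $M$ lies in $\mathcal Y(I_i)$ precisely when $S_i$ is not a direct summand of $\Soc(M)$, equivalently when there is no injection $S_i \hookrightarrow M$. So the two statements to prove become: if $\theta_i > 0$ then no $\theta$-semistable module $M$ surjects onto $S_i$, and if $\theta_i < 0$ then no $\theta$-semistable module $M$ contains $S_i$ as a submodule.

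For (1), suppose $M \in \s{\theta}{\Lambda}{}$ and there is a surjection $M \twoheadrightarrow S_i$ with kernel $N$. If $N = 0$ then $M \cong S_i$, but $\theta(S_i) = \theta_i \neq 0$ contradicts $\theta$-semistability (which requires $\theta(M) = 0$); so $N$ is a nonzero proper submodule. Then $\theta$-semistability gives $\theta(N) \geq 0$, while $\theta(N) = \theta(M) - \theta(S_i) = 0 - \theta_i = -\theta_i < 0$, a contradiction. Hence no such surjection exists and $M \in \mathcal T(I_i)$. For (2), suppose $M \in \s{\theta}{\Lambda}{}$ contains a copy of $S_i$ as a submodule $N \cong S_i$. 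Again $N \neq 0$; if $N = M$ then $\theta(M) = \theta_i \neq 0$, impossible, so $N$ is a nonzero proper submodule, and $\theta$-semistability forces $\theta(N) = \theta_i \geq 0$, contradicting $\theta_i < 0$. Hence $S_i$ is not a submodule of $M$, so $S_i$ is not a summand of $\Soc(M)$, i.e. $M \in \mathcal Y(I_i)$.

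One small point to address is that the statements of Lemma \ref{tor-I_i} are phrased for $M \in \fd\Lambda$, whereas $\s{\theta}{\Lambda}{}$ a priori sits in $\Mod\Lambda$; but any $\theta$-semistable module is finite dimensional by definition (stability is only defined for finite dimensional modules, and $\theta(M) = \sum_i \theta_i \dim_K(Me_i)$ presupposes finiteness), so this is not an issue. There is essentially no serious obstacle here: the argument is a direct application of the defining inequality of semistability to the canonical short exact sequences $0 \to N \to M \to S_i \to 0$ (for (1)) and $0 \to S_i \to M \to M/S_i \to 0$ (for (2)), combined with the translation of the torsion classes into "no quotient $S_i$" / "no sub $S_i$" provided by Lemma \ref{tor-I_i}. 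The only thing to be careful about is handling the degenerate case $M \cong S_i$ separately, which is immediate from $\theta_i \neq 0 = \theta(M)$.
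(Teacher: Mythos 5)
Your proof is correct and follows essentially the same route as the paper: translate membership in $\mathcal T(I_i)$ and $\mathcal Y(I_i)$ via Lemma \ref{tor-I_i} into the nonexistence of a quotient (resp.\ submodule) isomorphic to $S_i$, and then contradict the semistability inequality applied to the resulting short exact sequence. Your treatment is in fact slightly more careful than the paper's, since you handle the degenerate case $M\cong S_i$ explicitly and get the sign of the inequality in part (2) right ($\theta(S_i)\geq 0$ for a submodule, contradicting $\theta_i<0$), where the paper's text has a small slip.
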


\begin{proof}
(1) Take any $M \in \s{\theta}{\Lambda}{}$.
By Lemma \ref{tor-I_i} we need to show that $S_i$ is not a direct summand of  $M/MI$. If $S_i$ is a direct summand of $M/MI$, there is an exact sequence $0 \to X \to M \to S_i \to 0$, so $\theta_i=\theta(S_i)=\theta(M)-\theta(X)\leq 0$. This contradicts the assumption.

(2) Take any $M \in \s{\theta}{\Lambda}{}$.  
By Lemma \ref{tor-I_i} we need to show that $S_i$ is not a direct summand of  $\Soc(M)$. If $S_i$ is a direct summand of $\Soc(M)$, there is an exact sequence $0 \to S_i \to M \to X \to 0$, so $\theta_i=\theta(S_i)=\theta(M)-\theta(X)\leq 0$. This contradicts the assumption.
\end{proof}

Now we state a main result in this paper.

\begin{thm}\label{simple-reflection}
For any $\theta \in \Theta$ with $\theta_i>0$, there is a categorical equivalence
\[
\xymatrix{
\s{\theta}{\Lambda}{} \ar@<0.5ex>[rr]^{\Hom_{\Lambda}(I_i,-)} & & \s{s_i \theta}{\Lambda}{} \ar@<0.5ex>[ll]^{- \otimes_{\Lambda} I_i}
}.
\]
Under this equivalence $S$-equivalence classes are preserved and $\theta$-stable modules correspond to $s_i\theta$-stable modules.
In particular it induces the equivalence between $\s{\theta}{\Lambda}{,\alpha}$ and $\s{s_i\theta}{\Lambda}{,s_i\alpha}$.
\end{thm}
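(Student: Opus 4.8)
The plan is to deduce everything from the Brenner--Butler type equivalence of Lemma~\ref{Brenner-Butler}, the torsion-theoretic descriptions of $\mathcal T(I_i),\mathcal Y(I_i)$ in Lemma~\ref{tor-I_i}, and the dimension-vector formula of Corollary~\ref{change-dim}. First I fix the framework. Since $\theta_i>0$, Lemma~\ref{TY}(1) gives $\s{\theta}{\Lambda}{}\subseteq\mathcal T(I_i)$; and since $(s_i\theta)_i=\theta_i-\theta_i(\e_i,\e_i)=-\theta_i<0$, Lemma~\ref{TY}(2) gives $\s{s_i\theta}{\Lambda}{}\subseteq\mathcal Y(I_i)$. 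By Lemma~\ref{Brenner-Butler} the functors $F:=\Hom_{\Lambda}(I_i,-)\colon\mathcal T(I_i)\to\mathcal Y(I_i)$ and $G:=-\otimes_{\Lambda}I_i\colon\mathcal Y(I_i)\to\mathcal T(I_i)$ are mutually quasi-inverse equivalences. Hence it suffices to prove the two inclusions $F(\s{\theta}{\Lambda}{})\subseteq\s{s_i\theta}{\Lambda}{}$ and $G(\s{s_i\theta}{\Lambda}{})\subseteq\s{\theta}{\Lambda}{}$; the restrictions of $F$ and $G$ then automatically form quasi-inverse equivalences between $\s{\theta}{\Lambda}{}$ and $\s{s_i\theta}{\Lambda}{}$. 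Throughout I use $s_i^2=1$ and the identity $s_i\theta(\alpha)=\theta(s_i\alpha)$ from Section~\ref{moduli}, so that $\udim FM=s_i(\udim M)$ (Corollary~\ref{change-dim}) already gives $(s_i\theta)(FM)=\theta(M)=0$, and likewise $\theta(GN)=(s_i\theta)(N)$ for $N\in\mathcal Y(I_i)$.

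For the first inclusion let $M\in\s{\theta}{\Lambda}{}$ and put $N:=FM\in\mathcal Y(I_i)$, so $GN\cong M$. Since $(s_i\theta)(N)=0$, it is equivalent to bound $(s_i\theta)$ by $0$ from above on quotients of $N$. Given $N\twoheadrightarrow P$, decompose $P$ via the torsion pair $(\add S_i,\mathcal Y(I_i))$ as $0\to P_1\to P\to P_2\to 0$ with $P_1\in\add S_i$ and $P_2\in\mathcal Y(I_i)$, and set $L:=\Ker(N\twoheadrightarrow P_2)$. Applying $G$ to $0\to L\to N\to P_2\to 0$ and using $\Tor^{\Lambda}_1(P_2,I_i)=0$ yields a short exact sequence $0\to GL\to M\to GP_2\to 0$, so $GP_2$ is a quotient of $M$ and $\theta(GP_2)\le 0$; as $P_2\in\mathcal Y(I_i)$, Corollary~\ref{change-dim}(4) gives $\theta(GP_2)=(s_i\theta)(P_2)$, whence $(s_i\theta)(P_2)\le 0$. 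Moreover $(s_i\theta)(P_1)=(s_i\theta)_i\dim_K(P_1e_i)=-\theta_i\dim_K(P_1e_i)\le 0$, so $(s_i\theta)(P)\le 0$ and $N\in\s{s_i\theta}{\Lambda}{}$. The second inclusion is dual: for $N'\in\s{s_i\theta}{\Lambda}{}\subseteq\mathcal Y(I_i)$ put $M':=GN'\in\mathcal T(I_i)$; since $\theta(M')=(s_i\theta)(N')=0$ it suffices to show $\theta(K)\ge 0$ for every submodule $K\subseteq M'$. Decompose $K$ via the torsion pair $(\mathcal T(I_i),\add S_i)$ as $0\to K_1\to K\to K_2\to 0$ with $K_1\in\mathcal T(I_i)$ and $K_2\in\add S_i$. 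Applying $F$ to $0\to K_1\to M'\to M'/K_1\to 0$ and using $\Ext^1_{\Lambda}(I_i,K_1)=0$ exhibits $FK_1$ as a submodule of $FM'\cong N'$, so $0\le(s_i\theta)(FK_1)=\theta(K_1)$ by Corollary~\ref{change-dim}(1); and $\theta(K_2)=\theta_i\dim_K(K_2e_i)\ge 0$. Hence $\theta(K)\ge 0$ and $M'\in\s{\theta}{\Lambda}{}$.

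It remains to transfer stability, $S$-equivalence and the dimension vector. Because every object of $\s{\theta}{\Lambda}{}$ lies in $\mathcal T(I_i)$, applying $F$ to a short exact sequence $0\to L\to M\to M''\to 0$ with all terms in $\s{\theta}{\Lambda}{}$ gives, via $\Ext^1_{\Lambda}(I_i,L)=0$, a short exact sequence $0\to FL\to FM\to FM''\to 0$ in $\s{s_i\theta}{\Lambda}{}$; dually $G$ is exact on short exact sequences with terms in $\s{s_i\theta}{\Lambda}{}$ (recall both categories are closed under extensions). Now a $\theta$-semistable module is $\theta$-stable exactly when it admits no short exact sequence in $\s{\theta}{\Lambda}{}$ with both outer terms nonzero, and two $\theta$-semistable modules are $S$-equivalent exactly when they admit filtrations in $\s{\theta}{\Lambda}{}$ by $\theta$-stable modules with isomorphic associated graded; since $F$ and $G$ are mutually inverse equivalences exact in this sense, they preserve both properties, so $\theta$-stable modules correspond to $s_i\theta$-stable modules and $S$-equivalence classes are preserved. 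Finally $\udim FM=s_i(\udim M)$ and $\udim GN=s_i(\udim N)$ (Corollary~\ref{change-dim}) show that $F$ and $G$ restrict to quasi-inverse equivalences between $\s{\theta}{\Lambda}{,\alpha}$ and $\s{s_i\theta}{\Lambda}{,s_i\alpha}$, one of which is nonempty iff the other is. The one point demanding care is the semistability transfer: the equivalences $F,G$ carry short exact sequences to short exact sequences only when the terms lie in $\mathcal T(I_i)$, respectively $\mathcal Y(I_i)$, so an arbitrary sub/quotient of $N$ must first be corrected by a module in $\add S_i$ before it can be pushed through $G$, and this correction is harmless precisely because $\theta_i>0$ forces $s_i\theta$ to be $\le 0$ on $\add S_i$.
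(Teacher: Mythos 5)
Your proposal is correct and follows essentially the same route as the paper: reduce to the Brenner--Butler equivalence between $\mathcal T(I_i)$ and $\mathcal Y(I_i)$ via Lemma \ref{TY}, then verify semistability using Corollary \ref{change-dim} and the exact sequences obtained by applying $\Hom_{\Lambda}(I_i,-)$ and $-\otimes_{\Lambda}I_i$. Your torsion-pair decomposition of a quotient (resp.\ submodule) into an $\add S_i$ part and a $\mathcal Y(I_i)$ (resp.\ $\mathcal T(I_i)$) part is just a repackaging of the paper's correction terms $\Tor_1^{\Lambda}(-,I_i)\simeq S_i^m$ and $\Ext^1_{\Lambda}(I_i,-)\simeq S_i^m$, and your characterization of stability and $S$-equivalence via short exact sequences of semistables is a clean (and if anything slightly more careful) version of what the paper dismisses as trivial.
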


\begin{proof}
By Lemma \ref{TY}, we have functors $\Hom_{\Lambda}(I_i,-) : \s{\theta}{\Lambda}{} \to \mathcal Y(I_i)$ and $-\otimes_{\Lambda}I_i : \s{s_i\theta}{\Lambda}{} \to \mathcal T(I_i)$. Since $\Hom_{\Lambda}(I_i,-)$ and $-\otimes_{\Lambda}I_i$ give the equivalence between $\mathcal T(I_i)$ and $\mathcal Y(I_i)$, it is sufficient to show that $\Hom_{\Lambda}(I_i,M) \in \s{s_i\theta}{\Lambda}{}$ for any $M \in \s{\theta}{\Lambda}{}$ and $M \otimes_{\Lambda}I_i \in \s{\theta}{\Lambda}{}$ for any $M \in \s{s_i\theta}{\Lambda}{}$.

Take any $M \in \s{\theta}{\Lambda}{}$.
We show that $M':=\Hom_{\Lambda}(I_i,M)$ is $s_i\theta$-semistable. 
Since $M \in \mathcal T(I_i)$, by Corollary \ref{change-dim} we have
\[
(s_i \theta)(M') = (s_i \theta)(s_i(\udim M))=\theta(M)=0.
\]
Take any non-zero proper submodule $N'$ of $M'$. 
Since $\mathcal Y(I_i)$ is closed under submodules, $N' \in \mathcal Y(I_i)$. By applying $-\otimes_{\Lambda}I_i$ to the exact sequence
\[
0 \longrightarrow N' \longrightarrow M' \longrightarrow M'/N' \longrightarrow 0,
\]
we have an exact sequence
\[
0 \longrightarrow \Tor^{\Lambda}_1(M'/N',I_i) \longrightarrow N' \otimes_{\Lambda} I_i \xrightarrow{\ f \ } M.
\]
Since $X:=\IM f$ is a submodule of $M$, we have $\theta(X) \geq 0$. 
By Lemma \ref{isom1} and the 2-CY property, we have 
$\Tor_{1}^{\Lambda}(M'/N',I_i) \simeq \Tor_{2}^{\Lambda}(M'/N',S_i) \simeq D\Ext_{\Lambda}^2(M'/N',S_i) \simeq \Hom_{\Lambda}(S_i,M'/N') \simeq S_i^m$ for some integer $m$.
Therefore by Corollary \ref{change-dim} we have
\begin{eqnarray*}
(s_i \theta)(N') = (s_i \theta)(s_i(\udim N' \otimes_{\Lambda}I_i)) &=& \theta(N' \otimes_{\Lambda}I_i) \\
&=& \theta(\Tor^{\Lambda}_1(M'/N',I_i))+\theta(X) \\
&=& \theta(S_i^m)+\theta(X) \\
&=& m \theta_i+\theta(X) \geq 0.
\end{eqnarray*}
Thus $M'$ is $s_i \theta$-semistable. 
Furthermore if $M$ is $\theta$-stable, then $\theta(X)>0$, so $(s_i \theta)(N')>0$. Hence $M'$ is $s_i\theta$-stable. 

Conversely we take any $M \in \s{s_i\theta}{\Lambda}{}$. 
We show that $M':=M\otimes_{\Lambda}I_i$ is $\theta$-semistable. 
Since $M \in \mathcal Y(I_i)$, by Corollary \ref{change-dim} we have
\[
\theta(M') = \theta(s_i(\udim M))=(s_i\theta)(M)=0.
\]
Take any non-zero proper submodule $N'$ of $M'$. 
Since $\mathcal T(I_i)$ is closed under images, $M'/N' \in \mathcal T(I_i)$. 
Consider an exact sequence
\[
0 \longrightarrow N' \longrightarrow M' \longrightarrow M'/N' \longrightarrow 0.
\]
By applying $\Hom_{\Lambda}(I_i,-)$ to the above exact sequence, we have an exact sequence
\[
M \stackrel{g}{\to} \Hom_{\Lambda}(I_i,M'/N') \to \Ext^1_{\Lambda}(I_i,N') \to 0
\] 
Since $X:= \IM g$ is a factor module of $M$, we have $s_i\theta(X)\leq 0$. 
By the 2-CY property, we have $\Ext^1_{\Lambda}(I_i,N') \simeq \Ext^2_{\Lambda}(S_i,N') \simeq D\Hom_{\Lambda}(N',S_i) \simeq  S^m_i$ for some integer $m$. 
So by Corollary \ref{change-dim} we have
\begin{eqnarray*}
\theta(N') = -\theta(M'/N') 
&=& -\theta(s_i(\udim \Hom_{\Lambda}(I_i,M'/N')) \\
&=& -(s_i\theta)(\udim \Hom_{\Lambda}(I_i,M'/N') \\
&=& -(s_i\theta)(\Ext^1_{\Lambda}(I_i,N'))-(s_i\theta)(X) \\
&=& -(s_i\theta)(S_i^m)-(s_i\theta)(X) \\
&=& -m\theta_i-(s_i\theta)(X) \geq 0.
\end{eqnarray*}
Hence $M'$ is $\theta$-semistable. 
Furthermore if $M$ is $s_i\theta$-stable, then $(s_i\theta)(X)<0$, so $\theta(N')>0$. Hence $M'$ is $\theta$-stable. 

Now it is trivial that the obtained equivalence preserves $S$-equivalence classes, and induces the equivalence between $\s{\theta}{\Lambda}{,\alpha}$ and $\s{s_i\theta}{\Lambda}{,s_i\alpha}$ by Lemma \ref{TY} and Corollary \ref{change-dim}.
\end{proof}

\begin{df}
The functors $\Hom_{\Lambda}(I_i,-)$ and $-\otimes_{\Lambda}I_i$ in Theorem \ref{simple-reflection} are called simple reflection functors, which are denoted by $\bs_i^{+}$ and $\bs_i^-$ respectively. 
\end{df}

As a corollary we have the following results.

\begin{cor}
For any $\theta \in \Theta$ with $\theta_i \neq 0$, the simple reflection functors $\bs_i^+$ and $\bs_i^-$ induce a bijection between closed points of $\M{\theta}{\Lambda}{\alpha}$ and $\M{s_i\theta}{\Lambda}{s_i\alpha}$
\end{cor}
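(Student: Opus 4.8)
The plan is to reduce the statement to what has already been established in Theorem~\ref{simple-reflection}. That theorem, applied with the sign of $\theta_i$ positive, already produces mutually inverse equivalences $\bs_i^+=\Hom_\Lambda(I_i,-)$ and $\bs_i^-=-\otimes_\Lambda I_i$ between $\s{\theta}{\Lambda}{,\alpha}$ and $\s{s_i\theta}{\Lambda}{,s_i\alpha}$ which preserve $S$-equivalence classes. Since the moduli space $\M{\theta}{\Lambda}{\alpha}$ is by construction the set of $S$-equivalence classes of $\theta$-semistable modules of dimension vector $\alpha$, a bijection of categories preserving $S$-equivalence immediately induces a bijection on the underlying sets, hence on closed points. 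So the only genuine point to address is the case $\theta_i<0$.

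For $\theta_i<0$, I would observe that $s_i\theta$ has $(s_i\theta)_i=-\theta_i>0$ (this is immediate from the formula $s_i\theta=\theta-\theta_i\sum_j(\e_i,\e_j)\e_j^*$, since the coefficient of $\e_i^*$ picks up $-\theta_i(\e_i,\e_i)=-2\theta_i$, giving $(s_i\theta)_i=\theta_i-2\theta_i=-\theta_i$). Therefore Theorem~\ref{simple-reflection} applies to the parameter $s_i\theta$ (which has positive $i$-th coordinate) and the dimension vector $s_i\alpha$, yielding an equivalence
\[
\s{s_i\theta}{\Lambda}{,s_i\alpha} \ \simeq\ \s{s_i(s_i\theta)}{\Lambda}{,s_i(s_i\alpha)} \ =\ \s{\theta}{\Lambda}{,\alpha}
\]
using $s_i^2=1$, and this equivalence is exactly $\bs_i^+$ (or $\bs_i^-$) in the other direction. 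In particular $\bs_i^+$ and $\bs_i^-$ again restrict to mutually inverse equivalences between $\s{\theta}{\Lambda}{,\alpha}$ and $\s{s_i\theta}{\Lambda}{,s_i\alpha}$ preserving $S$-equivalence classes; taking $S$-equivalence classes gives the desired bijection between closed points of $\M{\theta}{\Lambda}{\alpha}$ and $\M{s_i\theta}{\Lambda}{s_i\alpha}$. One should also note that nonemptiness is symmetric: $\s{\theta}{\Lambda}{,\alpha}$ is nonempty if and only if $\s{s_i\theta}{\Lambda}{,s_i\alpha}$ is, so the moduli spaces being compared are simultaneously nonempty.

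I expect no serious obstacle here; this is essentially a bookkeeping corollary. The one thing to be careful about is making sure the functor directions match up: when $\theta_i>0$ it is $\bs_i^+=\Hom_\Lambda(I_i,-)$ that goes from $\s{\theta}{\Lambda}{,\alpha}$ to $\s{s_i\theta}{\Lambda}{,s_i\alpha}$, whereas when $\theta_i<0$ one instead uses the description coming from applying the theorem at $s_i\theta$, so it is $\bs_i^-=-\otimes_\Lambda I_i$ that plays that role. Either way both functors are present and give a bijection on $S$-equivalence classes, which is all that is needed for the statement about closed points.
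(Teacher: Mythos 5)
Your argument is correct and is exactly the reduction the paper intends: the corollary is stated without proof as an immediate consequence of Theorem \ref{simple-reflection}, using that closed points of the coarse moduli space correspond bijectively to $S$-equivalence classes, and your handling of the case $\theta_i<0$ by applying the theorem at $s_i\theta$ (where $(s_i\theta)_i=-\theta_i>0$) together with $s_i^2=1$ is the standard and intended bookkeeping.
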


It is natural to hope that the above bijection is extended to an isomorphism of algebraic varieties. Indeed it is proved in the next section.

\begin{ex}
Let $Q$ be a extended Dynkin quiver of type $\widetilde A_2$. 
Put $\mathbf d = (1,1,1)$.
For the parameter $\theta = (\theta_0,\theta_1,\theta_2)$ with $\theta(\mathbf d)$, $\theta_1=0$ and $\theta_2>0$, the following two representations are contained in $\s{\theta}{\Lambda}{,\mathbf d}$ and which give the same S-equivalent class.
\[
\overline{Q}=
\begin{array}{c}
\def\alphanum{\ifcase\xypolynode \or 0 \or 1 \or 2 \fi}
\xy 
\xygraph{
!{/r3pc/:} 
[] !P3"A"{~>{} ~*{\alphanum} }
}
\ar@/^/|-{} "A1";"A2"
\ar@/^/|-{} "A2";"A3"
\ar@/^/|-{} "A3";"A1"
\ar@/^/|-{} "A1";"A3"
\ar@/^/|-{} "A3";"A2"
\ar@/^/|-{} "A2";"A1"
\endxy \\
\end{array}
\quad M_1=
\begin{array}{c}
\def\alphanum{\ifcase\xypolynode \or K \or K \or K \fi}
\xy 
\xygraph{
!{/r3pc/:} 
[] !P3"A"{~>{} ~*{\alphanum} }
}
\ar@/^/|-{a} "A1";"A2"
\ar@/^/|-{0} "A2";"A3"
\ar@/^/|-{0} "A3";"A1"
\ar@/^/|-{1} "A1";"A3"
\ar@/^/|-{b} "A3";"A2"
\ar@/^/|-{0} "A2";"A1"
\endxy \\
\end{array}
M_2=
\begin{array}{c}
\def\alphanum{\ifcase\xypolynode \or K \or K \or K \fi}
\xy 
\xygraph{
!{/r3pc/:} 
[] !P3"A"{~>{} ~*{\alphanum} }
}
\ar@/^/|-{0} "A1";"A2"
\ar@/^/|-{B} "A2";"A3"
\ar@/^/|-{0} "A3";"A1"
\ar@/^/|-{1} "A1";"A3"
\ar@/^/|-{0} "A3";"A2"
\ar@/^/|-{A\ } "A2";"A1"
\endxy \\
\end{array}
\]
By Lemma \ref{tor-I_i}, it follows that $M_1 \in \mathcal T(I_1)$, but $M_2 \not \in \mathcal T(I_1)$. So $\s{\theta}{\Lambda}{,\mathbf d} \not \subset \mathcal T(I_1)$, hence $\Hom_{\Lambda}(I_1,-)$ does not give an equivalence between $\s{\theta}{\Lambda}{,\mathbf d}$ and $\s{s_1\theta}{\Lambda}{,\mathbf d}$. This means that the assumption that $\theta_1 \neq 0$ in Theorem \ref{simple-reflection} is essential.
\end{ex}

Before proceeding to the general case, we state a result.

\begin{prop}\label{non-change}
Let $\theta \in \Theta$ with $\theta_i  \neq 0$ and $M \in \s{\theta}{\Lambda}{}$. \\
(1) If $\theta_i>0$, $M \simeq \bs^{+}(M)$ if and only if $S_i$ is not a direct summand of $\Soc M$ and $(M,S_i)=0$.\\
(2) If $\theta_i<0$, $M \simeq \bs^{-}(M)$ if and only if $S_i$ is not a direct summand of $M/(MI)$ and $(M,S_i)=0$.
\end{prop}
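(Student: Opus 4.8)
The plan is to prove (1) directly and to obtain (2) by the dual argument; recall $\bs^{+}=\Hom_\Lambda(I_i,-)$ and $\bs^{-}=-\otimes_\Lambda I_i$. For (1), since $\theta_i>0$ and $M\in\s{\theta}{\Lambda}{}$, Lemma \ref{TY}(1) puts $M$ in $\mathcal T(I_i)$, so $\bs^{+}(M)\in\mathcal Y(I_i)$ by Lemma \ref{Brenner-Butler} and $\udim\bs^{+}(M)=s_i(\udim M)$ by Corollary \ref{change-dim}(1). I would then use the natural morphism $\eta_M\colon M=\Hom_\Lambda(\Lambda,M)\to\Hom_\Lambda(I_i,M)$ obtained by restriction along $I_i\hookrightarrow\Lambda$: applying $\Hom_\Lambda(-,M)$ to the bimodule exact sequence \eqref{exact1} and using $\Ext^1_\Lambda(\Lambda,M)=0$ gives
\[
0\longrightarrow\Hom_\Lambda(S_i,M)\longrightarrow M\xrightarrow{\ \eta_M\ }\Hom_\Lambda(I_i,M)\longrightarrow\Ext^1_\Lambda(S_i,M)\longrightarrow0 .
\]
Here $\Hom_\Lambda(S_i,M)=0$ says precisely that $S_i$ is not a direct summand of $\Soc M$, i.e.\ $M\in\mathcal Y(I_i)$ by Lemma \ref{tor-I_i}(4), in which case $\eta_M$ is injective; and $s_i(\udim M)=\udim M$ holds if and only if $(\udim M,\e_i)=(M,S_i)=0$.

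These two facts yield (1). If $M\simeq\bs^{+}(M)$, then $M\in\mathcal Y(I_i)$, so $S_i$ is not a summand of $\Soc M$, and $\udim M=\udim\bs^{+}(M)=s_i(\udim M)$, so $(M,S_i)=0$. Conversely, the two stated conditions put $M$ in $\mathcal Y(I_i)$ and force $\udim M=\udim\bs^{+}(M)$, so $\eta_M$ is an injective morphism between modules of equal $K$-dimension, hence an isomorphism $M\xrightarrow{\sim}\bs^{+}(M)$.

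For (2) I would argue dually: $\theta_i<0$ puts $M$ in $\mathcal Y(I_i)$ by Lemma \ref{TY}(2), and the comparison map is the natural morphism $\varepsilon_M\colon M\otimes_\Lambda I_i\to M$ induced by $I_i\hookrightarrow\Lambda$. Applying $M\otimes_\Lambda-$ to \eqref{exact1} gives
\[
0\longrightarrow\Tor^\Lambda_1(M,S_i)\longrightarrow M\otimes_\Lambda I_i\xrightarrow{\ \varepsilon_M\ }M\longrightarrow M\otimes_\Lambda S_i\longrightarrow0 ,
\]
where $M\otimes_\Lambda S_i=0$ says that $S_i$ is not a summand of $M/(MI)$, i.e.\ $M\in\mathcal T(I_i)$ by Lemma \ref{tor-I_i}(1), in which case $\varepsilon_M$ is surjective. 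Combining this with $\udim(M\otimes_\Lambda I_i)=s_i(\udim M)$ from Corollary \ref{change-dim}(4) and running the same bookkeeping shows that $M\simeq\bs^{-}(M)$ if and only if $M\in\mathcal T(I_i)$ and $(M,S_i)=0$.

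I expect no real obstacle: the argument is essentially formal once $\eta_M$ and $\varepsilon_M$ are written down. The points that need care are the identifications of $\Hom_\Lambda(S_i,M)=0$ and $M\otimes_\Lambda S_i=0$ with $S_i$ not being a direct summand of $\Soc M$ and of $M/(MI)$ respectively, together with tracking which of $\mathcal T(I_i)$ and $\mathcal Y(I_i)$ contains $M$ according to the sign of $\theta_i$ — the only place the hypothesis $M\in\s{\theta}{\Lambda}{}$ enters. Once $M$ lies in the appropriate torsion or torsion-free class, the numerical condition $(M,S_i)=0$ is exactly what makes the comparison map bijective.
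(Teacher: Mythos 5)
Your argument is correct and follows essentially the same route as the paper: both use the exact sequence obtained by applying $\Hom_{\Lambda}(-,M)$ (resp.\ $M\otimes_{\Lambda}-$) to $0\to I_i\to\Lambda\to S_i\to 0$, identify the vanishing of the end terms with the socle/top conditions via Lemma \ref{tor-I_i}, and conclude by a dimension count using Corollary \ref{change-dim}. The only cosmetic difference is in the converse of (1): the paper kills $\Ext^1_{\Lambda}(S_i,M)$ explicitly via Lemma \ref{CB-form}, whereas you deduce the isomorphism from injectivity of $\eta_M$ together with equality of dimension vectors — both are the same Euler-form computation in disguise.
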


\begin{proof}
Take any $M \in \s{\theta}{\Lambda}{}$. 
We only show (1) because (2) is similar. 
We assume that $M \simeq \Hom_{\Lambda}(I_i,M)$.
Since $M \in \mathcal T(I_i)$, $\Hom_{\Lambda}(I_i,M) \in \mathcal{Y}(I_i)$, so $M \in \mathcal Y(I_i)$.
Thus $S_i$ is not a direct summand of $\Soc M$ by Lemma \ref{tor-I_i}.
Moreover since $M \simeq \Hom_{\Lambda}(I_i,M)$ and $M \in \mathcal T(I_i)$, by Corollary \ref{change-dim}, we have $\udim M = \udim \Hom_{\Lambda}(I_i,M) = s_i(\udim M)$, which implies $(M,S_i)=0$.

Conversely we assume that $S_i$ is not a direct summand of $\Soc M$ and $(M,S_i)=0$. 
By applying $\Hom_{\Lambda}(-,M)$ to the exact sequence
$0 \to I_i \to \Lambda \to S_i \to 0$,
we have an exact sequence
\[
\Hom_{\Lambda}(S_i,M) \longrightarrow \Hom_{\Lambda}(\Lambda,M) \longrightarrow \Hom_{\Lambda}(I_i,M) \longrightarrow \Ext^1_{\Lambda}(S_i,M).
\]
Thus it is enough to show that $\Hom_{\Lambda}(S_i,M)=0$ and $\Ext^1_{\Lambda}(S_i,M)=0$.
By the assumption, we have $\Hom_{\Lambda}(S_i,M)=0$. 
On the other hand, since $M \in \mathcal{T}(I_i)$, by Lemma \ref{tor-I_i}, $S_i$ is not a direct summand of $M/(MI)$.
Hence by Lemma \ref{CB-form}, we have
\[
\dim \Ext^1_{\Lambda}(S_i,M) = \dim \Hom_{\Lambda}(S_i,M) + \dim \Hom_{\Lambda}(M,S_i) - (S_i,M)= 0.
\]
Therefore the assertion follows.
\end{proof}

\subsection{General case $w = s_{i_{\ell}}\cdots s_{i_1}$}

First we introduce the notation $\bs_i$ for any $i \in Q_0$ as follows:
\begin{itemize}
\item If $\theta_i>0$, then $\bs_i := \bs_i^{+} = \Hom_{\Lambda}(I_i,-) : \s{\theta}{\Lambda}{} \to \s{\theta}{\Lambda}{s_i\theta}$,
\item If $\theta_i<0$, then $\bs_i := \bs_i^{-} = -\otimes_{\Lambda}I_i : \s{\theta}{\Lambda}{} \to \s{\theta}{\Lambda}{s_i\theta}$.
\end{itemize}

Let $w$ be an element of the Coxeter group $W_Q$.
If we take a expression $w = s_{i_{\ell}}\cdots s_{i_1}$, a composition $\bs_{i_{\ell}}\cdots \bs_{i_1}$ of the simple reflection functors gives an equivalence between $\s{\theta}{\Lambda}{}$ and $\s{w\theta}{\Lambda}{}$ for a general $\theta \in \Theta$. 
The next result implies that it does not depend on the choice of the expression of $w$ up to isomorphisms.

\begin{prop}\label{Coxeter-relation}
The functors $\bs_i$ satisfy the Coxeter relation, namely
\begin{enumerate}
\item 
$\bs_i \bs_i \simeq \mathrm{id}$,
\item
$\bs_j \bs_i \simeq \bs_j \bs_i$
\text{ if there is no arrows between $i$ and $j$ in $Q$, }
\item 
$\bs_i \bs_j \bs_i \simeq \bs_j \bs_i \bs_j$ 
\text{ if there is precisely one arrow between $i$ and $j$ in $Q$. }
\end{enumerate}
where $\theta_i \neq 0$, $\theta_j \neq 0$ and $\theta_i+\theta_j \neq 0$.
\end{prop}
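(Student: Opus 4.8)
The plan is to deduce the Coxeter relations for the functors $\bs_i$ from the already-established identities on ideals (Proposition \ref{rel-tilt}) together with the parameter bookkeeping of Theorem \ref{simple-reflection}. The subtlety is that $\bs_i$ is defined case-wise: it is $\Hom_\Lambda(I_i,-)$ when the relevant $i$-th coordinate of the current parameter is positive, and $-\otimes_\Lambda I_i$ when it is negative. So before composing functors one must track how the parameter changes at each step and check that each $\bs_{i_k}$ is being applied to a category $\s{\theta^{(k)}}{\Lambda}{}$ on which it is defined (i.e.\ the $i_k$-th coordinate of $\theta^{(k)}$ is nonzero). The hypotheses $\theta_i\neq0$, $\theta_j\neq0$, $\theta_i+\theta_j\neq 0$ are exactly what guarantees this along every reduced word appearing in (1)--(3): for instance in $s_js_is_j$ acting on $\theta$ the intermediate parameters have $i$- and $j$-coordinates among $\pm\theta_i$, $\pm\theta_j$, $\pm(\theta_i+\theta_j)$.

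First I would record the basic compatibility: if $\theta_i>0$ then by Lemma \ref{TY}(1), $\s{\theta}{\Lambda}{}\subset\mathcal T(I_i)$ and $\bs_i=\Hom_\Lambda(I_i,-)$ restricted from the Brenner--Butler equivalence of Lemma \ref{Brenner-Butler}; if $\theta_i<0$ then $\s{\theta}{\Lambda}{}\subset\mathcal Y(I_i)$ and $\bs_i=-\otimes_\Lambda I_i$. In either case $\bs_i$ is half of a mutually inverse pair of functors between $\s{\theta}{\Lambda}{}$ and $\s{s_i\theta}{\Lambda}{}$, and applying $\bs_i$ again (now with the opposite sign of the $i$-th coordinate) gives back the other half of the same adjoint pair; since $\Hom_\Lambda(I_i,-)$ and $-\otimes_\Lambda I_i$ are quasi-inverse on $\mathcal T(I_i)\leftrightarrow\mathcal Y(I_i)$, this yields (1), $\bs_i\bs_i\simeq\mathrm{id}$. (Here one uses $s_i^2=1$ on parameters, so the composite indeed lands back in $\s{\theta}{\Lambda}{}$.)

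For (2) and (3) the strategy is the same: expand each side as a composite of derived functors $\RHom_\Lambda(I_{i_k},-)$ and $-\Ltensor_\Lambda I_{i_k}$ on $\mathcal D(\Mod\Lambda)$ using Proposition \ref{red-tilt}, and observe that on the subcategories of semistable modules these derived composites are concentrated in degree zero (each $\bs_{i_k}$ is exact on the relevant torsion/torsionfree subcategory, by Theorem \ref{simple-reflection} and Lemma \ref{Brenner-Butler}), so the honest functor composite agrees with the derived one. Then the two sides of (2), resp.\ (3), are restrictions of $-\Ltensor_\Lambda(I_iI_j)\simeq-\Ltensor_\Lambda(I_jI_i)$, resp.\ $-\Ltensor_\Lambda(I_iI_jI_i)\simeq-\Ltensor_\Lambda(I_jI_iI_j)$, which hold as functors on $\mathcal D(\Mod\Lambda)$ because the underlying bimodules are isomorphic by Proposition \ref{rel-tilt}(2),(3) (using also Proposition \ref{red-tilt}(2) to identify the iterated derived tensor with tensoring by the product ideal, since the relevant words are reduced). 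Restricting this natural isomorphism of derived auto-equivalences to $\s{\theta}{\Lambda}{}$ and using that both composites are computed by the case-wise functors $\bs_{i_k}$ along the parameter path gives the claimed isomorphisms.

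The main obstacle I anticipate is purely bookkeeping rather than conceptual: one must verify, for each of the (finitely many) sign patterns of $(\theta_i,\theta_j)$ compatible with $\theta_i,\theta_j,\theta_i+\theta_j\neq 0$, that every intermediate parameter encountered when applying a reduced word $s_is_js_i$ (or $s_js_is_j$, or $s_is_j$) has nonzero coordinate in the slot where the next simple reflection functor acts, so that the case-wise definition of each $\bs_{i_k}$ is unambiguous and the functor is genuinely defined there; and then that, whichever of $\Hom_\Lambda(I_{i_k},-)$ or $-\otimes_\Lambda I_{i_k}$ is selected, the composite still matches the single derived functor $-\Ltensor_\Lambda I_{i_k}$ (equivalently $\RHom_\Lambda(I_{i_k},-)$) after restriction, so that Proposition \ref{rel-tilt} can be invoked. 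Once this matching is in place, the relations are immediate from the ideal identities.
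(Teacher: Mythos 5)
Your overall framework is the right one and matches the paper's: part (1) follows because $\Hom_{\Lambda}(I_i,-)$ and $-\otimes_{\Lambda}I_i$ are quasi-inverse between $\mathcal T(I_i)$ and $\mathcal Y(I_i)$, and parts (2)--(3) are to be reduced, after a case analysis on the signs of the relevant coordinates of $\theta$ (which is exactly what the hypotheses $\theta_i,\theta_j,\theta_i+\theta_j\neq 0$ control), to the ideal identities of Proposition \ref{rel-tilt} via Proposition \ref{red-tilt}. The paper's proof is precisely such a case-by-case computation.

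However, the key identification you make is not correct as stated, and this is a genuine gap rather than a cosmetic one. You claim that both sides of (2) (resp.\ (3)) are restrictions of the single derived functor $-\Ltensor_{\Lambda}(I_iI_j)$, justifying this with the parenthetical that $-\Ltensor_{\Lambda}I_{i_k}$ is ``equivalently'' $\RHom_{\Lambda}(I_{i_k},-)$. These two derived functors are quasi-inverse auto-equivalences of $\mathcal{D}(\Mod\Lambda)$, not isomorphic ones: for instance $\RHom_{\Lambda}(I_i,S_i)$ is concentrated in degree $1$ while $S_i\Ltensor_{\Lambda}I_i$ is concentrated in degree $-1$. Consequently, in a mixed-sign case such as $\theta_i>0$, $\theta_j<0$, the composite $\bs_j^{-}\bs_i^{+}$ restricts the derived functor $(-\Ltensor_{\Lambda}I_j)\circ\RHom_{\Lambda}(I_i,-)$, which is the restriction of neither $-\Ltensor_{\Lambda}(I_jI_i)$ nor $\RHom_{\Lambda}(I_jI_i,-)$, so neither part of Proposition \ref{red-tilt} applies directly and your reduction breaks down. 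What is actually needed in these cases is that $\RHom_{\Lambda}(I_i,-)$ \emph{commutes} with $-\Ltensor_{\Lambda}I_j$, and this is where $I_iI_j=I_jI_i$ enters, via the adjunction: on the relevant subcategories one has
\[
\Hom_{\Lambda}(I_i,M)\otimes_{\Lambda}I_j
\simeq \Hom_{\Lambda}(I_i,\Hom_{\Lambda}(I_j,M\otimes_{\Lambda}I_j))\otimes_{\Lambda}I_j
\simeq \Hom_{\Lambda}(I_iI_j,M\otimes_{\Lambda}I_j)\otimes_{\Lambda}I_j,
\]
and then $I_iI_j=I_jI_i$ lets you rewrite this as $\Hom_{\Lambda}(I_j,\Hom_{\Lambda}(I_i,M\otimes_{\Lambda}I_j))\otimes_{\Lambda}I_j\simeq\Hom_{\Lambda}(I_i,M\otimes_{\Lambda}I_j)$, i.e.\ $\bs_j^{-}\bs_i^{+}\simeq\bs_i^{+}\bs_j^{-}$. (This uses $M\in\mathcal Y(I_j)$ and $\Hom_{\Lambda}(I_j,-)\otimes_{\Lambda}I_j\simeq\mathrm{id}$ on $\mathcal Y(I_j)$, both guaranteed by Lemma \ref{TY} and Lemma \ref{Brenner-Butler} under the sign hypotheses.) With this commutation argument substituted for your false identification, the rest of your bookkeeping over the sign patterns goes through and recovers the paper's proof.
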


\begin{proof}
(1) is trivial. (2) There are 4 cases: (i) $\theta_i>0$, $\theta_j>0$, (ii) $\theta_i>0$, $\theta_j<0$, (iii) $\theta_i<0$, $\theta_j>0$ and (iv) $\theta_i<0$, $\theta_j<0$. For example, if $\theta_i>0$ and $\theta_j<0$, then for a $\theta$-semistable $\Lambda$-module of $M$, by Proposition \ref{rel-tilt} and \ref{red-tilt} we have
\begin{align*}
&\bs_j^- \bs_i^+(M) \simeq \Hom(I_i,M)\otimes I_j \simeq  \Hom(I_i,\Hom(I_j,M\otimes I_j))\otimes I_j \simeq \Hom(I_iI_j,M\otimes I_j) \otimes I_j \\
&\simeq \Hom(I_jI_i,M\otimes I_j))\otimes I_j \simeq \Hom(I_j,\Hom(I_i,M\otimes I_j))\otimes I_j = \Hom(I_i,M\otimes I_j)) = \bs_i^+ \bs_j^-(M).
\end{align*}
The others are similar. (3) There are 6 cases and the proof is similar to (2).
\end{proof}

Thus we denote by $\bw$ the composition $\bs_{i_{\ell}} \cdots \bs_{i_1}$ of simple reflection functors for any expression $s_{i_{\ell}}\cdots s_{i_1}$ of $w$. We call $\bw$ the reflection functor. Summary we have the following result.

\begin{thm}\label{reflection-functor}
For any $w \in W$, if $\theta \in \Theta$ is sufficiently general, then there is a categorical equivalence
\[\xymatrix{
\s{\theta}{\Lambda}{} \ar@<0.5ex>[rr]^{\bw} && \ar@<0.5ex>[ll]^{\bw^{-1}} \s{w\theta}{\Lambda}{}.
}\]
Under this equivalence $S$-equivalence classes are preserved and $\theta$-stable modules correspond to $w\theta$-stable modules.
In particular it induces the equivalence between $\s{\theta}{\Lambda}{,\alpha}$ and $\s{w\theta}{\Lambda}{,w\alpha}$.
\end{thm}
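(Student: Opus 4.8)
The plan is to reduce the statement to the simple--reflection case, Theorem~\ref{simple-reflection}, by writing $\bw$ as a composite along a reduced expression and keeping track of the sign of the parameter at each stage. Fix $w \in W_Q$ and a reduced expression $w = s_{i_\ell}\cdots s_{i_1}$, and for $0 \le k \le \ell$ put $w_k := s_{i_k}\cdots s_{i_1}$, so that $w_0 = 1$ and $w_\ell = w$. Using the identity $s_i\theta(\alpha) = \theta(s_i\alpha)$, the $i_k$-th coordinate of the intermediate parameter $w_{k-1}\theta$ is $(w_{k-1}\theta)_{i_k} = \langle \theta,\, w_{k-1}^{-1}\e_{i_k}\rangle$, a nonzero linear form in $\theta$ (its coefficient vector is the root $w_{k-1}^{-1}\e_{i_k}$), so its zero set is a hyperplane in $\Theta$. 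Since a fixed $w$ has only finitely many reduced expressions, only finitely many such hyperplanes arise over all reduced expressions of $w$ and all $k$, even when $W_Q$ is infinite. By ``$\theta$ sufficiently general'' I mean that $\theta$ avoids all of these, together with the finitely many further hyperplanes described at the end.

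For such $\theta$ I would argue by induction on $k$ that $\bs_{i_k}$ restricts to a categorical equivalence $\s{w_{k-1}\theta}{\Lambda}{} \to \s{w_k\theta}{\Lambda}{}$ which preserves $S$-equivalence classes and stable modules and sends a module of dimension vector $\beta$ to one of dimension vector $s_{i_k}\beta$. Indeed $(w_{k-1}\theta)_{i_k} \ne 0$ by genericity; if it is positive this is precisely Theorem~\ref{simple-reflection} for the parameter $w_{k-1}\theta$, and if it is negative it is Theorem~\ref{simple-reflection} for the parameter $w_k\theta = s_{i_k}(w_{k-1}\theta)$ (whose $i_k$-th coordinate is then positive), read off the $-\otimes_\Lambda I_{i_k}$ direction, which is exactly the functor $\bs_{i_k}=\bs_{i_k}^-$. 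Composing these $\ell$ equivalences yields a categorical equivalence $\bw = \bs_{i_\ell}\cdots\bs_{i_1} \colon \s{\theta}{\Lambda}{} \to \s{w\theta}{\Lambda}{}$ whose quasi-inverse $\bw^{-1}$ is the composite of the individual quasi-inverses in the reverse order (equivalently, the reflection functor attached to $w^{-1} = s_{i_1}\cdots s_{i_\ell}$ and the parameter $w\theta$). Since each factor preserves $S$-equivalence and stability, so does $\bw$; and since each factor acts on dimension vectors by the corresponding simple reflection, $\bw$ carries dimension vector $\alpha$ to $w\alpha$, so it restricts to the asserted equivalence $\s{\theta}{\Lambda}{,\alpha} \to \s{w\theta}{\Lambda}{,w\alpha}$.

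Finally, I would check that this construction is independent of the chosen reduced expression, so that the notation $\bw$ is legitimate. Any two reduced expressions of $w$ are linked by a finite chain of braid moves, each replacing a subword by one related to it through a defining Coxeter relation of $W_Q$; by Proposition~\ref{Coxeter-relation} each such move induces an isomorphism of the two corresponding composite functors, provided the coordinates $\theta_i$, $\theta_j$ and $\theta_i+\theta_j$ occurring there, evaluated at the relevant intermediate parameters, are nonzero. These are again nonzero linear forms in $\theta$, and only finitely many of them occur; adjoining the hyperplanes on which they vanish to the generic locus above guarantees that all composites along reduced expressions of $w$ agree up to isomorphism.

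The step I expect to carry the real content, as opposed to being a formal consequence of Theorem~\ref{simple-reflection} and Proposition~\ref{Coxeter-relation}, is this genericity bookkeeping: one has to confirm that every coordinate that must be nonzero --- at each stage of each reduced word for $w$, and at each braid move between such words --- is a nonzero linear functional of $\theta$, so that the union of ``bad'' hyperplanes is a proper subset of $\Theta$. Once one identifies these functionals with the pairings $\theta \mapsto \langle \theta,\, w'^{-1}\e_i\rangle$ against images of simple roots and invokes the finiteness of the set of reduced words of a fixed element, the verification is routine.
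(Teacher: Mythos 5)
Your proposal follows the same route as the paper: the paper deduces Theorem \ref{reflection-functor} directly by composing the simple reflection functors $\bs_{i_k}$ of Theorem \ref{simple-reflection} along an expression of $w$ and invoking Proposition \ref{Coxeter-relation} for independence of the chosen expression. Your additional bookkeeping, identifying the ``bad'' locus as the finite union of hyperplanes $\theta(w_{k-1}^{-1}\e_{i_k})=0$ (and the analogous ones needed for the braid moves), is correct and in fact makes the phrase ``sufficiently general'' more precise than the paper does.
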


If we impose some assumption, $\bw$ has a explicit description  by using the tilting module $I_w$.

\begin{prop}\label{composition}
For any $w \in W_Q$, take a reduced expression $w=s_{i_{\ell}}\cdots s_{i_1}$. Put $w_j=s_{i_j}\cdots s_{i_1}$ for $j=1,\ldots,\ell$ and $w_0=1$. For any $\theta \in \Theta$, if $(w_{j-1}\theta)_{i_j}>0$  holds for any $j=1,\ldots, \ell$, then we have $\s{\theta}{\Lambda}{} \subset \mathcal T(I_w), \s{w\theta}{\Lambda}{} \subset \mathcal Y(I_w)$ and
\[
\bw = \bs_{i_{\ell}} \cdots \bs_{i_{1}} = \bs^{+}_{i_{\ell}} \cdots \bs^{+}_{i_{1}} = \Hom_{\Lambda}(I_w,-) : \s{\theta}{\Lambda}{} \to \s{w\theta}{\Lambda}{}.
\]
Also if $(w_{j-1}\theta)_{i_j}<0$ holds for any $j=1,\ldots\ell$, then we have  $\s{\theta}{\Lambda}{} \subset \mathcal Y(I_w), \s{w\theta}{\Lambda}{} \subset \mathcal T(I_w)$ and
\[
\bw = \bs_{i_{\ell}} \cdots \bs_{i_{1}} = \bs^{-}_{i_{\ell}} \cdots \bs^{-}_{i_{1}} = -\otimes_{\Lambda}I_w : \s{\theta}{\Lambda}{} \to \s{w\theta}{\Lambda}{}.
\]
\end{prop}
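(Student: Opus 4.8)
The plan is to establish the first statement, under the hypothesis $(w_{j-1}\theta)_{i_j}>0$ for all $j$; the second statement is its mirror image and I will only indicate the changes. The whole point is that, read along the reduced word, each simple reflection functor occurring in $\bw$ coincides with the degree-$0$ term of the corresponding derived auto-equivalence $\RHom_\Lambda(I_{i_j},-)$, because the module fed into it at each stage lies in the torsion class $\mathcal{T}(I_{i_j})$. Composing these, Proposition \ref{red-tilt} recognises the whole chain as $\RHom_\Lambda(I_w,-)$, still concentrated in degree $0$, and the rest is formal.

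Concretely I would argue by induction on $j$. Fix $M\in\s{\theta}{\Lambda}{}$, put $M_0:=M$ and $M_j:=\bs_{i_j}(M_{j-1})$. Since $(w_{j-1}\theta)_{i_j}>0$, by definition $\bs_{i_j}=\bs^{+}_{i_j}=\Hom_\Lambda(I_{i_j},-)$ on $\s{w_{j-1}\theta}{\Lambda}{}$, which already yields the middle equality $\bs_{i_\ell}\cdots\bs_{i_1}=\bs^{+}_{i_\ell}\cdots\bs^{+}_{i_1}$. By Lemma \ref{TY}(1), $\s{w_{j-1}\theta}{\Lambda}{}\subseteq\mathcal{T}(I_{i_j})$, so $\Ext^1_\Lambda(I_{i_j},M_{j-1})=0$; since $\mathrm{pd}_\Lambda I_{i_j}\le 1$ (Theorem \ref{classi-tilt} with Definition \ref{df_tilt}(1)), the complex $\RHom_\Lambda(I_{i_j},M_{j-1})$ is concentrated in degree $0$, equal to $\Hom_\Lambda(I_{i_j},M_{j-1})=M_j$, and $M_j\in\s{w_j\theta}{\Lambda}{}$ by Theorem \ref{simple-reflection}, so the induction continues. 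Iterating, $\bigl(\RHom_\Lambda(I_{i_\ell},-)\circ\cdots\circ\RHom_\Lambda(I_{i_1},-)\bigr)(M)$ is concentrated in degree $0$ and equals $M_\ell=\bw(M)$. By Proposition \ref{red-tilt}(1), using that $w=s_{i_\ell}\cdots s_{i_1}$ is reduced, this composite is exactly $\RHom_\Lambda(I_w,-)$; hence $\RHom_\Lambda(I_w,M)$ lives in degree $0$, giving $\Hom_\Lambda(I_w,M)=\bw(M)$ and $\Ext^1_\Lambda(I_w,M)=0$, i.e.\ $M\in\mathcal{T}(I_w)$. This proves $\s{\theta}{\Lambda}{}\subseteq\mathcal{T}(I_w)$ and $\bw=\Hom_\Lambda(I_w,-)$ on it.

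For $\s{w\theta}{\Lambda}{}\subseteq\mathcal{Y}(I_w)$ I would note that $\bw\colon\s{\theta}{\Lambda}{}\to\s{w\theta}{\Lambda}{}$ is an equivalence, being a composite of the equivalences of Theorem \ref{simple-reflection}, hence essentially surjective; so any $N\in\s{w\theta}{\Lambda}{}$ is isomorphic to $\Hom_\Lambda(I_w,M)$ with $M\in\s{\theta}{\Lambda}{}\subseteq\mathcal{T}(I_w)$, and $\Hom_\Lambda(I_w,-)$ carries $\mathcal{T}(I_w)$ into $\mathcal{Y}(I_w)$ by Lemma \ref{Brenner-Butler}. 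The case $(w_{j-1}\theta)_{i_j}<0$ is dual: Lemma \ref{TY}(2) puts $\s{w_{j-1}\theta}{\Lambda}{}$ in $\mathcal{Y}(I_{i_j})$, so $\bs_{i_j}=\bs^{-}_{i_j}=-\otimes_\Lambda I_{i_j}$ is the degree-$0$ term of $-\Ltensor_\Lambda I_{i_j}$ (as $\Tor^\Lambda_1(-,I_{i_j})$ vanishes on $\mathcal{Y}(I_{i_j})$); one identifies the composite $-\Ltensor_\Lambda I_{i_\ell}\Ltensor_\Lambda\cdots\Ltensor_\Lambda I_{i_1}$ with $-\Ltensor_\Lambda I_w$ via Proposition \ref{red-tilt}(2), concludes $\Tor^\Lambda_1(M,I_w)=0$, hence $M\in\mathcal{Y}(I_w)$ and $\bw=-\otimes_\Lambda I_w$, and gets $\s{w\theta}{\Lambda}{}\subseteq\mathcal{T}(I_w)$ from essential surjectivity and Lemma \ref{Brenner-Butler}. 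The only genuine obstacle is the bookkeeping that the composite stays concentrated in degree $0$ at every stage — equivalently, that the output of each simple reflection functor lands in the torsion class required by the next one; this is precisely what the positivity (resp.\ negativity) chain $(w_{j-1}\theta)_{i_j}>0$ buys, through Lemma \ref{TY} applied at the parameter $w_{j-1}\theta$ together with Theorem \ref{simple-reflection}.
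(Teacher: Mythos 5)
Your proposal is correct and follows essentially the same route as the paper: the paper's own proof is the one-line observation that the claim follows from Lemma \ref{TY} and Proposition \ref{red-tilt} once the expression is reduced, and your induction along the reduced word (landing in $\mathcal T(I_{i_j})$ resp.\ $\mathcal Y(I_{i_j})$ at each stage so that the derived functors stay concentrated in degree $0$, then invoking Proposition \ref{red-tilt} to identify the composite with $\RHom_{\Lambda}(I_w,-)$ resp.\ $-\Ltensor_{\Lambda}I_w$) is exactly the intended expansion of that argument. Your additional use of essential surjectivity together with Lemma \ref{Brenner-Butler} to get $\s{w\theta}{\Lambda}{}\subset\mathcal Y(I_w)$ is a clean way to finish and is consistent with what the paper leaves implicit.
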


\begin{proof}
Since $s_{i_{\ell}} \cdots s_{i_1}$ is a reduced expression, the assertion follows from Lemma \ref{TY} and Proposition \ref{red-tilt}. 
\end{proof}

\section{Functor induces morphism}
\label{functor}

In this section we prove that an equivalence between categories obtained in the previous section induces a isomorphism of varieties. In view of applications, we prove it in a more general setting. 
Here a variety means a separated reduced scheme of finite type over $K$.
Let $\Var$ be the category of varieties, $\R$ be the category of finitely generated reduced commutative $K$-algebras and $\Set$ the category of sets.
By Yoneda's lemma (cf.~\cite[Proposition VI-2]{EH}), $\Var$ is equivalent to the full subcategory of the category of functors from $\R$ to $\Set$. So a variety $X$ is regarded as a covariant functor $h_X : \R \to \Set$ the so-called functor of points.
For simplicity, for any $K$-algebra $A$, any $A$-module $M$ and any $R \in \R$, $A^R$ and $M^R$ stands for $\Lambda \otimes_K R$ and $M\otimes_K R$ respectively. For any $R \in \R$, $\Max(R)$ denotes the set of maximal ideal of $R$.
King \cite{Ki} provided a notion of families for a $K$-algebra $\Lambda = KQ/\langle R \rangle$ where $(Q,R)$ is a finite quiver with relations. A family of $\theta$-semistable $\Lambda$-module of dimension vector $\alpha$ is defined as a $\theta$-semistable $\Lambda^R$-module of dimension vector $\alpha$ which is finitely generated and locally free over $R$. 
Here a $\Lambda^R$-module $M$ is called $\theta$-semistable if $M \otimes_R k(\m)$ is $\theta$-semistable for any $\m \in \Max(R)$, also is called of dimension vector $\alpha$ if the dimension vector of $M \otimes_R k(\m)$ is $\alpha$ as a $\Lambda$-module for any $\m \in \Max(R)$ where $k(\mathfrak{m}) = R_{\mathfrak m}/{\mathfrak m}R_{\mathfrak m}$.
Similarly $\s{\theta}{\Lambda^R}{,\alpha}$ denotes the category of $\theta$-semistable $\Lambda$-modules of dimension vector $\alpha$ which is finitely generated and locally free over $R$.

\begin{lem}\label{fiber}
Let $R \in \R$. We assume that a finitely generated $R$-module $M$ satisfies the following condition. For any $\m \in \Max(R)$, there exists a non-negative integer $d$ such that
\[
\dim_{k(\mathfrak{m})} M \otimes_R k(\mathfrak{m}) = d.
\]
Then $M$ is locally free over $R$.
Consequently a $\Lambda^R$-module of dimension vector $\alpha$ is locally free over $R$. In particular, such module is flat over $R$.
\end{lem}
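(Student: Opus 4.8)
The plan is to show that a finitely generated module over a reduced finitely generated $K$-algebra whose fibre dimensions are locally constant is locally free, and then to observe that a $\Lambda^R$-module of dimension vector $\alpha$ satisfies the fibre-dimension hypothesis vertex-wise. The key point is that for a finitely generated module $M$ over a Noetherian ring $R$, the function $\mathfrak{p} \mapsto \dim_{k(\mathfrak{p})} M \otimes_R k(\mathfrak{p})$ is upper semicontinuous, and when $R$ is reduced, local constancy of this function (even just on the maximal ideals, since these are dense in $\Spec R$ for a Jacobson ring) forces $M$ to be locally free of that rank. So the hard part is really just assembling standard commutative-algebra facts with attention to the reducedness hypothesis, which is exactly the assumption whose omission was flagged in the Acknowledgements.

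First I would reduce to a local statement: freeness is local, so it suffices to fix $\mathfrak{m} \in \Max(R)$ and prove $M_{\mathfrak{m}}$ is free over $R_{\mathfrak{m}}$. Write $d = \dim_{k(\mathfrak{m})} M \otimes_R k(\mathfrak{m})$. By Nakayama's lemma, choose $m_1,\dots,m_d \in M_{\mathfrak{m}}$ lifting a $k(\mathfrak{m})$-basis of $M \otimes_R k(\mathfrak{m})$; these generate $M_{\mathfrak{m}}$, giving a surjection $\varphi : R_{\mathfrak{m}}^{\,d} \twoheadrightarrow M_{\mathfrak{m}}$. Let $N = \Ker\varphi$. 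Localizing further at a minimal prime $\mathfrak{q} \subseteq \mathfrak{m}$ of $R$: since $R$ is reduced, $R_{\mathfrak{q}}$ is a field (a reduced Noetherian local ring of dimension $0$), and over a field every module is free, so $M_{\mathfrak{q}}$ is free of some rank; by the hypothesis applied at $\mathfrak{q}$ — here I use that the fibre-dimension function is constant on a neighbourhood, hence in particular agrees at $\mathfrak{q}$ and $\mathfrak{m}$, or alternatively that $\dim_{k(\mathfrak{q})} M\otimes_R k(\mathfrak{q})$ is squeezed between the generic value and $d$ by semicontinuity — this rank is $d$. Hence $N_{\mathfrak{q}} = 0$. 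As this holds at every minimal prime and $N$ is a submodule of the torsion-free (indeed, at minimal primes, free) localizations, $N$ is supported away from all minimal primes; but an $R$-module supported on a closed set not containing any generic point of $\Spec R$, when $R$ is reduced and $N \subseteq R_{\mathfrak{m}}^{\,d}$, must vanish — concretely, any $n \in N$ is annihilated by an element outside every minimal prime, hence by a non-zerodivisor, yet $n$ lies in the free module $R_{\mathfrak{m}}^{\,d}$ which has no nonzero elements killed by a non-zerodivisor. Therefore $N = 0$ and $\varphi$ is an isomorphism, so $M_{\mathfrak{m}} \cong R_{\mathfrak{m}}^{\,d}$ is free. (Alternatively one may simply cite \cite[Ex.~II.5.8]{Har} or the standard fact that a coherent sheaf with locally constant fibre dimension on a reduced scheme is locally free.)

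For the consequence, let $M$ be a $\Lambda^R$-module of dimension vector $\alpha = (\alpha_i)_{i \in Q_0}$, meaning $M \otimes_R k(\mathfrak{m})$ has dimension vector $\alpha$ for every $\mathfrak{m} \in \Max(R)$. As an $R$-module, $M = \bigoplus_{i \in Q_0} Me_i$, and for each idempotent $e_i$ and each $\mathfrak{m}$ we have $\dim_{k(\mathfrak{m})} (Me_i) \otimes_R k(\mathfrak{m}) = \alpha_i$, a fixed integer independent of $\mathfrak{m}$. Applying the first part to each $Me_i$ (which is finitely generated over $R$ since $M$ is), each $Me_i$ is locally free over $R$, hence so is their finite direct sum $M$. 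Flatness over $R$ is then immediate, since locally free modules are flat. The only step where I expect to need care is the passage to minimal primes and the use of reducedness to conclude $N = 0$; everything else is bookkeeping with idempotents and Nakayama.
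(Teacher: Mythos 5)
Your argument is correct in substance but takes a more self-contained route than the paper. The paper's proof stays global: writing $\psi(\mathfrak p)=\dim_{k(\mathfrak p)}M\otimes_Rk(\mathfrak p)$, it invokes upper semicontinuity of $\psi$ from Hartshorne, Exercise II.5.8, uses $(0)=\bigcap_{\m\in\Max(R)}\m$ for a reduced finitely generated $K$-algebra (Matsumura, Theorem 5.5) to upgrade constancy of $\psi$ on $\Max(R)$ to constancy on all of $\Spec R$, and then cites part (c) of the same exercise (constant fibre dimension over a reduced Noetherian ring implies locally free) as a black box. You instead prove that last implication by hand: Nakayama gives a surjection $R_{\m}^{\,d}\twoheadrightarrow M_{\m}$, the kernel vanishes at every minimal prime because $R_{\mathfrak q}$ is a field when $R$ is reduced, and a finitely generated submodule of a free module annihilated by a non-zerodivisor is zero. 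This buys independence from the Hartshorne exercise at the cost of length, and it correctly isolates reducedness as the essential hypothesis. The one step you state loosely is exactly the one the paper handles via Matsumura: your justification that $\psi(\mathfrak q)=d$ at a minimal prime $\mathfrak q$ (``constant on a neighbourhood'', ``squeezed between the generic value and $d$'') does not quite parse, since the hypothesis only controls maximal ideals and at a minimal prime the value \emph{is} the generic value. What is needed is: $\psi(\mathfrak q)\le d$ by semicontinuity along the specialization from $\mathfrak q$ to $\m$, and $\psi(\mathfrak q)\ge d$ because the open locus $\{\psi\le\psi(\mathfrak q)\}$ is nonempty and hence, $R$ being Jacobson, contains a maximal ideal, where $\psi=d$ by hypothesis. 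Once that is spelled out your proof is complete; the vertex-wise decomposition $M=\bigoplus_iMe_i$ for the consequence is fine, though the paper applies the lemma to $M$ directly using constancy of the total dimension $\sum_i\alpha_i$.
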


\begin{proof}
The function
\[
\psi(\mathfrak p) = \dim_{k(\mathfrak{p})} M \otimes_R k(\mathfrak{p})
\]
is upper-semi-continuous by \cite[Exercise 5.8]{Har}, that is, for any $n \in \mathbb Z$ the set $U_{\geq n} = \{ \mathfrak p \in \Spec R \mid \psi(\mathfrak p) \geq n \}$ is closed. Also trivially the set $U_{\leq n} = \{ \mathfrak p \in \Spec R \mid \psi(\mathfrak p) \leq n \}$ is open.
$\mathrm{Max}(R)$ denotes the set of maximal ideals of $R$. Then by the assumption we have 
\[
\mathrm{Max}(R) \subset U_{\geq d} \text{ and } \mathrm{Max}(R) \subset U_{\leq d}.
\]
Since $R$ is finitely generated over $K$, by \cite[Theorem 5.5]{Ma} it follows that $(0) = \bigcap_{\mathfrak{m} \in \mathrm{Max}(R)}\mathfrak{m}$. So we have $U_{\geq d} = \Spec R = U_{\leq d}$, hence $U_{d} := U_{\geq d} \cap U_{\leq d} = \Spec R$. Therefore by \cite[Exercise 5.8]{Har} $M$ is locally free. 
Moreover, for a Noetherian commutative ring $R$ and a finitely generated $R$-module $M$, being locally free is the same as being flat.
\end{proof}

We denotes by $\F{\theta}{\Lambda}{\alpha}$ the moduli functor with respect to $\theta$-semistable $\Lambda$-modules of dimension vector $\alpha$, namely $\F{\theta}{\Lambda}{\alpha}$ is a covariant functor from $\R$ to $\Set$ defined by 
\[
\F{\theta}{\Lambda}{\alpha}(R) = \left\{\begin{array}{l} \text{ $S$-equivalence classes of $\theta$-semistable $\Lambda^R$-modules of dimension } \\ \text{ vector $\alpha$ which are finitely generated (and locally free) over $R$ }  \end{array}\right\}
\] 
for any $R \in \R$.
By the definition of coarse moduli spaces, for any $\theta \in \Theta$, there is a morphism  
\[
\Ph{\theta}{\Lambda}{\alpha} : \F{\theta}{\Lambda}{\alpha} \longrightarrow \uM{\theta}{\Lambda}{\alpha}
\]
such that $\Ph{\theta}{\Lambda}{\alpha}(K)$ is bijective and, for any variety $X$ and any morphism $\Psi : \F{\theta}{\Lambda}{\alpha} \to h_{X}$, there is a unique morphism $\Omega : \uM{\theta}{\Lambda}{\alpha}$ such that $\Psi = \Omega \circ \Ph{\theta}{\Lambda}{\alpha}$.
In particular if $\M{\theta}{\Lambda}{\alpha}$ is a fine moduli space, then by the definition $\Ph{\theta}{\Lambda}{\alpha}$ is an isomorphism.

Let $(Q,R), (Q',R')$ be finite quivers with relations. Put $\Lambda = KQ/\langle R \rangle$ and $\Gamma = KQ'/R'$. Suppose that there is a functor $F : \s{\theta}{\Lambda}{,\alpha} \to \s{\eta}{\Gamma}{,\beta}$ which preserves S-equivalence classes where $\alpha$ and $\beta$ are dimension vectors. Moreover we assume that the functor $F(R) : \s{\theta}{\Lambda}{,\alpha} \to \Mod \Gamma^R$ is given for each $R \in \R$ such that $F(K) = F$.
Then $F$ defines a map $f(K) : \uM{\theta}{\Lambda}{\alpha}(K) \to \uM{\eta}{\Gamma}{\beta}(K)$. We prove that this map is extended to a morphism of varieties.

\begin{prop}\label{morph}
 If $(-\otimes_RS)\circ F(R) \simeq F(S) \circ (-\otimes_RS)$ holds for each morphism $R \to S$ in $\R$, namely there is a commutative diagram
\[\xymatrix{
\s{\theta}{\Lambda^R}{,\alpha} \ar[r]^{F(R)} \ar[d]_{-\otimes_RS} & \Mod \Gamma^R \ar[d]^{-\otimes_RS} \\
\s{\theta}{\Lambda^S}{,\alpha} \ar[r]^{F(S)} & \Mod \Gamma^S,
}\]
then $F$ is extended to a morphism $f : \uM{\theta}{\Lambda}{\alpha} \to \uM{\eta}{\Gamma}{\beta}$ of functors, therefore a morphism $f : \M{\theta}{\Lambda}{\alpha} \to \M{\eta}{\Gamma}{\beta}$ of varieties.
\end{prop}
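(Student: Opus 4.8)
The plan is to promote the family-level functors $F(R)$ to a morphism of moduli functors $\F{\theta}{\Lambda}{\alpha} \to \F{\eta}{\Gamma}{\beta}$, to compose this with the canonical morphism $\Ph{\eta}{\Gamma}{\beta}$ to the coarse moduli space, and then to extract $f$ from the universal property of $\Ph{\theta}{\Lambda}{\alpha}$ recalled above.

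The first and main step is to check that for every $R \in \R$ the functor $F(R)$ restricts to a functor $\s{\theta}{\Lambda^R}{,\alpha} \to \s{\eta}{\Gamma^R}{,\beta}$, i.e.\ that it sends families to families. Let $M \in \s{\theta}{\Lambda^R}{,\alpha}$ and fix $\m \in \Max(R)$. Since $R$ is a finitely generated $K$-algebra and $K$ is algebraically closed, $k(\m) = K$; applying the assumed commutative square to the quotient morphism $R \to R/\m = k(\m)$ yields a natural isomorphism of $\Gamma$-modules $F(R)(M) \otimes_R k(\m) \simeq F(k(\m))\bigl(M \otimes_R k(\m)\bigr) = F\bigl(M \otimes_R k(\m)\bigr)$. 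As $M$ is a family, $M \otimes_R k(\m)$ is $\theta$-semistable of dimension vector $\alpha$, hence $F\bigl(M \otimes_R k(\m)\bigr) \in \s{\eta}{\Gamma}{,\beta}$ is $\eta$-semistable of dimension vector $\beta$. Thus every fibre of $F(R)(M)$ is $\eta$-semistable of dimension vector $\beta$; in particular $\dim_{k(\m)} F(R)(M) \otimes_R k(\m) = \sum_{i} \beta_i$ is independent of $\m$. Granting that $F(R)(M)$ is finitely generated over $R$ --- which one checks directly in each of the cases this proposition is applied to --- Lemma~\ref{fiber} shows that $F(R)(M)$ is locally free over $R$, so $F(R)(M) \in \s{\eta}{\Gamma^R}{,\beta}$.

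Next, the assignment $M \mapsto F(R)(M)$ descends to a map $\F{\theta}{\Lambda}{\alpha}(R) \to \F{\eta}{\Gamma}{\beta}(R)$ on $S$-equivalence classes of families: it is natural in $R$ by the assumed compatibility with base change, and it respects $S$-equivalence because $F$ does (so that the induced map is independent of the chosen representatives of the classes). This produces a morphism of functors $\F{\theta}{\Lambda}{\alpha} \to \F{\eta}{\Gamma}{\beta}$; composing with $\Ph{\eta}{\Gamma}{\beta} : \F{\eta}{\Gamma}{\beta} \to \uM{\eta}{\Gamma}{\beta}$ gives a morphism $\Psi : \F{\theta}{\Lambda}{\alpha} \to \uM{\eta}{\Gamma}{\beta} = h_{\M{\eta}{\Gamma}{\beta}}$. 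Applying the universal property of $\Ph{\theta}{\Lambda}{\alpha}$ with the variety $X = \M{\eta}{\Gamma}{\beta}$ produces a unique morphism of functors $f : \uM{\theta}{\Lambda}{\alpha} \to \uM{\eta}{\Gamma}{\beta}$ with $\Psi = f \circ \Ph{\theta}{\Lambda}{\alpha}$, equivalently a morphism of varieties $f : \M{\theta}{\Lambda}{\alpha} \to \M{\eta}{\Gamma}{\beta}$ by Yoneda's lemma. Evaluating at $R = K$ and using that $\Ph{\theta}{\Lambda}{\alpha}(K)$ is a bijection identifies $f(K)$ with the set-theoretic map induced by $F$, so $f$ genuinely extends it.

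The step I expect to be the real obstacle is the invocation of Lemma~\ref{fiber}: one must know that $F(R)(M)$ is an honest family, i.e.\ locally free over $R$ of the correct rank, and the only leverage for this is the constancy of the fibre dimension coming from the base-change isomorphism, together with finite generation of $F(R)(M)$ over $R$. Everything after that is formal bookkeeping with functors of points and the defining property of the coarse moduli space.
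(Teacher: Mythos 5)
Your proof is correct and follows essentially the same route as the paper: apply the base-change hypothesis to $R \to k(\m)$ to see that the fibres of $F(R)(M)$ are $\eta$-semistable of dimension vector $\beta$, descend to $S$-equivalence classes to get a morphism of moduli functors, then compose with $\Ph{\eta}{\Gamma}{\beta}$ and invoke the coarse-moduli universal property and Yoneda. Your explicit appeal to Lemma \ref{fiber} (and the flag that finite generation of $F(R)(M)$ over $R$ must be checked case by case) is a point the paper's proof leaves implicit, and is a worthwhile clarification rather than a deviation.
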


\begin{proof}
Let $R \in \R$. Take any $M \in \s{\theta}{\Lambda^R}{,\alpha}$.
For any $\m \in \Max(R)$, we have a natural surjection $R \to k(\m)$.  
So by the assumption we have an isomorphism
\[
F(R)(M) \otimes_R k(\m) \simeq F(k(\m))(M \otimes_R k(\m)).
\]
Since $M \otimes_R k(\m) \in \s{\theta}{\Lambda}{,\alpha}$ and $k(\m) \simeq K$ as a $K$-algebra, it follows that  $F(k(\m))(M \otimes_R k(\m)) = F(M \otimes_R k(\m)) \in \s{\eta}{\Gamma}{,\beta}$. Since $F$ preserves S-equivalence classes, it defines a map $f'(R) : \F{\theta}{\Lambda}{\alpha}(R) \to \F{\eta}{\Gamma}{\beta}(R)$. By the assumption, trivially $f'(R)$ is functorial in $R$, hence we have a morphism $f' : \F{\theta}{\Lambda}{\alpha} \to \F{\eta}{\Gamma}{\beta}$ of functors.
Thus by the definition of coarse moduli spaces, for a composition $\Ph{\eta}{\Gamma}{\beta} \circ f'$, there exists a morphism $f : \uM{\theta}{\Lambda}{\alpha} \to \uM{\eta}{\Gamma}{\beta}$ which makes the following diagram commutative:
\[\xymatrix{
\F{\theta}{\Lambda}{\alpha} \ar[r]^{\Ph{\theta}{\Lambda}{\alpha}} \ar[d]_{f'} & \uM{\theta}{\Lambda}{\alpha} \ar@{..>}[d]^{f} \\
\F{\eta}{\Gamma}{\beta} \ar[r]^{\Ph{\eta}{\Gamma}{\beta}} & \uM{\eta}{\Gamma}{\beta}.
}\]
By Yoneda's lemma, there exists the corresponding morphism $f : \M{\theta}{\Lambda}{\alpha} \to \M{\eta}{\Gamma}{\beta}$ of varieties.
\end{proof}

In particular, we consider the case when $F$ is given as either a hom functor or a tensor functor.
We use the following useful basic facts without a proof.

\begin{lem}\label{basic-iso}
Let $\Lambda$ be a $K$-algebra, $R$ a commutative $K$-algebra and $S$ a commutative $R$-algebra.
\begin{enumerate} 
\def\labelenumi{(\theenumi)} 
\item Let $P,M$ be $\Lambda^R$-modules and $N$ an $R$-module. 
Then there exists a morphism 
\[
\Hom_{\Lambda^R}(P^R,M) \otimes_R N \stackrel{\sim}{\longrightarrow}\Hom_{\Lambda}(P,M \otimes_R N)
\] 
given by $\varphi \otimes n \longmapsto  (x \longmapsto  \varphi(x \otimes 1) \otimes n)$, which are functorial in $P,M$ and $N$.
Moreover if $P$ is finitely generated projective, then it is an isomorphism.
\item For any $\Lambda^{\op}$-module $L$, $\Lambda^R$-module $M$ and $R$-module $N$, there exists an  isomorphism which is functorial in $L,M$ and $N$
\[
M \otimes_{\Lambda^R} (L \otimes_K N) \simeq N \otimes_R M \otimes_{\Lambda} L.
\]
\item Let $P$ be a finitely generated projective $\Lambda^R$-module. Then for any $\Lambda^R$-module $M$ which is flat over $R$, $\Hom_{\Lambda^R}(P,M)$ is also flat over $R$.
\end{enumerate}
\end{lem}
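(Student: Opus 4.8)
The plan is to establish the three statements by reducing each of them, through an elementary dévissage, to the case of a free module of rank one, where the verification becomes a direct computation. For (1), I would first note that the displayed map
\[
\Phi_P\colon \Hom_{\Lambda^R}(P^R,M)\otimes_R N\longrightarrow \Hom_{\Lambda}(P,M\otimes_R N),\qquad \varphi\otimes n\longmapsto\bigl(x\mapsto\varphi(x\otimes 1)\otimes n\bigr),
\]
is natural in the $\Lambda$-module $P$ and additive, so it carries a finite direct sum decomposition $P=\bigoplus_k P_k$ to the direct sum of the $\Phi_{P_k}$. Hence it suffices to check that $\Phi_P$ is an isomorphism for $P=\Lambda$: every finitely generated projective $P$ is a direct summand of some $\Lambda^{\oplus m}$, and naturality of $\Phi_P$ propagates the isomorphism property from $\Lambda^{\oplus m}$ to its direct summands. (This is exactly where finite generation of $P$ is used, since it is what turns $\Hom$ out of $\Lambda^{\oplus m}$ into a direct sum rather than a product.) For $P=\Lambda$, evaluation at $1$ identifies both sides canonically with $M\otimes_R N$, and one checks that $\Phi_\Lambda$ becomes the identity under these identifications.

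For (2), the key remark is the identification of $\Lambda^R$-$R$-bimodules $L\otimes_K N\simeq (L\otimes_K R)\otimes_R N=L^R\otimes_R N$, where $L^R=L\otimes_K R$ carries its natural left $\Lambda^R$-module structure. Associativity of the tensor product then gives
\[
M\otimes_{\Lambda^R}(L\otimes_K N)\simeq M\otimes_{\Lambda^R}(L^R\otimes_R N)\simeq (M\otimes_{\Lambda^R}L^R)\otimes_R N,
\]
and the base-change isomorphism $M\otimes_{\Lambda^R}L^R\simeq M\otimes_{\Lambda^R}(\Lambda^R\otimes_\Lambda L)\simeq M\otimes_\Lambda L$, combined with the commutativity of $\otimes_R$, yields $(M\otimes_\Lambda L)\otimes_R N\simeq N\otimes_R M\otimes_\Lambda L$, as required. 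All the isomorphisms invoked here are the standard natural ones, so functoriality in $L$, $M$ and $N$ is automatic.

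For (3), since $P$ is a finitely generated projective $\Lambda^R$-module it is a direct summand of $(\Lambda^R)^{\oplus m}$ for some $m$, hence $\Hom_{\Lambda^R}(P,M)$ is a direct summand of $\Hom_{\Lambda^R}((\Lambda^R)^{\oplus m},M)\simeq M^{\oplus m}$. The latter is flat over $R$ because $M$ is, and a direct summand of a flat $R$-module is flat; therefore $\Hom_{\Lambda^R}(P,M)$ is flat over $R$. None of the three parts presents a genuine obstacle — they are all formal manipulations with the $\Hom$ and $\otimes$ functors — and the only point requiring a little care is the bookkeeping of left/right module structures and the position of the base ring $R$, particularly in (2).
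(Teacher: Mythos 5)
Your proof is correct; note that the paper itself states this lemma explicitly ``without a proof,'' so there is no argument in the text to compare against. The three arguments you give --- reduction to $P=\Lambda$ via additivity and direct summands for (1), rewriting $L\otimes_K N$ as $L^R\otimes_R N$ and using associativity plus the base-change isomorphism $M\otimes_{\Lambda^R}L^R\simeq M\otimes_\Lambda L$ for (2), and realizing $\Hom_{\Lambda^R}(P,M)$ as a direct summand of $M^{\oplus m}$ for (3) --- are exactly the standard ones the authors intend the reader to supply, and they are carried out correctly, including the correct identification of where finite generation of $P$ is needed in (1).
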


\begin{prop}\label{tensor}
Suppose $F = -\otimes_{\Lambda}L$ for a $\Lambda^{\mathrm{op}}$-module $L$. 
Let $F(R) = -\otimes_{\Lambda^R}L^R$ for each $R \in \R$.
Then $(-\otimes_RS)\circ F(R) \simeq F(S) \circ (-\otimes_RS)$ holds for each morphism $R \to S$ in $\R$.
\end{prop}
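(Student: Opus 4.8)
The plan is to obtain the natural isomorphism $(-\otimes_R S)\circ F(R)\simeq F(S)\circ(-\otimes_R S)$ directly from the base-change formula of Lemma \ref{basic-iso}(2), applied twice, combined with the elementary associativity of tensor products. No finiteness or flatness hypothesis on $M$ or $L$ is needed, so it suffices to work with an arbitrary $\Lambda^R$-module $M$ (in particular with $M\in\s{\theta}{\Lambda^R}{,\alpha}$). Fixing a morphism $R\to S$ in $\R$, the two composite functors send $M$ to $(M\otimes_{\Lambda^R}L^R)\otimes_R S$ and to $(M\otimes_R S)\otimes_{\Lambda^S}L^S$ respectively, and I would construct a chain of natural isomorphisms between these two $\Gamma^S$-modules.

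On the one side, since $L^R=L\otimes_K R$, Lemma \ref{basic-iso}(2) with $N=R$ gives a natural isomorphism $M\otimes_{\Lambda^R}L^R\simeq R\otimes_R M\otimes_\Lambda L=M\otimes_\Lambda L$, hence $(M\otimes_{\Lambda^R}L^R)\otimes_R S\simeq(M\otimes_\Lambda L)\otimes_R S$. On the other side, $M\otimes_R S$ is naturally a $\Lambda^S$-module, $L^S=L\otimes_K S$, and Lemma \ref{basic-iso}(2) taken over the base ring $S$ with $N=S$ gives $(M\otimes_R S)\otimes_{\Lambda^S}L^S\simeq S\otimes_S(M\otimes_R S)\otimes_\Lambda L=(M\otimes_R S)\otimes_\Lambda L$. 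Finally, since the $\Lambda$-action and the $R$-action on $M$ commute, there is the standard associativity isomorphism $(M\otimes_\Lambda L)\otimes_R S\simeq(M\otimes_R S)\otimes_\Lambda L$, $(m\otimes l)\otimes s\mapsto(m\otimes s)\otimes l$. Composing the three yields the required isomorphism $(M\otimes_{\Lambda^R}L^R)\otimes_R S\simeq(M\otimes_R S)\otimes_{\Lambda^S}L^S$.

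Each of the three isomorphisms above is natural in $M$, and — by the functoriality in $L$ asserted in Lemma \ref{basic-iso}(2), together with the right $\Gamma$-action on $L$ through which $F=-\otimes_\Lambda L$ takes values in $\Mod\Gamma$ — compatible with the $\Gamma^S$-module structures; hence the composite is an isomorphism of $\Gamma^S$-modules natural in $M$, i.e. a natural isomorphism of functors $(-\otimes_R S)\circ F(R)\simeq F(S)\circ(-\otimes_R S)$. I do not expect any genuine obstacle here: the only care required is the bookkeeping of which tensor product is taken over which of the rings $K,R,S,\Lambda,\Lambda^R,\Lambda^S$, and the check that the resulting composite is the evident one, so that the square of functors — not merely its underlying modules — commutes up to this isomorphism. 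It is worth flagging that, unlike the hom version where Lemma \ref{basic-iso}(1) and (3) force an assumption that $L$ be (partial) projective, the tensor version needs no such hypothesis.
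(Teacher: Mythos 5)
Your proof is correct and follows essentially the same route as the paper's: both apply Lemma \ref{basic-iso}(2) twice (once over the base $R$ with $N=R$, once over the base $S$ with $N=S$) and connect the two sides via the standard isomorphism $(M\otimes_{\Lambda}L)\otimes_R S\simeq (M\otimes_R S)\otimes_{\Lambda}L$ coming from the commuting $R$- and $\Lambda$-actions on $M$. The paper's single displayed chain of isomorphisms is exactly your three steps composed.
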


\begin{proof}
For any $M \in \s{\theta}{\Lambda^R}{,\alpha}$, by Lemma \ref{basic-iso} (2) we have
\begin{align*}
(M\otimes_{\Lambda^R} L^R) \otimes_RS 
&= (M\otimes_{\Lambda^R} (L\otimes_KR)) \otimes_RS 
\simeq  (M \otimes_{\Lambda} L) \otimes_RS
\simeq S \otimes_S (S \otimes_R M)\otimes_{\Lambda}L \\
&\simeq (S\otimes_RM) \otimes_{\Lambda^S} (L\otimes_KS) 
\simeq (M\otimes_RS) \otimes_{\Lambda^S} L^S.
\end{align*}
\end{proof}

In the hom functor case, we need some assumption.
However it is satisfied in the setting in the previous section and probably in the higher dimensional case, for example, when $\Lambda$ is a $d$-Calabi-Yau algebra and $L$ is a partial tilting module and so on.

\begin{prop}\label{hom}
Suppose $F = \Hom_{\Lambda}(L,-)$ for a $\Lambda$-module $L$ of projective dimension $d$ such that $L$ has a resolution by finitely generated projective modules and $\Ext^i_{\Lambda}(L,M)=0$ for any $M \in \s{\theta}{\Lambda}{,\alpha}$ and $i \geq 1$. 
Let $F(R) = \Hom_{\Lambda^R}(L^R,-)$ for each $R \in \R$.
Then $(-\otimes_RS)\circ F(R) \simeq F(S) \circ (-\otimes_RS)$ holds for each ring homomorphism $R \to S$ in $\R$.
\end{prop}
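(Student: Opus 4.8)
The plan is to replace $L$ by a bounded resolution of finitely generated projectives and then feed the hypothesis in fibrewise over $\Max(R)$. First I would fix a resolution $0\to P_d\to\cdots\to P_0\to L\to 0$ by finitely generated projective $\Lambda$-modules (it exists by assumption, $d$ being the projective dimension of $L$; if $d=0$ then $L$ is projective and the claim is immediate, so assume $d\ge 1$). Since $K$ is a field, $-\otimes_K R$ is exact, so $0\to P_d^R\to\cdots\to P_0^R\to L^R\to 0$ resolves $L^R$ by finitely generated projective $\Lambda^R$-modules, and likewise $P_\bullet^S=P_\bullet^R\otimes_R S$ resolves $L^S$ over $\Lambda^S$ for any $R\to S$ in $\R$. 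For $M\in\s{\theta}{\Lambda^R}{,\alpha}$ set $C^\bullet_M:=\Hom_{\Lambda^R}(P_\bullet^R,M)$; this is a bounded complex of $R$-modules, functorial in $M$, with $H^i(C^\bullet_M)=\Ext^i_{\Lambda^R}(L^R,M)$. By Lemma \ref{fiber}, $M$ is locally free, hence flat, over $R$, so by Lemma \ref{basic-iso}(3) every term $C^j_M$ is flat over $R$; as $C^j_M$ is also a direct summand of a finite direct sum of copies of $M$, it is finitely generated over $R$, hence finitely generated projective over the Noetherian ring $R$.

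The crucial step is to show $\Ext^i_{\Lambda^R}(L^R,M)=0$ for all $i\ge 1$, which I would prove by reduction to closed points. For $\m\in\Max(R)$, Lemma \ref{basic-iso}(1) applied to each finitely generated projective $P_j$ (with $N=k(\m)$) gives natural isomorphisms $C^j_M\otimes_R k(\m)\simeq\Hom_\Lambda(P_j,M\otimes_R k(\m))$, hence $C^\bullet_M\otimes_R k(\m)\simeq\Hom_\Lambda(P_\bullet,M\otimes_R k(\m))$, whose cohomology is $\Ext^\bullet_\Lambda(L,M\otimes_R k(\m))$. As $R$ is finitely generated over the algebraically closed field $K$ we have $k(\m)\simeq K$, and $M\otimes_R k(\m)\in\s{\theta}{\Lambda}{,\alpha}$ by definition of $\s{\theta}{\Lambda^R}{,\alpha}$; so the hypothesis gives $H^i(C^\bullet_M\otimes_R k(\m))=0$ for all $i\ge 1$ and all $\m$. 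Now a cohomology-and-base-change argument finishes it: the top cohomology $H^d(C^\bullet_M)$ is a cokernel, so $H^d(C^\bullet_M)\otimes_R k(\m)=H^d(C^\bullet_M\otimes_R k(\m))=0$ for every $\m$; by Nakayama $H^d(C^\bullet_M)_\m=0$ for every $\m$, and since $(0)=\bigcap_{\m\in\Max(R)}\m$ for a finitely generated $K$-algebra $R$, we get $H^d(C^\bullet_M)=0$. Then the top differential is a split surjection onto the projective module $C^d_M$, so truncating replaces $C^\bullet_M$ by a shorter bounded complex of finitely generated flat $R$-modules, with the same cohomology in degrees $\le d-1$ and still vanishing cohomology over each $k(\m)$ in positive degrees; iterating yields $\Ext^i_{\Lambda^R}(L^R,M)=0$ for $i\ge 1$, shows that $\Hom_{\Lambda^R}(L^R,M)$ is finitely generated projective over $R$, and — by the same dévissage applied to the flat complex $C^\bullet_M$ with cohomology concentrated in degree $0$ — shows that for every $R$-algebra $S$ the natural map $\Hom_{\Lambda^R}(L^R,M)\otimes_R S\to H^0(C^\bullet_M\otimes_R S)$ is an isomorphism.

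To conclude, for a morphism $R\to S$ in $\R$ and $M\in\s{\theta}{\Lambda^R}{,\alpha}$, I would combine the last isomorphism with Lemma \ref{basic-iso}(1), used first over $R$ with $N=S$ and then over $S$ with $N=S$: this gives natural isomorphisms of complexes $C^\bullet_M\otimes_R S\simeq\Hom_\Lambda(P_\bullet,M\otimes_R S)\simeq\Hom_{\Lambda^S}(P_\bullet^S,M\otimes_R S)$, and since $P_\bullet^S$ is a projective resolution of $L^S$ over $\Lambda^S$, taking $H^0$ gives $\Hom_{\Lambda^R}(L^R,M)\otimes_R S\simeq\Hom_{\Lambda^S}(L^S,M\otimes_R S)$. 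Every isomorphism used is functorial in $M$, so this is the asserted natural isomorphism $(-\otimes_R S)\circ F(R)\simeq F(S)\circ(-\otimes_R S)$.

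I expect the main obstacle to be the second step: upgrading the fibrewise Ext-vanishing (which is exactly the hypothesis, via $k(\m)\simeq K$) to Ext-vanishing over $\Lambda^R$ and to the base-change compatibility of $\Hom_{\Lambda^R}(L^R,-)$. The ingredients — flatness of the terms of $C^\bullet_M$ coming from Lemma \ref{fiber} and Lemma \ref{basic-iso}(3), upper semicontinuity of fibre dimension, Nakayama's lemma, and $(0)=\bigcap_\m\m$ — are all available, but organizing the descending induction on $C^\bullet_M$ correctly is where the real care is needed; everything else is bookkeeping with the adjunctions of Lemma \ref{basic-iso}.
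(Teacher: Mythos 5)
Your proof is correct, and it follows the same overall strategy as the paper's: fix a finite resolution $0\to P_d\to\cdots\to P_0\to L\to 0$ by finitely generated projectives, reduce to fibres at maximal ideals via Lemma \ref{basic-iso}(1) (where the hypothesis $\Ext^i_{\Lambda}(L,M\otimes_Rk(\m))=0$ applies because $k(\m)\simeq K$ and $M\otimes_Rk(\m)\in\s{\theta}{\Lambda}{,\alpha}$), exploit flatness of the terms $\Hom_{\Lambda^R}(P_j^R,M)$ from Lemmas \ref{fiber} and \ref{basic-iso}(3), and run a descending induction from the top of the resolution to get $\Ext^i_{\Lambda^R}(L^R,M)=0$ and base-change compatibility. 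The difference is in how the induction is packaged: the paper breaks the resolution into syzygies $C_i$ and simultaneously proves that the comparison maps $h_i$ are isomorphisms and that $\Ext^1_{\Lambda^R}(C_i^R,M)\otimes_Rk(\m)=0$, using a $\Tor$ computation along the way; you instead work with the whole Hom-complex $C^\bullet_M$, observe its terms are finitely generated projective over $R$, kill the top cohomology by right-exactness of $-\otimes_Rk(\m)$ plus Nakayama, and then split off the resulting projective summand and iterate. Your packaging is arguably tidier and makes the base-change statement for $H^0$ fall out for free. Two minor points: the justification ``$(0)=\bigcap_{\m}\m$'' is not quite the relevant fact for concluding $H^d(C^\bullet_M)=0$ from $H^d(C^\bullet_M)_\m=0$ for all $\m$ --- what you actually need (and have) is that a finitely generated module whose localizations at all maximal ideals vanish is zero; and when you truncate, you should note explicitly that the split acyclic summand you remove stays split after $-\otimes_Rk(\m)$, so the fibrewise vanishing hypothesis persists for the truncated complex. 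Neither affects correctness.
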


\begin{proof}
Let $M \in \s{\theta}{\Lambda^R}{,\alpha}$. First we show that $\Ext^i_{\Lambda^R}(L^R,M) = 0$ for any $i \geq 1$.
It is sufficient to show that $\Ext^i_{\Lambda^R}(L^R,M) \otimes_R k(\m) = 0$ for any $\m \in \max(R)$.
By the assumption of $L$, there is a projective resolution 
\begin{eqnarray}\label{resol-long}
0 \stackrel{}{\longrightarrow} P_d \stackrel{f_d}{\longrightarrow} P_{d-1} \stackrel{f_{d-1}}{\longrightarrow} P_{d-2} \stackrel{f_{d-2}}{\longrightarrow} \cdots \stackrel{f_2}{\longrightarrow} P_1 \stackrel{f_1}{\longrightarrow} P_0 \stackrel{f_0}{\longrightarrow} L \to 0
\end{eqnarray}
of $L$ where $P_i$ is finitely generated over $\Lambda$ for any $i = 1,\ldots,d$. For any $i = 0,\ldots,d$, by putting $C_i = \IM f_i$ we have short exact sequences
\begin{align}\label{resol}
0 \stackrel{}{\longrightarrow} C_{i+1} \stackrel{}{\longrightarrow} P_i \stackrel{}{\longrightarrow} C_i \stackrel{}{\longrightarrow} 0.
\end{align}
Note that $C_0 = L$ and $C_d = P_d$.
By applying $\Hom_{\Lambda^R}(-\otimes_K R,M) = (\Hom_{\Lambda^R}(-,M)) \circ (-\otimes_K R)$ to \eqref{resol}, we obtain 
\[
\Ext^j_{\Lambda^R}(C_{i+1}^R,M) \simeq \Ext^{j+1}_{\Lambda^R}(C_i^R,M),
\]
for any $j \geq 1$, so we have $\Ext^1_{\Lambda^R}(C_i^R,M) \simeq \Ext^{i+1}_{\Lambda^R}(L^R,M)$ for any $i = 0,\ldots,d-1$. 
Hence it is enough to show that $\Ext^1_{\Lambda^R}(C_i^R,M) \otimes_R k(\m) = 0$ for any $i= 0,\ldots,d-1$ and any $\m \in \Max(R)$.
For any $\m \in \Max(R)$, by applying $(-\otimes_R k(\m))\circ(\Hom_{\Lambda^R}(-\otimes_K R,M))$ and $\Hom_{\Lambda}(-,M\otimes_R k(\m))$ to the exact sequence \eqref{resol}, by Lemma \ref{basic-iso} (1) we have a commutative diagram
\[\xymatrix{
\Hom_{\Lambda^R}(P_{i}^R,M)\otimes_R k(\m) \ar[r]^{a_{i+1}} \ar[d]^{g_i} & \Hom_{\Lambda^R}(C_{i+1}^R,M)\otimes_R k(\m) \ar[r] \ar[d]^{h_{i+1}} & \Ext^1_{\Lambda^R}(C_{i}^R,M)\otimes_R k(\m) \ar[r] & 0 \\
\Hom_{\Lambda}(P_i,M\otimes_R k(\m)) \ar[r]^{a_{i+1}'} & \Hom_{\Lambda}(C_{i+1},M\otimes_R k(\m)) \ar[r] & \Ext^1_{\Lambda}(C_i,M\otimes_R k(\m)) \ar[r] & 0
}\]
with exact rows where $g_i$ is isomorphic, and
\[
\Ext^{j+1}_{\Lambda}(C_i,M \otimes_R k(\m)) \simeq \Ext^j_{\Lambda}(C_{i+1},M \otimes_R k(\m))
\]
for any $j \geq 1$. 
So, since $C_0 = L$ and $M \otimes_R k(\m) \in \s{\theta}{\Lambda}{,\alpha}$, by the assumption we have $\Ext^1_{\Lambda}(C_i,M\otimes_R k(\m)) \simeq \Ext^{i+1}_{\Lambda}(L,M\otimes_R k(\m)) = 0$ for any $i = 0,\ldots,d-1$.

Now we use an induction on $i$ to prove $\Ext^1_{\Lambda^R}(C_i^R,M) \otimes_R k(\m)=0$. 
We assume that $\Ext^1_{\Lambda^R}(C_j^R,M) \otimes_R k(\m)=0$ holds for any $j = i+1,\ldots,d-1$ and $h_{i+1}$ is isomorphic. Then $\Ext^1_{\Lambda}(C_i,M\otimes_R k(\m)) = 0$ implies $\Ext^1_{\Lambda^R}(C_{i}^R,M) \otimes_R k(\m) = 0$. 
Furthermore, by the induction hypothesis, by applying $\Hom_{\Lambda^R}(-\otimes_K R,M)$ to \eqref{resol-long} we have an exact sequence
\[
0 \to \Hom_{\Lambda^R}(C_j^R,M) \to \Hom_{\Lambda^R}(P_j^R,M) \to \Hom_{\Lambda^R}(P_{j+1}^R,M) \to 0
\] 
for any $j = i,\ldots,d-1$. By applying $-\otimes_R k(\m)$ to it, since $\Hom_{\Lambda^R}(P_j,M)$ are flat over $R$ by Lemma \ref{basic-iso} (3), we  have
$\Tor_{\ell}^R(\Hom_{\Lambda^R}(C_j^R,M),k(\m)) \simeq \Tor_{\ell+1}^R(\Hom_{\Lambda^R}(C_{j+1}^R,M),k(\m))$ for any $j = i\ldots,d-1 $ and $\ell \geq 1$. 
So, since $C_d=P_d$, we have
$\Tor_1^R(\Hom_{\Lambda^R}(C_{i+1}^R,M),k(\m)) \simeq \Tor_{d-i}^R(\Hom_{\Lambda^R}(P_d^R,M),k(\m))=0$.
Thus by Lemma \ref{basic-iso} (1) we have a commutative diagram
\[\xymatrix{
0 \ar[r] & \Hom_{\Lambda^R}(C_{i}^R,M)\otimes_R k(\m) \ar[r]^{b_i} \ar[d]^{h_i} &  \Hom_{\Lambda^R}(P_{i}^R,M)\otimes_R k(\m) \ar[r]^{a_{i+1}} \ar[d]^{g_i} & \Hom_{\Lambda^R}(C_{i+1}^R,M)\otimes_R k(\m) \ar[d]^{h_{i+1}} \\
0 \ar[r] & \Hom_{\Lambda}(C_i,M\otimes_R k(\m)) \ar[r]^{b_i'} & \Hom_{\Lambda}(P_i,M\otimes_R k(\m)) \ar[r]^{a_{i+1}'} & \Hom_{\Lambda}(C_{i+1},M\otimes_R k(\m))
}\]
where each row is exact. Since $g_i,h_{i+1}$ are isomorphic, $h_i$ is also isomorphic. 

In the case $i = d-1$, since $C_d = P_d$, we have $\Ext^1_{\Lambda^R}(C_d^R,M)\otimes_R k(\m) = 0$, and $h_{d}$ is isomorphic. 
Consequently it follows that $\Ext^i_{\Lambda^R}(L^R,M) = 0$ for any $i \geq 1$.

Now we prove that $(-\otimes_RS)\circ F(R) \simeq F(S) \circ (-\otimes_RS)$ holds for each morphism $R \to S$ in $\R$.
Since $\Ext^i_{\Lambda^R}(L^R,M) = 0$ for any $i \geq 1$, 
by applying $\Hom_{\Lambda^R}(-\otimes_K R, M)$ to the exact sequence \eqref{resol-long}, we have an exact sequene
\[
0 \to \Hom_{\Lambda^R}(L^R,M) \to \Hom_{\Lambda^R}(P_0^R,M) \to \cdots \to \Hom_{\Lambda^R}(P_d^R,M) \to 0
\]
Further by applying $-\otimes_R S$ to this sequence, since $\Hom_{\Lambda^R}(P_i^R,M)$ is flat over $R$ for any $i = 0,\cdots,d$ by Lemma \ref{basic-iso}, we have an exact sequence
\[
0 \to  \Hom_{\Lambda^R}(L^R,M)\otimes_RS \to \Hom_{\Lambda^R}(P_0^R,M)\otimes_RS \to \Hom_{\Lambda^R}(P_1^R,M)\otimes_RS.
\]
So by applying $\Hom_{\Lambda^S}(-\otimes_K S,M\otimes_R S)$ to the exact sequence \eqref{resol-long}, we have a diagram
\[\xymatrix{
0 \ar[r]  & \Hom_{\Lambda^R}(L^R,M)\otimes_RS \ar[r] & \Hom_{\Lambda^R}(P_0^R,M)\otimes_RS \ar[r] \ar[d] & \Hom_{\Lambda^R}(P_1^R,M)\otimes_RS \ar[d] \\
0 \ar[r] & \Hom_{\Lambda^S}(L^S,M\otimes_RS) \ar[r] & \Hom_{\Lambda^S}(P_0^S,M\otimes_RS) \ar[r] & \Hom_{\Lambda^S}(P_1^S,M\otimes_RS)
}\]
where each row is exact and, by Lemma \ref{basic-iso} the vertical maps are isomorphism which make the diagram commutative. Therefore we have the required isomorphism $F(R)(M)\otimes_RS \simeq F(S)(M\otimes_RS)$. 
\end{proof}

Combining these results, we have the following result.

\begin{thm}\label{equiv-iso}
Let $L$ be a $(\Gamma,\Lambda)$-bimodule satisfying the condition in Proposition \ref{hom} as a right $\Lambda$-module.
If $\Hom_{\Lambda}(L,-)$ and $-\otimes_{\Lambda}L$ give a categorical equivalence 
\[\xymatrix{
\s{\theta}{\Lambda}{,\alpha} \ar@<0.5ex>[rr]^{\Hom_{\Lambda}(L,-)} && \ar@<0.5ex>[ll]^{-\otimes_{\Gamma}L} \s{\eta}{\Gamma}{,\beta}, 
}\]
then there is an isomorphism $\M{\theta}{\Lambda}{\alpha} \to \M{\eta}{\Gamma}{\beta}$ of varieties.
\end{thm}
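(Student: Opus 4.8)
The plan is to build the isomorphism by combining the preparatory results above, namely Propositions \ref{morph}, \ref{tensor} and \ref{hom} together with Lemma \ref{fiber}. First I would apply Proposition \ref{morph} to $F=\Hom_{\Lambda}(L,-)$, putting $F(R)=\Hom_{\Lambda^R}(L^R,-)$ for each $R\in\R$; note $F(K)=\Hom_{\Lambda}(L,-)$ is the given functor. The base-change compatibility $(-\otimes_RS)\circ F(R)\simeq F(S)\circ(-\otimes_RS)$ demanded by Proposition \ref{morph} is precisely Proposition \ref{hom}, which applies since $L$ satisfies the stated condition as a right $\Lambda$-module, and the preservation of $S$-equivalence classes is part of our hypothesis on the equivalence. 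What still needs checking is that $F(R)$ actually takes $\s{\theta}{\Lambda^R}{,\alpha}$ into $\s{\eta}{\Gamma^R}{,\beta}$: for $M\in\s{\theta}{\Lambda^R}{,\alpha}$ and any $\m\in\Max(R)$, Proposition \ref{hom} gives $F(R)(M)\otimes_R k(\m)\simeq\Hom_{\Lambda}(L, M\otimes_R k(\m))$, which lies in $\s{\eta}{\Gamma}{,\beta}$; hence every fibre of $F(R)(M)$ is $\eta$-semistable of dimension vector $\beta$, so $F(R)(M)$ is locally free over $R$ by Lemma \ref{fiber} and defines a class in $\F{\eta}{\Gamma}{\beta}(R)$. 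Proposition \ref{morph} then produces a morphism of varieties $f:\M{\theta}{\Lambda}{\alpha}\to\M{\eta}{\Gamma}{\beta}$.

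Symmetrically, I would apply Proposition \ref{morph} to $G=-\otimes_{\Gamma}L$ with $G(R)=-\otimes_{\Gamma^R}L^R$: the base-change compatibility is now Proposition \ref{tensor}, and the same fibrewise argument combined with Lemma \ref{fiber} shows $G(R)$ carries $\s{\eta}{\Gamma^R}{,\beta}$ into $\s{\theta}{\Lambda^R}{,\alpha}$, giving a morphism $g:\M{\eta}{\Gamma}{\beta}\to\M{\theta}{\Lambda}{\alpha}$. On closed points $f$ sends the $S$-equivalence class of a $\theta$-semistable $\Lambda$-module $M$ to that of $\Hom_{\Lambda}(L,M)$, and $g$ sends the class of $N$ to that of $N\otimes_{\Gamma}L$; since $\Hom_{\Lambda}(L,-)$ and $-\otimes_{\Gamma}L$ are quasi-inverse to one another on $\s{\theta}{\Lambda}{,\alpha}$ and $\s{\eta}{\Gamma}{,\beta}$ and preserve $S$-equivalence, the compositions $g\circ f$ and $f\circ g$ are the identity on closed points.

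It then remains to upgrade ``identity on closed points'' to ``identity of morphisms of varieties''. Here I would use the standard fact that a morphism between varieties (reduced, separated, of finite type over the algebraically closed field $K$) is determined by its effect on closed points: the equalizer of $g\circ f$ and $\mathrm{id}_{\M{\theta}{\Lambda}{\alpha}}$ is a closed subscheme by separatedness of the target, it contains every closed point, hence equals $\M{\theta}{\Lambda}{\alpha}$ as a closed subset because that scheme is Jacobson, and then it equals the whole scheme since $\M{\theta}{\Lambda}{\alpha}$ is reduced; the same applies to $f\circ g$. Thus $f$ and $g$ are mutually inverse morphisms and $f$ is the desired isomorphism. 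I do not anticipate a real obstacle: the genuinely technical content --- compatibility of $\Hom$ and $\otimes$ with base change, and local freeness of the resulting modules over the base ring --- is already isolated in Propositions \ref{hom}, \ref{tensor} and Lemma \ref{fiber}, so the only matters requiring attention are the bookkeeping of the categories into which the various $F(R)$ and $G(R)$ land, and the elementary reduced/separated argument in the last step.
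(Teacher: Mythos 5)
Your proposal is correct and follows essentially the same route as the paper: both obtain $f$ and $g$ from Propositions \ref{morph}, \ref{tensor} and \ref{hom}, check via the fibrewise criterion and Lemma \ref{fiber} that the relative functors land in the right categories, and deduce that $f$ and $g$ are mutually inverse from the bijectivity on closed points. The only difference is that you spell out the reduced/separated/Jacobson argument upgrading ``inverse on closed points'' to ``inverse as morphisms,'' a step the paper's proof leaves implicit.
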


\begin{proof}
By Proposition \ref{morph}, \ref{tensor} and \ref{hom}, the functors $\Hom_{\Lambda}(L,-)$ and $-\otimes_{\Lambda}L$ induce morphisms $f : \M{\theta}{\Lambda}{\alpha} \to \M{\eta}{\Gamma}{\beta}$ and $G : \M{\eta}{\Gamma}{\beta} \to  \M{\theta}{\Lambda}{\alpha}$ such that $f\Ph{\theta}{\Lambda}{\alpha} = \Ph{\eta}{\Gamma}{\beta}f'$ and $g\Ph{\eta}{\Gamma}{\beta} = \Ph{\theta}{\Lambda}{\alpha}g'$:
\[\xymatrix{
\F{\theta}{\Lambda}{\alpha} \ar[r]^{\Ph{\theta}{\Lambda}{\alpha}} \ar[d]_{f'} & \uM{\theta}{\Lambda}{\alpha} \ar@{..>}[d]^{f} \\
\F{\eta}{\Gamma}{\beta} \ar[r]^{\Ph{\eta}{\Gamma}{\beta}} & \uM{\eta}{\Gamma}{\beta}.
}
\qquad\xymatrix{
\F{\theta}{\Lambda}{\alpha} \ar[r]^{\Ph{\theta}{\Lambda}{\alpha}} \ar@{<-}[d]_{g'} & \uM{\theta}{\Lambda}{\alpha} \ar@{<..}[d]^{g} \\
\F{\eta}{\Gamma}{\beta} \ar[r]^{\Ph{\eta}{\Gamma}{\beta}} & \uM{\eta}{\Gamma}{\beta}
}\]
where $f'(R),g'(R)$ are maps given by $\Hom_{\Lambda^R}(L,-)$ and $-\otimes_{\Gamma^R}L$.
Since all $\Ph{\theta}{\Lambda}{\alpha}(K)$, $\Ph{\eta}{\Gamma}{\beta}(K)$, $f'(K)$, $g'(K)$ are bijective, we have $g(K)f(K) = id_{h_{\M{\theta}{\Lambda}{\alpha}}(K)}$ and $f(K)g(K) = id_{h_{\M{\eta}{\Gamma}{\beta}}(K)}$.
This implies that $f$ and $g$ give an isomorphism of varieties.
\end{proof}

Now we return to the setting in the previous section.

\begin{cor}\label{pp-equiv-iso}
For any preprojective algebra $\Lambda = K\overline{Q}/\langle R \rangle$, any element $w$ of the Coxeter group $W_Q$ and any sufficiently generic parameter $\theta \in \Theta$, the equivalence $\bw$ induces the isomorphism of algebraic varieties:
\[
\M{\theta}{\Lambda}{\alpha} \stackrel{\sim}{\longrightarrow} \M{w\theta}{\Lambda}{w\alpha}.
\]
\end{cor}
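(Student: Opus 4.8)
The plan is to peel off one simple reflection at a time and feed each one into Theorem~\ref{equiv-iso}. Fix a reduced expression $w=s_{i_\ell}\cdots s_{i_1}$, set $w_j=s_{i_j}\cdots s_{i_1}$ and $w_0=1$, and write $\theta^{(j)}:=w_j\theta$, $\alpha^{(j)}:=w_j\alpha$. For $\theta$ sufficiently generic in the sense of Theorem~\ref{reflection-functor} (so that $\mathbf w=\bs_{i_\ell}\cdots\bs_{i_1}$ is well defined and independent of the reduced expression), Theorem~\ref{simple-reflection} tells us that each $\bs_{i_j}$ is a categorical equivalence $\s{\theta^{(j-1)}}{\Lambda}{,\alpha^{(j-1)}}\to\s{\theta^{(j)}}{\Lambda}{,\alpha^{(j)}}$ preserving $S$-equivalence classes, and that $(\theta^{(j-1)})_{i_j}\neq 0$. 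Because a composite of isomorphisms of varieties is an isomorphism, and because the construction of Proposition~\ref{morph} is compatible with composition of functors (if $F_1,F_2$ each commute with every base change $-\otimes_RS$ then so does $F_2\circ F_1$, and uniqueness in the coarse moduli property forces the morphism induced by $F_2\circ F_1$ to be the composite of the induced ones), it suffices to treat a single simple reflection $\bs_i$: I must show that $\bs_i$ induces an isomorphism $\M{\eta}{\Lambda}{\beta}\stackrel{\sim}{\longrightarrow}\M{s_i\eta}{\Lambda}{s_i\beta}$ whenever $\eta_i\neq 0$.

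Suppose first $\eta_i>0$, so $\bs_i=\bs_i^{+}=\Hom_\Lambda(I_i,-)$. I would apply Theorem~\ref{equiv-iso} with $L=I_i$, regarded as a $(\Gamma,\Lambda)$-bimodule through the isomorphism $\Gamma:=\End_\Lambda(I_i)\simeq\Lambda$ of Theorem~\ref{classi-tilt}. One must check that $I_i$ satisfies the hypotheses of Proposition~\ref{hom} as a right $\Lambda$-module with respect to $\s{\eta}{\Lambda}{,\beta}$: it has projective dimension at most $1$ and a resolution by finitely generated projectives (Theorem~\ref{classi-tilt} and Definition~\ref{df_tilt}(1)), and $\Ext^k_\Lambda(I_i,M)=0$ for all $k\ge 1$ and all $M\in\s{\eta}{\Lambda}{,\beta}$ — vanishing for $k\ge 2$ is automatic from the bound on the projective dimension, while $\Ext^1_\Lambda(I_i,M)=0$ is exactly the containment $\s{\eta}{\Lambda}{}\subset\mathcal T(I_i)$ of Lemma~\ref{TY}(1). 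Combined with the $S$-equivalence-preserving equivalence of Theorem~\ref{simple-reflection}, Theorem~\ref{equiv-iso} then produces an isomorphism $\M{\eta}{\Lambda}{\beta}\stackrel{\sim}{\longrightarrow}\M{s_i\eta}{\Lambda}{s_i\beta}$, and by the construction in Propositions~\ref{morph} and~\ref{hom} this is precisely the morphism attached to $\bs_i^{+}$.

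For $\eta_i<0$, so $\bs_i=\bs_i^{-}=-\otimes_\Lambda I_i$, I would run the same argument with the arrows reversed. Since $(s_i\eta)_i=-\eta_i>0$, the module $I_i$ satisfies the Proposition~\ref{hom} hypotheses as a right $\Lambda$-module with respect to $\s{s_i\eta}{\Lambda}{,s_i\beta}$ (again by Lemma~\ref{TY}(1) and Theorem~\ref{classi-tilt}), and $\Hom_\Lambda(I_i,-)$, $-\otimes_\Lambda I_i$ give an $S$-equivalence-preserving equivalence $\s{s_i\eta}{\Lambda}{,s_i\beta}\leftrightarrow\s{\eta}{\Lambda}{,\beta}$. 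Applying Theorem~\ref{equiv-iso} with $\Lambda$ carrying parameter $s_i\eta$ in the role of its ``$\Lambda$'' and $\Lambda$ carrying parameter $\eta$ in the role of its ``$\Gamma$'', its proof produces a mutually inverse pair of morphisms $f,g$, and the one induced by $-\otimes_\Lambda I_i=\bs_i^{-}$, namely $g:\M{\eta}{\Lambda}{\beta}\to\M{s_i\eta}{\Lambda}{s_i\beta}$, is therefore an isomorphism. Composing the $\bs_{i_j}$ over $j=1,\dots,\ell$ yields the asserted isomorphism induced by $\mathbf w$.

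The main obstacle is the passage between the global statement about $\mathbf w$ and the local ones about the $\bs_{i_j}$: a general $\mathbf w$ is a \emph{mixed} composite of hom- and tensor-functors and so is not itself of the form covered by Theorem~\ref{equiv-iso}, so one has to go through an actual reduced expression, track the sign of the relevant coordinate of the parameter at each wall crossing, and verify that the induced morphisms of moduli spaces glue to the morphism of $\mathbf w$. Once that framework is in place each individual step is cheap: the facts that make it run are the projective-dimension-one property of the ideals $I_i$ (which trivializes the higher-$\Ext$ conditions in Proposition~\ref{hom}) and Lemma~\ref{TY}, which turns positivity of a parameter coordinate into the $\Ext^1$-vanishing demanded by Proposition~\ref{hom}; it would be worth isolating these two at the outset.
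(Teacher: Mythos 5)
Your proposal is correct and follows essentially the same route as the paper, whose proof is simply the observation that $\bw$ is a composite of simple reflection functors $\bs_i$ and therefore Theorem \ref{simple-reflection} together with Theorem \ref{equiv-iso} applies to each factor. You have merely filled in the details the paper leaves implicit (verifying the hypotheses of Proposition \ref{hom} for $L=I_i$ via Lemma \ref{TY} and the projective dimension bound, tracking the sign of the relevant parameter coordinate at each step, and noting compatibility of the induced morphisms with composition).
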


\begin{proof}
The functor $\bw$ is given by a composition of simple reflection functors $\bs_i$. Thus it follows from Theorem \ref{simple-reflection} and \ref{equiv-iso}.
\end{proof}

\section{Kleinian singularity case}
\label{Klein}

In this section, we especially investigate the Kleinian singularity.  
We assume $K$ is an algebraically closed field of characteristic 0.
Let $G$ be a finite subgroup of $SL(2,K)$ of type $\Gamma$, that is, $\Gamma = {A}_m (m \geq 1), {D}_m (m \geq 4), {E}_6, {E}_7$ or ${E}_8$. We denote by $\widetilde{\Gamma}$ the type of the extended Dynkin diagram of $\Gamma$. We denote by $\Lambda$ the preprojective algebra associated to the extended Dynkin quiver of type $\widetilde{\Gamma}$.
Note that the double $\overline{Q}$ of $Q$ is the so-called McKay quiver of $G$ (see Figure \ref{McKay}). 
The vertex set of $\overline{Q}$ is denoted by $\overline{Q}_0 = \{0,1,\ldots,n \}$ where $0$ corresponds to the trivial representation of $G$.

\subsection{Moduli space and Parameter space}

First we recall the relation between moduli spaces of $\Lambda$-module and the Kleinian singularity $\mathbb A^2/G$, and the chamber structure of the parameter space $\Theta$ and the Weyl group $W$.

Let $\mathbf d$ be the dimension vector whose entries are the fimensions of irreducible representations of $G$ (see Figure \ref{McKay}).
Since we are especially interested in moduli spaces of $\Lambda$-modules of dimension vector $\mathbf d$, we define a subset $\Theta_{\mathbf d}$ of $\Theta$ as follows:
\[
\Theta_{\mathbf d} = \{ \theta \in \Theta \mid \theta(\mathbf d)=0 \}.
\]
Then $\theta \in \Theta_{\mathbf d}$ is called {\it generic} if any $\theta$-semistable module is $\theta$-stable.
It is known that, for any $\theta \in \Theta_{\mathbf d}$, the moduli space $\M{\theta}{\Lambda}{\mathbf d}$ gives a partial resolution of the Kleinian singularity $\mathbb A^2/G$. Moreover if $\theta$ is generic, the next result is well-known. 

\begin{thm}[\cite{Kr},\cite{CS},\cite{BKR}]
If $\theta \in \Theta_{\mathbf d}$ is generic, then $\M{\theta}{\Lambda}{\mathbf d}$ is isomorphic to the minimal resolution of the Kleinian singularity $\mathbb A^2/G$ via a natural projective morphism
\[
\M{\theta}{\Lambda}{\mathbf d} \to \M{\mathbf 0}{\Lambda}{\mathbf d} \cong \mathbb A^2/G.
\]
\end{thm}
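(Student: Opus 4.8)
Since the statement is quoted from \cite{Kr,CS,BKR}, I only outline the argument and indicate how it meshes with the present framework. The plan is to analyse three things in turn: the target of the morphism, the smoothness of the source for generic $\theta$, and the recognition of the morphism as the minimal resolution. Write $\mathbf d=(d_0,\ldots,d_n)$ and recall that, since $\Lambda$ is the preprojective algebra of an extended Dynkin quiver, $\mathbf d$ is the minimal positive imaginary root, so $(\mathbf d,\beta)=0$ for every $\beta\in\mathbb Z^{\overline{Q}_0}$ by Lemma \ref{lem1_cat_fd}; in particular $(\mathbf d,\mathbf d)=0$. For the target, note that at $\theta=\mathbf 0$ every $\Lambda$-module is semistable and $S$-equivalence is just isomorphism of semisimple modules, so $\M{\mathbf 0}{\Lambda}{\mathbf d}$ is the affine GIT quotient $\mathrm{Rep}(\Lambda,\mathbf d)/\!\!/GL(\mathbf d)$. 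Via the Morita equivalence between $\Lambda$ and the skew group algebra $S\ast G$ (\cite{R-VdB,BSW}) one rewrites $\mathrm{Rep}(\Lambda,\mathbf d)$ as the zero fibre of the moment map on $\mathrm{Rep}(\overline{Q},\mathbf d)\cong\Hom_G(R_G,R_G\otimes_K K^2)$, where $R_G$ is the regular representation and $K^2$ the standard representation of $G$, and the resulting quotient is exactly the construction of $\mathbb A^2/G$ due to Kronheimer and to Cassens--Slodowy. For general $\theta$, King's construction \cite{Ki} realises $\M{\theta}{\Lambda}{\mathbf d}$ as a quotient of the same representation variety that is projective over the affine one, which produces the asserted projective morphism $\pi\colon\M{\theta}{\Lambda}{\mathbf d}\to\M{\mathbf 0}{\Lambda}{\mathbf d}\cong\mathbb A^2/G$.

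Next I would establish smoothness of $\M{\theta}{\Lambda}{\mathbf d}$ for generic $\theta$. Then $\theta$-semistable coincides with $\theta$-stable, $\M{\theta}{\Lambda}{\mathbf d}$ is a fine moduli space, and its Zariski tangent space at a point $[M]$ is $\Ext^1_\Lambda(M,M)$. Since $M$ is stable, $\End_\Lambda(M)=K$, so the $2$-Calabi--Yau property (Lemma \ref{lem:1-1}) gives $\Ext^2_\Lambda(M,M)\cong D\Hom_\Lambda(M,M)=K$; feeding this and $(\mathbf d,\mathbf d)=0$ into Lemma \ref{CB-form} yields $\dim\Ext^1_\Lambda(M,M)=2$ at every point. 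Moreover $\End_\Lambda(M)=K$ forces the differential of the moment map to be surjective at a $\theta$-stable point, so $0$ is a regular value of the moment map along the $\theta$-stable locus; hence $\M{\theta}{\Lambda}{\mathbf d}$ is smooth of pure dimension $2$, and being a symplectic reduction it carries a nowhere-degenerate holomorphic $2$-form, so its canonical bundle is trivial.

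Finally, $\pi$ is an isomorphism over the smooth locus $(\mathbb A^2\setminus\{0\})/G$: a point there corresponds to a free $G$-orbit, hence to a simple $\Lambda$-module, which is $\theta$-stable for every $\theta$ and is the unique module of dimension vector $\mathbf d$ supported on that orbit, so $\pi$ is a proper birational bijection there and therefore an isomorphism. Thus $\pi$ is a projective birational morphism from the smooth surface $\M{\theta}{\Lambda}{\mathbf d}$ onto the normal surface $\mathbb A^2/G$, whose unique singular point is a rational double point, and the exceptional locus of $\pi$ lies over that point. Since the canonical bundle of $\M{\theta}{\Lambda}{\mathbf d}$ is trivial, $\M{\theta}{\Lambda}{\mathbf d}$ contains no $(-1)$-curve, so $\pi$ contracts none, and therefore $\pi$ is the minimal resolution of $\mathbb A^2/G$. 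The genuinely delicate point is the smoothness of $\M{\theta}{\Lambda}{\mathbf d}$ --- equivalently, that $0$ is a regular value of the moment map on the $\theta$-stable locus --- together with the description of the fibre of $\pi$ over the singular point as the exceptional divisor of the minimal resolution; this is the substance of Kronheimer's hyper-K\"ahler quotient construction and of the GIT analysis of Cassens--Slodowy, and for the chamber containing the $G$-Hilbert scheme it also follows from the Bridgeland--King--Reid derived equivalence \cite{BKR}. In the abelian case all of this can instead be verified directly by toric geometry, as illustrated in Section \ref{example}.
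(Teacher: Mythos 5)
The paper does not prove this theorem: it is stated with the citations \cite{Kr}, \cite{CS}, \cite{BKR}, followed only by a historical remark that Kronheimer established it over $\mathbb C$ via hyper-K\"ahler quotients, Cassens--Slodowy recast it in GIT terms, and Bridgeland--King--Reid's method works over any algebraically closed field of characteristic $0$. Your outline is a correct and standard reconstruction along exactly those lines (moment-map description of $\mathrm{Rep}(\Lambda,\mathbf d)$, smoothness of the stable locus from $\End_\Lambda(M)=K$ together with the $2$-Calabi--Yau property and $(\mathbf d,\mathbf d)=0$ giving $\dim\Ext^1_\Lambda(M,M)=2$, birationality over the free-orbit locus, and crepancy excluding $(-1)$-curves), so there is nothing in the paper to compare it against beyond the attribution; the substantive steps you correctly flag as delicate (regularity of $0$ for the moment map on the stable locus, the fibre over the singular point) are precisely what the cited references supply.
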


Historically, for the first time Kronheimer proved the above theorem when $K = \mathbb C$ by constructing it as a hyper-K\"ahler quotient, and next Cassens-Slodowy interpreted it in terms of GIT quotients. On the other hand Bridgeland-King-Reid proved it in a more general setting and their method is valid for any algebraically closed field of characteristic 0.

Next we recall the chamber structure of $\Theta_{\mathbf d}$ given in \cite{Kr,CS}. We prepare some notations about root systems (cf.~\cite{H}).
We write $\widetilde X_{*} = \mathbb Z^{Q_0}$ which is regarded as the affine root lattice of type $\widetilde{\Gamma}$ with a symmetric bilinear form $(-,-)$ defined in Section \ref{pp}. In this case we have 
\[
(\e_i,\e_j) = 
\begin{cases}
2 & i=j \\
-1 & \text{$i$ and $j$ are adjacent vertices in $Q$} \\
0 & \text{$i$ and $j$ are adjacent vertices in $Q$}
\end{cases}.
\]
Since $\mathbf d \in \widetilde X_{*}$ is a minimal imaginary root of $\widetilde X_*$, namely $(\alpha,\mathbf d)=0$ holds for any $\alpha \in \widetilde X_{*}$ and any $\mathbf d' \neq 0$ with such property is written as $\mathbf d'=m\mathbf d$ for some $m \in \mathbb Z$, the quotient lattice 
\[
X_{*} := \widetilde X_{*}/\mathbb Z \mathbf d
\]
becomes the finite root lattice of type $\Gamma$ with the induced bilinear form, again we denote it by $(-,-)$.
We denote the image of $\alpha \in \widetilde X_{*}$ by $\overline{\alpha}$. 
Then $\ove_1,\ldots,\ove_n$ form a basis of $X_{*}$ since $\ove_0=-d_1\ove_1-\cdots-d_n\ove_n$ holds. The dual of $X_{*}$ is given as the sublattice
\[
X^{*} := \{ \theta \in \widetilde X^{*} \mid \theta(\mathbf d) = 0 \}
\] 
of $\widetilde X^*$.
For any $\theta \in X^{*}$, since $\theta(\mathbf d)=0$, we can define $\theta(\overline{\alpha}) := \theta(\alpha)$ for any $\overline{\alpha} \in X_{*}$.
For the finite root lattice $X_*$, let $\Phi$ be the finite root system, $\Delta = \{ \ove_1,\ldots,\ove_n \} \subset \Phi$ a simple root system of $\Phi$ and $\Pi$ (resp.~$-\Pi$) the positive (resp.~negative) root system corresponding to $\Delta$ i.e. $\Pi = \Phi \cap \mathbb Z_{\geq 0}\Delta$. 
Let $W$ be the finite Weyl group associated to the finite root system $\Phi$, which is a finite group generated by simple reflections $s_1,\ldots,s_n$ where $s_i$ is defined by $s_i(\alpha) = \alpha -(\alpha,\ove_i)\ove_i$ for $\alpha \in \Phi$. 
Note that $W$ is regarded as a subgroup of the Coxeter group $W_Q$.
For any element $w \in W$, $w\Delta := \{ w\ove_1,\ldots,w\ove_n \}$ is also a simple root system of $\Phi$.

Now we describe a chamber structure of $\Theta$.
Let $\Theta^{\mathrm{gen}}$ be a subset of $\Theta$ consisting of generic parameters. 
Each connected component in $\Theta^{\mathrm{gen}}$ is called a GIT chamber. $\theta$ and $\theta'$ are contained in the same GIT chamber if and only if $\s{\theta}{\Lambda}{} = \s{\theta'}{\Lambda}{}$ holds.
On the other hand, for any element $w \in W$ the subset
\[
C(w) = \{ \theta \in \Theta \mid \theta(\alpha)>0 \text{ for any } \alpha \in w\Delta \},
\]
of $\Theta$ is called a Weyl chamber.

\begin{prop}[\cite{Kr,CS}]
For any $\theta \in \Theta$, $\theta$ is generic if and only if $\theta(\alpha) \neq 0$ for any   real root $\alpha \in \widetilde X^{*}$ strictly between $0$ and $\mathbf d$, equivalently if and only if $\theta(\alpha)\neq 0$ for any $\alpha \in \Phi$.
\end{prop}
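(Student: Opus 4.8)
The plan is to prove the chain ``$\theta$ generic $\iff$ $\theta(\alpha)\neq0$ for every real root $\alpha$ of $\widetilde X_{*}$ strictly between $0$ and $\mathbf d$ $\iff$ $\theta(\gamma)\neq0$ for every $\gamma\in\Phi$'', working throughout with $\theta\in\Theta_{\mathbf d}$ (the existence of a $\theta$-semistable module of dimension vector $\mathbf d$ already forces $\theta(\mathbf d)=0$). The last equivalence is purely combinatorial: a short check identifies the real roots $\alpha$ of $\widetilde X_{*}$ with $0\lneqq\alpha\lneqq\mathbf d$ as exactly $\Pi\sqcup(\mathbf d-\Pi)$ — write $\alpha=\alpha_0\mathbf d+\zeta$ with $\zeta$ supported away from the vertex $0$, so that $\overline\zeta=\overline\alpha\in\Phi$, and note $0\lneqq\alpha\lneqq\mathbf d$ forces $\alpha_0\in\{0,1\}$. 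Since $\theta(\mathbf d)=0$ gives $\theta(\mathbf d-\gamma)=-\theta(\gamma)$ and $\Phi=\Pi\sqcup(-\Pi)$, the condition $\theta(\alpha)\neq0$ for all such $\alpha$ coincides with $\theta(\gamma)\neq0$ for all $\gamma\in\Phi$.

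For the implication ``$\theta$ not generic $\Rightarrow$ some wall root'', suppose $M$ is $\theta$-semistable of dimension vector $\mathbf d$ but not $\theta$-stable, and let $N\subsetneq M$ be a nonzero submodule of minimal dimension among those with $\theta(N)=0$. Then $N$ is $\theta$-stable, hence a brick ($\End_{\Lambda}(N)=K$), and Lemma \ref{CB-form} gives $(\udim N,\udim N)=2\dim\End_{\Lambda}(N)-\dim\Ext^1_{\Lambda}(N,N)=2-\dim\Ext^1_{\Lambda}(N,N)\le2$. The form on $\mathbb Z^{\overline Q_0}$ is positive semidefinite with radical $\mathbb Z\mathbf d$ (Lemma \ref{lem1_cat_fd}) and has even diagonal, so $(\udim N,\udim N)\in\{0,2\}$; it cannot be $0$, for then $\udim N\in\mathbb Z\mathbf d$, impossible since $0\lneqq\udim N\lneqq\mathbf d$. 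Hence $\udim N$ is a real root strictly between $0$ and $\mathbf d$ with $\theta(\udim N)=0$.

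For the reverse implication, suppose $\theta(\alpha)=0$ for a real root $\alpha$ with $0\lneqq\alpha\lneqq\mathbf d$. Then $\beta:=\mathbf d-\alpha$ satisfies $(\beta,\beta)=(\mathbf d,\mathbf d)-2(\mathbf d,\alpha)+(\alpha,\alpha)=2$ (as $\mathbf d$ is in the radical), so $\beta$ is again a positive real root with $0\lneqq\beta\lneqq\mathbf d$ and $\theta(\beta)=-\theta(\alpha)=0$. It then suffices to show: (*) for every positive real root $\gamma\le\mathbf d$ (componentwise) and every $\theta\in\Theta_{\mathbf d}$ with $\theta(\gamma)=0$ there is a $\theta$-semistable $\Lambda$-module of dimension $\gamma$. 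Granting (*), the module $M_\alpha\oplus M_\beta$ is $\theta$-semistable of dimension $\mathbf d$ and not $\theta$-stable (it has the proper nonzero submodule $M_\alpha$ with $\theta(M_\alpha)=0$), so $\theta$ is not generic. I would prove (*) by induction on the height $\sum_i\gamma_i$, over all $\theta\in\Theta_{\mathbf d}$ at once. If $\gamma=\e_i$ then $\theta_i=\theta(\gamma)=0$ and $S_i$ works. Otherwise $(\gamma,\gamma)=2=\sum_i\gamma_i(\gamma,\e_i)$ forces $(\gamma,\e_j)>0$ for some $j$, and $s_j\gamma=\gamma-(\gamma,\e_j)\e_j$ is a positive real root of strictly smaller height with $s_j\gamma\le\mathbf d$. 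If $\theta_j=0$ then $s_j\theta=\theta$ and $\theta(s_j\gamma)=\theta(\gamma)=0$, so by induction there is a $\theta$-semistable $M'$ of dimension $s_j\gamma$, and $M'\oplus S_j^{\oplus(\gamma,\e_j)}$ is $\theta$-semistable of dimension $\gamma$ (a direct sum of $\theta$-semistable modules of $\theta$-degree $0$). If $\theta_j\neq0$ then $(s_j\theta)(s_j\gamma)=\theta(\gamma)=0$, so by induction there is an $s_j\theta$-semistable $M'$ of dimension $s_j\gamma$; depending on the sign of $(s_j\theta)_j=-\theta_j$, exactly one of the simple reflection functors $\Hom_{\Lambda}(I_j,-)$, $-\otimes_{\Lambda}I_j$ of Theorem \ref{simple-reflection} carries $\s{s_j\theta}{\Lambda}{}$ into $\s{\theta}{\Lambda}{}$, and by Lemma \ref{TY} and Corollary \ref{change-dim} it sends $M'$ to a $\theta$-semistable module of dimension $s_j(s_j\gamma)=\gamma$.

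The main obstacle is precisely the existence statement (*): a semistable module of a prescribed root dimension must be produced even when $\theta$ lies on the very hyperplane that root cuts out, where the reflection functors of Section \ref{reflection} are unavailable, and the device of splitting off copies of $S_j$ in the case $\theta_j=0$ is what keeps the induction running. The remaining ingredients are standard: $\theta$-stable modules are bricks; the affine form is positive semidefinite with radical $\mathbb Z\mathbf d$; and a vector of norm $2$ in $\mathbb Z^{\overline Q_0}$ is a real root (its image in the positive-definite root lattice $X_{*}$ then has norm $2$, hence lies in $\Phi$). The statement itself is of course due to \cite{Kr,CS}; the above is merely the quiver-theoretic rendering.
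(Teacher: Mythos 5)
Your argument is correct, but it cannot be compared line-by-line with anything in the paper: the authors do not prove this proposition at all — they simply cite Kronheimer and Cassens--Slodowy, whose arguments go through hyper-K\"ahler quotients and explicit GIT computations respectively. What you supply instead is a self-contained representation-theoretic proof built from the paper's own toolkit, and it hangs together. The forward direction (not generic $\Rightarrow$ wall) is the standard destabilising-subobject argument: a minimal destabilising submodule is stable, hence a brick, hence of norm $\le 2$ by Lemma \ref{CB-form}, and norm $0$ is excluded because the radical of the affine form is $\mathbb Z\mathbf d$; the reverse direction is genuinely the harder half, and your existence statement $(*)$, proved by induction on height with the two cases $\theta_j=0$ (split off copies of $S_j$) and $\theta_j\neq 0$ (transport along the simple reflection functors of Theorem \ref{simple-reflection}, with dimension vectors controlled by Lemma \ref{TY} and Corollary \ref{change-dim}), is a clean way to manufacture a strictly semistable module $M_\alpha\oplus M_\beta$ of dimension $\mathbf d$. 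Two small points you lean on that are classical but not established in the paper: positive semidefiniteness of the affine Tits form (Lemma \ref{lem1_cat_fd} only identifies the radical) and the fact that every norm-$2$ vector in an ADE root lattice is a root; both are standard and you flag them as such. The trade-off is clear: the cited proofs buy the geometric identification of $\M{\theta}{\Lambda}{\mathbf d}$ with the minimal resolution in one stroke, while your argument buys a purely module-theoretic proof of the chamber description that stays entirely inside the framework of Sections \ref{tilting}--\ref{reflection} and, as a by-product, exhibits explicit strictly semistable representatives on every wall.
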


Thus GIT chambers coincide with Weyl chambers: 
\[
\Theta^{\mathrm{gen}} = \coprod_{w \in W} C(w).
\]
We call them just chambers. It is known that $W$ acts on the set of chambers simply transitive (\cite{H}).

In the rest of this section, we only consider the generic parameter.
Then the category $\s{\theta}{\Lambda}{,\mathbf d}$ and the moduli space $\M{\theta}{\Lambda}{\mathbf d}$ are classified by the element of $W$, thus we give the following definition.

\begin{df}
For any $\theta \in C(w)$, we write $\mathcal S_w = \s{\theta}{\Lambda}{,\mathbf d}$ and $\mathcal M_w = \M{\theta}{\Lambda}{\mathbf d}$.
\end{df}

The next fact is used in the following.

\begin{lem}\label{ref-length}
Let $w \in W$ and $i \in \{1,\ldots,n\}$. Then the following are equivalent;
\begin{enumerate}
\item $\ell(s_iw) > \ell(w)$,
\item $\ove_i \in w\Pi$, equivalently $-\ove_i \in s_iw\Pi$,
\item $\theta_i > 0$  for any $\theta \in C(w)$,
\item $\theta_i < 0$  for any $\theta \in C(s_iw)$,
\end{enumerate}
\end{lem}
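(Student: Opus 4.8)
The plan is to take condition (2) essentially as the hub and establish $(1)\Leftrightarrow(2)$, $(2)\Leftrightarrow(3)$ and $(3)\Leftrightarrow(4)$, all of which are classical root-system bookkeeping once conventions are pinned down. For $(1)\Leftrightarrow(2)$ I would simply invoke the standard length/inversion criterion for Coxeter groups \cite{H}: for a simple reflection $s_i$ and $w\in W$ one has $\ell(s_iw)>\ell(w)$ if and only if $w^{-1}\ove_i$ is a positive root, i.e. $w^{-1}\ove_i\in\Pi$, which is literally the statement $\ove_i\in w\Pi$. The ``equivalently'' clause is then a one-line check: since $s_i\ove_i=-\ove_i$ and $s_i^{2}=1$, we get $(s_iw)^{-1}(-\ove_i)=w^{-1}s_i(-\ove_i)=w^{-1}\ove_i$, so $w^{-1}\ove_i\in\Pi$ if and only if $(s_iw)^{-1}(-\ove_i)\in\Pi$, i.e. $-\ove_i\in s_iw\Pi$.

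For $(2)\Leftrightarrow(3)$ I would use that $w\Delta=\{w\ove_1,\dots,w\ove_n\}$ is again a simple root system of $\Phi$ with associated positive system $w\Pi$, so every $\beta\in w\Pi$ is a non-negative integral combination of $w\ove_1,\dots,w\ove_n$ with not all coefficients zero. Since $\theta\in C(w)$ means precisely $\theta(w\ove_j)>0$ for all $j$, it follows that $\theta(\beta)>0$ for every $\beta\in w\Pi$; as $\theta_i=\theta(\ove_i)$, the hypothesis $\ove_i\in w\Pi$ forces $\theta_i>0$ for all $\theta\in C(w)$, giving $(2)\Rightarrow(3)$. Conversely, if $\ove_i\notin w\Pi$, then $\ove_i\in -w\Pi$ because $\Phi=w\Pi\sqcup(-w\Pi)$, so $\theta_i=\theta(\ove_i)<0$ for every $\theta\in C(w)$; since $C(w)$ is a non-empty chamber (the Weyl group acts simply transitively on chambers), this contradicts (3), which proves $(3)\Rightarrow(2)$ by contraposition.

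For $(3)\Leftrightarrow(4)$ I would first record that $s_iC(w)=C(s_iw)$: indeed, using the compatibility $s_i\theta(\alpha)=\theta(s_i\alpha)$ from Section \ref{moduli}, one checks $(s_i\theta)(s_iw\ove_j)=\theta(w\ove_j)$, so $\theta\in C(w)$ iff $s_i\theta\in C(s_iw)$. Then for $\theta\in C(s_iw)$, writing $\theta=s_i\eta$ with $\eta\in C(w)$, one has $\theta_i=\theta(\ove_i)=(s_i\eta)(\ove_i)=\eta(s_i\ove_i)=\eta(-\ove_i)=-\eta_i$, so $\theta_i<0$ for all $\theta\in C(s_iw)$ is equivalent to $\eta_i>0$ for all $\eta\in C(w)$, which is exactly (3). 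Together with $(1)\Leftrightarrow(2)\Leftrightarrow(3)$ this closes the loop.

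I do not expect a genuine obstacle here; the content is entirely formal. The only points demanding care, and the ones I would sanity-check against the rank-one case before writing the final version, are that the length inequality $\ell(s_iw)>\ell(w)$ corresponds to $\ove_i\in w\Pi$ (rather than to $\ove_i\in s_iw\Pi$), and that the $W$-actions on $X_{*}$ and $X^{*}$ are set up to be mutually dual, so that the chamber $C(w)$ transforms correctly under $s_i$ and $\theta_i$ changes sign as computed above.
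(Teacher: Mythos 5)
Your proposal is correct and follows essentially the same route as the paper, which simply cites \cite[1.6]{H} for $(1)\Leftrightarrow(2)$ and declares the remaining equivalences clear from the definition of the chambers $C(w)$; you have merely written out the routine verifications (positivity of $\theta$ on all of $w\Pi$, nonemptiness of chambers, and the identity $s_iC(w)=C(s_iw)$) that the paper leaves implicit. All the sign conventions and dualities you flag as needing care do check out against the paper's definitions of the $W$-actions on $\mathbb Z^{Q_0}$ and $(\mathbb Z^{Q_0})^*$.
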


\begin{proof}
(1) $\Leftrightarrow$ (2) follows from \cite[1.6]{H}. 
By the definition of chambers the rest is clear.
\end{proof}

\subsection{Description of the Reflection functor}

Next we revisit the reflection functor in Kleinian singularity case.
First we consider the simple reflection case.

\begin{prop}\label{Klein-simple-reflection}
For any $w \in W$, if $\ell(w)<\ell(s_iw)$, then we have $\mathcal S_w \subset \mathcal T(I_i)$ and $\mathcal S_{s_iw} \subset \mathcal Y(I_i)$, and  there is a categorical  equivalence 
\[
\xymatrix{
\mathcal S_{w} \ar@<0.5ex>[rrr]^{\bs_i^+ = \Hom_{\Lambda}(I_i,-)} & & & 
\mathcal S_{s_iw} \ar@<0.5ex>[lll]^{\bs_i^- = -\otimes_{\Lambda}I_i}.
}
\]
\end{prop}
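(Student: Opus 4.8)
The plan is to reduce the statement to Theorem \ref{simple-reflection} by translating the length inequality $\ell(w) < \ell(s_iw)$ into a positivity condition on parameters. I would fix a generic parameter $\theta \in C(w)$, so that by definition $\mathcal S_w = \s{\theta}{\Lambda}{,\mathbf d}$, and invoke Lemma \ref{ref-length} to get $\theta_i > 0$. Two elementary observations then prepare the ground. First, $\mathbf d$ is $W$-invariant: since $\mathbf d$ is the minimal imaginary root, $(\mathbf d,\e_i) = 0$, hence $s_i\mathbf d = \mathbf d - (\mathbf d,\e_i)\e_i = \mathbf d$. Second, $\theta \mapsto s_i\theta$ maps $C(w)$ into $C(s_iw)$: using the identity $s_i\theta(\alpha) = \theta(s_i\alpha)$ recorded in Section \ref{moduli}, for any $\alpha = s_iw\ove_j$ lying in the simple system $s_iw\Delta$ we have $s_i\theta(\alpha) = \theta(w\ove_j) > 0$, because $w\ove_j \in w\Delta$ and $\theta \in C(w)$; thus $s_i\theta \in C(s_iw)$, and therefore $\mathcal S_{s_iw} = \s{s_i\theta}{\Lambda}{,\mathbf d}$.

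Given these, Theorem \ref{simple-reflection} applied to $\theta$ (which satisfies $\theta_i > 0$) produces directly the mutually quasi-inverse equivalences $\Hom_{\Lambda}(I_i,-) \colon \s{\theta}{\Lambda}{,\mathbf d} \longrightarrow \s{s_i\theta}{\Lambda}{,s_i\mathbf d}$ and $- \otimes_{\Lambda} I_i$ in the other direction. Since $s_i\mathbf d = \mathbf d$ and $\s{s_i\theta}{\Lambda}{,\mathbf d} = \mathcal S_{s_iw}$, this is exactly the claimed equivalence realized by $\bs_i^{+}$ and $\bs_i^{-}$.

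For the two inclusions I would appeal to Lemma \ref{TY}. From $\theta_i > 0$, part (1) gives $\mathcal S_w \subset \s{\theta}{\Lambda}{} \subset \mathcal T(I_i)$. From $(s_i\theta)_i = -\theta_i < 0$ (equivalently Lemma \ref{ref-length}(4) for a parameter in $C(s_iw)$), part (2) gives $\mathcal S_{s_iw} \subset \s{s_i\theta}{\Lambda}{} \subset \mathcal Y(I_i)$. The whole argument is essentially bookkeeping built on Theorem \ref{simple-reflection} and Lemma \ref{ref-length}; the only step needing a little attention is the compatibility between the $W$-action on chambers and the $W$-action on dimension vectors, and I expect no serious obstacle there.
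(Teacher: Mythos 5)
Your proposal is correct and follows exactly the paper's route: pick $\theta \in C(w)$, use Lemma \ref{ref-length} to get $\theta_i>0$, and invoke Theorem \ref{simple-reflection} (together with Lemma \ref{TY} for the inclusions). The extra checks you supply --- that $s_i\mathbf d=\mathbf d$ and that $s_i\theta \in C(s_iw)$ so that $\s{s_i\theta}{\Lambda}{,\mathbf d}=\mathcal S_{s_iw}$ --- are precisely the bookkeeping the paper leaves implicit, and they are carried out correctly.
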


\begin{proof}
If we take a $\theta \in C(w)$, then by Lemma \ref{ref-length} we have $\theta_i>0$. So the assertion follows from Theorem \ref{simple-reflection}.
\end{proof}

By virtue of Proposition \ref{Coxeter-relation}, for any $w \in W$, the corresponding reflection functor $\bw = \bs_{i_{\ell}}\cdots\bs_{i_1}$ does not depend on an expression $s_{i_{\ell}}\cdots s_{i_1}$ of $w$ up to isomorphisms, so we especially choose a route through $\overline{C(1)}$. First we observe the  relation between $\mathcal S_1$ and $\mathcal S_w$ for any $w \in W$.

\begin{prop}\label{Klein-basic-reflection}
For any $w \in W$, we have $\mathcal S_1 \subset \mathcal T(I_w)$ and $\mathcal S_w \subset \mathcal Y(I_w)$ and there is a categorical equivalence
\[
\xymatrix{
\mathcal S_{1} \ar@<0.5ex>[rrr]^{\bw \simeq \Hom_{\Lambda}(I_w,-)} & & & 
\mathcal S_{w} \ar@<0.5ex>[lll]^{\bw^{-1} \simeq -\otimes_{\Lambda}I_w}.
}
\]
\end{prop}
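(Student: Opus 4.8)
The plan is to realize $\bw$ explicitly through Proposition \ref{composition}, by choosing the parameter inside the fundamental chamber $C(1)$ and checking the sign conditions along a reduced expression. First I would fix a generic $\theta \in C(1)$; by the definition of the Weyl chamber and since $\Delta = \{\ove_1,\ldots,\ove_n\}$, this says exactly that $\theta_i = \theta(\ove_i) > 0$ for all $i = 1,\ldots,n$. Then choose a reduced expression $w = s_{i_\ell}\cdots s_{i_1}$ and set $w_j = s_{i_j}\cdots s_{i_1}$, $w_0 = 1$, as in Proposition \ref{composition}. Because the expression is reduced, every partial product has length $\ell(w_j) = j$, so $\ell(s_{i_j}w_{j-1}) = \ell(w_j) > \ell(w_{j-1})$ for each $j$.

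The key observation is that $w_{j-1}\theta$ lies in the chamber $C(w_{j-1})$: for any $\beta \in \Delta$,
\[
(w_{j-1}\theta)(w_{j-1}\beta) = \theta(\beta) > 0,
\]
since the $W$-action preserves the canonical pairing and $\theta \in C(1)$, and $w_{j-1}\Delta$ is precisely the set of roots cutting out $C(w_{j-1})$. Hence, applying Lemma \ref{ref-length} to $w_{j-1}$ and $i_j$ together with $\ell(s_{i_j}w_{j-1}) > \ell(w_{j-1})$, we get $(w_{j-1}\theta)_{i_j} > 0$ for every $j = 1,\ldots,\ell$.

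Now Proposition \ref{composition} applies directly: it gives $\s{\theta}{\Lambda}{} \subset \mathcal T(I_w)$ and $\s{w\theta}{\Lambda}{} \subset \mathcal Y(I_w)$, hence $\mathcal S_1 \subset \mathcal T(I_w)$ and $\mathcal S_w \subset \mathcal Y(I_w)$, and it identifies $\bw$ with $\Hom_\Lambda(I_w,-)$ on these subcategories, with quasi-inverse $-\otimes_\Lambda I_w$. Finally, by Theorem \ref{reflection-functor} (equivalently, by iterating Theorem \ref{simple-reflection}) the functor $\bw$ restricts to an equivalence between $\s{\theta}{\Lambda}{,\mathbf d}$ and $\s{w\theta}{\Lambda}{,w\mathbf d}$; since $\mathbf d$ is $W$-invariant (as $(-,\mathbf d) \equiv 0$) we have $w\mathbf d = \mathbf d$, so this is the asserted equivalence $\mathcal S_1 \simeq \mathcal S_w$. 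I do not expect a genuine obstacle: the only delicate point is the combinatorial fact that a reduced expression keeps the intermediate parameters $w_{j-1}\theta$ in consecutive Weyl chambers, which is exactly what Lemma \ref{ref-length} encodes.
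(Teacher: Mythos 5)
Your proof is correct and follows essentially the same route as the paper: choose $\theta \in C(1)$, take a reduced expression so that each partial product increases length, invoke Lemma \ref{ref-length} to get $(w_{j-1}\theta)_{i_j}>0$, and conclude via Proposition \ref{composition}. The only difference is that you spell out explicitly why $w_{j-1}\theta \in C(w_{j-1})$ (via $W$-equivariance of the pairing), a step the paper leaves implicit.
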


\begin{proof}
Let $\theta \in C(1)$.
Take any reduced expression $s_{i_{\ell}}\cdots s_{i_1}$ of $w$. If we write $w_j = s_{i_j}\cdots s_{i_1}$ for any $j = 1,\ldots, \ell$ and $w_0=1$, then  $\ell(s_{j+1}w_j) > \ell(w_j)$ holds for any $j=0,\ldots,\ell-1$. So by Lemma \ref{ref-length}, we have $(w_{j}\theta){i_{j+1}} > 0$ for any $j=0,\ldots,\ell-1$. Hence the assertion follows from Proposition \ref{composition}.
\end{proof}

\begin{thm}\label{Klein-reflection}
For any $w_1,w_2 \in W$, there is a categorical equivalence
\[
\xymatrix{
\mathcal S_{w_1} \ar@<0.5ex>[rrrr]^{\bw_2\bw_1^{-1} \simeq \Hom_{\Lambda}(I_{w_2},-\otimes_{\Lambda}I_{w_1})} &&&& 
\mathcal S_{w_2} \ar@<0.5ex>[llll]^{\bw_1\bw_2^{-1} \simeq \Hom_{\Lambda}(I_{w_1},-\otimes_{\Lambda}I_{w_2})},
}
\]
\end{thm}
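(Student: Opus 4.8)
The plan is to obtain this equivalence for free by composing the two equivalences already established in Proposition \ref{Klein-basic-reflection} and then rewriting the composite in terms of $I_{w_1}$ and $I_{w_2}$. Recall that for every $w \in W$ Proposition \ref{Klein-basic-reflection} furnishes mutually quasi-inverse equivalences
\[
\bw \simeq \Hom_{\Lambda}(I_w,-)\colon \mathcal S_1 \longrightarrow \mathcal S_w, \qquad
\bw^{-1} \simeq -\otimes_{\Lambda}I_w\colon \mathcal S_w \longrightarrow \mathcal S_1,
\]
together with the inclusions $\mathcal S_1 \subset \mathcal T(I_w)$ and $\mathcal S_w \subset \mathcal Y(I_w)$. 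Applying this for $w=w_1$ and $w=w_2$, the first step is simply to form the composite
\[
\mathcal S_{w_1} \xrightarrow{\ \bw_1^{-1}\ } \mathcal S_1 \xrightarrow{\ \bw_2\ } \mathcal S_{w_2},
\]
which is then an equivalence of categories with quasi-inverse $\bw_1\bw_2^{-1}\colon \mathcal S_{w_2}\to \mathcal S_{w_1}$ (indeed $(\bw_2\bw_1^{-1})(\bw_1\bw_2^{-1})\simeq \bw_2\bw_2^{-1}\simeq\mathrm{id}$ and likewise the other way). This already gives the asserted categorical equivalence.

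The second step would be to identify the composite with the displayed functor. Given $M \in \mathcal S_{w_1}$, one has $\bw_1^{-1}(M)\simeq M\otimes_{\Lambda}I_{w_1}$ and this object lies in $\mathcal S_1$; since $\mathcal S_1 \subset \mathcal T(I_{w_2})$, the module $\Hom_{\Lambda}(I_{w_2},M\otimes_{\Lambda}I_{w_1})$ is defined, and by the identification $\bw_2 \simeq \Hom_{\Lambda}(I_{w_2},-)$ on $\mathcal S_1$ it is naturally isomorphic to $\bw_2(M\otimes_{\Lambda}I_{w_1})$. Invoking naturality in $M$ of the isomorphisms of Proposition \ref{Klein-basic-reflection} (which ultimately comes from the Brenner--Butler equivalence, Lemma \ref{Brenner-Butler}), I would conclude
\[
\bw_2\bw_1^{-1} \simeq \Hom_{\Lambda}(I_{w_2},-\otimes_{\Lambda}I_{w_1})\colon \mathcal S_{w_1}\longrightarrow \mathcal S_{w_2},
\]
and symmetrically $\bw_1\bw_2^{-1} \simeq \Hom_{\Lambda}(I_{w_1},-\otimes_{\Lambda}I_{w_2})$ on $\mathcal S_{w_2}$; being quasi-inverse is inherited from $\bw_2\bw_1^{-1}$ and $\bw_1\bw_2^{-1}$. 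As an afterthought, since each simple reflection functor $\bs_i$ preserves $S$-equivalence classes and stable objects and transforms dimension vectors by $s_i$ (Theorem \ref{simple-reflection}), so does the composite, although only the equivalence is asserted here.

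The argument is essentially formal, a composition of equivalences already in hand, so I do not expect a genuine obstacle; the one point that needs to be checked with care is that the intermediate object $M\otimes_{\Lambda}I_{w_1}$ really lands in the torsion class $\mathcal T(I_{w_2})$, so that the hom-functor description of $\bw_2\bw_1^{-1}$ is valid without explicitly factoring through $\mathcal S_1$ — and this is exactly the inclusion $\mathcal S_1 \subset \mathcal T(I_{w_2})$ provided by Proposition \ref{Klein-basic-reflection}.
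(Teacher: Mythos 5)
Your proposal is correct and is essentially identical to the paper's own proof: both compose the equivalence $-\otimes_{\Lambda}I_{w_1}\colon \mathcal S_{w_1}\to\mathcal S_1$ with $\Hom_{\Lambda}(I_{w_2},-)\colon \mathcal S_1\to\mathcal S_{w_2}$ supplied by Proposition \ref{Klein-basic-reflection}. Your extra remark that the intermediate object lies in $\mathcal T(I_{w_2})$ via $\mathcal S_1\subset\mathcal T(I_{w_2})$ is exactly the point the paper leaves implicit.
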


\begin{proof}
By Proposition \ref{Klein-basic-reflection}, we have categorical equivalenecs
\[
\xymatrix{
\mathcal S_{w_1} \ar@<0.5ex>[rr]^{- \otimes_{\Lambda} I_{w_1}}& & 
\mathcal S_{1}  \ar@<0.5ex>[ll]^{\Hom_{\Lambda}(I_{w_1},-)} \ar@<0.5ex>[rr]^{\Hom_{\Lambda}(I_{w_2},-)} & & 
\mathcal S_{w_2} \ar@<0.5ex>[ll]^{- \otimes_{\Lambda} I_{w_2}}.
}
\]
Therefore the assertion follows.
\end{proof}

\begin{cor}
For any $w_1,w_2 \in W$, $\mathcal M_{w_1}$ is isomorphic to $\mathcal M_{w_2}$ as a variety, which is given by $\bw_2\bw_1^{-1}$ and $\bw_1\bw_2^{-1}$. 
\end{cor}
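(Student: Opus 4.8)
The plan is to reduce this to a double application of the isomorphism statement already in hand, using the chamber $C(1)$ as a common reference point. First I would fix a generic $\theta\in C(1)$ and note that $W$ fixes the dimension vector $\mathbf d$: since $(\alpha,\mathbf d)=0$ for every $\alpha\in\widetilde X_{*}$, we get $s_i\mathbf d=\mathbf d-(\mathbf d,\e_i)\e_i=\mathbf d$, hence $w\mathbf d=\mathbf d$ for all $w\in W$. Applying Corollary \ref{pp-equiv-iso} (equivalently Theorem \ref{equiv-iso} together with Theorem \ref{Klein-reflection}) with this $\theta$ and $\alpha=\mathbf d$, the reflection functor $\bw_1$ induces an isomorphism of varieties $\mathcal M_1\xrightarrow{\sim}\mathcal M_{w_1}$, and likewise $\bw_2$ induces $\mathcal M_1\xrightarrow{\sim}\mathcal M_{w_2}$. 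Composing the inverse of the first with the second yields an isomorphism $\mathcal M_{w_1}\xrightarrow{\sim}\mathcal M_{w_2}$; by the functoriality of the moduli construction (Proposition \ref{morph}) and the identification of the composite functor $\bw_2\bw_1^{-1}$ with $\Hom_{\Lambda}(I_{w_2},-\otimes_{\Lambda}I_{w_1})$ in Theorem \ref{Klein-reflection}, this isomorphism is precisely the one induced by $\bw_2\bw_1^{-1}$, with inverse induced by $\bw_1\bw_2^{-1}$.

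Since $\bw_2\bw_1^{-1}$ is not literally of the single form $\Hom_{\Lambda}(L,-)$ or $-\otimes_{\Lambda}L$ to which Theorem \ref{equiv-iso} applies verbatim, I would instead factor the equivalence through $\mathcal S_1$ as $\mathcal S_{w_1}\xrightarrow{-\otimes_{\Lambda}I_{w_1}}\mathcal S_1\xrightarrow{\Hom_{\Lambda}(I_{w_2},-)}\mathcal S_{w_2}$ (Proposition \ref{Klein-basic-reflection}), apply Theorem \ref{equiv-iso} to each of the two factors separately, and then compose the two resulting isomorphisms of varieties. The preservation of $S$-equivalence classes required by Theorem \ref{equiv-iso} is exactly what Theorem \ref{simple-reflection}/\ref{reflection-functor} record.

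The only point deserving care, which I would spell out, is that each tilting ideal $I_w$ satisfies the hypotheses feeding Theorem \ref{equiv-iso}. For the tensor half $-\otimes_{\Lambda}I_w$ this is automatic from Proposition \ref{tensor}. For the hom half $\Hom_{\Lambda}(I_w,-)$ I would verify the conditions of Proposition \ref{hom}: $I_w$ has projective dimension at most one and a resolution by finitely generated projective modules (this is part of $I_w$ being a tilting module in $\mathcal I(\Lambda)$, Theorem \ref{classi-tilt}, via the exact sequences \eqref{exact-fgp} and \eqref{exact-tilt}), and $\Ext^i_{\Lambda}(I_w,M)=0$ for all $i\ge 1$ and all $M$ in the source semistable category $\mathcal S_1$ — which follows because $\mathcal S_1\subset\mathcal T(I_w)$ by Proposition \ref{Klein-basic-reflection}, so $\Ext^1_{\Lambda}(I_w,M)=0$, while $\mathrm{pd}\,I_w\le 1$ kills all higher $\Ext$'s. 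Beyond this bookkeeping there is no genuine obstacle: the statement is a formal corollary of Theorem \ref{Klein-reflection} and Theorem \ref{equiv-iso}, the substance having been done in Sections \ref{reflection} and \ref{functor}.
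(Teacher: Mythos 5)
Your proposal is correct and follows essentially the same route as the paper: the paper's proof is a one-line appeal to Corollary \ref{pp-equiv-iso}, which itself reduces to composing instances of Theorem \ref{equiv-iso} along the chain $\mathcal M_{w_1}\to\mathcal M_1\to\mathcal M_{w_2}$, exactly as you do. Your explicit verification that $I_w$ satisfies the hypotheses of Proposition \ref{hom} (projective dimension at most one, finitely generated projective resolution, and vanishing of $\Ext^{i}_{\Lambda}(I_w,-)$ on $\mathcal S_1\subset\mathcal T(I_w)$) is correct bookkeeping that the paper leaves implicit.
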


\begin{proof}
It follows from corollary \ref{pp-equiv-iso}.
\end{proof}

\begin{rmk}
We only considered the case when $\theta \in \Theta_{\mathbf d}$ is generic. However we can prove similar results for any $\theta \in \Theta$ and any dimension vector $\alpha$ if we impose some assumptions on $\theta$ to use Theorem \ref{simple-reflection}.
\end{rmk}

\subsection{Properties of $S_i^w = \RHom_{\Lambda}(I_w,S_i)$}
\label{S_i^w}

In the rest of this section, we study properties of modules $M$ contained in $\mathcal S_w$ by using the obtained results.
We consider the following two problems. 

\begin{enumerate}
\item Is there an equivalent condition that being $M \in \mathcal S_w$?
\item Is there a characterization of the exceptional curves on $\mathcal M_w$?
\end{enumerate}

The complexes defined below will play an important role to solve the above problems. 
Recall that $S_i$ denotes the simple $\Lambda$-module corresponding to a vertex $i \in Q_0$ and $\e_i= \udim S_i$. 
For any $w \in W$ and $i \in Q_0$, $S_i^w$ denotes a complex $\RHom_{\Lambda}(I_w,S_i)$.

\begin{lem}\label{root-dim}
Precisely either $S_i \in \mathcal T(I_w)$ or $S_i \in \mathcal F(I_w)$ holds. Moreover
\begin{enumerate}
\item $S_i \in \mathcal T(I_w)$ if and only if $w \e_i \in \Pi$. In this case $\udim \Hom_{\Lambda}(I_w,S_i) = w\e_i$.
\item $S_i \in \mathcal F(I_w)$ if and only if $w \e_i \in -\Pi$. In this case $\udim \Ext^1_{\Lambda}(I_w,S_i) = -w\e_i$.
\end{enumerate}
\end{lem}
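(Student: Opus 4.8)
The plan is to read everything off from the single identity $[S_i^w]=w\e_i$ in $\widetilde X_{*}=\mathbb Z^{Q_0}$, together with the fact that $S_i^w$ is a \emph{nonzero} complex concentrated in cohomological degrees $0$ and $1$ only. For the latter: Definition \ref{df_tilt}(1) and Theorem \ref{classi-tilt} give $\mathrm{pd}_{\Lambda}I_w\le 1$, so $S_i^w=\RHom_{\Lambda}(I_w,S_i)$ has cohomology only in degrees $0$ and $1$, namely $H^0(S_i^w)=\Hom_{\Lambda}(I_w,S_i)$ and $H^1(S_i^w)=\Ext^1_{\Lambda}(I_w,S_i)$; and $S_i^w\ne 0$ since $\RHom_{\Lambda}(I_w,-)$ is an equivalence (Theorem \ref{classi-tilt}, \cite{Ri}) and $S_i\ne 0$. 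Throughout I take $i\in\{1,\dots,n\}$, so that $\e_i$ is sent to the simple root $\ove_i\in\Delta$ under $\widetilde X_{*}\twoheadrightarrow X_{*}=\widetilde X_{*}/\mathbb Z\mathbf d$ and every $w\in W$ is a word in the $s_j$ with $j\ne 0$.

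First I would settle the dichotomy. Suppose $\Hom_{\Lambda}(I_w,S_i)\ne 0$ and choose a nonzero morphism $I_w\to S_i$; it is surjective since $S_i$ is simple, so with $N:=\ker(I_w\to S_i)$ there is a short exact sequence $0\to N\to I_w\to S_i\to 0$. Applying $\Hom_{\Lambda}(I_w,-)$ and using $\Ext^1_{\Lambda}(I_w,I_w)=0$ (Theorem \ref{classi-tilt}) together with $\Ext^2_{\Lambda}(I_w,N)=0$ (from $\mathrm{pd}_{\Lambda}I_w\le 1$) gives $\Ext^1_{\Lambda}(I_w,S_i)=0$, i.e. $S_i\in\mathcal T(I_w)$. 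If instead $\Hom_{\Lambda}(I_w,S_i)=0$, then $S_i\in\mathcal F(I_w)$ by definition. The two possibilities are mutually exclusive, since $S_i\in\mathcal T(I_w)\cap\mathcal F(I_w)$ would force $\Hom_{\Lambda}(I_w,S_i)=\Ext^1_{\Lambda}(I_w,S_i)=0$, hence $S_i^w=0$, a contradiction. Thus exactly one of $S_i\in\mathcal T(I_w)$, $S_i\in\mathcal F(I_w)$ holds, and $S_i^w$ is concentrated either in degree $0$, where it is the module $\Hom_{\Lambda}(I_w,S_i)$, or in degree $1$, where it is $\Ext^1_{\Lambda}(I_w,S_i)[-1]$.

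Next I would compute $[S_i^w]$ and translate into root theory. Fixing a reduced expression $w=s_{i_{\ell}}\cdots s_{i_1}$, Proposition \ref{red-tilt}(1) gives $\RHom_{\Lambda}(I_w,-)=\RHom_{\Lambda}(I_{i_{\ell}},-)\circ\cdots\circ\RHom_{\Lambda}(I_{i_1},-)$, each factor preserving $\mathcal D$ by Lemma \ref{D^b(nilp)-eq}, so iterating the commutative square of Theorem \ref{tilt-Gro} on $\mathcal D$ yields $[S_i^w]=s_{i_{\ell}}\cdots s_{i_1}[S_i]=w\e_i$. Since each $s_j$ with $j\ne 0$ fixes the $0$-th coordinate, $w\e_i$ has vanishing $0$-th coordinate; hence whenever one of $\pm w\e_i$ lies in $\mathbb Z_{\ge 0}^{Q_0}$ it is a non-negative combination of $\e_1,\dots,\e_n$, so its image in $X_{*}$ is a non-negative combination of $\ove_1,\dots,\ove_n$ and, being an element of $\Phi$, lies in $\Phi\cap\mathbb Z_{\ge 0}\Delta=\Pi$. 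Now: if $S_i\in\mathcal T(I_w)$, then $\udim\Hom_{\Lambda}(I_w,S_i)=[S_i^w]=w\e_i\in\mathbb Z_{\ge 0}^{Q_0}$, whence $w\e_i\in\Pi$ (in the sense of the statement, $w\ove_i\in\Pi$); if $S_i\in\mathcal F(I_w)$, then $\udim\Ext^1_{\Lambda}(I_w,S_i)=-[S_i^w]=-w\e_i\in\mathbb Z_{\ge 0}^{Q_0}$, whence $-w\ove_i\in\Pi$, i.e. $w\e_i\in-\Pi$. Finally, because $\Phi=\Pi\sqcup(-\Pi)$, these two implications upgrade to equivalences: $w\e_i\in\Pi$ excludes $S_i\in\mathcal F(I_w)$ and so, by the dichotomy, forces $S_i\in\mathcal T(I_w)$; likewise $w\e_i\in-\Pi$ forces $S_i\in\mathcal F(I_w)$. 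The asserted dimension-vector formulas were obtained along the way.

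The one step demanding genuine care is the dichotomy itself, i.e. excluding that $S_i^w$ has nonzero cohomology in both degrees $0$ and $1$ at once; the argument above disposes of it cheaply, exploiting that $S_i$ is simple (so a nonzero map $I_w\to S_i$ is automatically epic) together with $\Ext^1_{\Lambda}(I_w,I_w)=0$ and $\mathrm{pd}_{\Lambda}I_w\le 1$. Everything else — the equality $[S_i^w]=w\e_i$ and the passage between non-negativity of a dimension vector and membership in $\Pi$ — is routine bookkeeping built on Theorem \ref{tilt-Gro}, Corollary \ref{change-dim}, and standard facts about the finite root system $\Phi$.
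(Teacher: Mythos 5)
Your proof is correct, and the second half (reading off $\Pi$ versus $-\Pi$ from $[S_i^w]=w\e_i$ together with the vanishing of the $0$-th coordinate) is exactly what the paper does, only spelled out more carefully on the point of passing from $\widetilde X_*$ to $X_*$. Where you genuinely diverge is the dichotomy. The paper deduces it from Lemma \ref{IRprivate} applied on the other side: exactly one of $I_w\otimes_\Lambda S_i=0$ and $\Tor_1^\Lambda(I_w,S_i)=0$ holds, and then Lemma \ref{isom1} converts these via $K$-duality into $\Hom_\Lambda(I_w,S_i)\simeq D(I_w\otimes_\Lambda S_i)$ and $\Ext^1_\Lambda(I_w,S_i)\simeq D\,\Tor^\Lambda_1(I_w,S_i)$. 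You instead prove the implication ``$\Hom_\Lambda(I_w,S_i)\ne 0\Rightarrow \Ext^1_\Lambda(I_w,S_i)=0$'' directly: a nonzero map $I_w\to S_i$ is epic because $S_i$ is simple, and the long exact sequence for $0\to N\to I_w\to S_i\to 0$ kills $\Ext^1_\Lambda(I_w,S_i)$ using $\Ext^1_\Lambda(I_w,I_w)=0$ and $\mathrm{pd}\,I_w\le 1$; the exclusion of ``both vanish'' via $S_i^w\ne 0$ is the same device used inside the proof of Lemma \ref{IRprivate} itself. Your route is more self-contained and avoids the left--right bookkeeping implicit in identifying the right module $S_i$ with the $K$-dual of the left simple at the same vertex; the paper's route is shorter on the page because it recycles a lemma already proved in full generality (arbitrary tilting module and arbitrary simple, where a nonzero map $T\to S$ need not be available on the tensor side, so the harder argument of Lemma \ref{IRprivate} is really needed there but not here). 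Both are sound.
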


\begin{proof}
By Lemma \ref{IRprivate}, precisely either $I_w \otimes_{\Lambda} S_i=0$ or $\Tor_1^{\Lambda}(I_w,S_i)=0$ holds. However, by Lemma \ref{isom1}, we have $\Hom_{\Lambda}(I_w,S_i) \simeq D(I_w \otimes_{\Lambda} S_i)$ and $\Ext^1_{\Lambda}(I_w,S_i) \simeq D(\Tor_1^{\Lambda}(I_w,S_i)$, so the first assertion follows. 
Thus the rest follows from the equality
\[
w\e_i = [S_i^w] = \udim (\Hom_{\Lambda}(I_w,S_i)) - \udim (\Ext^1_{\Lambda}(I_w,S_i))
\] 
which is given by Theorem \ref{tilt-Gro}
\end{proof}

By Lemma \ref{root-dim} we have
\[
S_i^w = 
\begin{cases}
\Hom_{\Lambda}(I_w,S_i) & \text{ if $w\e_i \in \Pi$, } \\
\Ext^1_{\Lambda}(I_w,S_i)[-1] & \text{ if $w\e_i \in -\Pi$} \\
\end{cases}
\]
and $[S_i^w] = w\e_i$.
So in the case $w\e_i \in -\Pi$, $S_i^w[1]$ stands for $\Ext^1_{\Lambda}(I_w,S_i)$. 
For any $w \in W$, the dimension vectors $[S_1^w],\ldots,[S_n^w]$ induce the simple root system $w\Delta = \{ w\ove_1,\ldots, w\ove_n \}$.
Also we note that any $S_i^w$ is a 2-spherical object in the derived category of $\Mod(\Lambda)$ and the collection $\{ S_1^w, \ldots, S_n^w \}$ is a $\Gamma$-configuration (cf.~\cite{BT}), though we do not use these properties here.

\subsection{Homological interpretation of the stability condition}

We give an equivalent condition that being $M \in \mathcal S_w$ for any $w \in W$.
Fix a vertex $v \in Q_0$. Then $\Lambda$-module $M$ is said to be $v$-generated if the dimension of $M e_v$ is 1 and $M$ is generated by an element of $M e_v$. For a nilpotent $\Lambda$-module $M$, $M$ is $v$-generated if and only if $M/MI \simeq S_v$. For any $M \in \mathcal S_w$, the dimension of $M e_0$ is $d_0 = 1$.
For the identity element $1 \in W$, the following holds.

\begin{lem}\label{0-gen}
For any $\Lambda$-module $M$ of dimension vector $\mathbf d$, the following are equivalent.
\begin{enumerate}
\item $M \in \mathcal S_1$.
\item $M$ is 0-generated.
\item $\Hom_{\Lambda}(M,S_i) = 0$ holds for all $i=1,\ldots,n$.
\end{enumerate}
\end{lem}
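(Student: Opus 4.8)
The plan is to fix once and for all a parameter $\theta \in C(1)$, so that $\theta_i > 0$ for every $i \in \{1,\dots,n\}$ and, since $\theta(\mathbf d)=0$ and $\udim M = \mathbf d$, one has $\theta(M)=0$; then I prove the cycle of implications $(1)\Rightarrow(3)\Rightarrow(2)\Rightarrow(1)$. For $(1)\Rightarrow(3)$ I would argue by contraposition: a nonzero map $M\to S_i$ with $i\geq 1$ is surjective because $S_i$ is simple, so it gives an exact sequence $0\to N\to M\to S_i\to 0$ in which $N$ is a proper submodule, and $N\neq 0$ because $\dim_K M=\sum_j d_j\geq 2>1=\dim_K S_i$. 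Then $\theta(N)=\theta(M)-\theta(S_i)=-\theta_i<0$, contradicting $\theta$-semistability of $M$.

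For $(3)\Rightarrow(2)$, I would consider the submodule $M':=(Me_0)\Lambda$ generated by the line $Me_0$ (note $\dim_K Me_0 = d_0 = 1$). If $M'\neq M$, pick a simple quotient $T$ of $M/M'$; since $(M/M')e_0=0$ one gets $Te_0=0$. By Lemma \ref{lem2_cat_fd}(1) every non-nilpotent simple $\Lambda$-module has dimension vector a positive multiple of $\mathbf d$, hence is nonzero at the vertex $0$, so $T$ must be one of the nilpotent simples, and $Te_0=0$ then forces $T\cong S_i$ for some $i\in\{1,\dots,n\}$. The composite $M\twoheadrightarrow M/M'\twoheadrightarrow S_i$ now contradicts $(3)$, so $M=(Me_0)\Lambda$, i.e.\ $M$ is $0$-generated.

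For $(2)\Rightarrow(1)$, let $N$ be a nonzero proper submodule of $M$. If $Ne_0\neq 0$, then $Ne_0=Me_0$ by a dimension count, and since $M$ is generated by $Me_0$ we get $N\supseteq (Me_0)\Lambda = M$, a contradiction; hence $Ne_0=0$, and then $\theta(N)=\sum_{i=1}^n\theta_i\dim_K Ne_i>0$ because $N\neq 0$ makes some $Ne_i$ nonzero while all $\theta_i>0$. Thus $M$ is $\theta$-stable, in particular $\theta$-semistable of dimension vector $\mathbf d$, so $M\in\mathcal S_1$.

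The only non-formal ingredient is in $(3)\Rightarrow(2)$: one needs that a simple $\Lambda$-module which vanishes at the vertex $0$ is necessarily one of $S_1,\dots,S_n$, and this is precisely what the classification of finite-dimensional simple modules (Lemma \ref{lem2_cat_fd}, equivalently Proposition \ref{cat_fd}) supplies. Apart from that input, the argument is just the bookkeeping between the sign conditions $\theta_i>0$ defining $C(1)$ and the (semi)stability inequalities, so I do not expect a genuine obstacle here.
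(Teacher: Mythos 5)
Your proof is correct, and it takes a somewhat different route from the paper's. The paper disposes of $(1)\Leftrightarrow(2)$ by citing a known result (\cite[Exercise 4.12]{C}) and then proves $(2)\Leftrightarrow(3)$ by first invoking the dichotomy that a module of dimension vector $\mathbf d$ is either simple or nilpotent (Proposition \ref{cat_fd}/Corollary \ref{vec->simp}) and, in the nilpotent case, using the isomorphism $\Hom_{\Lambda}(M,S_i)\cong\Hom_{\Lambda}(M/MI,S_i)$. You instead run the full cycle $(1)\Rightarrow(3)\Rightarrow(2)\Rightarrow(1)$ from scratch: the stability bookkeeping with a fixed $\theta\in C(1)$ replaces the external citation for $(1)\Leftrightarrow(2)$, and your simple-quotient argument for $(3)\Rightarrow(2)$ uses the same essential input as the paper --- Lemma \ref{lem2_cat_fd}(1), i.e.\ that every non-nilpotent finite-dimensional simple has dimension vector a positive multiple of $\mathbf d$ and hence is nonzero at the vertex $0$ --- but packaged as ``any simple quotient vanishing at $0$ must be some $S_i$, $i\geq 1$'' rather than as a global decomposition of the module. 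What your approach buys is self-containedness (no appeal to the unproved exercise) and, as a by-product of $(2)\Rightarrow(1)$, the slightly stronger conclusion that a $0$-generated module of dimension vector $\mathbf d$ is automatically $\theta$-\emph{stable} for $\theta\in C(1)$, not merely semistable; what the paper's version buys is brevity. All the individual steps check out: in $(1)\Rightarrow(3)$ the kernel is nonzero because $\sum_j d_j\geq 2$; in $(3)\Rightarrow(2)$ one indeed has $(M/M')e_0=0$ since $\dim_K Me_0=d_0=1$ and $Me_0\subseteq M'$; and in $(2)\Rightarrow(1)$ the inequality $\theta(N)>0$ uses exactly that $\theta_i>0$ for $i=1,\dots,n$ on $C(1)$ together with $Ne_0=0$.
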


\begin{proof}
(1) $\Leftrightarrow$ (2) is well known result (see \cite[Exercise 4.12]{C}). 
We show (2) $\Leftrightarrow$ (3). 
By Proposition \ref{cat_fd}, $M$ is either simple or nilpotent. If $M$ is simple, then $M$ satisfies both of (2) and (3). If $M$ is nilpotent, then the assertion follows from an isomorphism $\Hom_{\Lambda}(M,S_i) \cong \Hom_{\Lambda}(M/MI,S_i)$.
\end{proof}

Next we consider the general case.  
The next is the main result in this subsection.

\begin{thm}\label{thm:ch-s}
For any $w \in W$ and any $\Lambda$-module $M$ of dimension vector $\mathbf d$, the following are equivalent.
\begin{enumerate}
\item $M \in \mathcal S_w$.
\item For all $i=1,\ldots,n$, the following hold.
\[
\begin{cases}
\Hom_{\Lambda}(M,S_i^w)=0 & \text{ if $w\e_i \in \Pi$},\\
\Hom_{\Lambda}(S_i^w[1],M)=0 & \text{ if $w\e_i \in -\Pi$}.
\end{cases}
\]
\end{enumerate}
\end{thm}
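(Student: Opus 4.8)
The plan is to transport the statement through the reflection-functor equivalence $\mathcal S_1\cong\mathcal S_w$ of Proposition~\ref{Klein-basic-reflection}, where Lemma~\ref{0-gen} gives a clean homological description of membership, and then to carry the vanishing conditions back up along the derived auto-equivalence induced by $I_w$. Write $\Phi=\RHom_{\Lambda}(I_w,-)$ and $\Psi=-\Ltensor_{\Lambda}I_w$ for the quasi-inverse triangle auto-equivalences, so that $\Phi(S_i)=S_i^w$ and $\Psi(S_i^w)=S_i$, and recall from Lemma~\ref{root-dim} that $S_i^w$ is a module concentrated in degree $0$ when $w\e_i\in\Pi$ and a module $S_i^w[1]$ concentrated in degree $1$ when $w\e_i\in-\Pi$. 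I will use freely that $\Phi,\Psi$ are fully faithful and that $\Hom_{\mathcal D}(X,Y[m])=0$ for modules $X,Y$ and $m<0$.

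For $(1)\Rightarrow(2)$ I would argue as follows. If $M\in\mathcal S_w$ then $M\in\mathcal Y(I_w)$ and $N:=\bw^{-1}(M)=M\otimes_{\Lambda}I_w\in\mathcal S_1$ by Proposition~\ref{Klein-basic-reflection}, with $\udim N=\mathbf d$ (Theorem~\ref{tilt-Gro}); since $M\in\mathcal Y(I_w)$ and $N\in\mathcal T(I_w)$ one has $\Psi(M)=N$ and $\Phi(N)=M$ in $\mathcal D$. By Lemma~\ref{0-gen}, $\Hom_{\Lambda}(N,S_i)=0$ for $i=1,\dots,n$, and applying $\Phi$ gives $\Hom_{\mathcal D}(M,S_i^w)\cong\Hom_{\mathcal D}(N,S_i)=0$; when $w\e_i\in\Pi$ this is exactly $\Hom_{\Lambda}(M,S_i^w)=0$. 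When $w\e_i\in-\Pi$ I would instead apply $\Psi$ to $\Hom_{\mathcal D}(S_i^w[1],M)$, getting $\Hom_{\mathcal D}(S_i[1],N)=\Ext^{-1}_{\Lambda}(S_i,N)=0$. This is precisely condition~(2).

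For $(2)\Rightarrow(1)$ the heart of the matter is the identity
\[
M\in\mathcal Y(I_w)\iff \Hom_{\Lambda}(S_i^w[1],M)=0\ \text{ for all }i\text{ with }w\e_i\in-\Pi .
\]
I would prove it by setting $T:=\Tor^{\Lambda}_1(M,I_w)$ and noting that, since $\mathrm{pd}\,I_w\le 1$, the complex $\Psi(M)$ has homology only in degrees $-1,0$ with $H^{-1}(\Psi(M))=T$, and that $T\in\mathcal F(I_w)$ by the Brenner--Butler picture (Lemma~\ref{Brenner-Butler}). For $w\e_i\in-\Pi$, applying $\Psi$ yields $\Hom_{\Lambda}(S_i^w[1],M)\cong\Hom_{\mathcal D}(S_i,\Psi(M)[-1])$, and since $\Psi(M)[-1]$ has homology only in degrees $0,1$ with $H^0=T$ while $\Ext^{-1}_{\Lambda}$ and $\Ext^{-2}_{\Lambda}$ into $M\otimes_{\Lambda}I_w$ vanish, this equals $\Hom_{\Lambda}(S_i,T)$. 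As $\mathcal F(I_w)$ is closed under submodules and, by Lemma~\ref{root-dim}, its simple objects are exactly the $S_i$ with $w\e_i\in-\Pi$, any nonzero $T\in\mathcal F(I_w)$ contains such an $S_i$ as a submodule, so $T=0$ iff all these $\Hom$-groups vanish. Granting this, $(2)$ forces $M\in\mathcal Y(I_w)$; then $N:=\bw^{-1}(M)=M\otimes_{\Lambda}I_w$ is a module with $\udim N=\mathbf d$ and $N\in\mathcal T(I_w)$, $M=\Hom_{\Lambda}(I_w,N)=\bw(N)$ by Brenner--Butler, and $\Hom_{\Lambda}(N,S_i)\cong\Hom_{\mathcal D}(M,S_i^w)$ vanishes for all $i$ (it is $\Hom_{\Lambda}(M,S_i^w)=0$ by~(2) when $w\e_i\in\Pi$, and $0$ by degree reasons when $w\e_i\in-\Pi$); so $N\in\mathcal S_1$ by Lemma~\ref{0-gen}, whence $M=\bw(N)\in\mathcal S_w$.

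The step I expect to be the main obstacle is the displayed equivalence above — detecting membership in the torsion-free class $\mathcal Y(I_w)$ through the finitely many objects $S_i^w[1]$ — which rests on pinning down the simple objects of $\mathcal F(I_w)$ via Lemma~\ref{root-dim} and on the short computation identifying $\Hom_{\Lambda}(S_i^w[1],M)$ with $\Hom_{\Lambda}(S_i,\Tor^{\Lambda}_1(M,I_w))$. If one prefers to avoid analysing torsion classes for a general $I_w$, the alternative is induction on $\ell(w)$: for $w=s_jv$ with $\ell(w)=\ell(v)+1$, Proposition~\ref{Klein-simple-reflection} reduces $M\in\mathcal S_w$ to "$M\in\mathcal Y(I_j)$ and $M\otimes_{\Lambda}I_j\in\mathcal S_v$", and one transports condition~(2) for $v$ along the simple reflection functor using $I_w=I_jI_v$ and $S_i^w=\RHom_{\Lambda}(I_j,S_i^v)$; here the unique index whose sign $w\e_i$ differs from $v\e_i$ produces the object $S_j$ itself, so that the extra clause $M\in\mathcal Y(I_j)$ is exactly the corresponding condition in~(2).
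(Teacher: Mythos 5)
Your proof is correct in substance but takes a genuinely different route from the paper's. The paper argues by induction on $\ell(w)$, peeling off one simple reflection at a time via Proposition \ref{Klein-simple-reflection}; the whole difficulty of the converse direction is concentrated in the technical Lemma \ref{lem:ch-s}, which produces, for a reduced expression $w=s_{i_{\ell}}\cdots s_{i_1}$, an index $i$ with $w\e_i\in-\Pi$ such that $S_{i_{\ell}}$ is a direct summand of the top of $S_i^w[1]$ --- exactly what is needed to deduce $M\in\mathcal Y(I_{i_{\ell}})$ from condition (2) and keep the induction going. You instead work with the single tilting module $I_w$ and establish the global criterion that $M\in\mathcal Y(I_w)$ if and only if $\Hom_{\Lambda}(S_i^w[1],M)=0$ for all $i$ with $w\e_i\in-\Pi$; combined with Lemma \ref{0-gen} and Proposition \ref{Klein-basic-reflection} this finishes both implications in one stroke and avoids the reduced-expression bookkeeping. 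Your two computational ingredients --- the identification $\Hom_{\Lambda}(S_i^w[1],M)\cong\Hom_{\Lambda}(S_i,\Tor^{\Lambda}_1(M,I_w))$ via the truncation triangle for $M\Ltensor_{\Lambda}I_w$, and the fact that $\Tor^{\Lambda}_1(M,I_w)\in\mathcal F(I_w)$ --- are both correct. Your closing remark about the inductive alternative is essentially the paper's proof.

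One step needs more justification than you give it. You assert, citing Lemma \ref{root-dim}, that the simple objects of $\mathcal F(I_w)$ are exactly the $S_i$ with $w\e_i\in-\Pi$; but Lemma \ref{root-dim} only decides which of the vertex simples $S_i$ lie in $\mathcal F(I_w)$ --- it does not exclude the other simple $\Lambda$-modules (those in $\mathcal S$ of Proposition \ref{cat_fd}, or $S_0$) from $\mathcal F(I_w)$, and without that exclusion a nonzero $T\in\mathcal F(I_w)$ might a priori have no submodule of the required form. The gap is easily filled: applying $\Hom_{\Lambda}(-,N)$ to $0\to I_w\to\Lambda\to\Lambda/I_w\to 0$ shows that any $N\in\mathcal F(I_w)$ satisfies $N=\Hom_{\Lambda}(\Lambda/I_w,N)$, i.e.\ $NI_w=0$, so $N$ is a module over the finite-dimensional algebra $\Lambda/I_w$, whose composition factors lie among $S_{i_1},\ldots,S_{i_{\ell}}\subseteq\{S_1,\ldots,S_n\}$ (note $e_0\in I_w$ since $w\in\langle s_1,\ldots,s_n\rangle$). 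Hence every simple submodule of a nonzero $T\in\mathcal F(I_w)$ is some $S_i$ with $1\le i\le n$, and then $w\e_i\in-\Pi$ by Lemma \ref{root-dim} since $\mathcal F(I_w)$ is closed under submodules. With this patch your argument is complete.
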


To prove Theorem \ref{thm:ch-s}, we need the next technical lemma.

\begin{lem}\label{lem:ch-s}
For a $w \in W$, we take a a reduced expression $w=s_{i_{\ell}} \cdots s_{i_1}$. Then there exists $i \in \{1,\ldots,n \}$ such that $w\e_i \in -\Pi$ and $\Hom_{\Lambda}(S_i^w[1],S_{i_{\ell}}) \neq 0$. In particular $S_i^w[1]/S_i^w[1]I$ contains $S_{i_{\ell}}$ as a direct summand.
\end{lem}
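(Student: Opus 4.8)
The plan is to reduce the statement to a single simple reflection by peeling off the leftmost letter of the word. Write $w=s_{i_\ell}w'$ with $w'=s_{i_{\ell-1}}\cdots s_{i_1}$; this expression of $w'$ is again reduced, so $\ell(s_{i_\ell}w')>\ell(w')$, and $I_w=I_{i_\ell}I_{w'}$. Since $I_{w'}$ is a two‑sided ideal we have $I_w\subseteq I_{w'}$, and $I_w\ne I_{w'}$ by the bijectivity in Theorem \ref{Weyl-tilt}; hence $Z:=I_{w'}/I_w\ne 0$. I would first identify $Z$: tensoring the bimodule sequence $0\to I_{i_\ell}\to\Lambda\to S_{i_\ell}\to 0$ with $-\otimes_\Lambda I_{w'}$ shows $Z\cong S_{i_\ell}\otimes_\Lambda I_{w'}$. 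Since $I_{w'}$ is also a tilting module on the other side (Theorem \ref{classi-tilt} for $\Lambda^{\op}$), Lemma \ref{IRprivate} applied to $\Lambda^{\op}$ gives that exactly one of $S_{i_\ell}\otimes_\Lambda I_{w'}$ and $\Tor^\Lambda_1(S_{i_\ell},I_{w'})$ vanishes; as the former is $Z\ne 0$, we get $\Tor^\Lambda_1(S_{i_\ell},I_{w'})=0$, i.e. $S_{i_\ell}\in\mathcal Y(I_{w'})$, so $S_{i_\ell}\Ltensor_\Lambda I_{w'}=Z$ and $\Hom_\Lambda(I_{w'},Z)\cong S_{i_\ell}$ by Lemma \ref{Brenner-Butler}. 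By Theorem \ref{tilt-Gro} and Proposition \ref{red-tilt}, $\udim Z=[S_{i_\ell}\Ltensor_\Lambda I_{w'}]=(w')^{-1}\e_{i_\ell}$, which is a positive root by Lemma \ref{ref-length} and has no $\e_0$‑component (as $W$ preserves $\bigoplus_{j\ge 1}\mathbb Z\e_j$); in particular $Ze_0=0$.

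Since $Z\ne 0$ and $Ze_0=0$, some $S_i$ with $i\in\{1,\dots,n\}$ embeds in $\Soc Z$; I fix such an $i$ and claim it works. To see $w\e_i\in-\Pi$, apply $\Hom_\Lambda(I_{w'},-)$ to $S_i\hookrightarrow Z$: this yields a monomorphism $\Hom_\Lambda(I_{w'},S_i)\hookrightarrow\Hom_\Lambda(I_{w'},Z)\cong S_{i_\ell}$, so $\Hom_\Lambda(I_{w'},S_i)$ is either $0$ or isomorphic to $S_{i_\ell}$. In the first case $S_i\in\mathcal F(I_{w'})$, hence $w'\e_i\in-\Pi$ by Lemma \ref{root-dim}; moreover $w'\e_i\ne-\ove_{i_\ell}$, for otherwise $\e_i=-(w')^{-1}\ove_{i_\ell}$ would be negative, so $w\e_i=s_{i_\ell}(w'\e_i)\in-\Pi$. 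In the second case $S_i\in\mathcal T(I_{w'})$ and $w'\e_i=\udim\Hom_\Lambda(I_{w'},S_i)=\e_{i_\ell}$ by Lemma \ref{root-dim}, so $w\e_i=s_{i_\ell}\e_{i_\ell}=-\e_{i_\ell}\in-\Pi$. In either case $w\e_i\in-\Pi$, so by Lemma \ref{root-dim} $S_i^w[1]=\Ext^1_\Lambda(I_w,S_i)$ is an honest module.

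For the non‑vanishing of $\Hom_\Lambda(S_i^w[1],S_{i_\ell})$ I would transport along the autoequivalence $G=-\Ltensor_\Lambda I_w$ of $\dcat{}{\Mod\Lambda}$ inverse to $\RHom_\Lambda(I_w,-)$. Then $G(S_i^w[1])=S_i[1]$, while $G(S_{i_\ell})=S_{i_\ell}\Ltensor_\Lambda I_w=(S_{i_\ell}\Ltensor_\Lambda I_{i_\ell})\Ltensor_\Lambda I_{w'}$; since $S_{i_\ell}\in\add S_{i_\ell}=\mathcal X(I_{i_\ell})$ (Lemma \ref{tor-I_i}), a dimension count (Theorem \ref{tilt-Gro}, Lemma \ref{Brenner-Butler}) gives $S_{i_\ell}\Ltensor_\Lambda I_{i_\ell}\cong S_{i_\ell}[1]$, whence $G(S_{i_\ell})\cong Z[1]$. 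Therefore
\[
\Hom_\Lambda(S_i^w[1],S_{i_\ell})\cong\Hom_{\dcat{}{\Mod\Lambda}}(S_i[1],Z[1])=\Hom_\Lambda(S_i,Z)\ne 0 .
\]
The final sentence then follows because a nonzero map to the simple module $S_{i_\ell}$ is surjective and so factors through the semisimple top $S_i^w[1]/S_i^w[1]I$, which therefore contains $S_{i_\ell}$ as a direct summand.

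The main obstacle is the vanishing $\Tor^\Lambda_1(S_{i_\ell},I_{w'})=0$: this is exactly what makes $Z$ a module concentrated in degree $0$ and lets both the dimension‑vector bookkeeping and the derived transport go through. The delicate point is that $S_{i_\ell}\otimes_\Lambda I_{w'}$ — which I can pin down as the nonzero module $I_{w'}/I_w$, nonzero precisely because the expression is reduced — is the ``wrong‑sided'' tensor product, so one must invoke Lemma \ref{IRprivate} over $\Lambda^{\op}$ rather than directly, and must not conflate it with $I_{w'}\otimes_\Lambda S_{i_\ell}$, which may well vanish.
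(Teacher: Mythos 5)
Your proof is correct, and it follows the same overall strategy as the paper's: produce a nonzero nilpotent module with no $S_0$ among its composition factors, take a simple $S_i$ in its socle, deduce $w\e_i\in-\Pi$, and compute $\Hom_{\Lambda}(S_i^w[1],S_{i_{\ell}})$ by transporting along the derived equivalence $-\Ltensor_{\Lambda}I_w$. In fact your module $Z=I_{w'}/I_w\cong S_{i_{\ell}}\otimes_{\Lambda}I_{w'}$ is exactly the paper's $\Tor^{\Lambda}_1(S_{i_{\ell}},I_w)$, since $S_{i_{\ell}}\Ltensor_{\Lambda}I_w\cong(S_{i_{\ell}}\Ltensor_{\Lambda}I_{i_{\ell}})\Ltensor_{\Lambda}I_{w'}\cong Z[1]$. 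Where you differ is in the bookkeeping around two intermediate facts. (i) Nonvanishing: you get $Z\neq 0$ from the strictness of the chain of tilting ideals via Theorem \ref{Weyl-tilt}, which then forces you to invoke Lemma \ref{IRprivate} over $\Lambda^{\op}$ (and hence the left-module version of Theorem \ref{classi-tilt}, which the paper states only for right modules but which holds by symmetry) to see that $Z$ is concentrated in degree $0$. The paper instead observes directly that $S_{i_{\ell}}\otimes_{\Lambda}I_w=0$ because already $S_{i_{\ell}}\otimes_{\Lambda}I_{i_{\ell}}=0$, so $\Tor^{\Lambda}_1(S_{i_{\ell}},I_w)$ is automatically the only homology, and it is nonzero because its dimension vector is the nonzero vector $-w\e_{i_{\ell}}$. (ii) The claim $w\e_i\in-\Pi$: your two-case analysis on the embedding $\Hom_{\Lambda}(I_{w'},S_i)\hookrightarrow\Hom_{\Lambda}(I_{w'},Z)\cong S_{i_{\ell}}$ is correct, but the paper gets this in one line from $\Tor^{\Lambda}_1(S_{i_{\ell}},I_w)\in\mathcal F(I_w)$, the closure of $\mathcal F(I_w)$ under submodules, and Lemma \ref{root-dim}. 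What your version buys is a concrete realization of the relevant module as the subquotient $I_{w'}/I_w$ of $\Lambda$; the cost is the extra case analysis and the detour through the opposite algebra.
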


\begin{proof}
By Lemma \ref{tor-I_i}, $S_{i_{\ell}} \in \mathcal Y(I_{i_{\ell}})$ holds, so we have $S_{i_{\ell}} \otimes_{\Lambda} I_{i_{\ell}} = 0$. 
Hence we have $S_{i_{\ell}}\otimes_{\Lambda}I_w = S_{i_{\ell}}\otimes_{\Lambda} I_{i_{\ell}}\otimes_{\Lambda} I_{s_{i_{\ell}}w} = 0$. 
Thus we have $S_{i_{\ell}} \in \mathcal{X}(I_w)$ and $\Tor^{\Lambda}_1(S_{i_{\ell}},I_w) \in \mathcal{F}(I_w)$. 
By Corollary \ref{change-dim}, we have $\udim \Tor_1^{\Lambda}(S_{i_{\ell}},I_w) = -[S_{i_{\ell}}\Ltensor_{\Lambda}I_w] = -w\e_{i_{\ell}}$, hence $S_0$ does not appear in composition factors of $\Tor^{\Lambda}_1(S_{i_{\ell}},I_w)$. 
Since $\Tor^{\Lambda}_1(S_{i_{\ell}},I_w)$ is nilpotent, there exists $i \in \{1, \ldots, n\}$ such that $\Hom_{\Lambda}(S_i,\Tor^{\Lambda}_1(S_{i_{\ell}},I_w)) \neq 0$. This implies $S_i$ is a submodule of $\Tor^{\Lambda}_1(S_{i_{\ell}},I_w)$.
Since $\mathcal{F}(I_w)$ is closed under submodules, $S_i \in \mathcal{F}(I_w)$. By Lemma \ref{root-dim}, $w\e_i \in -\Pi$.
Moreover we have
\begin{align*}
\Hom_{\Lambda}(S_i^w[1],S_{i_{\ell}}) 
&\simeq \Hom_{\mathcal D}(\RHom_{\Lambda}(I_w,S_i[1]),S_{i_{\ell}})
\simeq \Hom_{\mathcal D}(S_i[1],\Tor^{\Lambda}_1(I_w,S_{i_{\ell}})[1]) \\
&\simeq \Hom_{\Lambda}(S_i,\Tor^{\Lambda}_1(I_w,S_{i_{\ell}})) \neq 0.
\end{align*}
\end{proof}

\begin{proof}[Proof of Theorem \ref{thm:ch-s}]
Take any $\Lambda$-module $M$ of dimension vector $\mathbf d$. 
If $M$ is simple, then the assertion is trivial. 
So we suppose that $M$ is nilpotent.
Put $\ell = \ell(w)$.
We prove the assertion by induction on $\ell(w)$. 
In the case $\ell(w)=0$, since $w=1$, the assertion follows from Lemma \ref{0-gen}. 
We assume that $\ell > 0$ and the assertion holds for any $w'$ with $\ell(w')<\ell$.
Take a reduced expression $w=s_{i_{\ell}} \cdots s_{i_1}$. 
We put $w'=s_{i_{\ell}}w=s_{i_{\ell-1}} \cdots s_{i_1}$.
We remark that $\ell(w)>\ell(s_iw)$ holds.

Now we show (1) $\Rightarrow$ (2). Assume that $M \in \mathcal S_w$. 
Then by Proposition \ref{Klein-simple-reflection}, we have $M \in \mathcal Y(I_{i_{\ell}})$ and $N := M \otimes_{\Lambda}I_{i_{\ell}} \in \mathcal S_{w'}$. So by the induction hypothesis, for all $i=1,\ldots,n$, $N$ satisfies
\[
\begin{cases}
\Hom_{\Lambda}(N,S_i^{w'})=0 & \text{ if $w'\e_i \in \Pi$},\\
\Hom_{\Lambda}(S_i^{w'}[1],N)=0 & \text{ if $w'\e_i \in -\Pi$}.
\end{cases}
\]

\noindent(i) The case $w' \e_i \in \Pi$ and $\e_i \neq \e_{i_{\ell}}$. In this case $w\e_i = s_iw'\e_i \in \Pi$ holds, so by Lemma \ref{root-dim}, $S_i \in \mathcal{T}(I_{w'})$.
Thus by Proposition \ref{red-tilt}, we have $\Hom_{\Lambda}(M,S_i^w) \simeq \Hom_{\mathcal D}(M,\RHom_{\Lambda}(I_{i_{\ell}},S_i^{w'})) \simeq \Hom_{\mathcal D}(M\Ltensor_{\Lambda}I_{i_{\ell}},S_i^{w'}) \simeq \Hom_{\Lambda}(N,S_i^{w'})=0$.

\noindent(ii) The case $w' \e_i \in -\Pi$. In this case $w\e_i \in s_{i_{\ell}}w'\e_i \in -\Pi$ holds, so by Lemma \ref{root-dim} we have $S_i \in \mathcal F(I_w)$.
Thus by Proposition \ref{red-tilt}, we have $\Hom_{\Lambda}(S_i^w,M) \simeq \Hom_{\mathcal D}(\RHom_{\Lambda}(I_{i_{\ell}},S_i^{w'}),M) \simeq \Hom_{\mathcal D}(S_i^{w'},M\Ltensor_{\Lambda}I_{i_{\ell}}) \simeq \Hom_{\Lambda}(S_i^{w'}[1],N)=0$.

\noindent(iii) In the case $w' \e_i=\e_{i_{\ell}}$. In this case $w\e_i = s_{i_{\ell}}w'\e_i = -\e_{i_{\ell}}$, so by Lemma \ref{root-dim} we have $S_i^w[1] \simeq S_{i_{\ell}}$.
By Lemma \ref{tor-I_i}, $M \in \mathcal Y(I_i)$ implies that $\Hom_{\Lambda}(S_i,M)=0$. Thus we have $\Hom_{\Lambda}(S_i^w[1],M)=0$.

Since (i),(ii) and (iii) cover all cases, $M$ satisfies the condition (2). 

Next we show (2) $\Rightarrow$ (1).
Assume that $M$ satisfies the condition (2). 
By Lemma \ref{lem:ch-s}, there exists $i \in \{1,\ldots,n\}$ such that 
$S_i^w[1]/S_i^w[1]I$ contains $S_{i_{\ell}}$ as a direct summand.
The assumption $\Hom_{\Lambda}(S_i^w[1],M)=0$ implies $\Hom_{\Lambda}(S_{i_{\ell}},M)=0$, hence $S_i$ is not a direct summand of $\Soc(M)$. Thus by Lemma \ref{tor-I_i} we have $M \in \mathcal{Y}(I_{i_{\ell}})$.
By repeating a similar argument in the above, we see that $N := M \otimes_{\Lambda} I_{i_{\ell}}$ satisfies the condition (2). 
So by the induction hypothesis, we have $N\in \mathcal S_{w'}$. 
Therefore by Proposition \ref{Klein-simple-reflection}, $M =  \in \mathcal S_w$.
\end{proof}

\subsection{Analogues of the McKay correspondence}

We give a characterization of the exceptional curves on $\mathcal M_w$ for any $w \in W$.

Crawley-Boevey \cite{CB} observed that $\mathcal M_1$ is identified with $G$-Hilbert scheme via the Morita equivalence between $\Lambda$ and the skew group ring.
Ito and Nakamura \cite{IN} explained the McKay correspondence, which is a one-to-one correspondence between the set of exceptional curves on the minimal resolution of $\mathbb A^2/G$ and the set of non-trivial irreducible representation of $G$, by using the $G$-Hilbert scheme. 
Crawley-Boevey \cite{CB} reformulated it in terms of $\Lambda$-modules as follows. 

\begin{thm}[{\cite[Theorem 2]{CB}}]\label{thm:CB}
Let $N \in \mathcal S_1$. Then the socle of $N$ has at most two simple summands, and if two, they are not isomorphic. If $i \in \{ 1,\ldots,n \}$, then
\[
E_i := \{ N \in \mathcal S_1 \mid S_i \text{ is a submodule of } N \}/\simeq
\] 
is a closed subset of $\mathcal M_1$ isomorphic to $\mathbb P^1_K$. Moreover $E_i$ meets $E_j$ if and only if $i$ and $j$ are adjacent in $Q$, and in this case they meet at only one point.
\end{thm}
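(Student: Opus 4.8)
The plan is to descend from the moduli space to modules, identify the $E_i$ with the components of the exceptional fibre of the natural projective morphism $\pi\colon \mathcal M_1\to\M{\mathbf 0}{\Lambda}{\mathbf d}\cong\mathbb A^2/G$, and then read off the geometry of each curve from the homological algebra of $\Lambda$.

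I would first sort $\mathcal S_1$. A $\theta$-stable module has endomorphism ring $K$, so is indecomposable; by Corollary \ref{vec->simp} every $N\in\mathcal S_1$ is therefore either a non-nilpotent simple module or nilpotent. A non-nilpotent simple $N$ has no proper submodule, hence contains no $S_i$ with $1\le i\le n$, and $\pi$ sends $[N]$ to the class of its semisimplification $N\ne\bigoplus_jS_j^{d_j}$, i.e. away from $0$. If $N\in\mathcal S_1$ is nilpotent then $\Soc N\ne0$, and $S_0$ is not a summand of $\Soc N$: as $N$ is $0$-generated (Lemma \ref{0-gen}) with $\dim Ne_0=d_0=1$, an embedding $S_0\hookrightarrow N$ would have image a generator of $N$ killed by $I$, forcing $N=S_0\ne N$. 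Hence $S_i\subset\Soc N$ for some $1\le i\le n$, and we conclude $\bigcup_{i=1}^nE_i=\{[N]\in\mathcal S_1:N\text{ nilpotent}\}=\pi^{-1}(0)$, with each $E_i$ contained in $\pi^{-1}(0)$. Closedness of each $E_i$ is then immediate: $\theta$ is generic and $\mathbf d$ indivisible, so $\mathcal M_1$ carries a universal family, and $E_i=\{[N]:\dim\Hom_\Lambda(S_i,N)\ge1\}$ is cut out by the upper-semicontinuous function $\dim\Hom_\Lambda(S_i,-)$ along that family.

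The module-theoretic heart is the socle analysis. For $N\in\mathcal S_1$ and $1\le i\le n$ one has $\Hom_\Lambda(N,S_i)=0$ (Lemma \ref{0-gen}) and $(S_i,N)=(\e_i,\mathbf d)=0$ since $\mathbf d$ spans the radical of $(-,-)$; Lemma \ref{CB-form} then gives $\dim\Hom_\Lambda(S_i,N)=\dim\Ext^1_\Lambda(S_i,N)$, and the same lemma yields $\dim\Ext^1_\Lambda(S_i,S_j)=-(\e_i,\e_j)$, equal to $1$ for $i,j$ adjacent in $Q$ and $0$ otherwise, with $\Ext^1_\Lambda(S_i,S_i)=0$. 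Using these identities together with $\theta$-stability applied to the submodule $\Soc N$ and to the submodule of $N$ generated at vertex $0$ by a lift of a chosen socle summand, one shows that $\Soc N$ has at most two simple summands, that they are non-isomorphic, and that when there are two the corresponding vertices are adjacent in $Q$ --- a repeated, a third, or a non-adjacent summand would split off a subrepresentation of negative $\theta$-weight or leave part of the socle unreachable from vertex $0$. In particular $E_i\cap E_j=\varnothing$ unless $i,j$ are adjacent, and the modules lying on two of the $E_i$'s are precisely the nodes of $\pi^{-1}(0)$.

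It remains to show each $E_i$ is irreducible, isomorphic to $\mathbb P^1$, and meets each neighbouring $E_j$ in a single point. Here I would invoke the classical fact (valid in characteristic $0$) that $\mathcal M_1$ is the minimal resolution of $\mathbb A^2/G$ --- in the Morita picture, Ito--Nakamura's $G$-Hilbert scheme \cite{CB}, via \cite{Kr,CS,BKR} --- whose exceptional fibre over $0$ is a reduced configuration of $n$ smooth rational curves with dual graph the Dynkin diagram of $G$, any two of which meet transversally, at most once, and only when adjacent. Since $\pi^{-1}(0)=\bigcup_{i=1}^nE_i$ with each $E_i$ a nonempty closed set and, by the previous paragraph, each point of $\pi^{-1}(0)$ lying on at most two of them, a counting argument forces each $E_i$ to equal one of these $n$ curves, giving $E_i\cong\mathbb P^1$, with $E_i$ and $E_j$ meeting in one point exactly when $i,j$ are adjacent. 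A more intrinsic route to irreducibility is to check that for generic $N\in E_i$ one has $\Soc N=S_i$ with $N'=N/S_i$ independent of $N$ and $\dim\Ext^1_\Lambda(N',S_i)=2$, so that $E_i$ is birational to $\mathbb P(\Ext^1_\Lambda(N',S_i))\cong\mathbb P^1$; one then checks stability of these extensions and smoothness at the two special points. The main obstacle throughout is the socle analysis --- converting the stability inequality into the precise bound on socle summands requires careful tracking of how the generator at vertex $0$ propagates through $\overline Q$ --- together with promoting ``$\pi^{-1}(0)=\bigcup E_i$, each closed'' to ``each $E_i$ is a single $\mathbb P^1$'', which is cleanest via the known $G$-Hilbert-scheme description of the exceptional curves.
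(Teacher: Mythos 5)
First, a structural point: the paper does not prove this statement at all --- it is imported verbatim as \cite[Theorem 2]{CB}, so there is no internal argument to match your proposal against. Judged on its own, your reconstruction has the right skeleton: sorting $\mathcal S_1$ via Corollary \ref{vec->simp} into non-nilpotent simples (which avoid $\pi^{-1}(0)$) and nilpotent $0$-generated modules, excluding $S_0$ from the socle, obtaining closedness of $E_i$ from semicontinuity of $\dim\Hom_{\Lambda}(S_i,-)$ along the universal family, and extracting $\dim\Hom_{\Lambda}(S_i,N)=\dim\Ext^1_{\Lambda}(S_i,N)$ and $\dim\Ext^1_{\Lambda}(S_i,S_j)=-(\e_i,\e_j)$ from Lemma \ref{CB-form}. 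But the two steps that carry all the weight are asserted rather than proved, and the mechanism you propose for the first one cannot work.

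Concretely: for $\theta\in C(1)$ one has $\theta_i>0$ for $i\geq 1$ and $\theta_0<0$, and for a nilpotent $N$ of dimension vector $\mathbf d$ every proper submodule $X$ satisfies $Xe_0=0$ (because $\dim Ne_0=1$ and $N$ is $0$-generated), hence $\theta(X)>0$ automatically. So ``$\theta$-stability applied to $\Soc N$'' is vacuous: for these modules stability is exactly $0$-generatedness (Lemma \ref{0-gen}) and imposes no further condition on the socle, so no choice of submodule ``splits off negative $\theta$-weight.'' The bound ``at most two socle summands, non-isomorphic, at adjacent vertices'' therefore needs a genuinely different input (in \cite{CB} it comes from repeated use of the bilinear form identity, i.e.\ Lemma \ref{CB-form}, applied to suitable sub- and quotient modules whose dimension vectors are roots --- not from the stability inequality). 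Similarly, the ``counting argument'' in your last paragraph does not close: knowing $\pi^{-1}(0)=\bigcup_i E_i$ with each $E_i$ closed, nonempty, and each point on at most two of them is still consistent with one $E_i$ containing two whole components of the exceptional fibre while another $E_j$ is a single point, so it does not force $E_i\cong\mathbb P^1_K$. Your ``more intrinsic route'' --- $\Soc N=S_i$ for generic $N\in E_i$, with $N/S_i$ the unique $0$-generated nilpotent module of dimension $\mathbf d-\e_i$ (a real root; cf.\ the use of \cite[Lemma 2]{CB} in Lemma \ref{L}) and $E_i\simeq\mathbb P(\Ext^1_{\Lambda}(N/S_i,S_i))$ with $\dim\Ext^1_{\Lambda}(N/S_i,S_i)=-(\mathbf d-\e_i,\e_i)=2$ --- is the argument that actually works and is essentially Crawley-Boevey's, but as written it is a one-sentence sketch, and together with the socle bound it \emph{is} the content of the theorem.
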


We generalize Theorem \ref{thm:CB} for any $w \in W$. For all $i \in \{ 1,\ldots,n \}$, we define a subset $E_i^w$ of $\mathcal M_w$ by
\[
E_i^w := \{ \Hom_{\Lambda}(I_w,M) \mid M \in E_i \}/\simeq.
\]

\begin{prop}\label{E_i^w}
If $i \in \{ 1,\ldots,n \}$, then $E_i^w$ is a closed subset of $\mathcal M_w$ isomorphic to $\mathbb P^1_K$. Moreover $E_i^w$ meets $E_j^w$ if and only if $i$ and $j$ are adjacent in $Q$, and in this case they meet at only one point.
\end{prop}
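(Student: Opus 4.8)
The plan is to deduce the proposition from Crawley-Boevey's Theorem~\ref{thm:CB} by transporting its conclusions along the isomorphism of varieties $\mathcal M_1 \xrightarrow{\sim} \mathcal M_w$ produced by the reflection functor; there is essentially no new geometry to do, only a transport of structure.

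First I would pin down the isomorphism. By Proposition~\ref{Klein-basic-reflection} we have $\mathcal S_1 \subset \mathcal T(I_w)$, so the reflection functor restricts to $\bw \simeq \Hom_{\Lambda}(I_w,-) : \mathcal S_1 \to \mathcal S_w$, and by the corollary following Theorem~\ref{Klein-reflection} (an instance of Corollary~\ref{pp-equiv-iso}, using that $w\mathbf d = \mathbf d$) this functor induces an isomorphism of varieties
\[
\varphi_w : \mathcal M_1 \stackrel{\sim}{\longrightarrow} \mathcal M_w
\]
whose effect on closed points is $[M] \mapsto [\Hom_{\Lambda}(I_w,M)]$. By the very definition of $E_i^w$, this means $E_i^w = \varphi_w(E_i)$ as subsets of $\mathcal M_w$, and since $\varphi_w$ is bijective it restricts to a bijection $E_i \to E_i^w$.

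Second, since $\varphi_w$ is an isomorphism of varieties it carries closed subvarieties to closed subvarieties and restricts to isomorphisms between them (with the reduced induced structure, which is all we need as varieties are reduced here). Hence $E_i^w$ is closed in $\mathcal M_w$ and $\varphi_w$ restricts to an isomorphism $E_i \xrightarrow{\sim} E_i^w$; by Theorem~\ref{thm:CB}, $E_i \cong \mathbb P^1_K$, so $E_i^w \cong \mathbb P^1_K$. Third, for the incidence statement, bijectivity of $\varphi_w$ gives $E_i^w \cap E_j^w = \varphi_w(E_i) \cap \varphi_w(E_j) = \varphi_w(E_i \cap E_j)$; by Theorem~\ref{thm:CB} the set $E_i \cap E_j$ is nonempty exactly when $i$ and $j$ are adjacent in $Q$, and is then a single point, so the same holds for $E_i^w \cap E_j^w$.

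The main obstacle is not really an obstacle: the substantive inputs are already available, namely Theorem~\ref{equiv-iso}/Corollary~\ref{pp-equiv-iso} (which guarantee that $\varphi_w$ is an honest isomorphism of schemes, not merely a bijection on closed points) and Theorem~\ref{thm:CB} (which supplies the geometry on $\mathcal M_1$). The only care needed is bookkeeping: verifying that the morphism $\varphi_w$ coming from Corollary~\ref{pp-equiv-iso} is genuinely given on points by $M \mapsto \Hom_{\Lambda}(I_w,M)$, so that its restriction to $E_i$ matches the set-theoretic definition of $E_i^w$, and recalling the inclusion $\mathcal S_1 \subset \mathcal T(I_w)$ so that the relevant functor is $\Hom_{\Lambda}(I_w,-)$ rather than $\RHom_{\Lambda}(I_w,-)$. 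Once these identifications are made the proposition is formal.
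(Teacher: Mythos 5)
Your proposal is correct and is exactly the paper's argument: the paper proves this proposition by citing Theorem \ref{equiv-iso} and Theorem \ref{thm:CB}, i.e.\ by transporting Crawley-Boevey's description of $E_i \subset \mathcal M_1$ along the variety isomorphism $\mathcal M_1 \xrightarrow{\sim} \mathcal M_w$ induced by $\Hom_{\Lambda}(I_w,-)$. You have merely written out the bookkeeping that the paper leaves implicit.
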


\begin{proof}
It follows immediate from Theorem \ref{equiv-iso} and \ref{thm:CB}.
\end{proof}

Now we state a main result in this subsection.
Although each exceptional curve $E_i$ is characterized by a simple module $S_i$, each exceptional curve $E_i^w$ is characterized by $S_i^w$.

\begin{thm}\label{hom-des-main}
We take any $w \in W$ and $i \in \{ 1,\ldots, n \}$.
For any $M \in \mathcal S_w$, the following hold.
\begin{enumerate}
\item If $w\e_i \in \Pi$, then $M \in E^w_i$ if and only if $S_i^w$ is a submodule of $M$.
\item If $w\e_i \in -\Pi$, then $M \in E^w_i$ if and only if $S_i^w[1]$ is a factor module of $M$.
\end{enumerate}
\end{thm}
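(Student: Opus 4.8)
The plan is to pull Crawley--Boevey's description of the $E_i$ (Theorem~\ref{thm:CB}) back along the reflection functor. By Proposition~\ref{Klein-basic-reflection}, $\bw\simeq\Hom_\Lambda(I_w,-)$ is an equivalence $\mathcal S_1\xrightarrow{\sim}\mathcal S_w$ with quasi-inverse $-\otimes_\Lambda I_w$, carrying $\mathcal S_1\subseteq\mathcal T(I_w)$ into $\mathcal S_w\subseteq\mathcal Y(I_w)$, and by Corollary~\ref{pp-equiv-iso} it induces the isomorphism $\mathcal M_1\xrightarrow{\sim}\mathcal M_w$ whose image of $E_i$ is, by definition, $E_i^w$. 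Since $\theta$ is generic, $S$-equivalence coincides with isomorphism, so everything may be argued object-wise: for $M\in\mathcal S_w$ set $N:=M\otimes_\Lambda I_w\in\mathcal S_1$, so that $\bw(N)=M$ and $M\in E_i^w\iff S_i\subseteq N$. Note that $N$ is nilpotent or simple, that $\Hom_\Lambda(N,S_i)=0$ by Lemma~\ref{0-gen}, and that $(\mathbf d,\e_i)=0$ because $\mathbf d$ is the minimal imaginary root.

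Next I record a homological dictionary using the derived equivalence $\RHom_\Lambda(I_w,-)$ and Lemma~\ref{root-dim}. Since $N\in\mathcal T(I_w)$ we have $\RHom_\Lambda(I_w,N)=M$. If $w\e_i\in\Pi$ then $S_i\in\mathcal T(I_w)$, $S_i^w=\Hom_\Lambda(I_w,S_i)$ is a module, and the equivalence gives $\Hom_\Lambda(S_i^w,M)\cong\Hom_\Lambda(S_i,N)$. If $w\e_i\in-\Pi$ then $S_i\in\mathcal F(I_w)$, $S_i^w[1]=\Ext^1_\Lambda(I_w,S_i)$, and the adjunction $\Hom_{\mathcal D}(X,\RHom_\Lambda(I_w,Y))\cong\Hom_{\mathcal D}(X\Ltensor_\Lambda I_w,Y)$ (cf.\ the proof of Proposition~\ref{red-tilt}) yields $\Hom_\Lambda(M,S_i^w[1])\cong\Hom_{\mathcal D}(N,S_i[1])=\Ext^1_\Lambda(N,S_i)$, while Lemma~\ref{CB-form} together with $\Hom_\Lambda(N,S_i)=0$ and $(\mathbf d,\e_i)=0$ gives $\dim\Ext^1_\Lambda(N,S_i)=\dim\Hom_\Lambda(S_i,N)$. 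As $S_i$ is simple, $S_i\subseteq N\iff\Hom_\Lambda(S_i,N)\neq0$, so altogether
\[
M\in E_i^w\iff\begin{cases}\Hom_\Lambda(S_i^w,M)\neq0&(w\e_i\in\Pi),\\ \Hom_\Lambda(M,S_i^w[1])\neq0&(w\e_i\in-\Pi).\end{cases}
\]

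It remains to upgrade non-vanishing of these Hom-spaces to the existence of a submodule, resp.\ a factor module. When $w\e_i\in\Pi$ this is immediate: $\mathcal T(I_w)$ is closed under quotients, so $0\to S_i\to N\to N/S_i\to0$ lies entirely in $\mathcal T(I_w)$, and applying $\Hom_\Lambda(I_w,-)$ (exact on short exact sequences inside $\mathcal T(I_w)$) produces a short exact sequence $0\to S_i^w\to M\to\Hom_\Lambda(I_w,N/S_i)\to0$; hence $S_i\subseteq N\Rightarrow S_i^w\subseteq M$, and the converse $S_i^w\subseteq M\Rightarrow\Hom_\Lambda(S_i^w,M)\neq0\Rightarrow M\in E_i^w$ is already in hand. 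The case $w\e_i\in-\Pi$ is the crux: a non-zero $\varphi\colon M\to S_i^w[1]$ need not be surjective. Transporting the extension $0\to S_i\to E\to N\to0$ that corresponds to $\varphi$ through $\RHom_\Lambda(I_w,-)$ gives a four-term exact sequence with $\Coker\varphi\cong\Ext^1_\Lambda(I_w,E)$, so $\varphi$ is onto exactly when $E\in\mathcal T(I_w)$; moreover the naive sequence $0\to M\to\Hom_\Lambda(I_w,N/S_i)\to S_i^w[1]\to0$ only exhibits $S_i^w[1]$ as a subquotient of objects built from $M$, not as a quotient of $M$, so this direction genuinely needs an extra idea.

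To settle the surjectivity I would induct on $\ell(w)$; the base case $w=1$ is the definition of $E_i$, with $S_j^1=S_j$. For the step, pick $j$ with $\ell(s_jw)<\ell(w)$, so that $\theta_j<0$ on $C(w)$ by Lemma~\ref{ref-length} and $\bs_j=-\otimes_\Lambda I_j\colon\mathcal S_w\to\mathcal S_{s_jw}$ is an equivalence with $M':=M\otimes_\Lambda I_j$, $M=\Hom_\Lambda(I_j,M')$ and $M\in E_i^w\iff M'\in E_i^{s_jw}$; the characterization for $s_jw$ is available by induction. One then distinguishes the four cases according to whether $(s_jw)\e_i$ equals $\pm\ove_j$ or lies in $\pm\Pi\setminus\{\pm\ove_j\}$, and in each case translates the condition on $M'$ into the one on $M$ using the identifications $\RHom_\Lambda(I_j,S_j)=S_j[-1]$ and the fact that $\RHom_\Lambda(I_j,S_i^{s_jw})$ (resp.\ $\RHom_\Lambda(I_j,S_i^{s_jw}[1])$) is again a module — that is, $S_i^{s_jw}$ (resp.\ $S_i^{s_jw}[1]$) lies in $\mathcal T(I_j)$ — together with dimension identities from Lemma~\ref{CB-form} of the type $\dim\Ext^1_\Lambda(M,S_j)=\dim\Hom_\Lambda(M,S_j)$, valid because $M\in\mathcal Y(I_j)$ forces $\Hom_\Lambda(S_j,M)=0$. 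The real work is the bookkeeping of these cases, in particular checking that the relevant connecting homomorphisms are surjective (equivalently, that certain kernels lie in $\mathcal T(I_j)$); this is the main obstacle, the rest being formal manipulation of the two torsion pairs and of the derived equivalence.
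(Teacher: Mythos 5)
Your reduction is sound and three of the four implications are complete. The dictionary $M\in E_i^w\Leftrightarrow\Hom_\Lambda(S_i^w,M)\neq0$ (resp.\ $\Leftrightarrow\Hom_\Lambda(M,S_i^w[1])\neq0$), obtained from the derived equivalence together with Lemma \ref{root-dim} and Lemma \ref{CB-form}, is exactly what the paper exploits; your argument for $w\e_i\in\Pi$ is the paper's Lemma \ref{L^w}(1) in disguise; and for $w\e_i\in-\Pi$ the implication ``$S_i^w[1]$ is a factor module of $M$ $\Rightarrow M\in E_i^w$'' does follow immediately from the dictionary (the paper reaches the same conclusion by tensoring the surjection back with $I_w$ and applying Lemma \ref{CB-form}; the two routes are equivalent). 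The problem is the remaining implication, $M\in E_i^w\Rightarrow S_i^w[1]$ is a factor module of $M$, which you correctly isolate as the crux and then do not prove: you propose an induction on $\ell(w)$ and concede that ``the real work is the bookkeeping of these cases, in particular checking that the relevant connecting homomorphisms are surjective.'' That unproved surjectivity \emph{is} the content of the theorem in this direction, so as written the proposal has a genuine gap.

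The paper closes the gap with no induction, by choosing the extension in the opposite direction. Suppose $w\e_i\in-\Pi$ and $N:=M\otimes_\Lambda I_w\in E_i$, so $\Hom_\Lambda(S_i,N)\neq0$ and hence, by Lemma \ref{CB-form} together with $\Hom_\Lambda(N,S_i)=0$ and $(\mathbf d,\e_i)=0$, also $\Ext^1_\Lambda(N,S_i)\neq0$. Choose a non-split extension $0\to S_i\to L_i^+\to N\to0$ as in Lemma \ref{L}. Its middle term has dimension vector $\mathbf d+\e_i$, a \emph{real} root, so by \cite[Lemma 2]{CB} the module $L_i^+$ is $0$-generated; and every $0$-generated module lies in $\mathcal T(I_w)$ (Lemma \ref{SL}), since $\Ext^1_\Lambda(I_w,L)\simeq\Ext^2_\Lambda(\Lambda/I_w,L)\simeq D\Hom_\Lambda(L,\Lambda/I_w)=0$ because $S_0$ is not a composition factor of $\Lambda/I_w$. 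Applying $\Hom_\Lambda(I_w,-)$ to the extension and using $\Hom_\Lambda(I_w,S_i)=0$ and $\Ext^1_\Lambda(I_w,L_i^+)=0$ gives the exact sequence $0\to(L_i^+)^w\to M\to S_i^w[1]\to0$: the connecting map is surjective precisely because the obstruction $\Ext^1_\Lambda(I_w,L_i^+)$ vanishes. This is the ``extra idea'' your sketch is missing --- replace the inclusion $S_i\subset N$ by an extension of $N$ by $S_i$ whose total space is forced into $\mathcal T(I_w)$ by the root-theoretic fact that $\mathbf d+\e_i$ is a real root. If you insist on your inductive scheme, you would still have to prove an analogous $\Ext$-vanishing at every step, which amounts to the same lemma.
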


In the rest we prove Theorem \ref{hom-des-main}.

\begin{lem}\label{L}
For any $N \in E_i$, there exist non-split exact sequences
\begin{eqnarray}
0 \longrightarrow S_i \longrightarrow L_i^{+} \longrightarrow N \longrightarrow 0, \label{exact3} \\ 
0 \longrightarrow S_i \longrightarrow N \longrightarrow L_i^{-} \longrightarrow 0 \label{exact4}
\end{eqnarray}
such that $L_i^+$ and $L_i^-$ are 0-genrated nilpotent $\Lambda$-module of dimension vector $\mathbf d + \e_i$ and $\mathbf d - \e_i$ respectively.
\end{lem}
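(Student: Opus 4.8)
The plan is to construct the two sequences separately: the sequence \eqref{exact4} by a direct quotient construction, and \eqref{exact3} as a suitable non-split extension whose existence is forced by the bilinear form $(-,-)$. First I would record the structure of $N$. Since we work with a generic parameter $\theta \in C(1)$, the module $N$ is $\theta$-stable, hence indecomposable; as $S_i$ is a proper nonzero submodule, $N$ is not simple, so by Corollary \ref{vec->simp} it is nilpotent, and by Lemma \ref{0-gen} it is $0$-generated, i.e.\ $N/NI \simeq S_0$. Two consequences will be used repeatedly: $\Hom_\Lambda(N,S_i) \simeq \Hom_\Lambda(S_0,S_i) = 0$ since $i \neq 0$, and the image of $S_i$ under $N \twoheadrightarrow N/NI \simeq S_0$ is a quotient of $S_i$ sitting inside $S_0$, hence zero, so $S_i \subseteq NI$.

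For \eqref{exact4} I would take $L_i^- := N/S_i$. Then $\udim L_i^- = \mathbf d - \e_i$, and $L_i^-$ is nilpotent, being a quotient of the nilpotent module $N$. Since $S_i \subseteq NI$, one has $L_i^- I = NI/S_i$, so $L_i^-/L_i^- I \simeq N/NI \simeq S_0$ and $L_i^-$ is $0$-generated. The sequence does not split because $N$ is indecomposable and $S_i \neq N$.

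For \eqref{exact3} I would first produce a non-split extension. By Lemma \ref{CB-form}, $(N,S_i) = \dim\Hom_\Lambda(N,S_i) - \dim\Ext^1_\Lambda(N,S_i) + \dim\Hom_\Lambda(S_i,N)$, while $(N,S_i) = (\mathbf d, \e_i) = 0$ by Lemma \ref{lem1_cat_fd}. Combining this with $\Hom_\Lambda(N,S_i)=0$ and $\Hom_\Lambda(S_i,N) \neq 0$ (as $S_i \subseteq N$) gives $\dim\Ext^1_\Lambda(N,S_i) \geq 1$. Choosing $0 \neq \xi \in \Ext^1_\Lambda(N,S_i)$ yields a non-split exact sequence $0 \to S_i \to L_i^+ \to N \to 0$ with $\udim L_i^+ = \mathbf d + \e_i$, and $L_i^+$ is nilpotent, being an extension of the nilpotent modules $S_i$ and $N$.

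The genuinely delicate point is that $L_i^+$ is $0$-generated, i.e.\ $L_i^+/L_i^+ I \simeq S_0$. The surjection $L_i^+ \twoheadrightarrow N$ induces a surjection $L_i^+/L_i^+ I \twoheadrightarrow N/NI \simeq S_0$ whose kernel is isomorphic to $S_i/(S_i \cap L_i^+ I)$; since $L_i^+/L_i^+ I$ is semisimple, either $L_i^+/L_i^+ I \simeq S_0$ and we are done, or $S_i \cap L_i^+ I = 0$ and $L_i^+/L_i^+ I \simeq S_0 \oplus S_i$. In the latter case there is a split surjection $p : L_i^+ \to S_i$; writing $\iota : S_i \hookrightarrow L_i^+$ for the inclusion, the composite $p\iota$ is either an isomorphism, so that $\iota$ is a split monomorphism and the sequence splits, contradicting $\xi \neq 0$, or else $p\iota = 0$, in which case $\iota(S_i) \subseteq \ker p$, $\ker p \neq L_i^+$ (as $p \neq 0$), and $\ker p/\iota(S_i)$ is a submodule of $N \simeq L_i^+/\iota(S_i)$ with quotient $L_i^+/\ker p \simeq S_i$, contradicting $\Hom_\Lambda(N,S_i)=0$. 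Hence $L_i^+$ is $0$-generated, which completes the proof. The main obstacle is precisely this last step, ruling out $S_i$ from the top of $L_i^+$; the remaining steps are routine dimension-vector bookkeeping combined with Lemmas \ref{CB-form}, \ref{lem1_cat_fd}, \ref{0-gen} and Corollary \ref{vec->simp}.
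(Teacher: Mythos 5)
Your construction is the same as the paper's: $L_i^-$ is the cokernel of the inclusion $S_i\hookrightarrow N$, and $L_i^+$ is the middle term of a non-split extension coming from a non-zero class in $\Ext^1_\Lambda(N,S_i)$, whose existence is forced by Lemma \ref{CB-form} together with $(N,S_i)=(\mathbf d,\e_i)=0$ and $\Hom_\Lambda(N,S_i)=0$. Where you diverge is in the verification that $L_i^{\pm}$ are $0$-generated and nilpotent: the paper disposes of this in one line by invoking \cite[Lemma 2]{CB} via the fact that $\mathbf d\pm\e_i$ are real roots, whereas you prove it from scratch --- nilpotency by closure of $\nilp\Lambda$ under quotients and extensions, $0$-generatedness of $L_i^-$ from the inclusion $S_i\subseteq NI$, and $0$-generatedness of $L_i^+$ by the case analysis on $L_i^+/L_i^+I\simeq S_0$ versus $S_0\oplus S_i$, ruling out the latter by showing it would force either a splitting of \eqref{exact3} or a non-zero map $N\to S_i$. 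That argument is correct and makes the lemma self-contained, at the cost of length; it also only needs $\dim\Ext^1_\Lambda(N,S_i)\geq 1$, whereas the paper extracts the exact value $1$ from Theorem \ref{thm:CB} (via $\dim\Hom_\Lambda(S_i,N)=1$), which is more information than the statement requires. One small stylistic remark: your preliminary reduction (stable $\Rightarrow$ indecomposable, then Corollary \ref{vec->simp} and Lemma \ref{0-gen} to get that $N$ is nilpotent and $0$-generated with $\Hom_\Lambda(N,S_i)=0$) is exactly the right way to justify the facts the paper uses implicitly.
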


\begin{proof}
By Theorem \ref{thm:CB} we have $\dim \Hom_{\Lambda}(S_i,N) = 1$. So by Lemma \ref{CB-form} we have $\dim \Ext^1_{\Lambda}(N,S_i)=1$. We denote by $L_i^+$  the module corresponding to a non-zero element in $\Ext^1_{\Lambda}(N,S_i)$. Also we denote by $L_i^-$ the cokernel of a inclusion $S_i \to N$.
Then it is trivial that the exact sequences \eqref{exact3} and \eqref{exact4} exist and $\udim L_i^+ = \mathbf d + \e_i$ and $\udim L_i^- = \mathbf d - \e_i$. Moreover since these are real roots, \cite[Lemma 2]{CB} claims $L_i^+$ and $L_i^-$ are 0-generated and nilpotent.
\end{proof}

\begin{lem}\label{SL}
For any 0-generated $\Lambda$-module $M$, it is contained in $\mathcal T(I_w)$ for any $w \in W$. 
In particular, $L_i^{+},L_i^{-} \in \mathcal{T}(I_w)$ for any $w \in W$ and $i \in \{1,\ldots,n \}$. 
\end{lem}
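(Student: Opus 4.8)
The plan is to reformulate membership in $\mathcal{T}(I_w)$ as the vanishing of a $\Hom$-group and then exploit that $\Lambda/I_w$ is supported away from the vertex $0$. Recall $\mathcal{T}(I_w)=\{M\in\fd\Lambda\mid\Ext^1_\Lambda(I_w,M)=0\}$ from Section \ref{torsion-tilting}. Applying $\Hom_\Lambda(-,M)$ to $0\to I_w\to\Lambda\to\Lambda/I_w\to 0$ and using that $\Lambda$ is projective gives $\Ext^1_\Lambda(I_w,M)\cong\Ext^2_\Lambda(\Lambda/I_w,M)$. Since $I_w$ is a tilting module (Theorem \ref{classi-tilt}), it has a presentation \eqref{exact-fgp} by finitely generated projectives, so splicing with the sequence above exhibits a length-two projective resolution of $\Lambda/I_w$ by finitely generated projectives; in particular $\Ext^2_\Lambda(\Lambda/I_w,M)$ is finite dimensional whenever $M$ is. Feeding this into the $2$-Calabi--Yau isomorphism of Lemma \ref{lem:1-1}, applied with the finite dimensional module $M$ in its first slot, yields
\[
\Ext^1_\Lambda(I_w,M)\ \cong\ \Ext^2_\Lambda(\Lambda/I_w,M)\ \cong\ D\Hom_\Lambda(M,\Lambda/I_w).
\]
Hence it suffices to prove $\Hom_\Lambda(M,\Lambda/I_w)=0$ for $M$ a finite dimensional $0$-generated module (which is the only case needed, as $L_i^{+}$ and $L_i^{-}$ are finite dimensional).

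Next I would record two elementary facts. First, for $i\neq 0$ one has $e_0(1-e_i)e_0=e_0$, so $e_0\in\Lambda(1-e_i)\Lambda=I_i$; since a reduced expression $w=s_{i_\ell}\cdots s_{i_1}$ of an element $w\in W$ uses only $s_1,\dots,s_n$, the identity $e_0=e_0\cdots e_0$ shows $e_0\in I_{i_\ell}\cdots I_{i_1}=I_w$, and therefore $(\Lambda/I_w)e_0=0$. Second, if $M=x\Lambda$ is $0$-generated with $x=xe_0$, then for $k\in\{1,\dots,n\}$ any $f\in\Hom_\Lambda(M,S_k)$ satisfies $f(x)=f(x)e_0\in S_ke_0=0$, hence $f=0$; thus $\Hom_\Lambda(M,S_k)=0$ for all $k=1,\dots,n$.

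Now I would combine these. Given a nonzero $f\in\Hom_\Lambda(M,\Lambda/I_w)$, the image $\IM f$ is a nonzero finite dimensional $\Lambda$-module (a quotient of $M$) with $(\IM f)e_0=0$, so it has a simple quotient $T$ with $Te_0=0$. But every finite dimensional simple $\Lambda$-module is either nilpotent, hence one of $S_0,\dots,S_n$, or has dimension vector a positive multiple of $\mathbf d$ by Lemma \ref{lem2_cat_fd}, hence has nonzero component at the vertex $0$ (recall $d_0=1$); so $Te_0=0$ forces $T\cong S_k$ for some $k\in\{1,\dots,n\}$. Composing $M\twoheadrightarrow\IM f\twoheadrightarrow S_k$ then contradicts the second fact above. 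Therefore $\Hom_\Lambda(M,\Lambda/I_w)=0$, i.e. $M\in\mathcal{T}(I_w)$. The final assertion about $L_i^{+}$ and $L_i^{-}$ is then immediate, since Lemma \ref{L} says these modules are $0$-generated.

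The only delicate point is the homological bookkeeping in the first step: $\Lambda/I_w$ is not a priori finite dimensional, so Lemma \ref{lem:1-1} must be invoked with $M$ (not $\Lambda/I_w$) as the object with finite dimensional total homology, and the finite presentation of $\Lambda/I_w$ is used only to guarantee that $\Ext^2_\Lambda(\Lambda/I_w,M)$ is finite dimensional, so that $D\circ D$ acts as the identity on it. Everything else is direct manipulation of orthogonal idempotents together with the classification of finite dimensional simples established in Section \ref{fdDynkin}.
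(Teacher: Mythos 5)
Your proof is correct and follows the same route as the paper's: both reduce $M\in\mathcal T(I_w)$ to $\Hom_\Lambda(M,\Lambda/I_w)=0$ via the dimension shift $\Ext^1_\Lambda(I_w,M)\simeq\Ext^2_\Lambda(\Lambda/I_w,M)$ and the $2$-Calabi--Yau duality, and then use that $\Lambda/I_w$ is supported away from the vertex $0$. The paper states this last step in one line (``$S_0$ does not appear in the composition factors of $\Lambda/I_w$''); your verification that $e_0\in I_w$ and your handling of which argument of Lemma \ref{lem:1-1} must have finite dimensional total homology are correct elaborations of the same argument.
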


\begin{proof}
For any a 0-generated $\Lambda$-module $M$,
$\Ext_{\Lambda}^1(I_w, M) \simeq \Ext_{\Lambda}^2(\Lambda/I_w,M) \simeq D\Hom_{\Lambda}(M, \Lambda/I_w)$ holds. 
Since $S_0$ doesn't appear in composition factors of $\Lambda/I_w$, $\Hom_{\Lambda}(M, \Lambda/I_w)=0$. Thus the assertion follows. 
\end{proof}

For any $i = 1,\ldots, n$, we put $(L_i^+)^w := \Hom_{\Lambda}(I_w,L_i^+)$ and $(L_i^-)^w := \Hom_{\Lambda}(I_w,L_i^-)$.

\begin{lem}\label{L^w}
For any $M \in E_i^w$, there exist non-split exact sequences:
\begin{enumerate}
\item If $w\e_i \in \Pi$, 
\[
0 \longrightarrow S_i^w \longrightarrow M \longrightarrow (L_i^{-})^w \longrightarrow 0.
\]
\item If $w\e_i \in -\Pi$, 
\[
0 \longrightarrow (L_i^{+})^w \longrightarrow M \longrightarrow S_i^w[1] \longrightarrow 0.
\]
\end{enumerate}
\end{lem}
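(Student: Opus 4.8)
The plan is to transport the two non-split short exact sequences produced by Lemma \ref{L} through the derived auto-equivalence $\RHom_\Lambda(I_w,-)$ on $\mathcal D$. Write $M = \Hom_\Lambda(I_w,N)$ for some $N \in E_i$, so that $N \in \mathcal S_1 \subset \mathcal T(I_w)$ by Proposition \ref{Klein-basic-reflection}, and recall that $L_i^{+},L_i^{-} \in \mathcal T(I_w)$ by Lemma \ref{SL}. By Lemma \ref{root-dim} exactly one of the cases $S_i \in \mathcal T(I_w)$ (when $w\e_i \in \Pi$) and $S_i \in \mathcal F(I_w)$ (when $w\e_i \in -\Pi$) occurs, and accordingly $\RHom_\Lambda(I_w,S_i)$ equals either the module $\Hom_\Lambda(I_w,S_i) = S_i^w$ concentrated in degree $0$, or the complex $\Ext^1_\Lambda(I_w,S_i)[-1] = S_i^w$ concentrated in degree $1$ (whose $(-1)$-shift is the module $S_i^w[1]$). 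Throughout one uses that $I_w$ has projective dimension at most one, so $\RHom_\Lambda(I_w,-)$ has cohomology only in degrees $0$ and $1$ and agrees with $\Hom_\Lambda(I_w,-)$ on $\mathcal T(I_w)$.

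For part (1), i.e. $w\e_i \in \Pi$, I would apply $\RHom_\Lambda(I_w,-)$ to the triangle $S_i \to N \to L_i^{-} \to S_i[1]$ attached to \eqref{exact4}. Since $S_i,N,L_i^{-}$ all lie in $\mathcal T(I_w)$, the resulting triangle $S_i^w \to M \to (L_i^{-})^w \to S_i^w[1]$ has all its terms in degree $0$, hence is a short exact sequence $0 \to S_i^w \to M \to (L_i^{-})^w \to 0$ of $\Lambda$-modules. For part (2), i.e. $w\e_i \in -\Pi$, I would instead apply $\RHom_\Lambda(I_w,-)$ to the triangle $S_i \to L_i^{+} \to N \to S_i[1]$ attached to \eqref{exact3}, obtaining the triangle $S_i^w \to (L_i^{+})^w \to M \to S_i^w[1]$. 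Here $(L_i^{+})^w$ and $M$ are modules in degree $0$ while $S_i^w$ sits in degree $1$, so the long exact cohomology sequence collapses to $0 \to (L_i^{+})^w \to M \to S_i^w[1] \to 0$, as claimed.

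For non-splitness I would argue as follows: a short exact sequence of modules splits if and only if the associated triangle in $\mathcal D$ splits, and an additive equivalence — in particular $\RHom_\Lambda(I_w,-)$, with quasi-inverse $-\Ltensor_\Lambda I_w$ — carries split triangles to split triangles and reflects them. Since \eqref{exact3} and \eqref{exact4} are non-split by Lemma \ref{L}, a hypothetical splitting of either of the constructed sequences would, under $-\Ltensor_\Lambda I_w$, yield a splitting of \eqref{exact3} or \eqref{exact4}, a contradiction; hence both constructed sequences are non-split.

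There is no substantial obstacle; the only point requiring attention is the bookkeeping of degrees — namely that feeding \eqref{exact4} (resp. \eqref{exact3}) into $\RHom_\Lambda(I_w,-)$ in the $\Pi$-case (resp. $-\Pi$-case), combined with the degree in which $S_i^w$ lives, produces exactly the asserted shape, and that every ambient object indeed lies in $\mathcal T(I_w)$ (resp. $\mathcal F(I_w)$) so that $\RHom_\Lambda(I_w,-)$ reduces to the classical hom or ext functor. All of this follows from Lemmas \ref{L}, \ref{SL}, \ref{root-dim} and the basic properties of the tilting equivalence recorded in Section \ref{tilting}.
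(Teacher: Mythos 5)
Your proof is correct and follows essentially the same route as the paper: apply $\RHom_{\Lambda}(I_w,-)$ to the two sequences of Lemma \ref{L} and use Lemma \ref{SL} together with Lemma \ref{root-dim} to see which $\Hom$/$\Ext^1$ terms vanish, the only cosmetic difference being that you phrase this via triangles while the paper writes out the long exact sequences of $\Ext$ groups. Your explicit argument for non-splitness (reflecting split triangles through the quasi-inverse $-\Ltensor_{\Lambda}I_w$) is a welcome addition, as the paper leaves that point implicit.
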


\begin{proof}
By the definition, there is $N \in E_i$ with $M = \Hom_{\Lambda}(I_w,N)$. 
Thus by applying $\Hom_{\Lambda}(I_w,-)$ to the exact sequence \eqref{exact3} and \eqref{exact4} in Lemma \ref{L}, we have exact sequences
\begin{eqnarray*}
0 \to \Hom_{\Lambda}(I_w,S_i) \to \Hom_{\Lambda}(I_w,L_i^+) \to \Hom_{\Lambda}(I_w,N) \to \Ext^1_{\Lambda}(I_w,S_i) \to \Ext^1_{\Lambda}(I_w,L_i^+), \\
0 \to \Hom_{\Lambda}(I_w,S_i) \to \Hom_{\Lambda}(I_w,N) \to \Hom_{\Lambda}(I_w,L_i^-) \to \Ext^1_{\Lambda}(I_w,S_i) \to \Ext^1_{\Lambda}(I_w,L_i^-).
\end{eqnarray*}
By Lemma \ref{SL} $L_i^+,L_i^- \in \mathcal T(I_w)$, so the assertion follows. 
\end{proof}

\begin{proof}[Proof of Theorem \ref{hom-des-main}]
Take any $M \in \mathcal S_w$. Then by the equivalence in Proposition \ref{Klein-basic-reflection}, there is $N \in \mathcal S_1$ such that $M = \Hom_{\Lambda}(I_w,N)$. First we consider in the case $w\e_i \in \Pi$. If $M \in E_i^w$, then by Lemma \ref{L^w} $S_i^w$ is a submodule of $M$. Conversely if $S_i^w$ is a submodule of $M$, by Proposition \ref{Klein-basic-reflection} and Lemma \ref{root-dim}, we have $N,S_i \in \mathcal T(I_w)$, thus $\Hom_{\Lambda}(S_i,N) \simeq \Hom_{\Lambda}(S_i^w,M) \neq 0$. So $S_i$ is a submodule of $N$, hence $N \in E_i$, therefore $M \in E_i^w$.
Next we consider the case $w\e_i \in -\Pi$. If $M \in E_i^w$, then by Lemma \ref{L^w} $S_i^w[1]$ is a factor module of $M$. Conversely if $S_i^w[1]$ is a factor module of $M$, there is a non-split exact sequence $0 \to X \to M \to S_i^w[1] \to 0$. By applying $-\otimes_{\Lambda}I_w$ to it, since $M,X \in \mathcal Y(I_w)$ and $S_i^w[1] \in \mathcal X(I_w)$, we have an non-split exact sequence $0 \to S_i \to X \otimes_{\Lambda}I_w \to N \to 0$. By Lemma \ref{CB-form} we have $\dim \Hom_{\Lambda}(S_i,N) = \dim \Ext^1_{\Lambda}(N,S_i) \neq 0$. Hence $N \in E_i$, therefore $M \in E_i^w$.
\end{proof}

\section{Example}
\label{example}

Let $G$ be a finite subgroup of $SL(2,K)$ of order three which is generated by $\sigma = \diag(\epsilon,\epsilon^2)$ where $\epsilon$ is a primitive third root of unity. Then the McKay quiver $Q$ of $G$, a preprojective relation $R$  and the dimension vector $\mathbf d$ of the irreducible representations are given by
\[
Q=
\begin{array}{c}
\def\alphanum{\ifcase\xypolynode \or 0 \or 1 \or 2 \fi}
\xy 
\xygraph{
!{/r3pc/:} 
[] !P3"A"{~>{} ~*{\alphanum} }
}
\ar@/^/|-{a_1} "A1";"A2"
\ar@/^/|-{a_2} "A2";"A3"
\ar@/^/|-{a_3} "A3";"A1"
\ar@/^/|-{b_3} "A1";"A3"
\ar@/^/|-{b_2} "A3";"A2"
\ar@/^/|-{b_1} "A2";"A1"
\endxy \\
\end{array}
\quad R=
\left\{\begin{array}{c}
a_1b_1 - b_3a_3, \\ 
a_2b_2 - b_1a_1,  \\
a_3b_3 - b_2a_2
\end{array}\right\}
\quad \mathbf d = \begin{matrix} 1\\ 1\quad 1 \end{matrix}.
\]

The chamber structure of the parameter space $\Theta \in \mathbb Q^2$ is as follows.
\[
\scalebox{1}{$
\def\objectstyle{\footnotesize}
\def\alphanum{\ifcase\xypolynode\or \or \theta_1=0 \or \theta_2=0 \or \or \or \fi}
\xy \xygraph{!{/r5pc/:} [] !P6"A"{~>{} ~*{\alphanum} 
}}
\ar@{-} "A1";"A4"
\ar@{-} "A2";"A5"
\ar@{-} "A3";"A6"
\ar@{}|-{C(s_1)} "A1";"A2"
\ar@{}^{C(1)} "A2";"A3"
\ar@{}|-{C(s_2)} "A3";"A4"
\ar@{}|-{C(s_2s_1)} "A4";"A5"
\ar@{}^{C(s_1s_2s_1)} "A5";"A6"
\ar@{}|-{C(s_1s_2)} "A6";"A1"
\endxy$}
{\theta_1+\theta_2=-\theta_0=0}
\]

First we consider $\mathcal M_1$. 
Then the exceptional set $E_1 \cup E_2$ is a chain of two $\mathbb P^1$'s and by Theorem \ref{thm:CB}, these are given as follows.
\[
\begin{array}{c}
E_1 = \{ M \in \s{\theta}{\Lambda}{,\mathbf d} \mid \Hom_{\Lambda}(S_1,M) \neq 0 \}
=\{ 
\def\objectstyle{\footnotesize}
\def\alphanum{\ifcase\xypolynode \or K \or K \or K \fi}
\xy 
\xygraph{
!{/r1.3pc/:} 
[] !P3"A"{~>{} ~*{\alphanum} }
}
\ar_{a} "A1";"A2"
\ar^{1} "A1";"A3"
\ar^{b} "A3";"A2"
\endxy
\mid (a,b) \in \mathbb P^1_K \}/\simeq,\\
E_2 = \{ M \in \s{\theta}{\Lambda}{,\mathbf d} \mid \Hom_{\Lambda}(S_2,M) \neq 0 \}
=\{ 
\def\objectstyle{\footnotesize}
\def\alphanum{\ifcase\xypolynode \or K \or K \or K \fi}
\xy 
\xygraph{
!{/r1.3pc/:} 
[] !P3"A"{~>{} ~*{\alphanum} }
}
\ar_{1} "A1";"A2"
\ar^{c} "A1";"A3"
\ar^{d} "A2";"A3"
\endxy
\mid (c,d) \in \mathbb P^1_K \},
\end{array}
\]
and the intersection of $E_1$ and $E_2$ is 
\[
\begin{array}{c}
E_1 \cap E_2 =\{ 
\def\objectstyle{\footnotesize}
\def\alphanum{\ifcase\xypolynode \or K \or K \or K \fi}
\xy 
\xygraph{
!{/r1.3pc/:} 
[] !P3"A"{~>{} ~*{\alphanum} }
}
\ar_{1} "A1";"A2"
\ar^{1} "A1";"A3"
\endxy\}
\end{array}.
\]
Note that we omit to write zero maps in each representations and actually consider isomorphism classes of them.
Pictorially $\mathcal M_1$ is described as follows where $(x,y) \neq (0,0)$ is a point in $\mathbb A^2$.

\begin{center}
\includegraphics[width=9cm]{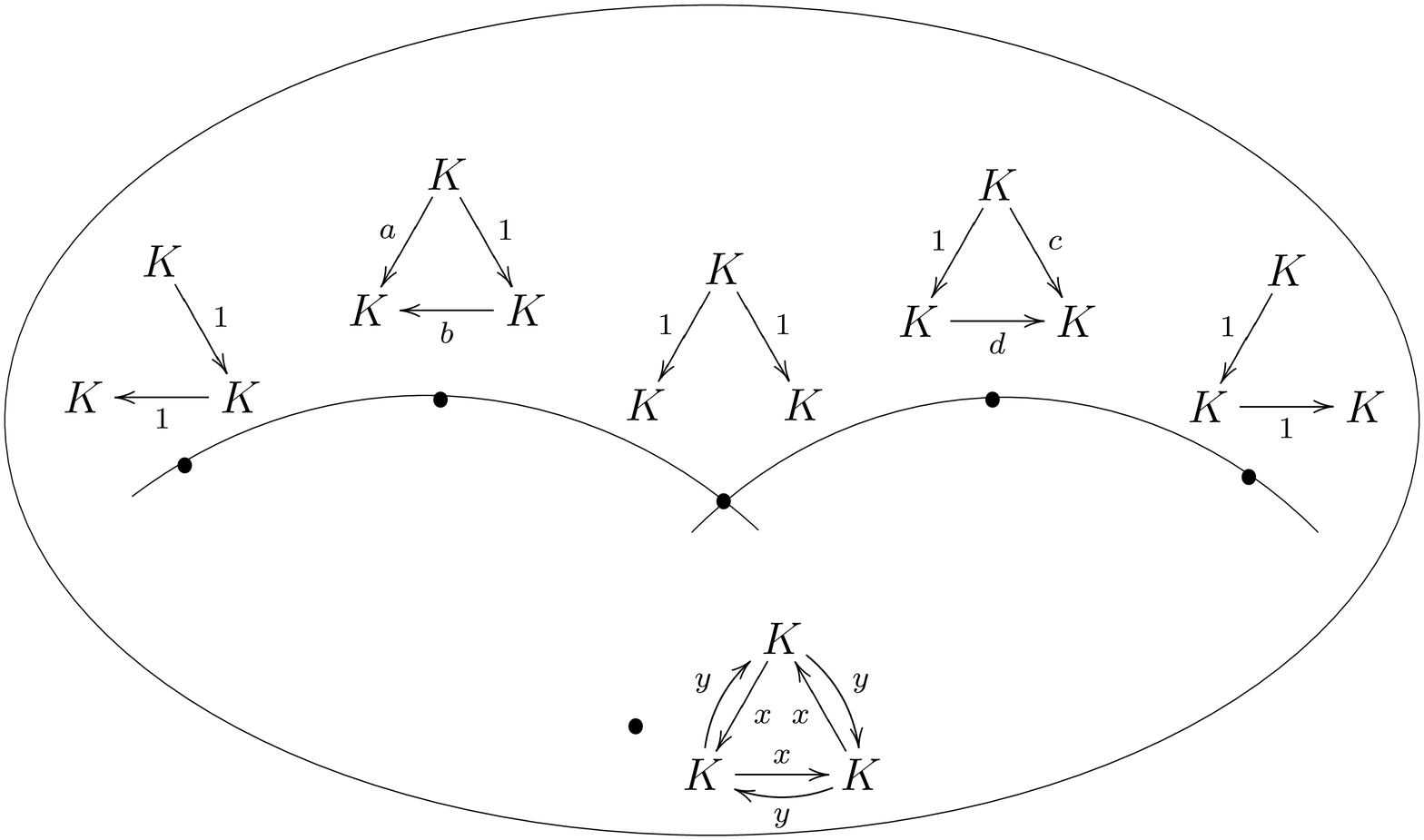}
\end{center}

Next we observe the relation between $\mathcal M_1$ and $\mathcal M_{s_1}$. 

\[
\begin{array}{c}
E_1^{s_1} 
=\{ 
\def\objectstyle{\footnotesize}
\def\alphanum{\ifcase\xypolynode \or K \or K \or K \fi}
\xy 
\xygraph{
!{/r1.3pc/:} 
[] !P3"A"{~>{} ~*{\alphanum} }
}
\ar^{a'} "A2";"A1"
\ar^{1} "A1";"A3"
\ar_{b'} "A2";"A3"
\endxy
\mid (a',b') \in \mathbb P^1_K \},
\end{array}
\]

\[
E_2^{s_1} 
= E_2 \setminus 
\{ \def\objectstyle{\footnotesize}
\def\alphanum{\ifcase\xypolynode \or K \or K \or K \fi}
\xy 
\xygraph{
!{/r1.3pc/:} 
[] !P3"A"{~>{} ~*{\alphanum} }
}
\ar_{1} "A1";"A2"
\ar^{1} "A1";"A3"
\endxy
\} 
\cup 
\{ \def\objectstyle{\footnotesize}
\def\alphanum{\ifcase\xypolynode \or K \or K \or K \fi}
\xy 
\xygraph{
!{/r1.3pc/:} 
[] !P3"A"{~>{} ~*{\alphanum} }
}
\ar_{1} "A1";"A2"
\ar^{1} "A2";"A3"
\endxy \} \\ 
=\{ 
\def\objectstyle{\footnotesize}
\def\alphanum{\ifcase\xypolynode \or K \or K \or K \fi}
\xy 
\xygraph{
!{/r1.3pc/:} 
[] !P3"A"{~>{} ~*{\alphanum} }
}
\ar_{d'} "A1";"A2"
\ar^{c'} "A1";"A3"
\ar^{1} "A2";"A3"
\endxy
\mid (c',d') \in \mathbb P^1_K \},
\]

and the intersection of $E_1^{s_1}$ and $E_2^{s_1}$ is 
\[
\begin{array}{c}
E_1^{s_1} \cap E_2^{s_1} =\{ 
\def\objectstyle{\footnotesize}
\def\alphanum{\ifcase\xypolynode \or K \or K \or K \fi}
\xy 
\xygraph{
!{/r1.3pc/:} 
[] !P3"A"{~>{} ~*{\alphanum} }
}
\ar^{1} "A1";"A3"
\ar^{1} "A2";"A3"
\endxy\}
\end{array}.
\]

Now $s_1\Delta = \{ -\e_1, \e_1+\e_2 \}$, and the dimension of $\Ext_{\Lambda}^1(I_1,S_1) \simeq S_1$ is $\e_1$.  

We express the exceptional curves on $\mathcal M_{s_1}$ by 
\[
E_1^{s_1} : 
\def\objectstyle{\footnotesize}
\def\alphanum{\ifcase\xypolynode \or \bullet \or \textcolor{blue}{\otimes} \or \bullet \fi}
\xy 
\xygraph{
!{/r2pc/:} 
[] !P3"A"{~>{} ~*{\alphanum} }
}
\ar "A2";"A1"
\ar "A2";"A3"
\ar "A1";"A3"
\endxy
\quad E_2^{s_1} : 
\def\alphanum{\ifcase\xypolynode \or \bullet \or \textcolor{red}{\oplus} \or \textcolor{red}{\oplus} \fi}
\xy 
\xygraph{
!{/r2pc/:} 
[] !P3"A"{~>{} ~*{\alphanum} }
}
\ar "A1";"A2"
\ar "A2";"A3"
\ar "A1";"A3"
\endxy
\]
where $\textcolor{blue}{\otimes}$ implies the quotient $\Ext_{\Lambda}^1(I_1,S_1)$ and $\textcolor{red}{\oplus} \longrightarrow \textcolor{red}{\oplus}$ the submodule $\Hom_{\Lambda}(I_1,S_2)$.

For all chambers, if we draw the exceptional curves by using the above expression, then the result is described in Figure \ref{figure2}.

\begin{figure}[htbp]
\begin{center}
\[
\begin{array}{|c|ccc|c|}\hline
\mathcal M_w & E_1^w & E_1^w \cap E_2^w & E_2^w & w\Delta \\\hline
\mathcal M_1 & 
\def\objectstyle{\footnotesize}
\def\alphanum{\ifcase\xypolynode \or \bullet \or \textcolor{red}{\oplus} \or \bullet \fi}
\xy 
\xygraph{
!{/r2pc/:} 
[] !P3"A"{~>{} ~*{\alphanum} }
}
\ar "A1";"A2"
\ar "A3";"A2"
\ar "A1";"A3"
\endxy &
\def\alphanum{\ifcase\xypolynode \or \bullet \or \textcolor{red}{\oplus} \or \textcolor{red}{\oplus} \fi}
\xy 
\xygraph{
!{/r2pc/:} 
[] !P3"A"{~>{} ~*{\alphanum} }
}
\ar "A1";"A2"
\ar "A1";"A3"
\endxy &
\def\alphanum{\ifcase\xypolynode \or \bullet \or \bullet \or \textcolor{red}{\oplus} \fi}
\xy \xygraph{
!{/r2pc/:} 
[] !P3"A"{~>{} ~*{\alphanum} }
}
\ar "A1";"A2"
\ar "A2";"A3"
\ar "A1";"A3"
\endxy
& \e_1, \e_2 \\
\mathcal M_{s_1} & 
\def\objectstyle{\footnotesize}
\def\alphanum{\ifcase\xypolynode \or \bullet \or \textcolor{blue}{\otimes} \or \bullet \fi}
\xy 
\xygraph{
!{/r2pc/:} 
[] !P3"A"{~>{} ~*{\alphanum} }
}
\ar "A2";"A1"
\ar "A2";"A3"
\ar "A1";"A3"
\endxy &
\def\alphanum{\ifcase\xypolynode \or \bullet \or \textcolor{red}{\oplus} \hspace{-0.8em}\textcolor{blue}{\otimes} \or \textcolor{red}{\oplus} \fi}
\xy 
\xygraph{
!{/r2pc/:} 
[] !P3"A"{~>{} ~*{\alphanum} }
}
\ar "A2";"A3"
\ar "A1";"A3"
\endxy &
\def\alphanum{\ifcase\xypolynode \or \bullet \or \textcolor{red}{\oplus} \or \textcolor{red}{\oplus} \fi}
\xy \xygraph{
!{/r2pc/:} 
[] !P3"A"{~>{} ~*{\alphanum} }
}
\ar "A1";"A2"
\ar "A2";"A3"
\ar "A1";"A3"
\endxy
& -\e_1, \e_1+\e_2\\
\mathcal M_{s_2} & 
\def\objectstyle{\footnotesize}
\def\alphanum{\ifcase\xypolynode \or \bullet \or \textcolor{red}{\oplus} \or \textcolor{red}{\oplus} \fi}
\xy 
\xygraph{
!{/r2pc/:} 
[] !P3"A"{~>{} ~*{\alphanum} }
}
\ar "A1";"A2"
\ar "A3";"A2"
\ar "A1";"A3"
\endxy &
\def\alphanum{\ifcase\xypolynode \or \bullet \or \textcolor{red}{\oplus} \or \textcolor{red}{\oplus} \hspace{-0.8em}\textcolor{blue}{\otimes} \fi}
\xy 
\xygraph{
!{/r2pc/:} 
[] !P3"A"{~>{} ~*{\alphanum} }
}
\ar "A1";"A2"
\ar "A3";"A2"
\endxy &
\def\alphanum{\ifcase\xypolynode \or \bullet \or \bullet \or \textcolor{blue}{\otimes} \fi}
\xy \xygraph{
!{/r2pc/:} 
[] !P3"A"{~>{} ~*{\alphanum} }
}
\ar "A1";"A2"
\ar "A3";"A2"
\ar "A3";"A1"
\endxy
& \e_1+\e_2, -\e_2 \\
\mathcal M_{s_2s_1} & 
\def\objectstyle{\footnotesize}
\def\alphanum{\ifcase\xypolynode \or \bullet \or \textcolor{blue}{\otimes} \or \textcolor{blue}{\otimes} \fi}
\xy 
\xygraph{
!{/r2pc/:} 
[] !P3"A"{~>{} ~*{\alphanum} }
}
\ar "A2";"A1"
\ar "A3";"A2"
\ar "A3";"A1"
\endxy &
\def\alphanum{\ifcase\xypolynode \or \bullet \or \textcolor{red}{\oplus} \hspace{-0.8em}\textcolor{blue}{\otimes} \or \textcolor{blue}{\otimes} \fi}
\xy 
\xygraph{
!{/r2pc/:} 
[] !P3"A"{~>{} ~*{\alphanum} }
}
\ar "A3";"A2"
\ar "A3";"A1"
\endxy &
\def\alphanum{\ifcase\xypolynode \or \bullet \or \textcolor{red}{\oplus} \or \bullet \fi}
\xy \xygraph{
!{/r2pc/:} 
[] !P3"A"{~>{} ~*{\alphanum} }
}
\ar "A1";"A2"
\ar "A3";"A2"
\ar "A3";"A1"
\endxy
& -\e_1-\e_2, \e_1 \\
\mathcal M_{s_1s_2} & 
\def\objectstyle{\footnotesize}
\def\alphanum{\ifcase\xypolynode \or \bullet \or \bullet  \or\textcolor{red}{\oplus} \fi}
\xy 
\xygraph{
!{/r2pc/:} 
[] !P3"A"{~>{} ~*{\alphanum} }
}
\ar "A2";"A1"
\ar "A2";"A3"
\ar "A1";"A3"
\endxy &
\def\alphanum{\ifcase\xypolynode \or \bullet \or \textcolor{blue}{\otimes} \or \textcolor{red}{\oplus} \hspace{-0.8em}\textcolor{blue}{\otimes} \fi}
\xy 
\xygraph{
!{/r2pc/:} 
[] !P3"A"{~>{} ~*{\alphanum} }
}
\ar "A2";"A1"
\ar "A2";"A3"
\endxy &
\def\alphanum{\ifcase\xypolynode \or \bullet \or \textcolor{blue}{\otimes} \or \textcolor{blue}{\otimes} \fi}
\xy \xygraph{
!{/r2pc/:} 
[] !P3"A"{~>{} ~*{\alphanum} }
}
\ar "A2";"A1"
\ar "A2";"A3"
\ar "A3";"A1"
\endxy
& \e_2, -\e_1-\e_2 \\
\mathcal M_{s_1s_2s_1} & 
\def\objectstyle{\footnotesize}
\def\alphanum{\ifcase\xypolynode \or \bullet \or \bullet \or \textcolor{blue}{\otimes} \fi}
\xy \xygraph{
!{/r2pc/:} 
[] !P3"A"{~>{} ~*{\alphanum} }
}
\ar "A2";"A1"
\ar "A3";"A2"
\ar "A3";"A1"
\endxy &
\def\alphanum{\ifcase\xypolynode \or \bullet \or \textcolor{blue}{\otimes} \or \textcolor{blue}{\otimes} \fi}
\xy 
\xygraph{
!{/r2pc/:} 
[] !P3"A"{~>{} ~*{\alphanum} }
}
\ar "A2";"A1"
\ar "A3";"A1"
\endxy &
\def\alphanum{\ifcase\xypolynode \or \bullet \or \bullet \or \textcolor{blue}{\otimes}  \or \bullet \fi}
\xy \xygraph{
!{/r2pc/:} 
[] !P3"A"{~>{} ~*{\alphanum} }
}
\ar "A2";"A1"
\ar "A2";"A3"
\ar "A3";"A1"
\endxy
& -\e_2, -\e_1 \\\hline
\end{array}
\]
\caption{Characterization of exceptional curves}
\label{figure2}
\end{center}
\end{figure}


\end{document}